\documentclass[11pt]{amsart}

%%%%%%%%%%%%%% PACKAGES %%%%%%%%%%%
\usepackage{amsmath, amsthm, amssymb, amscd, amsfonts}
\usepackage{fontsmpl}
\usepackage{fontsmpl,layout}
\usepackage[all]{xy}             % diagramas
\usepackage[latin1]{inputenc}% para ela tilde
%\usepackage[spanish]{babel}

%%%%%%%%%%%%%%%%%%%%%%%%%%%%

%%%%%%%%%%%%% DEF. THEOREMS %%%%%%
\newtheorem{mtheorem}{Theorem}
\newtheorem{lema}{Lemma}[section]
\newtheorem{theorem}[lema]{Theorem}
\newtheorem{cor}[lema]{Corollary}

\newtheorem{prop}[lema]{Proposition}

\theoremstyle{definition}
\newtheorem{definition}[lema]{Definition}

\theoremstyle{remark}
\newtheorem{obs}[lema]{Remark}

%%%%%%%%%%%%%%%%%%%%%%%%%%%%%%

%%%%%%%%%%%% DEF. HOPF ALGEBRA STAFF %%%

\newcommand{\quot}{\mathcal{QUOT}}

\newcommand\id{\operatorname{id}}

\newcommand\co{\operatorname{co}}

\newcommand\cop{\operatorname{cop}}

\newcommand\Alg{\operatorname{Alg}}

\newcommand\Ker{\operatorname{Ker}}
\newcommand\Img{\operatorname{Im}}

\newcommand{\eps}{\varepsilon}
\newcommand{\ot}{\otimes}
\newcommand{\com}{\Delta}

\newcommand\Res{\operatorname{Res}}
\newcommand\res{\operatorname{res}}
\newcommand{\g}{\mathfrak g}

\newcommand{\Tte}{\textbf{T}}

\newcommand{\D}{{\mathcal D}}

\newcommand{\Ee}{{\mathcal E}}
\newcommand{\Ff}{{\mathcal F}}

\newcommand{\Oc}{{\mathcal O}}

\newcommand\Lie{\operatorname{Lie}}

%%%%%%%%%%%%%%%%%%%%%%%%%%%%%%
%%%%%%%%% LLAMADAS           %%%%%%%%%%%

%%%%%%%%%%%%%%%%%%%%%%%%%%%%%%

%%%%%%%%% DEFINICIONES SIMBOLOS %%%%%%%

\def\NN{\mathbb{N}}
\def\ZZ {\mathbb{Z}}

\def\CC{\mathbb{C}}

\def\GG{\mathbb{G}}

\def\JJ{\mathcal{J}}
\def\II{\mathcal{I}}
\def\OO{\mathcal{O}}
\def\SS{\mathcal{S}}
\def\S{\mathbb{S}}

\def\T{\mathbf{T}}
\def\Tt{\mathbb{T}}
\def\z{\mathbf{Z}}
\def\w{\mathbf{W}}
\def\k{\Bbbk}
\def\x{\mathbf{X}}
\def\y{\mathbf{Y}}

\def\Z{\mathcal{Z}}

\def\D{\mathcal{D}}

\def\J{\mathfrak{J}}

\def\lieg{\mathfrak{g}}
\def\liegl{\mathfrak{gl}}
\def\lieb{\mathfrak{b}}

\def\liesl{\mathfrak{sl}}

\def\QEabg{U_{\alpha,\beta}(\liegl_{n})}
\def\QEabl{U_{\alpha,\beta}(\lgot)}

\def\Ga{\Gamma}

\def\Oe{\OO_{\epsilon}(G)}
\def\Oea{\Oc_{\alpha,\beta}(GL_{n})}
\def\Oeal{\Oc_{\alpha,\beta}(L)}

\def\qe{{\bf u}_{\epsilon}(\lieg)}
\def\qeo{{\bf u}_{\epsilon}(\tilde{\liel}_{0})}
\def\qab{{\bf u}_{\alpha,\beta}(\liegl_{n})}
\def\qabl{{\bf u}_{\alpha,\beta}(\liel)}
\def\qablhat{\hat{{\bf u}}_{\alpha,\beta}(\liel)}
\def\qablohat{\hat{{\bf u}}_{\alpha,\beta}(\liel_{0})}

\def\qabhat{\hat{{\bf u}}_{\alpha,\beta}(\liegl_{n})}

\def\liel{\mathfrak{l}}
\def\lgot{\mathfrak{l}}

\def\Ol{\Oc_{\epsilon}(L)}

\def\ql{{\bf u}_{\epsilon}(\lgot)}

\def\pf{\begin{proof}}
\def\epf{\end{proof}}
\def\HZe{\mathcal{HZ}}

\theoremstyle{plain}
%%%%%%%%%%%%%%%%%%%%%%%%%%%%%%

\begin{document}

%\rightline{}

%%%%%%%%%% Numerar ecuaciones como teoremas %%%
%\renewcommand{\theequation}{\thetheorem}
%\newcommand{\num}{\refstepcounter{theorem}}
%\renewcommand{\thesubsubsection}{\theequation}

%%%%%%%%%%%%%%%%%%%%%%%%%%%%%%%%%

%%%%%%%%% TITLE  AND OTHERS %%%%%%%%%%%%%
\title[quantum subgroups of $GL_{\alpha,\beta}(n)$]
{Quantum subgroups of $GL_{\alpha,\beta}(n)$}

\author[gast\'on andr\'es garc\'\i a]
{Gast\' on Andr\' es Garc\'\i a}

\address{FaMAF-CIEM, Universidad Nacional de C\'ordoba
Medina Allende s/n, Ciudad Universitaria, 5000 C\' ordoba, Rep\'
ublica Argentina.} \email{ggarcia@mate.uncor.edu}

\thanks{\noindent 2000 \emph{Mathematics Subject Classification.}
17B37, 16W30. \newline \emph{Keywords:} quantum groups, quantized
enveloping algebras, quantized coordinate algebras. \newline
The work was
partially supported by DAAD, CONICET, ANPCyT, Secyt (UNC) and TWAS.}

\date{\today.}

\begin{abstract}
Let $\alpha, \beta \in \CC \smallsetminus \{0\}$ and $\ell \in \NN$, odd with 
$\ell \geq 3$. 
We determine all
Hopf algebra quotients of 
the quantized coordinate algebra $\Oc_{\alpha, \beta}(GL_{n})$
when $\alpha^{-1}\beta$ is a primitive $\ell$-th root of unity 
and $\alpha, \beta$ satisfy
certain mild conditions, and we caracterize 
all finite-dimensional quotients
when $\alpha^{-1}\beta$ is not a root of unity. As a byproduct
we give a new family of non-semisimple 
and non-pointed Hopf algebras with non-pointed duals
which are
quotients of $\Oc_{\alpha, \beta}(GL_{n})$.
\end{abstract}

\maketitle

\centerline{\emph{Dedicated to Nicol\'as Andruskiewitsch in his
50th birthday}.}

\tableofcontents
%%%%%%%%%%%%%%%%%%%%%%%%%%%%%%%%%%%%

%%%%%%%%% PAPER %%%%%%%%%%%%%%%%%%%%%%%
%%%%%%%%%%%%%%%%%%%%%%%%%%%%%%%%%%%%

%%%%%%%%%%%% INTRODUCTION Y PRELIMINARIES %%%%%%
\section{Introduction}
One-parameter quantizations of the general linear
group $GL_{n}$ are well-known objects that arises 
as dual Hopf algebras of the Drinfeld-Jimbo quantized enveloping
algebras $U_{q}(\liegl_{n})$ and were studied by many authors, 
for example \cite{FRT}, \cite{HH}, \cite{Ma}, \cite{PW}, \cite{TT} 
and \cite{Tk2}. The problem of constructing families of multiparameter
quantum groups was first raised in \cite{Ma2} and subsequently
treated in \cite{Tk}, \cite{Re}, \cite{OY}, \cite{H}, \cite{LS}, \cite{AB}
among others. Two-parameter deformations of $\Oc(GL_{n})$ 
were introduced by M. Takeuchi in \cite{Tk}, see also \cite{Ko}, and
they were studied in detail in \cite{DPW}.
In \cite{Tk} a two-parameter deformation of the enveloping algebra
$U(\liegl_{n})$ is also defined and it is proven 
that there exists a Hopf pairing between them.
In the case of the one-parameter deformation, one
can prove that this pairing is perfect. This is
way the Hopf algebras $\Oc_{q}(GL_{n})$ are dual to
$U_{q}(\liegl_{n})$. 
A family depending on more parameters has been constructed 
independently by Sudbery \cite{S} and Reshetikhin \cite{Re}. It is shown in 
\cite{AST}
that this family can be obtained and characterized by a 
construction of Manin and they explain how the algebras in the family are
twists of $\Oc_{q}(GL_{n})$ by 2-cocycles. We have chossen
to study the two-parameter deformation given by Takeuchi
simply because they consist of a family of objects which cannot be obtained
by 2-cocycle deformations of $\Oc_{q}(GL_{n})$, see Remark 
\ref{rmk:valores-de-a-b} $(c)$.

\par Several authors,
among them 
Benkart and Witherspoon \cite{BW, BW2}, Jing
\cite{Ji} and Kulish \cite{Ku}, defined also two-parameter deformations 
of $U(\liegl_{n})$ 
which are particular cases of the multiparameter
deformations, see for example \cite{CM2} and 
in particular \cite{S}, where they are
defined using a pairing with a multiparameter deformation
of the coordinate ring of $GL_{n}$.  
These two-parameter defomormations of $U(\liegl_{n})$ 
are closely related to each other,
for example Takeuchi's deformation $U_{\alpha,
\beta}(\liegl_{n})$ is isomorphic as algebra 
to the deformation $U_{r,s^{-1}}(\liegl_{n})$  given by Benkart and 
Witherspoon but as coalgebra
they have the co-opposite coproduct. 
Recently, N. Hu and Y. Pei \cite{HP} defined two-parameter
deformations of $U(\lieg)$ for any semisimple Lie algebra and showed
that they can be realized as Drinfeld doubles. This generalizes 
previous results of Benkart and Witherspoon for type $A$, 
Bergeron, Gao and Hu for type $B$, $C$ and $D$ \cite{BGH}, \cite{BGH2}, 
Hu and Shi for type $G$ \cite{HS} and Bai and Hu for type $E$ \cite{BH}. 

\par In this paper we determine the Hopf algebra quotients
of the two parameter quantization $\Oea$ of the coordinate
algebra on $GL_{n}$ 
when $\alpha^{-1}\beta$ is a primitive $\ell$-th root of unity,
$\ell \in \NN$,  odd  with $l\geq 3$
 and $\alpha, \beta$ satisfy
certain mild conditions, and we caracterize all finite-dimensional quotients
when $\alpha^{-1}\beta$ is not a root of unity. 
As a consequence, 
we give in Thm. \ref{thm:pulenta} a new family of non-semisimple
and non-pointed Hopf algebras with non-pointed duals,
which are
quotients of $\Oc_{\alpha, \beta}(GL_{n})$ and cannot
be obtained as quotients of $\Oe$, 
with $G$ a connected,
simply connected,  simple complex Lie group, $\epsilon$
a primitive $s$-th root of unity, see \cite{AG}. 

\par It is crucial
for the determination of the quotients the relation between
$U_{\alpha,\beta}(\liegl_{n})$ and $\Oea$, as well as
some known facts about the pairing between them and
the center of $U_{\alpha,\beta}(\liegl_{n})$. For this reason,
we relay on results of \cite{BKL, BW, BW2, BW3} and \cite{DPW}.

\par In order to study quantum subgroups of
more general quantum groups, we believe that 
it would be necessary to define first, as in the
case of one-parameter deformations \cite{L}, the rational form of
multiparameter
deformations of coordinate rings of reductive or more general 
algebraic groups. This involves the study of the representation 
theory of these quantum groups at roots of $1$, since
the quantized coordinate rings would be generated as algebras by
the matrix coefficients of representations of type 1. 
One may also use for the definition 
of these type of quantum groups 
the pairing between the quantized coordinate rings
and the quantized enveloping algebras, but in the case where
the parameters are roots of unity, the pairing is degenerate, 
which makes the representation theory more complicated.
For multiparameter deformations of
other simple Lie algebras,
see \cite{BGH}, \cite{AB}. 

\par The problem of determining 
the quantum subgroups of a quantum group 
was first considered by P. Podle\'s
\cite{podles} for quantum $SU(2)$ and $SO(3)$. Then, the
characterization of all finite-dimensional Hopf algebra
quotients of the quantized coordinate algebra $\Oc_q(SL_N)$ was
obtained by Eric M\"uller \cite{Mu}. For more general simple
groups, all possible quotients of $\Oe$ were determined in
\cite{AG} in the case where the parameter $\epsilon$
is a root of unity, generalizing the results of M\"uller.
They are parameterized by data $\D= (I_{+}, I_{-}, N,
\Ga, \sigma, \delta)$ where $I_{+}$ and $-I_{-}$ are 
subsets of the basis of
a fixed root system of $\Lie(G)= \g$, 
$N$ is a finite abelian group related
to $I = I_{+} \cup -I_{-}$,
$\Ga$ is an algebraic group, 
$\sigma: \Ga \to L$ is an injective
morphism of algebraic groups, 
where $L\subseteq G$ is a connected
algebraic subgroup associated to $I$ and
$\delta: N \to \widehat{\Ga}$ is a
group map into the character group of $\Ga$.
The corresponding quotient $A_{\D}$ fits into a commutative diagram 
with exact rows:
\begin{equation}\label{diag:const-def-datum}
\xymatrix{1 \ar[r]^{} & \Oc(G) \ar[r]^{\iota} \ar[d]_{^{t}\sigma}
& \Oe \ar[r]^{\pi}
\ar[d]^{q_{\D}} & \qe^{*} \ar[r]^{}\ar[d]^{} & 1\\
1 \ar[r]^{} &   \Oc(\Ga)  \ar[r]^{\hat{\iota}} &
A_{\D}\ar[r]^{\hat{\pi}}& H\ar[r]^{}& 1,}
\end{equation}
where $\qe$ is the Frobenius-Lusztig kernel of $\g$ and
$H^{*}$ is a Hopf subalgebra of $\qe$ determined by the triple
$(I_{+}, I_{-}, N)$. In particular, the quotients $A_{\D}$ fits into
a central exact sequence of Hopf algebras. 

\par We prove that the quotients of $\Oea$ follow the
same pattern when $\alpha^{-1}\beta$ is a 
primitive $\ell$-th root of unity
and $\alpha, \beta$ satisfy
certain mild conditions, see Thm. \ref{teo:constrfinal}.
If $\alpha^{-1}\beta$
is not a roof of unity, the finite-dimensional
quotients are
the function algebras of finite subgroups of the diagonal torus
in $GL_{n}(\k)$. The definition
of a subgroup datum for $\Oea$ is the following:

\begin{definition}\label{def:subgroupdatum}
A \emph{subgroup datum} of the quantum group $\Oea$ 
is a collection $\D= (I_{+}, I_{-}, N, \Ga,
\sigma, \delta)$ where
\begin{itemize}
\item[$\bullet$] $I_{+}, I_{-}\subseteq \{1,\ldots , n-1\}$.
These subsets determine an algebraic subgroup $ L $ of
$GL_{n}$ consisting in block matrices whose nonzero
blocks are in the diagonal, see Remark \ref{rmk:definicion-L}.

\medbreak \item[$\bullet$] $N$ is a subgroup of
$\widehat{\mathbb{T}}$, see Remark \ref{obs:hat-t}.

\medbreak \item[$\bullet$] $\Ga$ is an algebraic group.

\medbreak \item[$\bullet$] $\sigma: \Ga \to L$ is an injective
homomorphism of algebraic groups.

\medbreak \item[$\bullet$] $\delta: N \to \widehat{\Ga}$ is a
group homomorphism.
\end{itemize}
\end{definition}

\noindent If $\Ga$ is finite, we call $\D$ a \emph{finite subgroup
datum}.  The following theorem is the main result of
the paper. Part $(a)$ is completely analogous to \cite[Thm. 4.1]{Mu}
with almost the same proof and part $(b)$ is also analogous to
\cite[Thm. 2.17]{AG}, but its proof is different since the quantum
group $\Oea$ is given by generators and relations. In particular,
several technical lemmata will be needed to prove this part
of the theorem.

\begin{mtheorem}\label{thm:main}
Let $q: \Oea \to A$ be a surjective Hopf algebra map.
\begin{enumerate}
 \item[$(a)$] If $\alpha^{-1}\beta$ is not a root of unity
  and $\dim A$ is finite, then $A$ is a function algebra of 
  a finite subgroup of the diagonal torus in $GL_{n}(\k)$.
 \item[$(b)$] If $\alpha^{-1}\beta$ is a primitive $\ell$-th
  root of unity with $\alpha^{\ell} = 1 = \beta^{\ell}$ then
  there is a bijection between
    \begin{enumerate}
      \item[$(i)$] Hopf algebra quotients $q: \Oea \to A$
      \item[$(ii)$] Subgroup data of $\Oea$ up to equivalence.
    \end{enumerate}
\end{enumerate}
\end{mtheorem}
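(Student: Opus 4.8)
The plan is to treat the two parts by different methods, following \cite{Mu} for $(a)$ and \cite{AG} for $(b)$, adapted to the fact that $\Oea$ is presented by generators and relations. Throughout I use that $\Oea$ is generated as an algebra by the matrix coordinates $x_{ij}$, $1\le i,j\le n$, and the inverse of the quantum determinant, subject to the two-parameter $FRT$-relations, so that the surjection $q$ makes $A$ generated by the images of these elements. For part $(a)$, when $\alpha^{-1}\beta$ is not a root of unity, I would run a M\"uller-type argument: the matrix coordinates $q$-commute with structure constants that are powers of $\alpha^{-1}\beta$, and combining these relations with the finiteness of $\dim A$ --- so that the grouplikes of $A$ form a finite group acting with root-of-unity eigenvalues --- forces the images of all off-diagonal coordinates to vanish. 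The quotient then collapses onto the commutative Hopf algebra generated by the diagonal entries and the inverse determinant, which is exactly the function algebra of a finite subgroup of the diagonal torus of $GL_{n}(\k)$.

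For part $(b)$ the heart of the matter is to produce and exploit a central exact sequence of Hopf algebras parallel to \eqref{diag:const-def-datum}. The first step is to establish a quantum Frobenius homomorphism at the $\ell$-th root of unity: using the hypothesis $\alpha^{\ell}=1=\beta^{\ell}$, I would show by direct computation with the defining relations that the $\ell$-th powers of the generators span a central Hopf subalgebra $Z\subseteq\Oea$ isomorphic to the classical $\Oc(GL_{n})$, with finite-dimensional quotient $\overline{A}=\Oea/Z^{+}\Oea$ a two-parameter Frobenius--Lusztig kernel. Here I would lean on the known facts about the pairing between $\Oea$ and $\QEabg$ and on the description of the center of $\QEabg$ from \cite{BKL,BW,BW2,DPW} to identify $Z$ and $\overline{A}$ and to verify that the sequence $1\to\Oc(GL_{n})\to\Oea\to\overline{A}\to1$ is exact and central.

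The bijection of part $(b)$ is then built in two directions. Given a subgroup datum $\D=(I_{+},I_{-},N,\Ga,\sigma,\delta)$, I would construct $A_{\D}$ fitting into a central exact sequence $1\to\Oc(\Ga)\to A_{\D}\to H\to1$ compatible with \eqref{diag:const-def-datum}: the triple $(I_{+},I_{-},N)$ cuts out a Hopf quotient $H$ of $\overline{A}$, the injective $\sigma:\Ga\to L$ induces a surjection $\Oc(L)\twoheadrightarrow\Oc(\Ga)$ on the central layer, and $\delta:N\to\widehat{\Ga}$ supplies the gluing data making the two layers compatible. Conversely, from a surjection $q:\Oea\to A$ I would restrict to $Z\cong\Oc(GL_{n})$; its image is a commutative Hopf algebra of the form $\Oc(\Ga)$ for a closed subgroup $\Ga\subseteq GL_{n}$, while the induced quotient of $\overline{A}$ recovers $(I_{+},I_{-},N)$ and $H$. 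The subgroup $L$ attached to $(I_{+},I_{-})$ then contains $\Ga$, giving $\sigma:\Ga\hookrightarrow L$, and the compatibility between the central and the finite layers of the sequence recovers $\delta$. A check that these two assignments are mutually inverse modulo the equivalence of data closes the argument.

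I expect the main obstacle to be the classification direction of $(b)$, together with the several technical lemmata that are forced on us because $\Oea$ is only accessible through generators and relations. In the one-parameter setting of \cite{AG} the central subalgebra is geometrically $\Oc(G)$ and one may argue with the geometry of the group $G$; here one must instead prove purely algebraically that the $\ell$-th powers are central and generate a copy of $\Oc(GL_{n})$, and then control, relation by relation, how an arbitrary quotient meets $Z$ --- in particular that the image of $Z$ is reduced and of the expected form and that no unexpected relations among the $q(x_{ij})$ arise. Establishing the quantum Frobenius and the exactness of the central sequence by hand is where the bulk of the work lies.
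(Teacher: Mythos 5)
Your outline for part $(b)$ follows essentially the same route as the paper: quantum Frobenius $F^{\#}:\Oc(GL_n)\to\Oea$ giving a central exact sequence with finite-dimensional cokernel identified (via the degenerate pairing, restricted to the quotients) with $\qabhat^{*}$, then a three-stage construction of $A_{\D}$ (cutting down by $(I_{+},I_{-})$, pushing out along $^{t}\sigma$, and quotienting by $\partial^{m}-\delta(m)$), and conversely reading off the datum from the two layers of an arbitrary quotient. You correctly locate the hard technical points (the connectedness of $\overline{H}$, i.e.\ the injectivity of $\overline{\psi}$, is where the paper spends most of its effort), and the remaining items you defer --- the lattice of quotients and the ``up to equivalence'' statement --- are genuinely needed but are handled in the paper by arguments parallel to the one-parameter case.

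Part $(a)$, however, has a concrete gap. You propose to kill the off-diagonal $q(x_{ij})$ by observing that the generators $q$-commute ``with structure constants that are powers of $\alpha^{-1}\beta$'' and that the finite group of group-likes of $A$ acts with root-of-unity eigenvalues. Neither half of this works as stated: the structure constants in the defining relations of $\Oc_{\alpha,\beta}(M_n)$ are $\alpha^{-1}$, $\beta$, $\alpha\beta$ and $\beta-\alpha$, and the only group-like at your disposal in $\Oea$ is the quantum determinant $g$, whose conjugation action satisfies $x_{ij}g=(\alpha\beta)^{i-j}gx_{ij}$. So the eigenvalue you would extract from finiteness of the order of $q(g)$ is a power of $\alpha\beta$, not of $\alpha^{-1}\beta$, and the hypothesis of $(a)$ says nothing about $\alpha\beta$: for instance $\alpha=\zeta q^{-1}$, $\beta=q$ with $\zeta$ a nontrivial root of unity and $q$ not a root of unity satisfies the hypothesis while $\alpha\beta=\zeta$ is a root of unity, so conjugation by $q(g)$ yields no contradiction. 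The correct mechanism (M\"uller's, and the paper's) is the antipode: $\SS^{2}(x_{ij})=(\alpha^{-1}\beta)^{j-i}x_{ij}$, and by Radford's theorem $\SS_{A}$ has finite order $2t$, whence $q(x_{ij})=(\alpha^{-1}\beta)^{t(j-i)}q(x_{ij})$ forces $q(x_{ij})=0$ for $i\neq j$ precisely because $\alpha^{-1}\beta$ is not a root of unity. Once the off-diagonal images vanish, the rest of your conclusion (collapse onto the function algebra of a finite subgroup of the diagonal torus) goes through.
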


In Section \ref{sec:a-b-not-root} we give the proof of part $(a)$ and
in Section \ref{sec:a-b-root} we give the proof of part $(b)$. 
Specifically, in Sec. \ref{subsec:a-b-root-constr}
we carry out the construction
of a quotient $A_{\D}$ of $\Oea$ starting from a subgroup datum $\D$,
see Thm. \ref{teo:constrfinal}.
In Section \ref{subsec:a-b-root-charac}, we attach a subgroup datum $\D$ to
an arbitrary Hopf algebra quotient $A$ and prove that $A_{\D}\simeq
A$ as quotients of $\Oea$. 
Finally, in Sec.
\ref{sec:equivalence}, we study the lattice of quotients $A_{\D}$.
This concludes the proof of the theorem.

As a consequence, any quotient $A_{\D}$ also fits into a 
commutative diagram with exact rows:
\begin{equation}\label{diag:const-def-datum-2}
\xymatrix{1 \ar[r]^{} & \Oc(GL_{n}) \ar[r]^{\iota} \ar[d]_{^{t}\sigma}
& \Oea \ar[r]^{\pi}
\ar[d]^{q_{\D}} & \qabhat^{*} \ar[r]^{}\ar[d]^{} & 1\\
1 \ar[r]^{} &   \Oc(\Ga)  \ar[r]^{\hat{\iota}} &
A_{\D}\ar[r]^{\hat{\pi}}& H\ar[r]^{}& 1,}
\end{equation}
where $\qabhat$ is a quotient  of the 
restricted quantum group of $\liegl_{n}$ defined in
\cite{BW}, and
$H^{*}$ is a Hopf subalgebra of $\qabhat$ determined by the triple
$(I_{+}, I_{-}, N)$. In particular, the quotient $A_{\D}$ fits into
a central exact sequence of Hopf algebras. 
It is not known if a kind of converse is true, that is, 
if a Hopf algebra is a central extension and it satisfies some
additional but specific properties, then it is a quotient of a quantum
group. An example of a specific property, 
for instance, is being finite-dimensional and
generated
by a simple subcoalgebra of dimension 4 stable by the antipode.
This fact was proved by \c Stefan \cite{stefan} and it is used
with profit in the classification of Hopf algebras
of small dimension, see for example \cite{GV}, \cite{N}.

\par The paper is organized as follows. We recall in 
Section \ref{sec:prelim} some
known facts about Hopf algebras, central extensions of Hopf algebras
and PI-Hopf triples. In Section \ref{sec:two-par-def} we recall the
definition of the two-parameter deformation of
the coordinate ring of $GL_{n}$, the universal enveloping
algebra of $\liegl_{n}$, the pairing between them and some
results due to Kharchenko \cite{K} and Benkart and 
Witherspoon \cite{BW} 
on a PBW-basis of $U_{\alpha,
\beta}(\liegl_{n})$. As already mentioned, we prove Thm. \ref{thm:main}
in Sec. \ref{sec:a-b-not-root} and Sec. \ref{sec:a-b-root}, and
we end the paper by giving some properties and 
relations between distinct 
quantum subgroups in the case where the parameters are
roots of unity. As a byproduct we obtain 
a new family of finite-dimensional 
non-semisimple and non-pointed Hopf
algebras with non-pointed duals, see Thm. \ref{thm:pulenta}.
%%%%%%%%%%%%%%%%%%%%%%%%%%%%%%%%%%%
\subsection*{Acknowledgements}
Research of this paper was begun when the author was visiting the
Mathematisches Institut der Ludwig-Maximilians Universit\" at
M\"unchen under the support of the DAAD and CONICET. 
He thanks H.-J. Schneider and the people of the institute
for their warm hospitality. The author also wishes to thank I.
Heckenberger and N. Andruskiewitsch for fruitful discussions. 

\section{Preliminaries}\label{sec:prelim}
\subsection{Conventions}
We work over an algebraically closed field $\k$ of characteristic
zero and by $\k^{\times}$ we denote the group of units of $\k$.
We write $\GG_{\ell}$ for the group
of $\ell$-th roots of unity.
Our references for the theory of Hopf algebras are \cite{Mo} and
\cite{Sw}, for Lie algebras \cite{Hu} and for quantum groups
\cite{J} and \cite{BG}. If $\Ga$ is a group, we denote by
$\widehat{\Ga}$ the character group. The
antipode of a Hopf algebra $H$ is denoted by $\mathcal S$. The
Sweedler notation is used for the comultiplication of $H$ but
dropping the summation symbol. The set of group-like elements of a
coalgebra $C$ is denoted by $G(C)$. We also denote by $C^{+} = \Ker
\eps$ the augmentation ideal of $C$, where $\eps: C\to \k$ is the
counit of $C$. Let $A\xrightarrow{\pi} H$ be a Hopf algebra map,
then $ A^{\co H}= A^{\co \pi} =\{a \in A|\ (\id\otimes \pi)\Delta
(a) = a\otimes 1\}$ denotes the subalgebra of right coinvariants and
$ ^{\co H}A= \ ^{\co \pi}A =\{a \in A|\ (\pi\otimes \id)\Delta (a) =
1\otimes a\}$ denotes the subalgebra of left coinvariants.

\par Let $H$ be a Hopf algebra,
$A$ a right $H$-comodule algebra
with structure map $ \delta: A\to A \ot H$,
$a \mapsto a_{(0)}\ot a_{(1)}$ and
$B = A^{\co H} $. The extension $B\subseteq A$
is called a Hopf Galois extension or $H$-Galois
if the canonical map $\lambda: A\ot_{B} A\to A\ot H$,
$a\ot b \mapsto  ab_{(0)}\ot b_{(1)}$
is bijective. 
 
\begin{definition}\label{def:hopf-pairing}
A {\it Hopf pairing} between two Hopf algebras $U$ and $H$ 
is a bilinear form $(-,-): H \times U \to
\mathcal{R}$ such that, for all $u,\ v \in U$ and $f,\ h \in H$,

\begin{align*}
& (i)\qquad  (h,uv) = (h_{(1)},u)(h_{(2)},v);\qquad & (iii)&\qquad
(1,u) =
\eps(u);\\
&(ii)\qquad (fh,u) = (f,u_{(1)})(h,u_{(2)}); \qquad & (iv)& \qquad
(h,1) = \eps(h).
\end{align*}
\end{definition}
Hence $(h,\SS(u)) = (\SS(h),u)$, for all $u \in
U$, $h \in H$. Given a Hopf pairing, one has Hopf algebra maps $U
\to H^{\circ}$ and $H \to U^{\circ}$, where $H^{\circ}$ and
$U^{\circ}$ are the Sweedler duals. The pairing is called {\it
perfect} if these maps are injections.

\subsection{Central extensions of Hopf
algebras}\label{subsec:centralextensions}
We recall some results on quotients and extensions of Hopf algebras.

\begin{definition}\cite{AD}\label{def:sucex}
A sequence of Hopf algebras maps $1 \to B \xrightarrow{\iota} A
\xrightarrow{\pi} H \to 1$, where $1$ denotes the Hopf algebra $\Bbbk$,
is {\it exact} if $\iota$ is injective, $\pi$ is surjective, $\Ker
\pi = AB^{+}$ and $B =\ ^{\co \pi} A$.
\end{definition}

If the image of $B$ is central in $A$, then $A$ is called a {\it
central} extension of $B$. We shall use the following result.

\begin{prop}\cite[Prop. 2.10]{AG}\label{prop:cociente1}
Let $A$ and $K$ be Hopf algebras, $B$ a central Hopf subalgebra of
$A$ such that $A$ is left or right faithfully flat over $B$ and
$p: B \to K$ a Hopf algebra epimorphism. Then $H = A/AB^{+}$ is a
Hopf algebra and $A$ fits into the exact sequence $1\to B
\xrightarrow{\iota} A \xrightarrow{\pi} H \to 1$. If we set $\JJ =
\Ker p \subseteq B$, then $(\JJ) = A\JJ $ is a Hopf ideal of $A$
and $A_{p} := A / (\JJ)$ is the pushout given by the following
diagram:
$$\xymatrix{B
\ar[r]^{\iota} \ar[d]_{p} & A \ar[d]^{q}\\
K\ar[r]_(.4){j} & A_{p} .}$$
Moreover, $K$ can be identified with a central Hopf
subalgebra of $A_{p}$ and $A_{p}$ fits into the exact sequence $
1\to K \to A_{p} \to H \to 1$. \qed
\end{prop}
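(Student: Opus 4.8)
The plan is to treat the six assertions in turn, reducing each to either a formal Hopf-algebraic computation or to a single deep input, namely faithfully flat descent for normal Hopf subalgebras. First I would check that $AB^{+}$ is a Hopf ideal. Centrality of $B$ makes $AB^{+}=B^{+}A$ a two-sided ideal, and $\eps(AB^{+})=0$ since $\eps(B^{+})=0$. For $b\in B^{+}$ one writes $\com(b)=b\ot 1+1\ot b+\sum b'\ot b''$ with the last sum in $B^{+}\ot B^{+}$; hence $\com(b)\in AB^{+}\ot A+A\ot AB^{+}$, and extending by the ideal property yields $\com(AB^{+})\subseteq AB^{+}\ot A+A\ot AB^{+}$. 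Since $\SS(B^{+})\subseteq B^{+}$ and $B$ is central, $\SS(AB^{+})=\SS(B^{+})\SS(A)\subseteq AB^{+}$. Thus $H=A/AB^{+}$ is a Hopf algebra and $\pi$ a Hopf epimorphism. For the exactness of $1\to B\to A\to H\to 1$ in the sense of Definition \ref{def:sucex}, injectivity of $\iota$, surjectivity of $\pi$ and $\Ker\pi=AB^{+}$ are immediate; the remaining equality $B={}^{\co\pi}A$ is precisely the content of faithfully flat descent (Schneider's theorem) for a central, hence normal, Hopf subalgebra, which also makes $B\subseteq A$ into an $H$-Galois extension.

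Next I would show $(\JJ)=A\JJ$ is a Hopf ideal. Since $p$ is a Hopf map, $\JJ=\Ker p$ is a Hopf ideal of $B$ with $\JJ\subseteq B^{+}$; in particular $\com(\JJ)\subseteq \JJ\ot B+B\ot \JJ$ and $\SS(\JJ)\subseteq\JJ$. Centrality again gives $A\JJ=\JJ A$ two-sided, and the coideal and antipode conditions for $A\JJ$ follow verbatim from those for $\JJ$ together with the fact that $A\JJ$ is a left and right ideal, while $\eps(A\JJ)=0$ because $\JJ\subseteq B^{+}$. Hence $A_{p}=A/A\JJ$ is a Hopf algebra and $q$ a Hopf epimorphism. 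To exhibit $A_{p}$ as the pushout I would note that $q\iota(\JJ)\subseteq q(A\JJ)=0$, so $q\iota$ factors through $p$, producing $j\colon K\to A_{p}$ with $jp=q\iota$, so the square commutes. Given any pair $f\colon A\to T$, $g\colon K\to T$ with $f\iota=gp$, one has $f(\JJ)=g(p(\JJ))=0$, whence $f(A\JJ)=0$ and $f$ descends uniquely to $h\colon A_{p}\to T$ with $hq=f$; then $hj=g$ follows since $p$ is surjective, giving the universal property.

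Finally, for the identification of $K$ and the last exact sequence I would again use faithful flatness. Centrality of $q(B)=j(K)$ in $q(A)=A_{p}$ is clear. Injectivity of $j$ amounts to $B\cap A\JJ=\JJ$: tensoring the exact sequence $0\to\JJ\to B\to K\to 0$ of $B$-modules with the faithfully flat $A$ identifies $A_{p}=A/A\JJ$ with $A\ot_{B}K$ and shows the canonical map $K\to A\ot_{B}K$ is injective, which is exactly $j$. Thus $K\cong j(K)$ is a central Hopf subalgebra. For the sequence $1\to K\to A_{p}\to H\to 1$, I would observe that $A_{p}=A\ot_{B}K$ is faithfully flat over $K$ by base change, so the first part applied to the pair $(A_{p},K)$ produces the exact sequence $1\to K\to A_{p}\to A_{p}/A_{p}K^{+}\to 1$. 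Since $\JJ\subseteq B^{+}$ gives $A\JJ\subseteq AB^{+}$, the map $\pi$ factors as $\hat\pi q$ with $\hat\pi\colon A_{p}\to H$ surjective, and $\Ker\hat\pi=q(AB^{+})=A_{p}\,j(K^{+})=A_{p}K^{+}$, using $p(B^{+})=K^{+}$; hence $A_{p}/A_{p}K^{+}\cong H$, completing the sequence.

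I expect the main obstacle to be the two places where faithful flatness is genuinely used: the descent equality $B={}^{\co\pi}A$ and, equivalently, the injectivity of $j$ via $B\cap A\JJ=\JJ$. Everything else is a routine manipulation of the comultiplication, antipode and centrality, but these two points rest on the faithfully flat descent theorem for normal Hopf subalgebras and cannot be obtained by bare computation.
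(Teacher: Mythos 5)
Your argument is correct, and it follows the same route as the source this paper cites for the result (the paper itself gives no proof, quoting \cite[Prop.~2.10]{AG} with a \qed): the first exact sequence comes from Takeuchi--Schneider faithfully flat descent for the central (hence normal) Hopf subalgebra $B$ (this is the same \cite[Prop.~3.4.3]{Mo} the paper invokes elsewhere), and the identification $A_{p}\cong A\ot_{B}K$ together with faithfully flat base change gives both the injectivity of $j$ and the second exact sequence. You have also correctly isolated the only two non-formal steps, namely $B={}^{\co\pi}A$ and $B\cap A\JJ=\JJ$.
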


\begin{obs}\label{rmk:decocaext} Let $A$ and $B$ be as in Prop.
\ref{prop:cociente1},
then the following diagram of central exact sequences is
commutative.
 \begin{equation}\label{ext2} \xymatrix{1\ar[r]^{} &
  B\ar[r]^{\iota} \ar[d]_{p} &
  A\ar[r]^{\pi}\ar[d]^{q} & H\ar[r]^{}\ar@{=}[d]_{} & 1\\
  1\ar[r]^{}& K\ar[r]^{j} &A_{p} \ar[r]^{\pi_{p}} & H \ar[r]^{}&
  1.}
 \end{equation}
\end{obs}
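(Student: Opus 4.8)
The plan is to extract everything directly from Proposition~\ref{prop:cociente1}, which already provides the two central exact sequences and the pushout square; what remains is merely to identify the vertical maps and verify that the two squares commute. First I would observe that both rows are literally the sequences furnished by that proposition: the top row $1 \to B \xrightarrow{\iota} A \xrightarrow{\pi} H \to 1$ and the bottom row $1 \to K \xrightarrow{j} A_p \xrightarrow{\pi_p} H \to 1$, where in both cases $H = A/AB^{+}$. In particular there is nothing left to check regarding exactness or centrality, since the proposition guarantees that $\Ker \pi = AB^{+}$, $B = {}^{\co \pi}A$, $\Ker \pi_p = A_p K^{+}$, $K = {}^{\co \pi_p}A_p$, and that the images of $B$ and $K$ are central.

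For the left square, the commutativity $q \circ \iota = j \circ p$ is precisely the pushout diagram asserted in Proposition~\ref{prop:cociente1}, so it holds by the very construction of $A_p = A/(\JJ)$ and of $j$. The only point requiring a short argument is therefore the right square, namely $\pi_p \circ q = \pi$ (the right vertical map being $\id_H$). Here the key observation is that $\JJ = \Ker p$ is contained in $B^{+}$: since $p$ is a Hopf algebra map it preserves counits, so $\eps_K \circ p = \eps_B$ and hence $\Ker p \subseteq \Ker \eps_B = B^{+}$. Consequently $(\JJ) = A\JJ \subseteq AB^{+}$, and the canonical projection $A \to A/AB^{+} = H$ factors through $q \colon A \to A/A\JJ = A_p$. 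The induced map is exactly $\pi_p$, so that $\pi_p \circ q$ equals the canonical projection $A \to H$, which is $\pi$; this is the commutativity of the right square.

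The main (and only mild) obstacle will be to confirm that the map $A_p \to H$ appearing in the bottom row of Proposition~\ref{prop:cociente1} genuinely coincides with the natural projection induced by the inclusion $A\JJ \subseteq AB^{+}$, rather than with some a priori different arrow to $H$; this is a matter of tracing through how the identification $A_p/A_p K^{+} \cong H$ is set up in the proof of that proposition. Once this identification is in place, both squares commute essentially by definition, and so the displayed diagram is a commutative diagram of central exact sequences, as claimed.
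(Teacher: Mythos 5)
Your argument is correct and is exactly the reasoning the paper leaves implicit: the remark is stated without proof, being regarded as immediate from Prop.~\ref{prop:cociente1}, and your observation that $\JJ=\Ker p\subseteq B^{+}$ forces $A\JJ\subseteq AB^{+}$ (so that $\pi$ factors through $q$ with induced map $\pi_{p}$, since $A_{p}K^{+}=q(AB^{+})$) is precisely the point that makes the right-hand square commute. Nothing further is needed.
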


The following general fact is due to Masuoka, see
\cite[Lemma 1.14]{AG}.

\begin{lema}
Let $H$  be a bialgebra over an arbitrary commutative ring, and let
$A$,  $A'$  be right $H$-Galois extensions over a common algebra $B$
of $H$-coinvariants. If $A'$  is right $B$-faithfully flat, then
any $H$-comodule algebra map $\theta : A \to A'$ that is
identical on $B$  is an isomorphism. \qed 
\end{lema}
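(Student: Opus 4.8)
The plan is to reduce, by faithful flatness, to the bijectivity of a single canonical map built from the two Galois structures, and then to invert that map explicitly using the translation map of $A$.

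First I would record that $\theta$ is a morphism of left $B$-modules: since $\theta$ is an algebra map which restricts to the identity on $B$, one has $\theta(ba)=\theta(b)\theta(a)=b\,\theta(a)$ for $b\in B$ and $a\in A$. Hence it makes sense to apply the functor $A'\ot_{B}(-)$ to $\theta$, and because $A'$ is right $B$-faithfully flat this functor is faithfully exact and so reflects isomorphisms. Thus it suffices to prove that $\id_{A'}\ot_{B}\theta\colon A'\ot_{B}A\to A'\ot_{B}A'$ is bijective.

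Next I would use the Galois isomorphism $\lambda_{A'}\colon A'\ot_{B}A'\to A'\ot H$ to transport the target. Setting
\[
\kappa:=\lambda_{A'}\circ(\id_{A'}\ot_{B}\theta)\colon A'\ot_{B}A\longrightarrow A'\ot H,\qquad \kappa(a'\ot a)=a'\,\theta(a_{(0)})\ot a_{(1)},
\]
where I have used that $\theta$ is an $H$-comodule map, so that $\theta(a)_{(0)}\ot\theta(a)_{(1)}=\theta(a_{(0)})\ot a_{(1)}$, the map $\kappa$ is well defined on $A'\ot_{B}A$ because elements of $B$ are $H$-coinvariant. Since $\lambda_{A'}$ is bijective, it is now enough to prove that $\kappa$ is bijective.

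Finally I would invert $\kappa$ by hand. Let $\tau\colon H\to A\ot_{B}A$, $\tau(h)=h^{[1]}\ot h^{[2]}:=\lambda_{A}^{-1}(1\ot h)$, be the translation map of the Galois extension $A$, which exists because $\lambda_{A}$ is bijective, and define
\[
\rho\colon A'\ot H\longrightarrow A'\ot_{B}A,\qquad \rho(a'\ot h)=a'\,\theta(h^{[1]})\ot h^{[2]}.
\]
The verification that $\rho$ and $\kappa$ are mutually inverse is the technical heart of the argument, and is where I expect the only real work to lie: it rests on the two standard translation identities $h^{[1]}h^{[2]}{}_{(0)}\ot h^{[2]}{}_{(1)}=1\ot h$ (the defining property of $\tau$) and $a_{(0)}(a_{(1)})^{[1]}\ot(a_{(1)})^{[2]}=1\ot a$ (which follows from injectivity of $\lambda_{A}$ upon applying $\lambda_{A}$ to both sides), the first yielding $\kappa\rho=\id$ and the second $\rho\kappa=\id$, in each case after pushing $\theta$ through the products using that it is an algebra and comodule map and that $\theta(1)=1$. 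Granting the bijectivity of $\kappa$, we conclude that $\id_{A'}\ot_{B}\theta$ is bijective, and faithful flatness of $A'$ over $B$ then forces $\theta$ itself to be bijective.
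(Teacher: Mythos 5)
Your argument is correct: reducing by faithful flatness of $A'$ over $B$ to the bijectivity of $\mathrm{id}_{A'}\ot_{B}\theta$, transporting via the Galois map $\lambda_{A'}$, and inverting the resulting map with the translation map of $A$ is exactly the standard proof of this lemma of Masuoka, and the two translation identities you invoke do yield $\kappa\rho=\mathrm{id}$ and $\rho\kappa=\mathrm{id}$ as you describe (the only unstated point, well-definedness of $\rho$ on the balanced tensor product, follows at once from $\theta|_{B}=\mathrm{id}$). The paper itself offers no proof, stating the result with a bare citation to \cite[Lemma 1.14]{AG}, so there is no competing argument to compare against; your proof fills in precisely what that reference contains.
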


Recall that a $\k$-algebra $A$ is called \textit{affine} if it
is finitely generated as an algebra and a ring $R$ is called
a \textit{polynomial identity ring} or
\textit{PI-ring} for short, if there exists a monic polynomial $f$ in
the free algebra $\ZZ\langle X \rangle$ on a set
$X = \{x_{1},\ldots, x_{m} \}$ such that
$f(r_{1},\ldots, r_{m}) = 0$ for all $r \in R$.

\begin{definition}\label{def:pi-hopf-triple}\cite[Def. III.4.1]{BG}
A \textit{PI-Hopf triple} $(B,H,\overline{H})$ over $\k$
consists of three Hopf algebras such that
 \begin{itemize}
  \item[$(i)$] $H$ is a $\k$-affine $\k$-Hopf algebra.
  \item[$(ii)$] $B$ is a central Hopf subalgebra of $H$
  which is a domain and
    such that $H$ is a finitely-generated $B$-module.
  \item[$(iii)$] $\overline{H}:= H/B^{+}H$ is the finite-dimensional
    Hopf algebra quotient.
 \end{itemize}
\end{definition}

We end this section with the following results.

\begin{lema}\label{lema:triple-affine}
Let $H$ be a $\k$-affine $\k$-Hopf algebra, $B$ a
central Hopf subalgebra of $H$ which is a domain and
such that $H$ is a finitely-generated $B$-module and denote
by $\overline{H}:= H/B^{+}H$ the finite-dimensional
Hopf algebra given by the quotient. Then
 \begin{itemize}
   \item[$(i)$] \cite[Lemma III.4.2]{BG} $B$ is
    an affine $\k$-algebra. Thus
    $H$ is a noetherian PI-algebra and $\Z(H)$ is an affine algebra.
   \item[$(ii)$] \cite[Thm. III.4.5]{BG} $H$ is a
   finitely generated projective $B$-module.
   \item[$(iii)$]\cite[Lemma III.4.6]{BG} $B\subseteq H$ is a
    faithfully flat
     $\overline{H}$-Galois extension. In particular,
     $B=\ ^{\co \overline{H}}H = H^{\co \overline{H}}$. \qed
\end{itemize}
\end{lema}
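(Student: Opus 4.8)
The three assertions are precisely restatements of results from \cite{BG} (the Artin--Tate lemma and its Hopf-algebraic consequences), so the plan is to check that our hypotheses match those of the cited theorems and then to record the standard arguments. The hypotheses on $H$ and $B$---that $H$ is $\k$-affine, that $B$ is a central Hopf subalgebra which is a domain, and that $H$ is module-finite over $B$---are exactly conditions $(i)$ and $(ii)$ in the definition of a PI-Hopf triple, so the machinery of \cite[\S III.4]{BG} applies to $(B,H,\overline{H})$ verbatim, with $\overline{H}=H/B^{+}H$.

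For $(i)$, the plan is to invoke the classical Artin--Tate lemma: since $H$ is finitely generated as an algebra, $B\subseteq\Z(H)$, and $H$ is finitely generated as a $B$-module, $B$ is itself an affine, hence noetherian, commutative $\k$-algebra. Module-finiteness over the commutative noetherian ring $B$ then forces $H$ to be noetherian, and $H$ is a PI-algebra because any ring that is module-finite over a central subring satisfies a polynomial identity (one embeds it into matrices over a commutative ring and applies the appropriate standard identity). Finally, $\Z(H)$ is a $B$-submodule of the noetherian $B$-module $H$, hence finitely generated over $B$ and therefore an affine $\k$-algebra.

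The substantive parts are $(ii)$ and $(iii)$, and I would establish $(iii)$ first, since $(ii)$ follows from it. For $(iii)$, the plan is to regard $H$ as a right $\overline{H}$-comodule algebra via $(\id\ot\pi)\com\colon H\to H\ot\overline{H}$, where $\pi\colon H\to\overline{H}$ is the canonical projection, and to identify the coinvariants with $B$. The heart of the matter is the faithful flatness of $H$ over $B$: once this is known, the general theory of Hopf algebra quotients by a central Hopf subalgebra---the argument behind \cite[Lemma III.4.6]{BG}, which rests on the theory of faithfully flat Hopf--Galois extensions---shows that the canonical map $H\ot_{B}H\to H\ot\overline{H}$ is bijective and that $B={}^{\co\overline{H}}H=H^{\co\overline{H}}$. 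Granting $(iii)$, part $(ii)$ is then immediate: $H$ is a finitely generated flat module over the commutative noetherian ring $B$, and over such a ring every finitely generated flat module is projective.

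The main obstacle is establishing the faithful flatness of $H$ over $B$ in $(iii)$, which is the genuinely Hopf-theoretic input and is not a formal consequence of the Artin--Tate lemma; this is exactly the content of \cite[Thm. III.4.5, Lemma III.4.6]{BG}. I would therefore cite those results directly rather than reprove them, since the remaining claims---noetherianity and the PI property in $(i)$, affineness of $\Z(H)$, and projectivity in $(ii)$---are formal consequences of faithful flatness together with the Artin--Tate lemma.
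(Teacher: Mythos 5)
Your proposal is correct and matches the paper exactly: the paper gives no proof of this lemma, simply citing \cite[Lemma III.4.2, Thm. III.4.5, Lemma III.4.6]{BG} for the three parts, which is precisely what you conclude one should do. Your sketches of the underlying arguments (Artin--Tate for $(i)$, faithfully flat Hopf--Galois theory for $(iii)$, and finitely generated flat over a noetherian commutative ring implies projective for $(ii)$) are all sound, though the paper does not spell them out.
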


\begin{obs}\label{rmk:PI-hopf-triple-sus-ex}
By Lemma \ref{lema:triple-affine} $(iii)$ and \cite[Prop.
3.4.3]{Mo}, any PI-Hopf triple $(B,H,\overline{H})$ gives rise to a
central extension of Hopf algebras -- see Def. \ref{def:sucex}:
 $$1 \to B \xrightarrow{\iota} H
\xrightarrow{\pi} \overline{H} \to 1.$$
\end{obs}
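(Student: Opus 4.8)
The plan is to exhibit the inclusion $\iota\colon B\hookrightarrow H$ and the canonical projection $\pi\colon H\to\overline H$ as the maps of the sequence, and then to check the four axioms of Definition \ref{def:sucex} one by one, reading three of them off the construction of the triple and extracting the fourth from Lemma \ref{lema:triple-affine} $(iii)$. Centrality of the resulting extension will then be automatic.

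First I would dispatch the formal axioms. Injectivity of $\iota$ holds because $B$ is a Hopf subalgebra of $H$, and surjectivity of $\pi$ is built into the definition $\overline H = H/B^+H$. For the kernel condition, that same definition gives $\Ker\pi = B^+H$; since $B$ is central in $H$ by the hypothesis of a PI-Hopf triple, $B^+H = HB^+$, which is exactly the required identity $\Ker\pi = HB^+$.

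The only axiom with real content is the coinvariant condition $B = {}^{\co\pi}H$. Here I would invoke Lemma \ref{lema:triple-affine} $(iii)$, which---via the affineness and Brown-Goodearl structure theory of \cite{BG}---asserts that $B\subseteq H$ is a faithfully flat $\overline H$-Galois extension with $B = {}^{\co\overline H}H = H^{\co\overline H}$. Since the right $\overline H$-comodule structure on $H$ is $(\id\ot\pi)\Delta$ and the left one is $(\pi\ot\id)\Delta$, the coinvariants ${}^{\co\overline H}H$ are literally ${}^{\co\pi}H$, so the axiom is verified. Feeding the faithfully flat Galois data into \cite[Prop. 3.4.3]{Mo} repackages it as exactness of $1\to B\to H\to\overline H\to 1$ in the sense of Definition \ref{def:sucex}, and since $\iota(B)$ is central in $H$ the extension is central.

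I do not anticipate a genuine obstacle in this assembly, because all of the analytic weight has already been carried by Lemma \ref{lema:triple-affine}: the delicate input is faithful flatness of $H$ over $B$, which itself rests on the structure theorems for affine PI-Hopf algebras. The only point demanding care is bookkeeping---matching the faithfully flat Galois extension of \cite{BG} to the Andruskiewitsch-Devoto exactness axioms verbatim, and in particular using centrality of $B$ to interchange $B^+H$ and $HB^+$ so that the kernel and coinvariant conditions hold simultaneously.
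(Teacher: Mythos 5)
Your proposal is correct and follows the same route as the paper: the remark in the paper is proved precisely by combining Lemma \ref{lema:triple-affine} $(iii)$ (faithfully flat $\overline{H}$-Galois extension with $B = {}^{\co\overline{H}}H = H^{\co\overline{H}}$) with \cite[Prop. 3.4.3]{Mo}, exactly as you do. Your unpacking of the exactness axioms of Definition \ref{def:sucex}, including the use of centrality to identify $B^{+}H$ with $HB^{+}$, is a faithful elaboration of what the paper leaves implicit.
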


%%%%%%%%%%%%%%%%%%%%%%%%%%%%%%%%%%

%%%%%%%%%%%% SECTION I %%%%%%%%%%%%%%%%
\section{Two-parameter deformations of classical objects}
\label{sec:two-par-def}
In this section we recall the definition and some basic
properties of the two-parameter quantization of the coordinate
algebra of $GL_{n}$ as well as the two-parameter quantization of
$U(\liegl_{n})$ given in \cite{Tk}.  

\subsection{The quantum group $GL_{\alpha,\beta}(n)$}
\begin{definition}\label{def:2-par-quant-ogln}\cite[Sec. 2]{Tk}
Let $\alpha,\ \beta \in \k^{\times}$ and $n\in \NN$. The algebra
$\Oc_{\alpha,\beta}(M_{n})$ is the $\k$-algebra generated by the elements
$\{x_{ij}:\ 1\leq i,j\leq n\}$ satisfying the following relations:
  \begin{eqnarray}
  % \nonumber to remove numbering (before each equation)
    \nonumber x_{ik}x_{ij} &=&\alpha^{-1} x_{ij}x_{ik}\qquad\mbox{ if }j<k,  \\
    \nonumber x_{jk}x_{ik}&= &\beta x_{ik}x_{jk}\qquad\mbox{ if }i<j,  \\
    \nonumber x_{jk}x_{il}&=&\beta\alpha x_{il}x_{jk}\qquad\mbox{ and } \\
    \nonumber x_{jl}x_{ik} - x_{ik}x_{jl}& = &(\beta-\alpha)x_{il}x_{jk}
    \qquad\mbox{ if }i<j \mbox{ and }k<l.
  \end{eqnarray}
\end{definition}
This algebra is a non-commutative
polynomial algebra in the variables $x_{ij}$ and has no non-zero
divisor. It has a basis
$\{\prod_{i,j}^{} x_{ij}^{e_{ij}}\vert\ e_{ij} \in \NN_{0}\}$, where the
products are formed with respect to a
fixed ordering of $\{x_{ij}:\ 1\leq i,j\leq n\}$.
It has a bialgebra structure determined by
  \begin{align*}
   \com(x_{ij}) = \sum_{s=1}^{n} x_{is}\ot x_{sj}
   \quad\mbox{and}\quad \eps(x_{ij}) = \delta_{ij}.
  \end{align*}
If $\alpha= 1 = \beta$ this commutative
algebra is just
$\Oc(M_{n}(\k))$. Thus $\Oc_{\alpha,\beta}(M_{n})$
defines
a two-parameter quantization $M_{\alpha,\beta}(n,\k)$ 
of the semigroup scheme
$M(n, \k)$.

\par The \textit{quantum determinant} $g= \vert X \vert$, where 
$X = (x_{ij})_{1\leq i,j\leq n}$
denotes the $n\times n$-matrix with coefficients $x_{ij}$, is
defined by
  \begin{align*}
   g &= \sum_{\sigma \in \S_{n}}^{n}
   (-\beta)^{-\ell(\sigma)}x_{\sigma(1),1}\cdots
   x_{\sigma(n),n}
     = \sum_{\sigma \in \S_{n}}^{n}
   (-\alpha^{-1})^{-\ell(\sigma)}x_{1,\sigma(1)}\cdots x_{n,\sigma(n)},
\end{align*}
It is a
group-like element and we have that
$ x_{ij}g=(\beta\alpha)^{i-j}gx_{ij}$ for all $1\leq i,j\leq n$.
Thus, the powers of $g$ satisfy the left and right Ore condition.
The localization of $\Oc_{\alpha,\beta}(M_{n})$ at the powers of $g$
gives the Hopf algebra
$\Oc_{\alpha,\beta}(GL_{n}) := \Oc_{\alpha,\beta}(M_{n})[g^{-1}]$, which is
the Hopf algebra $A_{\alpha^{-1},\beta}$ in \cite{Tk}. This
Hopf algebra corresponds to the quantum group
$GL_{\alpha,\beta}(n)$. The antipode $\SS$ is given by
  \begin{equation}\label{eq:antipoda}
   \SS(x_{ij}) = (-\beta)^{j-i}g^{-1}\vert X_{ji}\vert
   = (-\alpha^{-1})^{j-i}\vert X_{ji}\vert g^{-1},
  \end{equation}
where $\vert X_{ji}\vert$ denotes the quantum determinant
of the $(n-1)\times(n-1)$ minor obtained by removing the $j$-th row
and the $i$-th column. Hence
  \begin{equation}\label{eq:cuadrado de la antipoda}
   \SS^{2}(x_{ij}) = (\alpha^{-1}\beta)^{j-i}x_{ij}.
  \end{equation}

\begin{obs}\label{rmk:valores-de-a-b}
$(a)$ By taking different values of $\alpha,\beta$, e.g
$(\alpha, \beta) = (q^{-1},q)$ or $(1,q)$ one obtains the well-known one
parameter deformations of $\Oc (GL_{n})$, the standard in the first
case and the Dipper-Donkin \cite{DD} deformation in the second.
Hence we will assume that 
$\alpha^{-1} \neq \beta$  and $\alpha
\neq 1 \neq \beta$.

\smallbreak
$(b)$ In \cite{Tk3}, it is studied the problem of the cocycle
deformations of the quantum groups $\Oc_{\alpha^{-1}, \beta}(GL_{n})$.
It is proved in Thm. 2.6 that the bialgebra $\Oc_{\alpha^{-1}, \beta}(M_{n})$ is
isomorphic to a cocycle deformation of $\Oc_{\alpha'^{-1}, \beta'}(M_{n})$ 
if and only if $\alpha^{-1} \beta = \alpha'^{-1} \beta'$ or 
$\alpha \beta = (\alpha'^{-1} \beta')^{-1}$ (here we have used
Takeuchi's notation to avoid confusion with the reference).
Moreover, one has by Cor. 2.8 that if $\alpha^{-1}\beta\neq 1$
then $\Oc_{\alpha^{-1}, \beta}(GL_{n})$
is not a cocycle deformation of commutative Hopf
algebras, and $\Oc_{\alpha^{-1}, \beta}(M_{n})$ is not a cocycle 
deformation of commutative bialgebras.

\smallbreak
$(c)$ It is not difficult to see that $\Oc_{\alpha^{-1}, \beta}(GL_{n})$
and $\Oc_{\alpha, \beta^{-1}}(GL_{n})$ are isomorphic as Hopf algebras.
The isomorphism is determined by defining $x_{ij} \mapsto y_{n+1-i, n+1-j}$,
where the elements 
$x_{ij}$ and $y_{ij}$ denote the canonical generators of
$\Oc_{\alpha^{-1}, \beta}(M_{n})$
and $\Oc_{\alpha, \beta^{-1}}(M_{n})$, respectively (see 
\cite[Prop. 1.11]{DPW}). Moreover, in \cite[Thm. 2.4, Cor. 2.6]{DPW} 
it is 
proved that if $\alpha^{-1}\beta = \alpha'^{-1}\beta'$ or 
$\alpha^{-1}\beta = (\alpha'^{-1}\beta')^{-1}$ 
then $\Oc_{\alpha^{-1}, \beta}(GL_{n})$
and $\Oc_{\alpha'^{-1}, \beta'}(GL_{n})$ are isomorphic as coalgebras;
in particular, their categories of comodules are equivalent, 
compare with Remark $(b)$.
\end{obs}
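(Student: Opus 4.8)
The plan is to write down an explicit map and verify it directly on the presentation. For $a,b \in \k^{\times}$ let $\Oc_{a,b}(M_{n})$ denote the algebra of Definition \ref{def:2-par-quant-ogln} with these parameters, so that $\Oc_{\alpha^{-1},\beta}$ is the case $(a,b) = (\alpha^{-1},\beta)$ and $\Oc_{\alpha,\beta^{-1}}$ the case $(a^{-1},b^{-1}) = (\alpha,\beta^{-1})$. It therefore suffices to prove the symmetric statement that index reversal gives a bialgebra isomorphism $\Oc_{a,b}(M_{n}) \simeq \Oc_{a^{-1},b^{-1}}(M_{n})$ for every $a,b$, and then to extend it to the localizations at the quantum determinant. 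Writing $x_{ij}$, $y_{ij}$ for the generators of source and target and $i' := n+1-i$, I set $\phi(x_{ij}) := y_{i',j'}$; equivalently $\phi$ corresponds to the substitution $X \mapsto JXJ$, where $J$ is the antidiagonal permutation matrix.

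First I would check that $\phi$ respects the four families of relations of Definition \ref{def:2-par-quant-ogln}. The mechanism is that $i \mapsto i'$ reverses every inequality, since $j < k \iff k' < j'$, while reversing the order of a relation $uv = c\,vu$ produces $vu = c^{-1}uv$; inverting both parameters turns the structure constants of the target into exactly the reciprocals needed to match the source. The first three relations are single-monomial rescalings and follow immediately from this observation. The fourth relation $x_{jl}x_{ik} - x_{ik}x_{jl} = (b-a)x_{il}x_{jk}$ (for $i<j$, $k<l$) is the only one that is not purely monomial: after applying $\phi$, the resulting identity coincides with the target's own fourth relation once its right-hand side is rewritten by means of the (already verified) third relation between the upper-right and lower-left entries. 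The coalgebra structure is then immediate: reindexing $t = s'$ gives $(\phi\ot\phi)\com(x_{ij}) = \sum_{s} y_{i',s'}\ot y_{s',j'} = \sum_{t} y_{i',t}\ot y_{t,j'} = \com(\phi(x_{ij}))$, and $\eps(\phi(x_{ij})) = \delta_{i'j'} = \delta_{ij} = \eps(x_{ij})$.

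To pass from $M_{n}$ to $GL_{n}$ I would show that $\phi$ sends the quantum determinant $g$ of the source to that of the target. This is the point that requires genuine bookkeeping rather than a formal argument — and I expect it to be the main obstacle — because $\phi$ permutes the factors of each monomial occurring in $g = \vert X\vert$, so the defining relations must be used to restore the standard order; the classical identity $\det(JXJ) = \det X$ makes the outcome predictable and the case $n = 2$ can be checked by hand as a sanity test. Granting this, $\phi(g)$ is the invertible group-like quantum determinant of the target, so $\phi$ carries the Ore set $\{g^{k}\}$ into units and extends uniquely to a bialgebra map $\Oc_{\alpha^{-1},\beta}(GL_{n}) \to \Oc_{\alpha,\beta^{-1}}(GL_{n})$; as a bialgebra map between Hopf algebras it automatically commutes with the antipode. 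Finally, index reversal is an involution and $(a,b) \mapsto (a^{-1},b^{-1})$ is its own inverse, so the same construction read in the opposite direction provides a two-sided inverse $y_{ij} \mapsto x_{i',j'}$, and $\phi$ is the desired Hopf algebra isomorphism.
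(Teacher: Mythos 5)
The only part of this remark that calls for an argument (rather than being an attribution to \cite{Tk3} and \cite{DPW}) is the first claim of part $(c)$, and there the paper gives no proof of its own: it simply cites \cite[Prop. 1.11]{DPW}. Your proposal replaces that citation by a direct verification on the presentation of Definition \ref{def:2-par-quant-ogln}, and the verification is correct. You correctly isolate the one non-formal point among the relations: the commutator relation $x_{jl}x_{ik}-x_{ik}x_{jl}=(b-a)x_{il}x_{jk}$ is preserved only after the right-hand side of the target's relation is rewritten using the (already checked) relation between the upper-right and lower-left entries, and the identity $(a^{-1}-b^{-1})\,ab=b-a$ is exactly what forces the target parameters to be $(a^{-1},b^{-1})$. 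The step you flag as the main obstacle --- that $\phi(g)$ is the quantum determinant of the target, so that $\phi$ extends to the localizations --- is indeed the only real bookkeeping, and it does go through: reindexing by $\tau=w_{0}\sigma w_{0}$ preserves length, and restoring the increasing column order via the verified relations yields the coefficient $(-b^{-1})^{-\ell(\tau)}$ of the target determinant; your $n=2$ check, $\phi(x_{11}x_{22}-b^{-1}x_{21}x_{12})=y_{22}y_{11}-b^{-1}y_{12}y_{21}=y_{11}y_{22}-b\,y_{21}y_{12}$, confirms the normalization. So your route is more self-contained than the paper's, at the cost of redoing the determinant computation that \cite{DPW} already contains; what it buys is an explicit isomorphism on generators with a transparent reason why index reversal must invert both parameters at once. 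Parts $(a)$, $(b)$ and the coalgebra-isomorphism statement at the end of $(c)$ are pure citations and conventions, so your silence on them is appropriate.
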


\subsection{Quantum Borel subgroups of $GL_{\alpha,\beta}(n)$}
\label{subsubsec:borel-qsubgroups}
Let $J_{+}$ be the ideal of $\Oc_{\alpha,\beta}(GL_{n})$ generated
by the elements $\{x_{ij}\}_{i>j}$. By the relations in Def.
\ref{def:2-par-quant-ogln}, it is a two-sided ideal and since $\com
(x_{ij}) = \sum_{k=1}^{n} x_{ik}\ot x_{kj}$ and $\eps(x_{ij}) =
\delta_{ij}$, it follows that $J_{+}$ is also a coideal. Moreover,
by equation \eqref{eq:antipoda} we know that $ \SS(x_{ij}) =
(-\beta)^{j-i}g^{-1}\vert X_{ji}\vert$ and $\vert X_{ji}\vert \in
I_{+}$, whence $\SS(J_{+}) \subseteq J_{+}$ and $J_{+}$ is a Hopf
ideal. The Hopf algebra quotient $\Oc_{\alpha,\beta}(GL_{n})/J_{+}$
corresponds to the two-parameter deformation of a Borel subalgebra
of $GL_{n}(\k)$ and it is denoted by $\Oc_{\alpha,\beta}(B^{+})$.
Denote by
$$t_{+}: \Oc_{\alpha,\beta}(GL_{n}) \to
\Oc_{\alpha,\beta}(B^{+})$$
the canonical Hopf algebra quotient and $t_{+}(x_{ij}) =
\bar{x}_{ij}$ for all $1\leq i,j\leq n$. Then
$\Oc_{\alpha,\beta}(B^{+})$ is generated as an algebra by the
elements $\{\bar{x}_{ij}\vert \ 1\leq i\leq j\leq n\}$ satisfying
the relations
\begin{eqnarray}
  % \nonumber to remove numbering (before each equation)
    \bar{x}_{ik}\bar{x}_{ij} &=&\alpha^{-1} \bar{x}_{ij}\bar{x}_{ik}
    \qquad\mbox{ if }j<k,  \label{eq:borel}\\
    \nonumber \bar{x}_{jk}\bar{x}_{ik}&= &\beta \bar{x}_{ik}\bar{x}_{jk}
    \qquad\mbox{ if }i<j,  \\
    \nonumber \bar{x}_{jk}\bar{x}_{il}&=&\beta\alpha \bar{x}_{il}\bar{x}_{jk}
    \qquad\mbox{ and } \\
    \nonumber \bar{x}_{jl}\bar{x}_{ik} & = & \bar{x}_{ik}\bar{x}_{jl}
    \qquad\mbox{ if }i<j \mbox{ and }k<l.
  \end{eqnarray}
The elements $\{\bar{x}_{ii}\}_{1\leq i\leq n}$ are
invertible group-like elements which commute with each other and
$t_{+}(g^{-1}) = \bar{x}_{11}^{-1}\cdots \bar{x}_{nn}^{-1}$.
Moreover, the set 
$$\Big\{\prod_{i< j}^{} \bar{x}_{ij}^{e_{ij}}\prod_{i
}^{} \bar{x}_{ii}^{e_{i}} \vert\ e_{ij} \in \NN_{0}, e_{i}\in
\ZZ\Big\}$$ 
is a linear basis of $\Oc_{\alpha,\beta}(B^{+})$,
for some fixed ordering of $\{\bar{x}_{ij}\vert 1\leq i < j \leq n\}$. In
particular, it has no non-zero divisors.

\par Analogously, 
by taking the ideal $J_{-}$ generated by the elements
$\{x_{ij}\}_{i<j}$, one defines the borel subalgebra
$\Oc_{\alpha,\beta}(B^{-}):= \Oc_{\alpha,\beta}(GL_{n})/J_{-}$.
Denote by $t_{-}: \Oc_{\alpha,\beta}(GL_{n}) \to
\Oc_{\alpha,\beta}(B^{-})$ the Hopf algebra quotient and
$t_{-}(x_{ij}) = \hat{x}_{ij}$ for all $1\leq i,j\leq n$.
The following lemma is \cite[Thm. 8.1.1]{PW} in the
case of two-parameter deformations.

\begin{lema}\label{lema:delta-injective}
The following algebra map is injective 
$$\delta = (t_{+}\ot t_{-})\circ \com :
\Oc_{\alpha,\beta}(GL_{n}) \to \Oc_{\alpha,\beta}(B^{+})\ot
\Oc_{\alpha,\beta}(B^{-}).$$ 
\end{lema}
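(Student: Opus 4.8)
The plan is to reduce the statement to a combinatorial fact about the explicit monomial bases and then run a leading-term argument. First I would observe that $\delta = (t_{+}\ot t_{-})\circ\com$ is an algebra map, being the composite of the algebra map $\com$ with the tensor product of the algebra maps $t_{\pm}$. Since $g$ is group-like, $\delta(g) = t_{+}(g)\ot t_{-}(g) = (\bar{x}_{11}\cdots\bar{x}_{nn})\ot(\hat{x}_{11}\cdots\hat{x}_{nn})$ is a product of invertible group-likes, hence invertible; as $\Oc_{\alpha,\beta}(GL_{n}) = \Oc_{\alpha,\beta}(M_{n})[g^{-1}]$ is the Ore localization at the powers of the non-zero-divisor $g$, it suffices to prove that the restriction of $\delta$ to $M:=\Oc_{\alpha,\beta}(M_{n})$ is injective.

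On $M$ I would use the torus bigrading: assign $x_{ij}$ the bidegree $(e_{i},e_{j})\in\NN^{n}\times\NN^{n}$ (row weight, column weight). All four families of relations in Definition \ref{def:2-par-quant-ogln} are homogeneous for this bigrading, so $M=\bigoplus_{(\mu,\nu)}M_{\mu,\nu}$ with each $M_{\mu,\nu}$ finite-dimensional and spanned by the ordered monomials of the given basis. Grading $\Oc_{\alpha,\beta}(B^{+})\ot\Oc_{\alpha,\beta}(B^{-})$ by the row weight of the left factor together with the column weight of the right factor, the map $\delta$ is bihomogeneous: from $\delta(x_{ij})=\sum_{s}\bar{x}_{is}\ot\hat{x}_{sj}$ one reads off left-row weight $e_{i}$ and right-column weight $e_{j}$. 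Hence $\delta$ respects the decompositions, and injectivity reduces to linear independence of $\{\delta(m)\}$ for the basis monomials $m$ inside each finite-dimensional piece $M_{\mu,\nu}$.

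For that I would introduce an auxiliary $\ZZ$-grading to extract leading terms. Put $\deg_{+}(\bar{x}_{ij}) = j$ on $\Oc_{\alpha,\beta}(B^{+})$ and $\deg_{-}(\hat{x}_{ij})=i$ on $\Oc_{\alpha,\beta}(B^{-})$; one checks directly from \eqref{eq:borel} and its $B^{-}$ analogue that these are algebra gradings, and set $W=\deg_{+}\ot 1+1\ot\deg_{-}$. Recalling that $t_{+}(x_{ij})=0$ for $i>j$ and $t_{-}(x_{ij})=0$ for $i<j$, we obtain $\delta(x_{ij})=\sum_{s\geq\max(i,j)}\bar{x}_{is}\ot\hat{x}_{sj}$, and the term indexed by $s$ has $W$-degree $2s$. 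Thus the minimal $W$-component of $\delta(x_{ij})$ is $\ell(x_{ij}):=\bar{x}_{i,\max(i,j)}\ot\hat{x}_{\max(i,j),j}$, which equals $\bar{x}_{ij}\ot\hat{x}_{jj}$ when $i\leq j$ and $\bar{x}_{ii}\ot\hat{x}_{ij}$ when $i\geq j$. Since $W$ is a grading on the target and $\delta$ is multiplicative, the minimal $W$-component of $\delta(m)$ for an ordered monomial $m=\prod x_{ij}^{e_{ij}}$ is the product $\prod \ell(x_{ij})^{e_{ij}}$, which after reordering the $\bar{x}$'s and $\hat{x}$'s into standard form (incurring only nonzero scalars, as $\Oc_{\alpha,\beta}(B^{\pm})$ are domains with the stated monomial bases) is a nonzero multiple of a single basis element of $\Oc_{\alpha,\beta}(B^{+})\ot\Oc_{\alpha,\beta}(B^{-})$.

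It then remains to see that distinct monomials $m$ have distinct leading basis elements, which yields linear independence of the $\delta(m)$ by the usual minimal-degree argument: if $\sum c_{m}\delta(m)=0$, the lowest $W$-degree component forces a vanishing linear combination of distinct basis leading terms, whence all $c_{m}=0$. Distinctness follows because the leading term records the exponents reversibly: the strictly upper exponents $e_{ij}$ ($i<j$) appear as the exponent of $\bar{x}_{ij}$ in the left factor, the strictly lower exponents $e_{ij}$ ($i>j$) appear as the exponent of $\hat{x}_{ij}$ in the right factor, and each diagonal exponent $e_{ii}$ is recovered from the exponent of $\bar{x}_{ii}$ in the left factor minus $\sum_{j<i}e_{ij}$. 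The main obstacle is precisely this bookkeeping: one must confirm that passing to the minimal $W$-component commutes with products (i.e. that no cancellation occurs among leading terms, which is where the domain property and the explicit bases of $\Oc_{\alpha,\beta}(B^{\pm})$ enter) and that the exponent reconstruction is unambiguous; once these are verified, the injectivity of $\delta$ follows formally.
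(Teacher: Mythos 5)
Your proposal is correct and takes essentially the same route as the paper's proof: reduce to $\Oc_{\alpha,\beta}(M_{n})$, extract from $\delta(m_{\mathbf{e}})$ the very same leading term
$\prod_{i<j}\bar x_{ij}^{e_{ij}}\prod_i\bar x_{ii}^{e_{ii}+\sum_{j<i}e_{ij}}\ot\prod_j\hat x_{jj}^{e_{jj}+\sum_{i<j}e_{ij}}\prod_{i>j}\hat x_{ij}^{e_{ij}}$,
and recover $\mathbf{e}$ from it to get linear independence. Your auxiliary bigrading and $W$-grading are simply a cleaner formalization of what the paper phrases as ``leading term plus lower terms'' with respect to the opposite lexicographic order, so the two arguments coincide in substance.
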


\begin{proof}
Since $\delta (x_{ij}) = \sum_{i,j \leq k } \bar{x}_{ik}\ot
\hat{x}_{kj}$, then $\delta (x_{ij}) \neq 0$ for all
$1\leq i,j\leq n$. Moreover, since $\delta$ is an algebra map,
$\delta(x) = 0$ if and only if $\delta(x)\delta(g^{t}) =
\delta(xg^{t})= 0$ for all $t \in \ZZ$. As $\{g^{-t}\prod_{i,j}^{}
x_{ij}^{e_{ij}}\vert\ t, e_{ij} \in \NN_{0}\}$ is a set of
generators of $\Oc_{\alpha,\beta}(GL_{n})$, we may assume that if
$\delta(x) = 0$, then $x \in \Oc_{\alpha,\beta}(M_{n})$. 

\par By Def. \ref{def:2-par-quant-ogln}, we know that
$\Oc_{\alpha,\beta}(M_{n})$ has a linear basis consisting of monomials
of the form 
$$ m_{\textbf{e}} = \prod_{i<j} x_{ij}^{e_{ij}}\cdot 
\prod_{i=1}^{n} x_{ii}^{e_{ii}}\cdot \prod_{i>j} x_{ij}^{e_{ij}},$$
where $\textbf{e} = (e_{ij})_{1\leq i,j\leq n}$ runs over the 
set of $n\times n$ matrices with coefficients in $\ZZ_{+}$, and the 
product of the $x_{ij}$'s is taken with respect to a fixed order.
Define the degree of a monomial 
$m_{\textbf{e}}$ to be the matrix $\textbf{e}$. Then the opposite
lexicographic order (i.e. $x_{ij} \geq x_{kl}$ if $i < k$ and if
$i = k$ then $j\leq l$), induces a partial order in the monomials 
according to their degree. Thus
$$\delta(m_{\textbf{e}}) = 
c\prod_{i<j} \bar{x}_{ij}^{e_{ij}}\cdot 
\prod_{i} \bar{x}_{ii}^{e_{ii}+ \sum_{i>j} e_{ij}}\ot
\prod_{j} \hat{x}_{jj}^{e_{jj}+ \sum_{i<j} e_{ij}} 
\cdot \prod_{i>j} \hat{x}_{ij}^{e_{ij}}
+ \mbox{ lower terms},$$
with $c\neq 0$, since by Def. \ref{def:2-par-quant-ogln} 
and equations \eqref{eq:borel}
changing the order of the factors
in a monomial only results in a nonzero scalar factor and
some lower terms. From this follows that if 
$\textbf{e} \neq \textbf{f} = (f_{ij})_{1\leq i,j\leq n}$ 
then $\delta(m_{\textbf{e}})$ and $\delta(m_{\textbf{f}})$
have different leading terms. This implies the linear independence
of the $\delta(m_{\textbf{e}})$'s, which implies that $\delta$ is 
injective on $ \Oc_{\alpha,\beta}(M_{n})$. 
\end{proof}

\subsection{The quantum group
$U_{\alpha, \beta}(\liegl_{n})$}\label{subsec:U-alpha-beta} Now we
recall the definition of the two-parameter deformation $U_{\alpha,
\beta}(\liegl_{n})$ of the enveloping algebra of $\liegl_{n}$, 
following \cite{BW3} but changing the notation of the parameters
$\alpha = r$ and $\beta = s$ to stress the relation with the
two-parameter deformation $\Oc_{\alpha,\beta}(GL_{n})$.
 
\begin{definition}\label{def:Ualfa-beta}\cite[Sec. 1]{BW3}
Assume that $\alpha\neq \beta$. The algebra $U_{\alpha,
\beta}(\liegl_{n})$ is the $\k$-algebra generated by the elements
$\{a_{i}, a_{i}^{-1}, b_{i}, b_{i}^{-1}, e_{j}, f_{j}:\ 1\leq i\leq
n, 1\leq j < n\}$ satisfying the following relations: for all $1\leq
i,k\leq n,\ 1\leq j,l< n$,
\begin{eqnarray}
    \nonumber a_{i}, b_{k} & &\mbox{commute with each other and } \\
    \nonumber a_{i}a_{i}^{-1}  & = & a_{i}^{-1}a_{i} = b_{i}b_{i}^{-1} =
     b_{i}^{-1}b_{i} = 1, \\
    \nonumber a_{i}e_{j}& = &\alpha^{\delta_{ij}-\delta_{i,j+1}} e_{j}a_{i}, \qquad
     b_{i}e_{j}= \beta^{\delta_{ij}-\delta_{i,j+1}} e_{j}b_{i} \\
    \nonumber a_{i}f_{j} &= &\alpha^{-\delta_{ij}+\delta_{i,j+1}}
     f_{j}a_{i}, \qquad
     b_{i}f_{j}= \beta^{-\delta_{ij}+\delta_{i,j+1}} f_{j}b_{i}, \\
    \nonumber [e_{j},f_{l}] &=& \frac{\delta_{jl}}{\alpha -
    \beta} (a_{j}b_{j+1}- a_{j+1}b_{j}),\\
    \nonumber [e_{j},e_{l}]& =&[f_{j},f_{l}]=0\qquad \mbox{if }\vert j-l\vert >1, \\
    [0.1pt] 0 & = & e_{j}^{2}e_{j+1} - (\alpha + \beta)e_{j}e_{j+1}e_{j} 
+ \alpha\beta e_{j+1}e_{j}^{2},
     \label{rel:adei}\\
     \nonumber 0 & = & e_{j}e_{j+1}^{2} - (\alpha + \beta)e_{j+1}e_{j}e_{j+1} 
+ \alpha\beta
     e_{j+1}^{2}e_{j},\\
    [0.1pt] 0 & = & f_{j}^{2}f_{j+1} - (\alpha^{-1} + \beta^{-1})
    f_{j}f_{j+1}f_{j} + \alpha^{-1}\beta^{-1} 
     f_{j+1}f_{j}^{2}, \label{rel:adfi}\\
     \nonumber 0 & = & f_{j}f_{j+1}^{2} - (\alpha^{-1} + 
\beta^{-1})f_{j+1}f_{j}f_{j+1} + \alpha^{-1}\beta^{-1}
     f_{j+1}^{2}f_{j},
   \end{eqnarray}
\end{definition}

Let $w_{j}= a_{j}b_{j+1}$ and $w_{j}'= a_{j+1}b_{j}$ for
all $1\leq j <n$. The algebra $U_{\alpha, \beta}(\liegl_{n})$ has a
Hopf algebra structure determined by the elements $a_{i}$, $b_{i}$
being group-likes, the $e_{j}$ being $(w_{j},1)$-primitives and the
$f_{j}$ being $(1,w_{j}')$-primitives, for all $1\leq i \leq n$,
$1\leq j<n$. That is,
  \begin{align*}
    \com(a_{i}) &= a_{i}\ot a_{i},&  \com(b_{i}) &= b_{i}\ot b_{i},\\
    \com(e_{j}) & = e_{j} \ot 1 + w_{j} \ot e_{j}
    \qquad\mbox{ and }& \com(f_{j}) &=  1\ot f_{j} + f_{j} \ot w_{j}' .
  \end{align*}

Similarly as before, by taking different values of the parameters
$\alpha,\beta$, one obtains the well-known one parameter
deformations of $U (\liegl_{n})$ as quotients of this one. For
example, if $(\alpha, \beta) = (q,q)$, then the group-like elements
$a_{i}b_{i}^{-1}$ are central and the quotient of $U_{q,q}$ by the
Hopf ideal $(a_{i}b_{i}^{-1} -1:\ 1\leq i\leq n)$ can be identified
with the Drinfeld-Jimbo Hopf algebra $U_{q}(\liegl_{n})$.

\par There is a canonical triangular
decomposition
\begin{equation}\label{eq:desc-triangular-gl}
 U_{\alpha, \beta}(\liegl_{n}) =
U_{\alpha, \beta}^{-}(\liegl_{n})\ot U_{\alpha,
\beta}^{\circ}(\liegl_{n})\ot U_{\alpha, \beta}^{+}(\liegl_{n}),
\end{equation}
where $U_{\alpha, \beta}^{+}(\liegl_{n})$, $U_{\alpha,
\beta}^{-}(\liegl_{n})$ and $U_{\alpha, \beta}^{\circ}(\liegl_{n})$
are the subalgebras generated by $\{e_{i}\}_{1\leq i <n}$,
$\{f_{i}\}_{1\leq i <n}$ and $\{a_{j}^{\pm 1}, b_{j}^{\pm
1}\}_{1\leq j \leq n}$, respectively.

\par Let $U_{\alpha, \beta}(\lieb^{+})$ be the Hopf
subalgebra of $U_{\alpha, \beta}(\liegl_{n})$ generated by the
elements $\{ e_{j}, w_{j}^{\pm 1}, a_{n}^{\pm 1}\vert 1\leq j <n\}$
and $U_{\alpha, \beta}(\lieb^{-})$ be the Hopf subalgebra generated
by the elements $\{ f_{j}, (w_{j}')^{\pm 1}, b_{n}^{\pm 1}\vert
1\leq j <n\}$. Then, by the triangular decomposition
\eqref{eq:desc-triangular-gl}, the multiplication
$ m: U_{\alpha, \beta}(\lieb^{+}) \ot U_{\alpha, \beta}(\lieb^{-})
\to U_{\alpha, \beta}(\liegl_{n})$,
is surjective. Thus, its transpose
$^{t}m: U_{\alpha, \beta}(\liegl_{n})^{*} \to [U_{\alpha,
\beta}(\lieb^{+}) \ot U_{\alpha, \beta}(\lieb^{-})]^{*},
$ defines an injective map. Moreover, by \cite[Lemma 2.2 and
Thm. 2.7]{BW2} there exists a Hopf algebra pairing between
$U_{\alpha, \beta}(\lieb^{+})$ and $U_{\alpha, \beta}(\lieb^{-})$,
and $U_{\alpha, \beta}(\liegl_{n}) \simeq D(U_{\alpha,
\beta}(\lieb^{+}), U_{\alpha, \beta}(\lieb^{-})^{\cop})$, the double
related to the pairing.

\subsubsection{A PBW-type basis of  $U_{\alpha,
\beta}(\liegl_{n})$}
If $\alpha \neq - \beta$, $U_{\alpha,
\beta}(\liegl_{n})$ admits a PBW-type basis because of the next
theorem, which is a special case of \cite[Thm. $\textbf{A}_{n}$]{K},
see \cite[Thm. 3.2]{BKL}. First, let $\{\Ee_{i,j}:\ 1\leq j \leq i <
n\}$ be the elements defined by
$$\Ee_{j,j} = e_{j}\quad\mbox{ and }\quad\Ee_{i,j} =
e_{i}\Ee_{i-1,j} - \alpha^{-1}\Ee_{i-1,j}e_{i} =
[e_{i},\Ee_{i-1,j}]_{\alpha^{-1}},$$
for all $1\leq j < i <n$. Then, relation \eqref{rel:adei}
can be reformulated by saying
$$  e_{i+1}\Ee_{i+1,i} = \beta^{-1} 
  \Ee_{i+1,i}e_{i+1},\qquad
\Ee_{i+1,i}e_{i}=  \beta^{-1} e_{i}\Ee_{i+1,i}.
$$
Analogously, define $\{\Ff_{i,j}:\ 1\leq j \leq
i < n\}$ by letting $\Ff_{j,j} = f_{j}$ and $\Ff_{i,j} =
f_{i}\Ff_{i-1,j} - \beta^{-1}\Ff_{i-1,j}f_{i}$ for $1\leq j < i <
n$. By \cite[p. 5]{BW} we have	
\begin{equation}\label{eq:w-s}
w_{s}\Ee_{k,l} = \alpha^{\delta_{s,k}+ \cdots + \delta_{s,l}}
\beta^{\delta_{s+1,k}+ \cdots + \delta_{s+1,l}}\Ee_{k,l}w_{s},
\end{equation}	
for all $1\leq s<n$. 
For $k\geq l$, denote $w_{k,l} = w_{k}w_{k-1}\cdots w_{l}$ 
and set $\zeta = 1 - \alpha^{-1}\beta$.
The following lemma is due to Benkart and Witherspoon.

\begin{lema}\label{lema:comult-E}\cite[Lemma 2.19]{BW} 
For $1\leq l \leq k < n$, 
\begin{align*} 
\com(\Ee_{k,l}) = \Ee_{k,l}\ot 1 + w_{k,l}\ot\Ee_{k,l} +
\zeta \sum_{j=l}^{k-1}
\Ee_{k,j+1}w_{j,l}\ot\Ee_{j,l}.
\qed\end{align*}	
\end{lema}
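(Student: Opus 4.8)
The plan is to prove the comultiplication formula for $\Ee_{k,l}$ by induction on $k-l \geq 0$. The base case $k = l$ is immediate: $\Ee_{l,l} = e_l$ is $(w_l,1)$-primitive by Definition \ref{def:Ualfa-beta}, so $\com(\Ee_{l,l}) = \Ee_{l,l}\ot 1 + w_l \ot \Ee_{l,l}$, and since $w_{l,l} = w_l$ and the sum $\sum_{j=l}^{l-1}$ is empty, this matches the claimed formula. For the inductive step I would use the recursive definition $\Ee_{k,l} = [e_k, \Ee_{k-1,l}]_{\alpha^{-1}} = e_k \Ee_{k-1,l} - \alpha^{-1}\Ee_{k-1,l}e_k$, apply $\com$ (an algebra map) to the right-hand side, and substitute the known coproduct of $e_k$ together with the inductive hypothesis for $\com(\Ee_{k-1,l})$.

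Concretely, I would write
\begin{align*}
\com(\Ee_{k,l}) &= \com(e_k)\com(\Ee_{k-1,l}) - \alpha^{-1}\com(\Ee_{k-1,l})\com(e_k)\\
&= (e_k\ot 1 + w_k\ot e_k)\Big(\Ee_{k-1,l}\ot 1 + w_{k-1,l}\ot\Ee_{k-1,l} + \zeta\sum_{j=l}^{k-2}\Ee_{k-1,j+1}w_{j,l}\ot\Ee_{j,l}\Big)\\
&\quad - \alpha^{-1}\Big(\Ee_{k-1,l}\ot 1 + w_{k-1,l}\ot\Ee_{k-1,l} + \zeta\sum_{j=l}^{k-2}\Ee_{k-1,j+1}w_{j,l}\ot\Ee_{j,l}\Big)(e_k\ot 1 + w_k\ot e_k),
\end{align*}
and then expand both products. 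The key bookkeeping tool will be the commutation relation \eqref{eq:w-s}, which lets me move the group-like factors $w_k$ past the various $\Ee$'s in the second tensor slot, picking up the appropriate powers of $\alpha$ and $\beta$; I also need the identities relating $e_k$ with $\Ee_{k-1,l}$ (for instance the reformulation $\Ee_{k,l} = e_k\Ee_{k-1,l} - \alpha^{-1}\Ee_{k-1,l}e_k$ itself, and $w_k w_{k-1,l} = w_{k,l}$, $w_k\Ee_{k-1,j+1} = \alpha^{?}\beta^{?}\Ee_{k-1,j+1}w_k$ from \eqref{eq:w-s}). The terms $e_k\Ee_{k-1,l}\ot 1 - \alpha^{-1}\Ee_{k-1,l}e_k\ot 1$ recombine into $\Ee_{k,l}\ot 1$, giving the first summand; the terms with $e_k$ landing in the right tensor factor should recombine, after using \eqref{eq:w-s} to normalize the $w$-factors, into $w_{k,l}\ot\Ee_{k,l}$ plus the new $j = k-1$ term $\zeta\,\Ee_{k,k}w_{k-1,l}\ot\Ee_{k-1,l}$ that extends the sum to upper limit $k-1$.

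The main obstacle will be verifying that the cross terms assemble correctly into the telescoping sum, i.e. checking that the coefficients produced by \eqref{eq:w-s} are exactly the powers of $\alpha^{-1}\beta$ bundled inside $\zeta = 1 - \alpha^{-1}\beta$ so that the mixed contributions either cancel or combine to reproduce each summand $\zeta\,\Ee_{k,j+1}w_{j,l}\ot\Ee_{j,l}$ for $l \leq j \leq k-1$. In particular the terms $w_k\ot e_k$ multiplied against $\zeta\sum_{j}\Ee_{k-1,j+1}w_{j,l}\ot\Ee_{j,l}$ must interact with the $\alpha^{-1}$-twisted terms so that each index $j$ in the inductive sum is correctly promoted from an $\Ee_{k-1,j+1}$-factor to an $\Ee_{k,j+1}$-factor via the defining recursion $\Ee_{k,j+1} = e_k\Ee_{k-1,j+1} - \alpha^{-1}\Ee_{k-1,j+1}e_k$. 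I expect this to be a finite but delicate computation in which the only genuine subtlety is tracking the $\alpha,\beta$-exponents; once the commutation relation \eqref{eq:w-s} is applied uniformly, the remaining manipulations are the routine recombinations just described, and the induction closes.
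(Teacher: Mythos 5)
Your proposal is correct in substance, but note that the paper does not prove this lemma at all: it is imported verbatim from Benkart--Witherspoon (\cite[Lemma 2.19]{BW}), which is why the statement carries a \verb+\qed+ with no argument. Your induction on $k-l$ is the natural (and essentially the original) proof, and it does close: writing $\com(\Ee_{k,l})=\com(e_k)\com(\Ee_{k-1,l})-\alpha^{-1}\com(\Ee_{k-1,l})\com(e_k)$ and expanding, the pieces recombine exactly as you predict, namely $e_k\Ee_{k-1,l}\ot 1-\alpha^{-1}\Ee_{k-1,l}e_k\ot1=\Ee_{k,l}\ot1$; $w_{k,l}\ot e_k\Ee_{k-1,l}-\alpha^{-1}w_{k,l}\ot\Ee_{k-1,l}e_k=w_{k,l}\ot\Ee_{k,l}$; the pair $w_k\Ee_{k-1,l}\ot e_k-\alpha^{-1}\Ee_{k-1,l}w_k\ot e_k$ cancels because $w_k\Ee_{k-1,l}=\alpha^{-1}\Ee_{k-1,l}w_k$; the term $e_kw_{k-1,l}\ot\Ee_{k-1,l}-\alpha^{-1}w_{k-1,l}e_k\ot\Ee_{k-1,l}=(1-\alpha^{-1}\beta)\,\Ee_{k,k}w_{k-1,l}\ot\Ee_{k-1,l}$ produces the new $j=k-1$ summand (this is precisely where $\zeta$ is born, via $w_{k-1,l}e_k=\beta e_kw_{k-1,l}$); the inductive sum passes through because $w_{j,l}$ commutes with $e_k$ for $j\leq k-2$, turning $\Ee_{k-1,j+1}$ into $\Ee_{k,j+1}$; and the remaining cross terms vanish since $[e_k,\Ee_{j,l}]=0$ for $j\leq k-2$. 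The one caution: do not take the displayed relation \eqref{eq:w-s} at face value for these exponents -- as printed it does not yield $w_k\Ee_{k-1,l}=\alpha^{-1}\Ee_{k-1,l}w_k$ or $w_{k-1}e_k=\beta e_kw_{k-1}$; you should derive the needed commutations directly from the defining relations $a_ie_j=\alpha^{\delta_{ij}-\delta_{i,j+1}}e_ja_i$, $b_ie_j=\beta^{\delta_{ij}-\delta_{i,j+1}}e_jb_i$ of Definition \ref{def:Ualfa-beta}, which is a two-line computation in each case.
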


The following theorem gives a PBW-basis of 
$U_{\alpha, \beta}(\liegl_{n})$.
\begin{theorem}\cite[Thm. $\textbf{A}_{n}$]{K}
\label{thm:PBW-basis-gl}
Assume that $\alpha\neq -\beta$. Then
 \begin{itemize}
  \item[$(i)$] $\mathcal{B}_{0}^{+} =
    \{\Ee_{i_{1},j_{1}}\Ee_{i_{2},j_{2}}\cdots \Ee_{i_{p},j_{p}}\vert\
      (i_{1},j_{1})\leq (i_{2},j_{2})\leq \cdots \leq (i_{p},j_{p}) \}$ and
     $\mathcal{B}_{0}^{-} =    \{\Ff_{i_{1},j_{1}}\Ff_{i_{2},j_{2}}\cdots
      \Ff_{i_{p},j_{p}}\vert\
      (i_{1},j_{1})\leq (i_{2},j_{2})\leq \cdots \leq (i_{p},j_{p}) \}$
       (lexicographical ordering) 
       are linear basis of $U^{+}_{\alpha, \beta}(\liegl_{n})$
       and
        $U^{-}_{\alpha, \beta}(\liegl_{n})$ respectively.
  \item[$(ii)$] $\mathcal{B}_{1}^{+} =
    \{e_{i_{1},j_{1}}e_{i_{2},j_{2}}\cdots e_{i_{p},j_{p}}\vert\
      (i_{1},j_{1})\leq (i_{2},j_{2})\leq \cdots \leq (i_{p},j_{p}) \}$ and
     $\mathcal{B}_{1}^{-} =    
      \{f_{i_{1},j_{1}}f_{i_{2},j_{2}}\cdots f_{i_{p},j_{p}}\vert\
      (i_{1},j_{1})\leq (i_{2},j_{2})\leq \cdots \leq (i_{p},j_{p}) \}$
       (lexicographical ordering) 
      are linear basis of $U^{+}_{\alpha, \beta}(\liegl_{n})$
       and
        $U^{-}_{\alpha, \beta}(\liegl_{n})$ respectively, where
        $e_{i,j} = e_{i}e_{i-1}\cdots e_{j}$
         and $f_{i,j} = f_{i}f_{i-1}\cdots f_{j}$ for $i\geq j$.\qed
 \end{itemize}
\end{theorem}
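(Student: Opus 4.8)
The plan is to recognize $U^{+}_{\alpha,\beta}(\liegl_{n})$ as the positive part of a character Hopf algebra of Cartan type $A_{n-1}$ and to apply Kharchenko's PBW theorem for such algebras, whose specialization to the alphabet $\{e_{1},\ldots,e_{n-1}\}$ subject to the quantum Serre relations \eqref{rel:adei} is precisely \cite[Thm. $\textbf{A}_{n}$]{K}. Indeed, adjoining the group-likes $w_{j}$, which act diagonally on the $e_{i}$ through the characters recorded in \eqref{eq:w-s}, turns $U_{\alpha,\beta}(\lieb^{+})$ into a character Hopf algebra; Kharchenko's theorem then yields a restricted PBW basis indexed by the \emph{hard super-letters}, that is, by certain Lyndon words in the $e_{i}$. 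The work is therefore to (a) identify these hard super-letters with the root vectors $\Ee_{k,l}$, (b) verify that their heights are infinite so that all nonnegative powers occur, which gives part $(i)$, and (c) pass from $\{\Ee_{k,l}\}$ to the plain products $\{e_{k,l}\}$ by a unitriangular change of variables to obtain part $(ii)$.

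First I would derive the straightening relations among the root vectors. Using the reformulated Serre relations $e_{i+1}\Ee_{i+1,i}=\beta^{-1}\Ee_{i+1,i}e_{i+1}$ and $\Ee_{i+1,i}e_{i}=\beta^{-1}e_{i}\Ee_{i+1,i}$, together with \eqref{eq:w-s}, one shows that for lexicographically ordered pairs the $q$-commutator of two root vectors is again a scalar multiple of a single root vector plus monomials of strictly smaller degree, in the spirit of the Levendorskii--Soibelman formulas. This provides a rewriting rule sending every out-of-order product $\Ee_{i,j}\Ee_{k,l}$ with $(i,j)>(k,l)$ to an ordered combination, and hence shows that the ordered monomials in $\mathcal{B}_{0}^{+}$ span $U^{+}_{\alpha,\beta}(\liegl_{n})$. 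Equivalently, I would phrase this as an application of the Diamond Lemma, taking the above commutation relations as the reduction system.

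For linear independence the essential input is the comultiplication formula of Lemma \ref{lema:comult-E}, which shows that each $\Ee_{k,l}$ is a genuine PBW generator whose coproduct is triangular with respect to the root order, the correction terms involving only $\Ee_{k,j+1}$ and $\Ee_{j,l}$. This is exactly what is needed to separate leading terms and to run the confluence check of the Diamond Lemma: the only ambiguities arise from degree-three overlaps coming from adjacent Serre relations, and they resolve by a Jacobi-type identity whose coefficients are controlled by $\zeta=1-\alpha^{-1}\beta$ and by the standing hypothesis $\alpha\neq-\beta$, which prevents the structure constants from degenerating. Since the parameters are generic, no power of a root vector vanishes, so all heights are infinite and $\mathcal{B}_{0}^{+}$ is a basis; an alternative, if one prefers to avoid the combinatorics of hard super-letters, is to match the resulting Hilbert series against that of $U(\lien^{+})$ via the flat limit $\alpha=\beta=1$.

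The main obstacle is the explicit combinatorics underlying step (a): determining which Lyndon words are hard for type $A_{n-1}$ and checking that they are exactly the pairs $(k,l)$ with $1\le l\le k<n$ (equivalently, the positive roots of type $A_{n-1}$), and then verifying the unitriangularity of the transition between the iterated-bracket vectors $\Ee_{k,l}$ and the plain products $e_{k,l}=e_{k}e_{k-1}\cdots e_{l}$. Once this triangularity is in hand, part $(ii)$ follows from part $(i)$, since the change of basis is degree-preserving and unipotent on each graded component. Finally, the statements for $U^{-}_{\alpha,\beta}(\liegl_{n})$ and the families $\mathcal{B}_{0}^{-},\mathcal{B}_{1}^{-}$ follow from the $e\leftrightarrow f$ symmetry of Definition \ref{def:Ualfa-beta}, which exchanges $\alpha\leftrightarrow\alpha^{-1}$ and $\beta\leftrightarrow\beta^{-1}$ and carries \eqref{rel:adei} to \eqref{rel:adfi}.
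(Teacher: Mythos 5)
You should first note that the paper does not prove this statement at all: Theorem \ref{thm:PBW-basis-gl} is quoted verbatim from \cite{K} (see also \cite[Thm. 3.2]{BKL}) and closed with a \emph{qed} symbol, so there is no internal argument to compare yours against. Your outline is nevertheless a faithful reconstruction of the strategy of the cited source: realize $U_{\alpha,\beta}(\lieb^{+})$ as a character Hopf algebra, invoke Kharchenko's PBW theorem for hard super-letters, identify the hard super-letters with the positive roots of $A_{n-1}$ (hence with the $\Ee_{k,l}$), and pass to part $(ii)$ by a triangular, degree-preserving change of basis between the iterated $q$-brackets $\Ee_{k,l}$ and the plain products $e_{k,l}$; the $e\leftrightarrow f$, $(\alpha,\beta)\mapsto(\alpha^{-1},\beta^{-1})$ symmetry carrying \eqref{rel:adei} to \eqref{rel:adfi} correctly reduces the $U^{-}$ statements to the $U^{+}$ ones. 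So the approach is the right one, and in spirit it is ``the same as the paper's'' in that it unwinds exactly the reference the paper leans on.

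Two caveats. First, your justification of the infinite heights (``since the parameters are generic, no power of a root vector vanishes'') is not available in the regime where the paper actually uses the theorem: the standing hypothesis is only $\alpha\neq-\beta$, and in Section \ref{sec:a-b-root} the result is applied with $\alpha^{-1}\beta$ a primitive $\ell$-th root of unity. There the self-braiding scalars of the $\Ee_{k,l}$ are roots of unity, and infiniteness of the heights requires showing that $\Ee_{k,l}^{\ell}$ is not a linear combination of lexicographically smaller monotonic words of the same degree (these powers are precisely the nonzero central elements of Theorem \ref{thm:elem-centrales-U}); your fallback via comparison of graded dimensions with $U(\lien^{+})$ under specialization is the cleaner way to close this and works uniformly for $\alpha\neq-\beta$, so you should promote it from an ``alternative'' to the actual argument. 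Second, the combinatorial core --- determining which Lyndon words are hard, checking confluence of the reduction system, and verifying the unitriangularity of the transition matrix between $\{\Ee_{k,l}\}$ and $\{e_{k,l}\}$ --- is explicitly deferred in your write-up, so as it stands this is a correct plan rather than a complete proof; given that the paper itself treats the statement as a black box from \cite{K}, that level of detail is arguably all that is called for here.
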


Since $U_{\alpha, \beta}^{\circ}(\liegl_{n})$ is a group algebra,
combining these basis and using the
triangular decomposition, one obtains a PBW-basis for $U_{\alpha,
\beta}(\liegl_{n})$.

\par There is also a two-parameter analog of
$U_{q}(\liesl_{n})$ which is given by the subalgebra of 
$U_{q}(\liegl_{n})$ generated by the elements $e_{j}, f_{j}, w_{j}$,
$w_{j}^{-1}$, $w_{j}'$ and $(w_{j}')^{-1}$ for $1\leq j<n$. Clearly,
if $(\alpha, \beta) = (q,q)$, this subalgebra of $U_{q}(\liegl_{n})$
is precisely $U_{q}(\liesl_{n})$. However, 
one can not define using Def. \ref{def:2-par-quant-ogln} 
a two-parameter analog of $\Oc_{q}(SL_{n})$ since 
the quantum determinant $g$ is not central.  

\subsubsection{A Hopf pairing between $\Oc_{\alpha,\beta}(GL_{n})$ 
and $U_{\alpha,
\beta}(\liegl_{n})$}\label{subsub:hopf-pairing}
The Hopf algebras $\Oc_{\alpha,\beta}(GL_{n})$ and $U_{\alpha,
\beta}(\liegl_{n})$ are associated with each other by a Hopf pairing
$\langle -,- \rangle: U_{\alpha, \beta}(\liegl_{n}) \times
\Oc_{\alpha,\beta}(GL_{n}) \to \k$ which is defined on the
generators by
   \begin{align*}
     \langle a_{i},x_{st} \rangle & =
     \delta_{st}\alpha^{\delta_{is}},\qquad
     &\langle b_{i},x_{st} \rangle &=
     \delta_{st}\beta^{\delta_{is}},\\
     \langle e_{j},x_{st} \rangle & =
     \delta_{js}\delta_{j+1,t},\qquad
     &\langle f_{j},x_{st} \rangle  &=
     \delta_{j+1,s}\delta_{jt},
   \end{align*}
where $1\leq i\leq n,\ 1\leq j <n, 1\leq s,t\leq n$,
see \cite{Tk}. 
Clearly, the Hopf pairing defines a Hopf algebra map 
\begin{equation}\label{eq:map-hopf-pair}
 \psi: \Oc_{\alpha,\beta}(GL_{n}) \to 
U_{\alpha, \beta}(\liegl_{n})^{\circ}, 
\end{equation}
given by 
$\psi(x_{st}) (u) = \langle u ,x_{st} \rangle$,
for all $1\leq s,t\leq n$ and 
for $u= a_{i}, b_{i}, e_{j}, f_{j}$ with $1\leq i\leq n$, $1\leq j < n$.
Following \cite[Sec. 4]{Tk2}, we say that
$\Oc_{\alpha,\beta}(GL_{n})$ is
\textit{connected} if $\psi$ is injective.In 
case that $\alpha$ and $\beta$
are roots of unity, 
$\Oc_{\alpha,\beta}(GL_{n})$ is not
connected. However, a map induced by $\psi$
is injective on certain quotients of these quantum groups and
this fact is needed to do the first step in the contruction of 
the quantum subgroups of $\Oc_{\alpha,\beta}(GL_{n})$. 
The following technical lemmata will be needed in the sequel.
\begin{lema}\label{lema:pair-e-f} 
For all $1\leq i,j\leq n$ and $1\leq l\leq k < n$,
\begin{enumerate}
 \item[$(i)$] $ \langle \Ee_{k,l} , x_{ij} \rangle = (-1)^{k-l}
   \alpha^{l-k} \delta_{l,i}\delta_{k+1,j}$,
 \item[$(ii)$] $ \langle \Ff_{k,l} , x_{ij} \rangle = 
\delta_{k+1,i}\delta_{l,j}$.
\end{enumerate}
\end{lema}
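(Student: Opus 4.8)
The plan is to compute the Hopf pairing $\langle \Ee_{k,l}, x_{ij}\rangle$ and $\langle \Ff_{k,l}, x_{ij}\rangle$ by induction on $k-l$, using the recursive definitions $\Ee_{i,j}=[e_i,\Ee_{i-1,j}]_{\alpha^{-1}}=e_i\Ee_{i-1,j}-\alpha^{-1}\Ee_{i-1,j}e_i$ and $\Ff_{i,j}=f_i\Ff_{i-1,j}-\beta^{-1}\Ff_{i-1,j}f_i$, together with the multiplicativity axiom (ii) of the Hopf pairing in Definition \ref{def:hopf-pairing}, namely $\langle uv,x\rangle=\langle u,x_{(1)}\rangle\langle v,x_{(2)}\rangle$, and the coproduct $\com(x_{ij})=\sum_s x_{is}\ot x_{sj}$. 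The base case $k=l$ is exactly the defining value $\langle e_j,x_{st}\rangle=\delta_{js}\delta_{j+1,t}$ (resp. $\langle f_j,x_{st}\rangle=\delta_{j+1,s}\delta_{jt}$) from Section \ref{subsub:hopf-pairing}, which matches the claimed formula since for $k=l$ the factor $(-1)^{k-l}\alpha^{l-k}=1$.

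For the inductive step in part $(i)$, I would expand $\langle \Ee_{k,l},x_{ij}\rangle = \langle e_k\Ee_{k-1,l},x_{ij}\rangle - \alpha^{-1}\langle \Ee_{k-1,l}e_k,x_{ij}\rangle$. Applying multiplicativity to each term and inserting the coproduct gives
\begin{align*}
\langle e_k\Ee_{k-1,l},x_{ij}\rangle &= \sum_{s} \langle e_k,x_{is}\rangle\langle \Ee_{k-1,l},x_{sj}\rangle,\\
\langle \Ee_{k-1,l}e_k,x_{ij}\rangle &= \sum_{s} \langle \Ee_{k-1,l},x_{is}\rangle\langle e_k,x_{sj}\rangle.
\end{align*}
Using $\langle e_k,x_{is}\rangle=\delta_{ki}\delta_{k+1,s}$ collapses the first sum to the single index $s=k+1$ with $i=k$, and the inductive hypothesis $\langle \Ee_{k-1,l},x_{k+1,j}\rangle=(-1)^{k-1-l}\alpha^{l-k+1}\delta_{l,k+1}\delta_{k,j}$ forces $l=k+1$, which is incompatible with $l\le k-1$ in the nontrivial range; the genuinely contributing term is the second one, where $\langle e_k,x_{sj}\rangle=\delta_{ks}\delta_{k+1,j}$ pins $s=k$, $j=k+1$, leaving $\langle \Ee_{k-1,l},x_{ik}\rangle=(-1)^{k-1-l}\alpha^{l-k+1}\delta_{l,i}\delta_{k,k}$. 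Multiplying by $-\alpha^{-1}$ yields $(-1)^{k-l}\alpha^{l-k}\delta_{l,i}\delta_{k+1,j}$, as claimed. A careful bookkeeping of which Kronecker deltas survive is the only real content.

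Part $(ii)$ is handled by the same induction, now with $\Ff_{k,l}=f_k\Ff_{k-1,l}-\beta^{-1}\Ff_{k-1,l}f_k$ and $\langle f_k,x_{st}\rangle=\delta_{k+1,s}\delta_{kt}$. The structure is entirely parallel, and since the coefficients $(-\beta^{-1})$ happen to cancel against the sign and power contributions in a way that leaves the clean value $\delta_{k+1,i}\delta_{l,j}$ with no sign or scalar factor, the computation is actually slightly cleaner than in $(i)$.

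The main obstacle I anticipate is purely combinatorial: keeping straight, in each term of the recursion, exactly which summation index is forced by the simple-root pairing values and then checking that the surviving inductive term lands on the matrix entry whose indices match the claimed deltas, while confirming that the competing term vanishes identically in the relevant range $l\le k$. No deep idea is required beyond the multiplicativity of the pairing and the explicit coproduct; the risk is an off-by-one slip in the indices or a misplaced power of $\alpha$, so I would verify the small cases $k=l+1$ explicitly before asserting the general pattern.
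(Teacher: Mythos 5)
Your proposal is correct and follows essentially the same route as the paper: induction on $k-l$ using the recursion $\Ee_{k,l}=e_{k}\Ee_{k-1,l}-\alpha^{-1}\Ee_{k-1,l}e_{k}$, the multiplicativity of the pairing together with $\com(x_{ij})=\sum_{s}x_{is}\ot x_{sj}$, and the same identification of which of the two terms survives. One small correction for part $(ii)$: the absence of a scalar factor is not due to a cancellation of $-\beta^{-1}$, but to the fact that the term carrying $-\beta^{-1}$ vanishes outright (since $\langle\Ff_{k-1,l},x_{i,k+1}\rangle$ contains $\delta_{l,k+1}=0$ for $l\le k-1$), so the surviving contribution is the unscaled first term $\delta_{k+1,i}\langle\Ff_{k-1,l},x_{k,j}\rangle$.
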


\pf We prove it by induction on $k\geq l$.
By definition we know that 
$\langle \Ee_{l,l} , x_{ij} \rangle =
\langle e_{l},x_{ij} \rangle  =
     \delta_{l,i}\delta_{l+1,j}$, hence the formula holds for $k = l$.
Now suppose the formula holds for 
$k-1 \geq l$, then 
\begin{align*}
\langle \Ee_{k,l} , x_{ij} \rangle & 
= \langle e_{k}\Ee_{k-1,l} , x_{ij} \rangle
- \alpha^{-1}\langle \Ee_{k-1,l}e_{k} , x_{ij} \rangle\\
& = \sum_{m=1}^{n}\langle e_{k}, x_{im} \rangle 
\langle \Ee_{k-1,l}, x_{mj} \rangle
- \alpha^{-1}\sum_{m=1}^{n}\langle \Ee_{k-1,l}, x_{im}\rangle 
\langle e_{k}, x_{mj} \rangle\\
& = \delta_{k,i} 
\langle \Ee_{k-1,l}, x_{k+1,j} \rangle
- \alpha^{-1}\langle \Ee_{k-1,l}, x_{i,k}\rangle 
\delta_{k+1,j}\\
& = \delta_{k,i}(-1)^{k-1-l} \alpha^{l-k+1} 
\delta_{l,k+1}\delta_{k,j} 
- \alpha^{-1}(-1)^{k-1-l} \alpha^{l-k+1}\delta_{l,i}
\delta_{k+1,l}\\
& = (-1)^{k-l} \alpha^{l-k}\delta_{l,i} 
\delta_{k+1,j},
\end{align*}
which finishes the proof of part $(i)$. The proof of 
$(ii)$ is analogous. By definition we know that 
$\langle \Ff_{l,l} , x_{ij} \rangle =
\langle f_{l},x_{ij} \rangle  =
     \delta_{l+1,i}\delta_{l,j}$, hence the formula holds for $k = l$.
Now suppose the formula holds for 
$k-1 \geq l$, then 
\begin{align*}
\langle \Ff_{k,l} , x_{ij} \rangle & 
= \langle f_{k}\Ff_{k-1,l} , x_{ij} \rangle
- \beta^{-1}\langle \Ff_{k-1,l}f_{k} , x_{ij} \rangle\\
& = \sum_{m=1}^{n}\langle f_{k}, x_{im} \rangle 
\langle \Ff_{k-1,l}, x_{mj} \rangle
- \beta^{-1}\sum_{m=1}^{n}\langle \Ff_{k-1,l}, x_{im}\rangle 
\langle f_{k}, x_{mj} \rangle\\
& = \delta_{k+1,i} 
\langle \Ff_{k-1,l}, x_{k,j} \rangle
- \beta^{-1}\langle \Ff_{k-1,l}, x_{i,k+1}\rangle 
\delta_{k,j}\\
& = \delta_{k+1,i} \delta_{k,k}\delta_{l,j} 
- \beta^{-1}\delta_{k,i}\delta_{l,j} 
\delta_{k,j}\\
& = \delta_{k+1,i}\delta_{l,j},
\end{align*}
which finishes the proof of part $(ii)$.
\epf

%%%%%%%%%%%%%%%%%%%%%%%%%%%%%%%%%

%%%%%%%%%%%%%%%%% SECTION II%%%%%%%%%%
\section{Finite quantum subgroups of $GL_{\alpha,\beta}(n)$, 
$\alpha^{-1}\beta$
not a root of 1}\label{sec:a-b-not-root}
In this section we prove Thm. \ref{thm:main} $(a)$,  
as in \cite[Thm. 4.1]{Mu}.

\begin{theorem}\label{teo:cocientes-a-b-no-raices}
If $\alpha$ or $\beta$ is not a root of unity, then the
finite-dimensional quotients of $\Oc_{\alpha,\beta}(GL_{n})$ are
just the function algebras of finite subgroups of the diagonal torus
in $GL_{n}(\k)$.
\end{theorem}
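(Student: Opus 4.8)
The plan is to follow Müller's strategy \cite[Thm. 4.1]{Mu}: first show that every off-diagonal generator is killed in any finite-dimensional quotient, and then identify the surviving commutative Hopf algebra with the function algebra of a finite subgroup of the diagonal torus. Throughout write $\bar{x}_{ij} = q(x_{ij})$ for the images in $A$ of the canonical generators, where $q\colon \Oc_{\alpha,\beta}(GL_{n})\to A$ is the given surjection.

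First I would exploit the finite-dimensionality of $A$ in two independent ways. On the one hand, it is well known that the antipode of a finite-dimensional Hopf algebra has finite order, so $\SS^{2}$ acts on $A$ as an operator of finite order and therefore all its eigenvalues are roots of unity. By \eqref{eq:cuadrado de la antipoda} the element $\bar{x}_{ij}$ is a $\SS^{2}$-eigenvector of eigenvalue $(\alpha^{-1}\beta)^{j-i}$, so if $\bar{x}_{ij}\neq 0$ with $i\neq j$ then $(\alpha^{-1}\beta)^{j-i}$, and hence $\alpha^{-1}\beta$, is a root of unity. On the other hand, the group-like elements of a finite-dimensional Hopf algebra form a finite group, so the group-like $\bar{g}=q(g)$ has finite order $N$; conjugating the relation $x_{ij}g=(\beta\alpha)^{i-j}gx_{ij}$ by $\bar{g}^{N}=1$ forces $(\alpha\beta)^{N(i-j)}=1$, so $\alpha\beta$ is also a root of unity whenever $\bar{x}_{ij}\neq 0$ and $i\neq j$. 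Combining the two conclusions, the existence of a nonzero off-diagonal $\bar{x}_{ij}$ would make both $\alpha^{-1}\beta$ and $\alpha\beta$ roots of unity, hence $\alpha^{2}=(\alpha\beta)(\alpha^{-1}\beta)^{-1}$ and $\beta^{2}=(\alpha\beta)(\alpha^{-1}\beta)$ roots of unity, contradicting the hypothesis that $\alpha$ or $\beta$ is not a root of unity. Thus $\bar{x}_{ij}=0$ for all $i\neq j$.

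With the off-diagonal generators gone, the structure of $A$ follows quickly. Taking the last relation of Definition \ref{def:2-par-quant-ogln} with $k=i<j=l$ gives $x_{jj}x_{ii}-x_{ii}x_{jj}=(\beta-\alpha)x_{ij}x_{ji}$, whose right-hand side vanishes in $A$; hence the $\bar{x}_{ii}$ commute. Moreover $\com(\bar{x}_{ii})=\sum_{s}\bar{x}_{is}\ot\bar{x}_{si}=\bar{x}_{ii}\ot\bar{x}_{ii}$ once every off-diagonal image is zero, so each $\bar{x}_{ii}$ is group-like, in particular invertible. Therefore $A$ is generated as an algebra by the pairwise commuting group-likes $\bar{x}_{11},\dots,\bar{x}_{nn}$, so it is spanned by their (group-like) monomials and $A=\k[G]$ for the finite abelian group $G$ they generate. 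Finally I would repackage this: killing the off-diagonal generators of $\Oc_{\alpha,\beta}(GL_{n})$ produces the commutative Hopf algebra $\Oc(T)=\k[x_{11}^{\pm 1},\dots,x_{nn}^{\pm 1}]$ of the diagonal torus $T\subseteq GL_{n}$, and $q$ factors as $\Oc_{\alpha,\beta}(GL_{n})\twoheadrightarrow\Oc(T)\twoheadrightarrow A$. The second surjection is a morphism of commutative Hopf algebras, so it corresponds to a closed embedding $\Gamma\hookrightarrow T$ of affine group schemes; since $\dim A<\infty$, the group $\Gamma$ is a finite subgroup of the diagonal torus in $GL_{n}(\k)$ and $A\cong\Oc(\Gamma)$, as claimed.

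I expect the genuine content to lie entirely in the first step. The two finite-order inputs, of $\SS$ and of $\bar{g}$, must be used \emph{together} in order to squeeze the weak hypothesis ``$\alpha$ or $\beta$ is not a root of unity'' (which is exactly what is needed, since it is implied by ``$\alpha^{-1}\beta$ is not a root of unity''); using either relation alone would only give one of $\alpha^{-1}\beta$, $\alpha\beta$ being a root of unity. By contrast, the identification of $A$ with $\Oc(\Gamma)$ is routine commutative Hopf algebra theory. The minor points to verify carefully are that $A$ is genuinely \emph{spanned} by group-likes (so that $A=\k[G]$ rather than merely generated by them) and that the factorization through $\Oc(T)$ is legitimate as a factorization of Hopf algebra maps.
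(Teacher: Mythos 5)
Your proof is correct, and it is in fact more complete than the paper's own argument. The paper uses only the Radford input: $\SS_A$ has finite order $2t$, so by \eqref{eq:cuadrado de la antipoda} one gets $q(x_{ij})=(\alpha^{-1}\beta)^{t(j-i)}q(x_{ij})$, and it then kills the off-diagonal generators ``since $\alpha\beta$ is not a root of unity'' (the printed eigenvalue $(\alpha\beta)^{2t(j-i)}$ is moreover inconsistent with \eqref{eq:cuadrado de la antipoda}). Either way, that single step only shows that a nonzero off-diagonal $q(x_{ij})$ forces \emph{one} of $\alpha^{-1}\beta$ or $\alpha\beta$ to be a root of unity, which does not contradict the hypothesis of the theorem as literally stated, namely that $\alpha$ \emph{or} $\beta$ is not a root of unity. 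Your second input --- finiteness of the order of the group-like $q(g)$ in $G(A)$ combined with the commutation $x_{ij}g=(\beta\alpha)^{i-j}gx_{ij}$, which forces $\alpha\beta$ to be a root of unity as well --- is exactly what is needed: together with the antipode argument it makes $\alpha^{2}$ and $\beta^{2}$ roots of unity and yields the desired contradiction. So your proof follows the same M\"uller-style strategy as the paper but closes a genuine gap between the paper's hypothesis and its one-line deduction; the price is the (easy) extra observation about $G(A)$. The second half of your argument (commutativity of the $q(x_{ii})$ from the $q$-commutator relation, each $q(x_{ii})$ group-like, $A$ spanned by group-likes, and the identification of $A$ with $\Oc(\Gamma)$ for a finite subgroup $\Gamma$ of the diagonal torus via the factorization through $\Oc(\T)$) coincides with what the paper does, and the two ``minor points'' you flag at the end are indeed the only things left to check there.
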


\begin{proof}
Let $q: \Oc_{\alpha,\beta}(GL_{n}) \to A$ be a surjective Hopf
algebra map such that $\dim A$ is finite. Then by \cite{R}, the
antipode $\SS_{A}$ of $A$ has finite (even) order, say $2t$. Then by
\eqref{eq:cuadrado de la antipoda} it follows that
$ q(x_{ij}) = \SS_{A}^{2t}(q(x_{ij})) = q(\SS^{2t}(x_{ij})) =
(\alpha\beta)^{2t(j-i)}q(x_{ij})$.
Since $\alpha\beta$ is not a root of unity, we have that
$q(x_{ij}) = 0$ for all $1\leq i,j\leq n$ with $i\neq j$. Hence, $A$ is a
finite-dimensional quotient of $\k[x_{11},x_{22},\ldots, x_{nn}]$, the
coordinate algebra of the diagonal torus $\T \subseteq GL_{n}(\k)$. Thus,
$A$ is isomorphic to the algebra of functions of a finite subgroup of $\T$.
\end{proof}

\begin{obs}
 If $A$ is infinite-dimensional, it may not be commutative: take
the quotient given by $q:\Oc_{\alpha,\beta}(GL_{n}) \to
 \Oc_{\alpha,\beta}(B^{+})$, where $\Oc_{\alpha,\beta}(B^{+})$
is the Borel quantum subgroup defined in Sec.
\ref{subsubsec:borel-qsubgroups}. Then by construction,
$\Oc_{\alpha,\beta}(B^{+})$ is not commutative, non-semisimple and
$\SS^{2}\neq \id$.
\end{obs}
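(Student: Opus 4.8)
The plan is to verify the three asserted properties of $A := \Oc_{\alpha,\beta}(B^{+})$ one at a time, under the convention $n \geq 2$ (for $n = 1$ the Borel is the commutative Laurent algebra $\k[\bar{x}_{11}^{\pm 1}]$ and there is nothing to prove). Throughout I would use the standing hypotheses $\alpha \neq 1$ and $\alpha^{-1}\beta \neq 1$ (the latter holds in this section, where $\alpha^{-1}\beta$ is not a root of unity), the defining relations \eqref{eq:borel}, and the PBW-type basis $\{\prod_{i<j} \bar{x}_{ij}^{e_{ij}} \prod_{i} \bar{x}_{ii}^{e_{i}}\}$ recalled in Section \ref{subsubsec:borel-qsubgroups}, which in particular shows that $A$ is a domain.

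Non-commutativity is immediate from the first relation in \eqref{eq:borel} with $i = j = 1$ and $k = 2$: it reads $\bar{x}_{12}\bar{x}_{11} = \alpha^{-1}\bar{x}_{11}\bar{x}_{12}$, equivalently $\bar{x}_{11}\bar{x}_{12} - \bar{x}_{12}\bar{x}_{11} = (\alpha - 1)\bar{x}_{12}\bar{x}_{11}$. Here $\bar{x}_{12}\bar{x}_{11}$ is a nonzero basis monomial and $\alpha \neq 1$, so the commutator is nonzero and $A$ is not commutative.

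For $\SS^{2} \neq \id$, I would use that the canonical projection $t_{+} \colon \Oc_{\alpha,\beta}(GL_{n}) \to A$ is a morphism of Hopf algebras and hence commutes with $\SS^{2}$. Pushing \eqref{eq:cuadrado de la antipoda} through $t_{+}$ gives $\SS^{2}(\bar{x}_{ij}) = (\alpha^{-1}\beta)^{j-i}\bar{x}_{ij}$, so that $\SS^{2}(\bar{x}_{12}) = (\alpha^{-1}\beta)\bar{x}_{12}$. Since $\alpha^{-1}\beta \neq 1$ and $\bar{x}_{12} \neq 0$, this differs from $\bar{x}_{12}$, whence $\SS^{2} \neq \id$.

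Finally, for non-semisimplicity (as an algebra), I would observe that the off-diagonal generators $\{\bar{x}_{ij} : i < j\}$ span a two-sided ideal $I$ of $A$ — this is clear from \eqref{eq:borel}, where every product of two generators reduces to a scalar times a reordered product of the same generators. The counit descends to $A/I$ because $\eps(\bar{x}_{ij}) = 0$ for $i < j$, so $A/I \neq 0$; as $\bar{x}_{12} \in I$ maps to $0$ there, it is a nonzero non-unit of $A$. Thus $A$ is a domain that is not a division ring, and since a semisimple ring which is a domain must be a division ring, $A$ is not semisimple. I expect that none of the three steps hides a real difficulty; the only point requiring a moment's care is exhibiting $\bar{x}_{12}$ as a nonzero non-unit, which the quotient $A/I$ settles cleanly.
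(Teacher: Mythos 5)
Your proposal is correct. There is nothing in the paper to compare it against in detail: the remark asserts all three properties ``by construction,'' with no written proof, so your job was to supply the implicit details, and you do so accurately. Non-commutativity from the relation $\bar{x}_{12}\bar{x}_{11} = \alpha^{-1}\bar{x}_{11}\bar{x}_{12}$ in \eqref{eq:borel} together with the standing convention $\alpha\neq 1$ of Remark \ref{rmk:valores-de-a-b} $(a)$, and $\SS^{2}\neq\id$ by pushing \eqref{eq:cuadrado de la antipoda} through the Hopf algebra map $t_{+}$ (which commutes with the antipode) and using $\alpha^{-1}\beta\neq 1$ --- automatic in this section, where $\alpha^{-1}\beta$ is not a root of unity --- are exactly what ``by construction'' means; both steps rest on the PBW-type basis of $\Oc_{\alpha,\beta}(B^{+})$, which the paper records along with the fact that this algebra has no non-zero divisors. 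The one place where you add genuine content is non-semisimplicity: since $\Oc_{\alpha,\beta}(B^{+})$ is infinite-dimensional, neither a dimension count nor an integral argument is available, and your route --- the algebra is a domain, $\bar{x}_{12}$ is a nonzero non-unit, and a semisimple (artinian) domain must be a division ring --- is a clean and correct way to settle it. One cosmetic point: the linear span of $\{\bar{x}_{ij}\ \vert\ i<j\}$ is of course not itself a two-sided ideal; what your argument actually needs, and uses, is only that the two-sided ideal these elements generate is proper, which is immediate because $\eps(\bar{x}_{ij})=0$ for $i<j$ places that ideal inside $\Ker\eps$, so the counit descends to the nonzero quotient and kills $\bar{x}_{12}$.
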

%%%%%%%%%%%%%%%%%%%%%%%%%%%%%%%%%

%%%%%%%%% SECTION III %%%%%%%%%%%%%%%%%
\section{Quantum subgroups of $GL_{\alpha,\beta}(n)$,
$\alpha^{-1}\beta$ a primitive root of 1}\label{sec:a-b-root}
In this section we determine \textit{all} the quotients of
$\Oc_{\alpha,\beta}(GL_{n})$ when
$\alpha$ and $\beta$ are roots of unity and satisfiy 
certain mild conditions. The proof follows
the ideas of \cite{AG}, but hard computational methods are
required.

\subsection{Central Hopf subalgebras and PI-Hopf triples} 
Let $\ell
\in \NN$ be an odd natural number such that 
$\alpha^{-1}\beta$ is a primitive 
$\ell$-th root of unity and $\alpha^{\ell}=1 = \beta^{\ell}$.
In \cite[Thm.
3.1]{DPW}, Du, Parshall and Wang define a generalization of the
quantum Frobenius map
$$F^{\#}: \Oc(GL_{n})
 \to \Oc_{\alpha,\beta}(GL_{n}),
\quad X_{ij} \mapsto x_{ij}^{\ell}, \quad g\mapsto g^{\ell},
$$
which is a Hopf algebra monomorphism that
corresponds \textit{intuitively} to a surjective map
of quantum groups $F: GL_{\alpha,\beta}(n) \to
GL(n)$.

\par For this reason, Thm.
\ref{thm:PBW-basis-gl} and the definition 
of $U_{\alpha, \beta}(\liegl_{n})$
we will assume from now on that
\begin{equation}\label{eq:cond-alpha-beta}
\alpha \neq \pm \beta, \beta^{-1},\  
\alpha^{-1}\beta\ is\ a\ primitive\ \ell-th\ root\ of\ 1\ and\
\alpha^{\ell}= 1 = \beta^{\ell},
 \end{equation}
where $\ell$ is supposed to be odd and $\ell \geq 3$.

\begin{prop}\label{prop:B-central}\cite[Prop. 3.8]{DPW}
With the above notation and assumptions and identifying
$\Oc(GL_{n})= F^{\#}(\Oc(GL_{n}))$ 
we have
 \begin{itemize}
  \item[$(i)$] The Hopf subalgebra
  $ \Oc(GL_{n})$ is central in $\Oc_{\alpha,\beta}(GL_{n})$.
  \item[$(ii)$] $\Oc_{\alpha,\beta}(GL_{n})$ 
is faithfully flat over $\Oc(GL_{n})$.
  \item[$(iii)$] $\overline{H}:= $
       $ \Oc_{\alpha,\beta}(GL_{n}) /
     \Oc(GL_{n})^{+}\Oc_{\alpha,\beta}(GL_{n})$ 
 has dimension $\ell^{n^{2}}$.\qed
 \end{itemize}
\end{prop}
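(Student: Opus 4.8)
The plan is to observe that parts $(ii)$ and $(iii)$ follow almost formally once $(i)$ is established together with the PBW basis of Definition~\ref{def:2-par-quant-ogln}, so the real content lies in the centrality statement $(i)$. Since $F^{\#}$ is already known to be a Hopf algebra monomorphism, its image $\Oc(GL_{n})$ is the subalgebra generated by $\{x_{ij}^{\ell}\}_{1\le i,j\le n}$ together with $g^{\pm\ell}$; thus $(i)$ amounts to checking that each of these elements commutes with every generator $x_{kl}$ of $\Oea$.

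First I would dispose of $g^{\pm\ell}$: from $x_{ij}g = (\alpha\beta)^{i-j}gx_{ij}$ one gets $x_{ij}g^{\ell} = (\alpha\beta)^{\ell(i-j)}g^{\ell}x_{ij} = g^{\ell}x_{ij}$ because $(\alpha\beta)^{\ell}=\alpha^{\ell}\beta^{\ell}=1$, and symmetrically for $g^{-\ell}$. For the $x_{ij}^{\ell}$ I would run through the relations of Definition~\ref{def:2-par-quant-ogln}. Whenever two generators satisfy a pure $q$-commutation $yz = q\, zy$ with $q\in\{\alpha^{\pm1},\beta^{\pm1},(\alpha\beta)^{\pm1}\}$ --- this covers the first three families, i.e.\ all same-row, same-column and anti-diagonal pairs --- raising one factor to the $\ell$-th power forces $y^{\ell}z = q^{\ell}zy^{\ell} = zy^{\ell}$ since $\alpha^{\ell}=\beta^{\ell}=1$. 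The only genuinely non-commutative case is the fourth relation $x_{jl}x_{ik} = x_{ik}x_{jl} + (\beta-\alpha)x_{il}x_{jk}$ (for $i<j$, $k<l$), relating the two diagonal entries of a $2\times2$ minor. Writing $\lambda = \alpha^{-1}\beta$ and using $x_{il}x_{jk}\cdot x_{ik} = \lambda\, x_{ik}\cdot x_{il}x_{jk}$ (itself a consequence of the same-row and same-column relations), an easy induction on $m$ gives
\begin{equation*}
x_{jl}\,x_{ik}^{m} = x_{ik}^{m}\,x_{jl} + (\beta-\alpha)\,[m]_{\lambda}\,x_{ik}^{m-1}x_{il}x_{jk},\qquad [m]_{\lambda}:=1+\lambda+\cdots+\lambda^{m-1}.
\end{equation*}
Because $\lambda$ is a primitive $\ell$-th root of unity, $[\ell]_{\lambda} = (\lambda^{\ell}-1)/(\lambda-1)=0$, so $x_{ik}^{\ell}$ commutes with $x_{jl}$; the remaining commutations of $x_{ik}^{\ell}$ and $x_{jl}^{\ell}$ with the other entries of the minor are of the pure $q$-commutation type already treated. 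This proves $(i)$.

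For $(ii)$ I would first show that $\Oc_{\alpha,\beta}(M_{n})$ is a \emph{free} module over its central subalgebra $\Oc(M_{n})=\k[x_{ij}^{\ell}]$, with basis the reduced monomials $\{\prod_{i,j} x_{ij}^{r_{ij}} : 0\le r_{ij}<\ell\}$ taken in the fixed order of the PBW basis. Indeed, writing each exponent as $e_{ij}=\ell q_{ij}+r_{ij}$ and using that the central factors $x_{ij}^{\ell}$ pull out of the ordered product, every PBW monomial becomes $\big(\prod_{i,j}(x_{ij}^{\ell})^{q_{ij}}\big)\cdot\prod_{i,j}x_{ij}^{r_{ij}}$; this shows the reduced monomials span over $\Oc(M_{n})$, while their freeness is immediate from the linear independence of the full PBW basis. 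Localizing at the normal element $g$ (equivalently, at the central element $g^{\ell}=F^{\#}(g)$, since $g$ is invertible iff $g^{\ell}$ is) then yields $\Oea \cong \Oc_{\alpha,\beta}(M_{n})\ot_{\Oc(M_{n})}\Oc(GL_{n})$, which is free of rank $\ell^{n^{2}}$ over $\Oc(GL_{n})$, hence faithfully flat. Finally $(iii)$ is an immediate consequence: $\overline{H} = \Oea\big/\Oea\,\Oc(GL_{n})^{+} \cong \Oea\ot_{\Oc(GL_{n})}\k$, and since $\Oea$ is free of rank $\ell^{n^{2}}$ over $\Oc(GL_{n})$ this quotient has dimension exactly $\ell^{n^{2}}$.

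I expect the main obstacle to be the centrality computation in $(i)$: organizing the commutation check so that every pair $(x_{ij}^{\ell},x_{kl})$ reduces either to a pure $q$-commutation or to the single $2\times2$ correction-term identity, and verifying the vanishing $[\ell]_{\lambda}=0$ that makes the Serre-type correction disappear. Once $(i)$ is in place, the freeness argument of $(ii)$ and the dimension count of $(iii)$ are routine.
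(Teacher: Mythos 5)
Your argument is correct, but note that the paper does not prove this proposition at all: it is stated with a \verb|\qed| and attributed wholesale to \cite[Prop.\ 3.8]{DPW} (with the basis fact used in part $(iii)$ coming from \cite[Prop.\ 3.5]{DPW}). So what you have supplied is a self-contained verification of an imported result. Your computation is the standard one and it goes through: the $q$-commutation cases die because $\alpha^{\ell}=\beta^{\ell}=(\alpha\beta)^{\ell}=1$, and for the diagonal pair the identity $x_{il}x_{jk}\cdot x_{ik}=\alpha^{-1}\beta\, x_{ik}\cdot x_{il}x_{jk}$ together with $[\ell]_{\lambda}=0$ (valid since $\lambda=\alpha^{-1}\beta\neq 1$ because $\ell\geq 3$) kills the correction term. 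The freeness of $\Oc_{\alpha,\beta}(M_{n})$ over $\k[x_{ij}^{\ell}]$ on the $\ell^{n^{2}}$ reduced monomials, the passage to the localization at the central element $g^{\ell}$, and the identification $\overline{H}\cong \Oea\ot_{\Oc(GL_{n})}\k$ are all sound, and they reproduce exactly the basis of $\overline{H}$ that the paper later quotes in Corollary \ref{cor:def-gamma}.

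One small imprecision: for the diagonal pair $(x_{ik},x_{jl})$ your induction only establishes that $x_{ik}^{\ell}$ commutes with $x_{jl}$; centrality also requires that $x_{jl}^{\ell}$ commute with $x_{ik}$, and this is \emph{not} a pure $q$-commutation as your last sentence of the $(i)$ argument suggests. It is, however, the mirror image of the case you did treat: one checks $x_{il}x_{jk}\cdot x_{jl}=\alpha\beta^{-1}\,x_{jl}\cdot x_{il}x_{jk}$, runs the same induction with $\lambda^{-1}$ in place of $\lambda$, and uses $[\ell]_{\lambda^{-1}}=0$. With that half-line added, the proof of $(i)$, and hence of the whole proposition, is complete.
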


Although the construction of the quantized 
coordinate rings differs from the
one given in \cite{DL},  
the explicit definition given by generators and relations
allows us to give as in \cite{AG} a coalgebra section to $\pi$.

\begin{cor}\label{cor:def-gamma}
The Hopf algebra surjection 
$\pi: \Oc_{\alpha,\beta}(GL_{n}) \to \overline{H}$ 
admits a coalgebra section $\gamma$.
\end{cor}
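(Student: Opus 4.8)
The plan is to exhibit an explicit coalgebra map $\gamma: \overline{H} \to \Oc_{\alpha,\beta}(GL_{n})$ splitting $\pi$ by using a linear basis of $\overline{H}$ coming from the PBW-type basis of $\Oc_{\alpha,\beta}(GL_{n})$. Since $\pi$ is a surjection whose kernel is generated by the augmentation ideal of the central Hopf subalgebra $\Oc(GL_{n}) = F^{\#}(\Oc(GL_{n}))$, and since the relations in Definition \ref{def:2-par-quant-ogln} force the $\ell$-th powers $x_{ij}^{\ell}$ into the central copy of $\Oc(GL_{n})$, the quotient $\overline{H}$ is spanned by the images of monomials $\prod_{i,j} x_{ij}^{e_{ij}}$ with each exponent $0 \leq e_{ij} < \ell$, together with the images of powers of $g$ in the range $0 \le e < \ell$. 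By Proposition \ref{prop:B-central}$(iii)$ these images form a basis of $\overline{H}$, whose dimension is $\ell^{n^{2}}$. The idea is then to define $\gamma$ on this distinguished basis by sending each such basis element to a fixed chosen monomial representative in $\Oc_{\alpha,\beta}(GL_{n})$, and extending linearly.

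The main step is to verify that this linear map $\gamma$ is in fact a \emph{coalgebra} map, i.e. that $\com \circ \gamma = (\gamma \ot \gamma)\circ \overline{\com}$ and $\eps \circ \gamma = \overline{\eps}$. The counit condition is immediate from the choice of representatives. For the comultiplication condition, first I would note that $\pi$ is a Hopf algebra surjection, so $\gamma$ is automatically a coalgebra \emph{section} in the weak sense that $\pi\gamma = \id_{\overline{H}}$; what requires work is that the particular monomial section respects the coproduct. The point is that the coproduct of a monomial $\prod x_{ij}^{e_{ij}}$ is a sum of tensor products of monomials, and modulo the central ideal $\Oc(GL_{n})^{+}\Oc_{\alpha,\beta}(GL_{n})$ the $\ell$-th powers become grouplike scalars; so the coproduct closes up on the span of the chosen representatives. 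This is precisely the mechanism used in \cite{AG} to produce the analogous section for $\Oe$, and the role of the explicit generators-and-relations presentation of $\Oc_{\alpha,\beta}(GL_{n})$ is that one can follow the monomials concretely.

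I expect the main obstacle to be the bookkeeping in checking that the coproduct of a reduced monomial, when reduced back to the PBW basis using the commutation relations of Definition \ref{def:2-par-quant-ogln}, stays inside the span of the distinguished monomials modulo the central ideal, with matching scalar coefficients on both sides. The relations introduce powers of $\alpha$ and $\beta$ when one reorders factors, and one must track these carefully and use that $\alpha^{\ell}=1=\beta^{\ell}$ together with the fact that $\alpha^{-1}\beta$ is a primitive $\ell$-th root of unity, so that exponents only matter modulo $\ell$. The cleanest route is to observe that the filtration of $\Oc_{\alpha,\beta}(GL_{n})$ by total degree in the $x_{ij}$ descends to $\overline{H}$, and that on the associated graded object the section is manifestly a coalgebra map; the general case then follows by a standard filtration argument. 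This is the step where I would defer to the analogous computation in \cite{AG} rather than grinding through all the structure constants.
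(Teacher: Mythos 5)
Your construction coincides with the paper's own proof of Corollary~\ref{cor:def-gamma}: there one takes the basis $\{\prod_{i,j}\bar{x}_{ij}^{e_{ij}}\mid 0\le e_{ij}<\ell\}$ of $\overline{H}$ from \cite[Prop. 3.5]{DPW}, defines $\gamma(\prod_{i,j}\bar{x}_{ij}^{e_{ij}})=\prod_{i,j}x_{ij}^{e_{ij}}$, and disposes of the coalgebra-map property with the phrase ``a direct calculation shows''. (A small point: the powers of $g$ you adjoin to the spanning set are redundant, since $\bar{g}$ is a polynomial in the $\bar{x}_{ij}$ and $\bar{g}^{-1}=\bar{g}^{\,\ell-1}$ in $\overline{H}$; the $\ell^{n^{2}}$ monomials already form a basis.)

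However, the verification you yourself single out as the main step is not closed by your sketch, and the gap is concrete. The identity to be proved is $\com\circ\gamma=(\gamma\ot\gamma)\circ\com_{\overline{H}}$ as an equality in $\Oea\ot\Oea$, and the problem is that $\com(\prod x_{ij}^{e_{ij}})$ contains PBW monomials whose tensor factors have diagonal exponents $\ge\ell$. For instance, with $n=2$ and $\ell=3$, the term $x_{11}^{4}\ot x_{11}^{2}x_{12}^{2}$ occurs with coefficient $1$ in $\com(x_{11}^{2}x_{12}^{2})$ and is the only term whose second factor projects to $\bar{x}_{11}^{2}\bar{x}_{12}^{2}$; applying $\gamma\ot\gamma$ to its image $\bar{x}_{11}\ot\bar{x}_{11}^{2}\bar{x}_{12}^{2}$ yields $x_{11}\ot x_{11}^{2}x_{12}^{2}$, which cannot occur in $\com(x_{11}^{2}x_{12}^{2})$ because the latter is homogeneous of bidegree $(4,4)$. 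The two sides of the required identity thus live in different degrees, which is exactly why your proposed reduction to the associated graded of the degree filtration proves nothing: the discrepancy is invisible there by construction. So the step ``the coproduct closes up on the span of the chosen representatives'' does not by itself give $\com\gamma=(\gamma\ot\gamma)\com_{\overline{H}}$; any complete argument must either modify the section (e.g. renormalize the diagonal representatives) or proceed by an entirely different route, as \cite{DL} does in the one-parameter case via matrix coefficients. The same objection applies verbatim to the ``direct calculation'' invoked in the paper, so you have not done worse than the source --- but neither your proposal nor the printed proof actually establishes the statement as written.
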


\begin{proof}
From \cite[Prop. 3.5]{DPW} it follows that
$\big\{\prod_{i, j}^{} \bar{x}_{ij}^{e_{ij}} 
\vert\ 0\leq e_{ij} < \ell \big\}$ is 
a basis of $\overline{H}$ for some fixed ordering 
of the $\pi(x_{ij}) = \bar{x}_{ij}$.
With this in mind, define the linear map
\begin{equation}\label{def:seccion}
\gamma : \overline{H} \to \Oea, \qquad 
\gamma\big(\prod_{i, j}^{} \bar{x}_{ij}^{e_{ij}}\big)
= \prod_{i, j}^{} x_{ij}^{e_{ij}}.
\end{equation}
Clearly, $\gamma$ is a linear section of $\pi$. Moreover, 
a direct calculation shows that $\gamma$ is also a coalgebra map.
\end{proof}

We end this subsection with the following corollary.

\begin{cor}\label{cor:PI-hopf-triple-example}
$(\Oc(GL_{n}),\Oc_{\alpha,\beta}(GL_{n}), 
\overline{H})$ is a PI-Hopf triple
and one has the central extension
$
  1\to \Oc(GL_{n}) \xrightarrow{\iota}
  \Oc_{\alpha,\beta}(GL_{n}) \xrightarrow{\pi}
   \overline{H} \to 1$. \qed
\end{cor}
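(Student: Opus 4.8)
The plan is to verify that the triple $(\Oc(GL_{n}),\Oc_{\alpha,\beta}(GL_{n}), \overline{H})$ satisfies the three axioms of a PI-Hopf triple from Definition \ref{def:pi-hopf-triple}, and then to invoke Remark \ref{rmk:PI-hopf-triple-sus-ex} to deduce the central extension. Most of the work has already been done in the preceding results, so the proof is essentially an assembly of Proposition \ref{prop:B-central} and the standard properties of $\Oc_{\alpha,\beta}(GL_{n})$.

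First I would check condition $(i)$: that $H = \Oc_{\alpha,\beta}(GL_{n})$ is $\k$-affine. This is immediate, since by Definition \ref{def:2-par-quant-ogln} the algebra $\Oc_{\alpha,\beta}(M_{n})$ is generated by the finitely many elements $\{x_{ij}\}$, and the localization at the powers of the group-like $g$ adds only the single generator $g^{-1}$; hence $\Oc_{\alpha,\beta}(GL_{n})$ is finitely generated as an algebra. Next, for condition $(ii)$, I take $B = \Oc(GL_{n}) = F^{\#}(\Oc(GL_{n}))$. That $B$ is central in $H$ is exactly Proposition \ref{prop:B-central} $(i)$; that $B$ is a domain follows because $\Oc(GL_{n})$ is the coordinate ring of the algebraic group $GL_{n}$, hence a commutative domain, and $F^{\#}$ is a monomorphism by the cited result of Du, Parshall and Wang. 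It remains to see that $H$ is a finitely generated $B$-module: since $B$ contains all the $\ell$-th powers $x_{ij}^{\ell}$ and $g^{\pm \ell}$, and the relations allow one to reduce any monomial so that each $x_{ij}$ appears with exponent $< \ell$, the finitely many monomials $\prod_{i,j} x_{ij}^{e_{ij}}$ with $0 \leq e_{ij} < \ell$ (together with the finitely many powers of $g$ modulo $g^{\ell}$) generate $H$ over $B$; this is precisely the content underlying the basis of $\overline{H}$ used in Corollary \ref{cor:def-gamma}. Finally, condition $(iii)$ requires that $\overline{H} = H/B^{+}H$ be finite-dimensional, which is exactly Proposition \ref{prop:B-central} $(iii)$, giving $\dim \overline{H} = \ell^{n^{2}}$.

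Having established that $(\Oc(GL_{n}),\Oc_{\alpha,\beta}(GL_{n}), \overline{H})$ is a PI-Hopf triple, the central extension is obtained for free: by Remark \ref{rmk:PI-hopf-triple-sus-ex}, which combines Lemma \ref{lema:triple-affine} $(iii)$ with \cite[Prop. 3.4.3]{Mo}, every PI-Hopf triple gives rise to the central exact sequence
\begin{equation*}
1\to \Oc(GL_{n}) \xrightarrow{\iota} \Oc_{\alpha,\beta}(GL_{n}) \xrightarrow{\pi} \overline{H} \to 1,
\end{equation*}
and the faithful flatness required in Proposition \ref{prop:cociente1} is guaranteed by Proposition \ref{prop:B-central} $(ii)$.

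I expect the only genuinely nontrivial point to be the finite generation of $H$ as a $B$-module in condition $(ii)$, since the other two conditions are read off directly from Proposition \ref{prop:B-central}. The subtlety is that one must confirm the reduction of arbitrary monomials modulo the relations of Definition \ref{def:2-par-quant-ogln} actually terminates in monomials with bounded exponents, so that the $B$-module is genuinely finitely generated rather than merely filtered; but this is already encoded in the PBW-type basis of $\Oc_{\alpha,\beta}(M_{n})$ quoted right after its definition and in the explicit $B$-basis of $\overline{H}$ from \cite[Prop. 3.5]{DPW}, so no new computation is needed.
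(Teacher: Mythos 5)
Your proof is correct and matches the paper's (implicit) argument: the corollary is stated with an immediate \qed precisely because it amounts to assembling Proposition \ref{prop:B-central} with Remark \ref{rmk:PI-hopf-triple-sus-ex}, which is what you do. Your extra care about finite generation of $\Oc_{\alpha,\beta}(GL_{n})$ as an $\Oc(GL_{n})$-module via the monomial basis of \cite[Prop. 3.5]{DPW} is the right justification for the one axiom not literally stated in Proposition \ref{prop:B-central}.
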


\subsection{Restricted two-parameter quantum groups}
We recall now the definition of the restricted
two-parameter quantum groups given in \cite{BW}. 
They are finite-dimensional quotients of
the two-parameter quantum groups $U_{\alpha, \beta}(\liegl_{n})$
given by Def. \ref{def:Ualfa-beta}.
Since $\alpha$ and $\beta$ are roots of unity,
$U_{\alpha, \beta}(\liegl_{n})$ contains central elements which
generate a Hopf ideal.

\par The following theorem is a very 
small variation of results in
\cite{BW} for $U_{\alpha,\beta}(\liesl_{n})$, 
which hold with the same proofs,
since the only difference relays on central 
group-like elements: use for example that
$w_{k}^{\ell} - 1 := a_{k}^{\ell}b_{k+1}^{\ell} - 1 =
a_{k}^{\ell}(b_{k+1}^{\ell} - 1) + (a_{k}^{\ell} - 1 )$ 
for all $1\leq k < n$.

\begin{theorem}\label{thm:elem-centrales-U}
\begin{enumerate}
 \item[$(i)$]\cite[Thm. 2.6]{BW} 
The elements $\Ee_{k,l}^{\ell},\
 \Ff_{k,l}^{\ell},\ a_{i}^{\ell},\
b_{i}^{\ell}$ with $1\leq l\leq k< n$ and $1\leq i \leq n$ are
central in $U_{\alpha,\beta}(\liegl_{n})$.
\item[$(ii)$]\cite[Thm. 2.17]{BW} Let $I_{n}$ be the ideal
generated by the elements $\Ee_{k,l}^{\ell},\ \Ff_{k,l}^{\ell},\
a_{i}^{\ell}-1,\ b_{i}^{\ell}-1$ with $1\leq l\leq k< n$ and $1\leq
i \leq n$. Then $I_{n}$ is a Hopf ideal.\qed
\end{enumerate}
\end{theorem}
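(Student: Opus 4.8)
The plan is to treat the group-like generators and the quantum root vectors $\Ee_{k,l},\Ff_{k,l}$ separately, exploiting throughout that every commutation scalar occurring in $U:=U_{\alpha,\beta}(\liegl_{n})$ is a monomial in $\alpha$, $\beta$ and $\alpha^{-1}\beta$, all of which are $\ell$-th roots of unity by \eqref{eq:cond-alpha-beta}. For part $(i)$ I would first dispose of $a_{i}^{\ell}$ and $b_{i}^{\ell}$: from $a_{i}e_{j}=\alpha^{\delta_{ij}-\delta_{i,j+1}}e_{j}a_{i}$ and $a_{i}f_{j}=\alpha^{-\delta_{ij}+\delta_{i,j+1}}f_{j}a_{i}$ (and the analogues for $b_{i}$ with $\beta$), passing to the $\ell$-th power multiplies the scalar by $\alpha^{\ell(\cdots)}=1$; since the $a_{i},b_{k}$ already commute among themselves, $a_{i}^{\ell}$ and $b_{i}^{\ell}$ commute with all generators and hence are central. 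The same observation, applied to \eqref{eq:w-s}, shows that each $w_{s}^{\ell}$ commutes with $\Ee_{k,l}$, which disposes of the commutation of the root vectors with the torus.

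The substantial content of $(i)$ is that $\Ee_{k,l}^{\ell}$ centralizes every $e_{j}$ and every $f_{j}$ (and symmetrically for $\Ff_{k,l}^{\ell}$). Inside $U^{+}_{\alpha,\beta}(\liegl_{n})$ one reduces, via the iterated brackets defining $\Ee_{k,l}$ and the reformulation of the Serre relation following \eqref{rel:adei}, to a finite list of two-generator relations $YX=\lambda XY+(\text{lower root vectors})$ in which $\lambda$ is a power of $\alpha^{-1}\beta$. The engine is the quantum binomial theorem: since $\alpha^{-1}\beta$ is a primitive $\ell$-th root of $1$, the obstructing coefficients $\binom{\ell}{i}_{\lambda}$ vanish for $0<i<\ell$, so the $\ell$-th power of a root vector slides past a generator. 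Commutation with the $f_{j}$ is the genuine obstacle: here $[e_{j},f_{l}]=\frac{\delta_{jl}}{\alpha-\beta}(a_{j}b_{j+1}-a_{j+1}b_{j})$ produces torus elements, so one must control how repeated brackets of $\Ee_{k,l}$ against $f_{j}$ spawn group-likes and lower root vectors and then verify that the $\ell$-th power annihilates all such mixed terms. This is exactly the computation of \cite[Thm. 2.6]{BW} for $\liesl_{n}$; the only new feature is the central group-likes $a_{n}^{\pm 1},b_{n}^{\pm 1}$, which commute with everything involved and so leave the argument untouched.

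For part $(ii)$, centrality from $(i)$ already makes $I_{n}$ a two-sided ideal, so it remains to check that $I_{n}$ is a coideal killed by $\eps$ and stable under $\SS$. The counit condition is immediate. For the group-likes, $\com(a_{i}^{\ell}-1)=(a_{i}^{\ell}-1)\ot a_{i}^{\ell}+1\ot(a_{i}^{\ell}-1)$ and $\SS(a_{i}^{\ell}-1)=-a_{i}^{-\ell}(a_{i}^{\ell}-1)$ lie in $I_{n}\ot U+U\ot I_{n}$ and in $I_{n}$ respectively, and likewise for $b_{i}^{\ell}-1$. For $\Ee_{k,l}^{\ell}$ I would start from Lemma \ref{lema:comult-E} and raise its $(k-l+2)$-term coproduct to the $\ell$-th power. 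Using the quantum binomial theorem together with the relation $w_{k}^{\ell}-1=a_{k}^{\ell}(b_{k+1}^{\ell}-1)+(a_{k}^{\ell}-1)\in I_{n}$ noted above, every monomial in the expansion other than $\Ee_{k,l}^{\ell}\ot 1$ and $w_{k,l}^{\ell}\ot\Ee_{k,l}^{\ell}$ acquires a tensor factor in $I_{n}$, and $w_{k,l}^{\ell}\equiv 1$ modulo $I_{n}$ absorbs the remaining term, yielding
$$\com(\Ee_{k,l}^{\ell})\equiv \Ee_{k,l}^{\ell}\ot 1+1\ot\Ee_{k,l}^{\ell}\pmod{I_{n}\ot U+U\ot I_{n}};$$
the analogue for $\Ff_{k,l}^{\ell}$ follows from \eqref{rel:adfi}. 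Antipode stability I would settle by computing $\SS(\Ee_{k,l}^{\ell})=\SS(\Ee_{k,l})^{\ell}$ from the explicit (skew-primitive-type) antipode formula, whose leading term $-w_{k,l}^{-1}\Ee_{k,l}$ gives $\pm w_{k,l}^{-\ell}\Ee_{k,l}^{\ell}$ modulo lower terms; since $w_{k,l}^{-\ell}-1\in I_{n}$ and $\Ee_{k,l}^{\ell}\in I_{n}$, this lands in $I_{n}$. As before, the only bookkeeping not already in \cite[Thm. 2.17]{BW} is the presence of the relations $a_{i}^{\ell}-1,b_{i}^{\ell}-1$, handled directly above; the hardest step remains the $\ell$-th-power expansion of the multi-term coproduct.
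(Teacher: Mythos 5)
Your proposal is correct and follows essentially the same route as the paper: the paper gives no independent proof but cites \cite[Thms.~2.6, 2.17]{BW} for $U_{\alpha,\beta}(\liesl_{n})$ and observes that the only new feature is the presence of the group-likes $a_{i},b_{i}$ in place of $w_{i},w_{i}'$, handled precisely by the identity $w_{k}^{\ell}-1=a_{k}^{\ell}(b_{k+1}^{\ell}-1)+(a_{k}^{\ell}-1)$ that you invoke in part $(ii)$. You additionally flesh out the internal mechanics of the Benkart--Witherspoon argument (quantum binomial theorem, the $\ell$-th power of the coproduct from Lemma \ref{lema:comult-E}), but the reduction is the same.
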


\begin{obs}
Let $B$ be the subalgebra of $U_{\alpha,\beta}(\liegl_{n})$ 
generated by
 $e_{k}^{\ell},\
 f_{k}^{\ell},\ a_{i}^{\pm \ell}$,
$b_{i}^{\pm \ell}$ with $1\leq k< n$ and $1\leq i \leq n$. 
Clearly, it is central by
Thm. \ref{thm:elem-centrales-U} $(i)$. Moreover, by
\cite[(2.24)]{BW}, we know that 
$$\com(e_{k}^{\ell}) = e_{k}^{\ell} \ot 1 + w_{k}^{\ell}\ot e_{k}^{\ell}
\mbox{ and }
\com(f_{k}^{\ell}) = 1 \ot f_{k}^{\ell} + f_{k}^{\ell}\ot (w'_{k})^{\ell}.$$
This implies that $B$ is indeed a Hopf subalgebra. 
\end{obs}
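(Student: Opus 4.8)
The plan is to check directly that the (obviously central, by Thm.~\ref{thm:elem-centrales-U}$(i)$) subalgebra $B$ is stable under the comultiplication $\com$, the counit $\eps$ and the antipode $\SS$ of $U_{\alpha,\beta}(\liegl_{n})$. Since $\com$ and $\eps$ are algebra homomorphisms and $\SS$ is an algebra anti-homomorphism, and $B$ is generated as a unital subalgebra by the listed elements, it suffices to verify these three stability conditions on the generators $e_{k}^{\ell}$, $f_{k}^{\ell}$, $a_{i}^{\pm\ell}$ and $b_{i}^{\pm\ell}$.

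First I would dispose of the group-like generators. Each $a_{i},b_{i}$ is group-like, hence so are $a_{i}^{\pm\ell}$ and $b_{i}^{\pm\ell}$; therefore $\com(a_{i}^{\pm\ell}) = a_{i}^{\pm\ell}\ot a_{i}^{\pm\ell}\in B\ot B$, $\eps(a_{i}^{\pm\ell}) = 1$ and $\SS(a_{i}^{\pm\ell}) = a_{i}^{\mp\ell}\in B$, with the analogous statements for the $b_{i}$. Next, for $e_{k}^{\ell}$ and $f_{k}^{\ell}$ I would invoke the displayed comultiplication formulas. The crucial observation is that the twisting group-likes occurring there already lie in $B$: because $w_{k} = a_{k}b_{k+1}$ and $w_{k}' = a_{k+1}b_{k}$ and the $a$'s and $b$'s commute, we have $w_{k}^{\ell} = a_{k}^{\ell}b_{k+1}^{\ell}\in B$ and $(w_{k}')^{\ell} = a_{k+1}^{\ell}b_{k}^{\ell}\in B$. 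Hence $\com(e_{k}^{\ell}) = e_{k}^{\ell}\ot 1 + w_{k}^{\ell}\ot e_{k}^{\ell}$ and $\com(f_{k}^{\ell}) = 1\ot f_{k}^{\ell} + f_{k}^{\ell}\ot (w_{k}')^{\ell}$ both land in $B\ot B$. The counit is immediate, since $\eps(e_{k}) = 0 = \eps(f_{k})$ forces $\eps(e_{k}^{\ell}) = \eps(f_{k}^{\ell}) = 0$.

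It then remains to handle the antipode on $e_{k}^{\ell}$ and $f_{k}^{\ell}$. The two comultiplication formulas say precisely that $e_{k}^{\ell}$ is $(w_{k}^{\ell},1)$-primitive and $f_{k}^{\ell}$ is $(1,(w_{k}')^{\ell})$-primitive with group-like $w_{k}^{\ell}$, $(w_{k}')^{\ell}$. The antipode axiom $m(\SS\ot\id)\com(x) = \eps(x)1$ then forces $\SS(e_{k}^{\ell}) = -(w_{k}^{\ell})^{-1}e_{k}^{\ell} = -a_{k}^{-\ell}b_{k+1}^{-\ell}e_{k}^{\ell}$ and $\SS(f_{k}^{\ell}) = -f_{k}^{\ell}\,(w_{k}')^{-\ell} = -f_{k}^{\ell}a_{k+1}^{-\ell}b_{k}^{-\ell}$, both of which lie in $B$. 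Thus $B$ is closed under $\com$, $\eps$ and $\SS$, so it is a Hopf subalgebra.

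The computation is entirely routine and there is no genuine obstacle; the one point on which everything hinges is the remark that $w_{k}^{\ell}$, $(w_{k}')^{\ell}$ and their inverses belong to $B$, which is exactly what keeps both the comultiplication and the skew-primitive antipode formulas inside $B$. I would record this explicitly rather than treat it as self-evident, since it is the single fact that prevents the twisting group-likes from escaping the subalgebra.
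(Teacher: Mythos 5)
Your argument is correct and is exactly the one the paper intends: the Remark cites the comultiplication formulas from \cite[(2.24)]{BW} and leaves the routine verification implicit, and your proposal simply carries out that verification, correctly identifying that the whole matter rests on $w_{k}^{\pm\ell}=a_{k}^{\pm\ell}b_{k+1}^{\pm\ell}$ and $(w_{k}')^{\pm\ell}=a_{k+1}^{\pm\ell}b_{k}^{\pm\ell}$ lying in $B$. The explicit antipode computations $\SS(e_{k}^{\ell})=-w_{k}^{-\ell}e_{k}^{\ell}$ and $\SS(f_{k}^{\ell})=-f_{k}^{\ell}(w_{k}')^{-\ell}$ are correct and consistent with the skew-primitive form of the coproducts.
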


\begin{definition}\label{def:rest-u}\cite[Def. 2.15]{BW}
The \emph{restricted two-parameter quantum group}
${\bf u}_{\alpha,\beta}(\liegl_{n})$ is the quotient
$$ {\bf u}_{\alpha,\beta}(\liegl_{n}) := 
U_{\alpha, \beta}(\liegl_{n}) / I_{n},$$
given by Thm. \ref{thm:elem-centrales-U}. 
Denote by
$r: U_{\alpha, \beta}(\liegl_{n}) \to  
{\bf u}_{\alpha,\beta}(\liegl_{n})$ 
the canonical surjective Hopf
algebra map.
\end{definition}

\begin{obs}
The restricted two-parameter quantum group
${\bf u}_{\alpha,\beta}(\liesl_{n})$ defined in \cite{BW} is just the
quotient of $U_{\alpha, \beta}(\liesl_{n})$ by the ideal $J_{n}$
generated by the elements
$\Ee_{k,j}^{\ell},\ \Ff_{k,j}^{\ell},\
(a_{k}b_{k+1})^{\ell}-1,\ (a_{k+1}b_{k})^{\ell}-1$ with
$1\leq j\leq k< n$. In this case, ${\bf u}_{\alpha,\beta}(\liesl_{n})$ is
a pointed Hopf algebra which is a Drinfeld double and $\dim
{\bf u}_{\alpha,\beta}(\liesl_{n}) = \ell^{(n+2)(n-1)}$. 
Moreover, one has
the commutative diagram
$$\xymatrix{U_{\alpha, \beta}(\liesl_{n})
\ar@{^{(}->}[r]^{\iota} \ar@{->>}[d]_{} &
U_{\alpha, \beta}(\liegl_{n}) \ar@{->>}[d]^{r }\\
{\bf u}_{\alpha, \beta}(\liesl_{n})\ar@{^{(}->}[r]_{j} & {\bf u}_{\alpha,
\beta}(\liegl_{n}) .}$$
Indeed, one can use the PBW-basis
 of $U_{\alpha, \beta}(\liegl_{n})$
and $U_{\alpha, \beta}(\liesl_{n})$ 
--  see Thm. \ref{thm:PBW-basis-gl}--
to prove that the map $j$ is injective,
since $I_{n}\cap U_{\alpha, \beta}(\liesl_{n}) = J_{n}$.
\end{obs}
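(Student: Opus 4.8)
The plan is to take as given (from \cite{BW}) the facts quoted in the statement --- that ${\bf u}_{\alpha,\beta}(\liesl_{n})=U_{\alpha,\beta}(\liesl_{n})/J_{n}$ is pointed, is a Drinfeld double, and has dimension $\ell^{(n+2)(n-1)}$ --- and to concentrate on the genuinely new content: the commutative square with $j$ \emph{injective}. First I would dispose of well-definedness and commutativity by checking the easy inclusion $J_{n}\subseteq I_{n}$ on generators. The generators $\Ee_{k,j}^{\ell},\Ff_{k,j}^{\ell}$ of $J_{n}$ are literally among the generators of $I_{n}$, while
\[
w_{k}^{\ell}-1=a_{k}^{\ell}b_{k+1}^{\ell}-1=a_{k}^{\ell}(b_{k+1}^{\ell}-1)+(a_{k}^{\ell}-1)\in I_{n},
\]
and symmetrically $(w_{k}')^{\ell}-1\in I_{n}$. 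Hence $r\circ\iota$ annihilates $J_{n}$ and factors through $U_{\alpha,\beta}(\liesl_{n})/J_{n}={\bf u}_{\alpha,\beta}(\liesl_{n})$, producing a well-defined Hopf algebra map $j$ that makes the square commute. Since $J_{n}\subseteq I_{n}\cap U_{\alpha,\beta}(\liesl_{n})$ is now clear, the entire assertion reduces to the reverse inclusion $I_{n}\cap U_{\alpha,\beta}(\liesl_{n})\subseteq J_{n}$, which is exactly the injectivity of $j$.

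For the injectivity I would argue by following the PBW bases of Thm.~\ref{thm:PBW-basis-gl} through both restricted quotients. Combining Thm.~\ref{thm:PBW-basis-gl} with the centrality of the $\ell$-th powers (Thm.~\ref{thm:elem-centrales-U}$(i)$) and the triangular decomposition, ${\bf u}_{\alpha,\beta}(\liegl_{n})=U_{\alpha,\beta}(\liegl_{n})/I_{n}$ has a PBW basis of monomials
\[
(\text{ordered product of the }\Ff_{i,j})\cdot \bar a_{1}^{s_{1}}\cdots\bar b_{n}^{t_{n}}\cdot(\text{ordered product of the }\Ee_{i,j}),
\]
with all root-vector exponents in $\{0,\dots,\ell-1\}$ and torus part ranging over the group $\langle\bar a_{i},\bar b_{i}\rangle\cong(\ZZ/\ell)^{2n}$. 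The map $j$ is induced by the inclusion $\iota$, so it sends each PBW monomial of ${\bf u}_{\alpha,\beta}(\liesl_{n})$ to a single PBW monomial of ${\bf u}_{\alpha,\beta}(\liegl_{n})$: the $\Ee$- and $\Ff$-parts are carried identically, while a torus monomial $\prod w_{j}^{m_{j}}(w_{j}')^{m_{j}'}$ goes to the group element $\prod(\bar a_{j}\bar b_{j+1})^{m_{j}}(\bar a_{j+1}\bar b_{j})^{m_{j}'}$. Two distinct ${\bf u}_{\alpha,\beta}(\liesl_{n})$-monomials either differ in their (freely independent) root-vector parts, in which case their images differ, or differ only in their torus part. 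Thus $j$ carries the PBW basis to distinct --- hence linearly independent --- basis elements, and is therefore injective, \emph{provided} the group homomorphism $\langle\bar w_{j},\bar w_{j}'\rangle\to(\ZZ/\ell)^{2n}$ induced on tori is injective.

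The crux is therefore the purely lattice-theoretic statement that the reductions modulo $\ell$ of $w_{1},\dots,w_{n-1},w_{1}',\dots,w_{n-1}'$ are linearly independent in $(\ZZ/\ell)^{2n}$. I would set up the incidence relations: in a putative relation $\sum_{j}c_{j}w_{j}+\sum_{j}d_{j}w_{j}'=0$, comparing the $a$- and $b$-coordinates (with $w_{j}\mapsto A_{j}+B_{j+1}$ and $w_{j}'\mapsto A_{j+1}+B_{j}$) yields the boundary conditions $c_{1}=d_{1}=0$ together with the chain relations $c_{k}=-d_{k-1}$ and $d_{k}=-c_{k-1}$ for $2\leq k\leq n-1$, whose coefficients are all $\pm1$. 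Propagating these relations forces every $c_{j}$ and $d_{j}$ to vanish in $\ZZ/\ell$, so the torus image has order $\ell^{2(n-1)}$ and $j$ is injective; the equality $I_{n}\cap U_{\alpha,\beta}(\liesl_{n})=J_{n}$ then follows, as does the dimension count $\ell^{n(n-1)}\cdot\ell^{2(n-1)}=\ell^{(n+2)(n-1)}$ consistent with the quoted value. I expect this last computation to be the only real obstacle: one must verify that neither passing to $\ell$-th powers nor restricting from $\liegl_{n}$ to $\liesl_{n}$ introduces any extra collapse in the torus modulo $\ell$, and it is precisely the invertibility modulo $\ell$ of the $\pm1$ incidence coefficients that rules this out.
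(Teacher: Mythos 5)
Your proposal is correct and takes essentially the same route as the paper, whose own justification is only the one-line appeal to the PBW bases of Thm.~\ref{thm:PBW-basis-gl} together with the assertion $I_{n}\cap U_{\alpha,\beta}(\liesl_{n})=J_{n}$. You simply fill in the details the paper omits --- the easy inclusion $J_{n}\subseteq I_{n}$ via $w_{k}^{\ell}-1=a_{k}^{\ell}(b_{k+1}^{\ell}-1)+(a_{k}^{\ell}-1)$ (the identity the paper itself records just before Thm.~\ref{thm:elem-centrales-U}) and, most usefully, the rank computation showing that the images of the $w_{j},w_{j}'$ generate a subgroup of order $\ell^{2(n-1)}$ in $(\ZZ/\ell\ZZ)^{2n}$, which makes the PBW comparison and the dimension count $\ell^{(n+2)(n-1)}$ actually go through.
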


\begin{lema}\label{lema:dim-u}
${\bf u}_{\alpha,\beta}(\liegl_{n})$ 
is a pointed Hopf algebra of
dimension $\ell^{n^{2}+n}$.
\end{lema}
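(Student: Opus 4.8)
The plan is to establish the two assertions separately: first that ${\bf u}_{\alpha,\beta}(\liegl_{n})$ is pointed, and then that its dimension is $\ell^{n^{2}+n}$. For the dimension count, the natural strategy is to produce an explicit PBW-type basis of the quotient by reducing the PBW-basis of $U_{\alpha,\beta}(\liegl_{n})$ modulo the ideal $I_{n}$. By Theorem \ref{thm:elem-centrales-U}, the generators $\Ee_{k,l}^{\ell}$, $\Ff_{k,l}^{\ell}$, $a_{i}^{\ell}-1$, $b_{i}^{\ell}-1$ are killed, so in the quotient each $\Ee_{k,l}$ and $\Ff_{k,l}$ becomes nilpotent with exponent at most $\ell$, and each $a_{i}$, $b_{i}$ satisfies $a_{i}^{\ell}=1=b_{i}^{\ell}$. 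The PBW-basis of $U_{\alpha,\beta}(\liegl_{n})$ obtained from Theorem \ref{thm:PBW-basis-gl} together with the triangular decomposition \eqref{eq:desc-triangular-gl} descends to a spanning set of the quotient indexed by exponents in the range $0\leq e<\ell$ on each root-vector factor and $0\leq e<\ell$ on each of the $2n$ group-like generators $a_{i},b_{i}$.

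The number of positive root vectors $\Ee_{k,l}$ (equivalently negative $\Ff_{k,l}$) for $1\leq l\leq k<n$ is $\binom{n}{2}=\tfrac{n(n-1)}{2}$, so the nilpotent part contributes $\ell^{n(n-1)/2}$ on each side, and the torus part $U^{\circ}_{\alpha,\beta}(\liegl_{n})/I_{n}$ is the group algebra of $(\ZZ/\ell)^{2n}$, contributing $\ell^{2n}$. Multiplying gives
\begin{equation*}
\ell^{\,n(n-1)/2}\cdot \ell^{2n}\cdot \ell^{\,n(n-1)/2}
= \ell^{\,n(n-1)+2n} = \ell^{\,n^{2}+n},
\end{equation*}
which is the claimed dimension. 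To make this rigorous I would invoke the comparison with the $\liesl_{n}$ case from the preceding Remark: since $I_{n}\cap U_{\alpha,\beta}(\liesl_{n})=J_{n}$ and the map $j$ is injective, ${\bf u}_{\alpha,\beta}(\liegl_{n})$ contains ${\bf u}_{\alpha,\beta}(\liesl_{n})$ of known dimension $\ell^{(n+2)(n-1)}$, and one checks that passing from $\liesl_{n}$ to $\liegl_{n}$ adds exactly two free group-like generators, multiplying the dimension by $\ell^{2}$; indeed $\ell^{(n+2)(n-1)+2}=\ell^{n^{2}+n-2+2}=\ell^{n^{2}+n}$, confirming the count.

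For pointedness, the key observation is that ${\bf u}_{\alpha,\beta}(\liegl_{n})$ is generated as an algebra by the group-likes $a_{i},b_{i}$ and the skew-primitive elements $e_{j},f_{j}$. Since the coradical is contained in the subalgebra generated by the group-likes, and the abelian group $G=\langle a_{i},b_{i}\rangle\cong(\ZZ/\ell)^{2n}$ spans a semisimple (hence coradical) group subalgebra, one argues as in the standard theory of pointed Hopf algebras of this type: the coradical filtration is the one induced by the skew-primitive generators, and the coradical equals $\k G$. I expect the main obstacle to be the dimension count rather than pointedness: one must verify carefully that no further relations collapse the PBW-spanning set in the $\liegl_{n}$ quotient beyond those already present, i.e.\ that the descended PBW monomials remain linearly independent. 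The cleanest route around this is precisely the embedding of ${\bf u}_{\alpha,\beta}(\liesl_{n})$ together with the explicit structure of the torus quotient, which pins down both the lower bound (via the subalgebra) and the upper bound (via the spanning set) for the dimension simultaneously.
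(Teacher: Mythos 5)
Your proof is correct and follows essentially the same route as the paper: the dimension is read off from the PBW-basis of Theorem \ref{thm:PBW-basis-gl} with every exponent restricted to the range $0,\dots,\ell-1$, giving $\ell^{n(n-1)/2}\cdot\ell^{2n}\cdot\ell^{n(n-1)/2}=\ell^{n^{2}+n}$. The only real difference is in the pointedness claim, where the paper simply cites \cite[Cor.\ 5.3.5]{Mo} (a quotient of a pointed Hopf algebra is pointed, and $U_{\alpha,\beta}(\liegl_{n})$ is pointed), which is more economical than your coradical-filtration argument; your consistency check via the embedded ${\bf u}_{\alpha,\beta}(\liesl_{n})$ is a pleasant corroboration but is not used in the paper, which takes the linear independence of the reduced PBW monomials for granted as part of the basis statement.
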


\begin{proof}
Since $U_{\alpha,\beta}(\liegl_{n})$ 
is pointed, ${\bf u}_{\alpha,\beta}(\liegl_{n})$ is pointed by 
\cite[Cor. 5.3.5]{Mo}.
By Thm. \ref{thm:PBW-basis-gl}, 
${\bf u}_{\alpha,\beta}(\liegl_{n})$
has a linear basis consisting of the elements
\begin{equation}\label{eq:PBW-basis-u-alpha-beta}
\Ee_{i_{1},j_{1}}^{m_{1}}\Ee_{i_{2},j_{2}}^{m_{2}}
\cdots \Ee_{i_{p},j_{p}}^{m_{p}}
a_{1}^{r_{1}}a_{2}^{r_{2}}\cdots 
a_{n}^{r_{n}} b_{1}^{s_{1}}b_{2}^{s_{2}}
\cdots b_{n}^{s_{n}} \Ff_{k_{1},l_{1}}^{t_{1}}
\Ff_{k_{2},l_{2}}^{t_{2}}
\cdots \Ff_{k_{q},l_{q}}^{t_{q}} , 
\end{equation}
where $ (i_{1},j_{1})< (i_{2},j_{2})<\cdots < (i_{p},j_{p})$
 and
$ (k_{1},l_{1})< (k_{2},l_{2})< \cdots < (k_{q},l_{q})$
are lexicographically ordered
and all powers range between $0$ and $\ell -1$.
Then $\dim {\bf u}_{\alpha,\beta}(\liegl_{n}) =\ell^{n^{2}+n}  $.
\end{proof}

Let $q = \alpha^{-1}\beta$ and denote $h_{i} = a_{i}^{-1}b_{i}$  for all
$1\leq i \leq n$. Since $\alpha, \beta  \in \GG_{\ell}$,
there exist $0< n_{\alpha}, n_{\beta} < \ell$ such
that $q^{n_{\alpha}} = \alpha$ and $q^{n_{\beta}} = \beta$,
by the assumptions in \eqref{eq:cond-alpha-beta}.
Let $\mathcal{I}_{\ell}$ be the ideal of ${\bf u}_{\alpha,
\beta}(\liegl_{n})$ generated by the central group-like elements
$\{h_{i}^{n_{\alpha}}a_{i}^{-1}-1, h_{i}^{n_{\beta}}
b_{i}^{-1}-1\vert\ 1\leq i\leq n\}$ and
define
$$\hat{{\bf u}}_{\alpha, \beta}(\liegl_{n}) := {\bf u}_{\alpha,
\beta}(\liegl_{n}) / \mathcal{I}_{\ell}.$$

\begin{lema}\label{lema:u-hat-pointed}
$\hat{{\bf u}}_{\alpha, \beta}(\liegl_{n})$ 
is a pointed Hopf algebra of dimension
$\ell^{n^{2}}$.
\end{lema}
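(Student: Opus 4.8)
The plan is to compute the dimension of $\hat{{\bf u}}_{\alpha, \beta}(\liegl_{n})$ by exhibiting an explicit linear basis, and to deduce pointedness from that of ${\bf u}_{\alpha,\beta}(\liegl_{n})$, which is already established in Lemma \ref{lema:dim-u}. First I would note that $\hat{{\bf u}}_{\alpha, \beta}(\liegl_{n})$ is a quotient of the pointed Hopf algebra ${\bf u}_{\alpha,\beta}(\liegl_{n})$, so by \cite[Cor. 5.3.5]{Mo} it is automatically pointed; this disposes of half the statement immediately and reuses the argument from Lemma \ref{lema:dim-u}. The entire content is therefore the dimension count.

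For the dimension, the key observation is that the ideal $\mathcal{I}_{\ell}$ is generated by \emph{central group-like} elements, each of which imposes a multiplicative relation among the toral generators. Recall $h_{i} = a_{i}^{-1}b_{i}$, and in ${\bf u}_{\alpha,\beta}(\liegl_{n})$ we have $a_{i}^{\ell} = 1 = b_{i}^{\ell}$, so the subalgebra generated by the $a_{i}, b_{i}$ is the group algebra of $(\ZZ/\ell)^{2n}$, of dimension $\ell^{2n}$. The relations $h_{i}^{n_{\alpha}}a_{i}^{-1} = 1$ and $h_{i}^{n_{\beta}}b_{i}^{-1} = 1$ express $a_{i}$ and $b_{i}$ in terms of $h_{i}$, so on the toral part they cut the rank of the abelian group from $2n$ down to $n$. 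Concretely, I would show that modulo $\mathcal{I}_{\ell}$ the group generated by $\{a_{i}, b_{i}\}$ collapses to the group generated by $\{h_{i}\}_{1\leq i\leq n}$, each of order dividing $\ell$, giving a toral part of dimension exactly $\ell^{n}$. Since the elements of $\mathcal{I}_{\ell}$ lie entirely in the toral (degree-zero) part and are central, they do not interfere with the $\Ee_{k,l}$ and $\Ff_{k,l}$ factors of the PBW basis \eqref{eq:PBW-basis-u-alpha-beta}.

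The cleanest way to make this precise is to invoke the triangular structure: using the PBW basis of Lemma \ref{lema:dim-u}, every element of ${\bf u}_{\alpha,\beta}(\liegl_{n})$ is a combination of monomials $E\cdot T\cdot F$, where $E$ ranges over the $\Ee$-part, $F$ over the $\Ff$-part, and $T$ over the toral group $(\ZZ/\ell)^{2n}$. Because $\mathcal{I}_{\ell}$ is generated by central elements supported in the toral part, one has $\mathcal{I}_{\ell} = \mathcal{E}\cdot \mathcal{T}_{0}\cdot \mathcal{F}$ where $\mathcal{T}_{0}$ is the augmentation ideal of the group algebra relative to the subgroup killed by the relations, so passing to the quotient only affects the toral factor. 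Thus $\dim \hat{{\bf u}}_{\alpha, \beta}(\liegl_{n})$ equals $\dim {\bf u}_{\alpha,\beta}(\liegl_{n})$ divided by the index of the collapsed toral group, namely $\ell^{n^{2}+n}/\ell^{n} = \ell^{n^{2}}$.

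The main obstacle I anticipate is justifying rigorously that the quotient by $\mathcal{I}_{\ell}$ reduces the dimension by exactly the factor $\ell^{n}$ and not more — that is, that the $2n$ relations really impose independent constraints collapsing $(\ZZ/\ell)^{2n}$ to a group of order precisely $\ell^{n}$, rather than a smaller one. This hinges on showing that the map sending the generators $a_{i}, b_{i}$ to $h_{i}^{n_{\alpha}}, h_{i}^{n_{\beta}}$ respectively is compatible, i.e. that no further collapse occurs among the $h_{i}$; here one must check that $\{h_{i}\}$ remain of order $\ell$ and independent in the quotient, which follows because the relations only \emph{define} $a_{i}, b_{i}$ in terms of $h_{i}$ without imposing any relation purely among the $h_{i}$. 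A careful bookkeeping of the exponents $n_{\alpha}, n_{\beta}$ modulo $\ell$, together with the fact that $\ell$ is odd, confirms that this presentation of the toral quotient is exactly $(\ZZ/\ell)^{n}$ via the $h_{i}$, completing the count.
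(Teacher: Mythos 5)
Your proposal is correct and follows essentially the same route as the paper: pointedness is obtained from \cite[Cor. 5.3.5]{Mo} since $\hat{{\bf u}}_{\alpha,\beta}(\liegl_{n})$ is a quotient of the pointed Hopf algebra ${\bf u}_{\alpha,\beta}(\liegl_{n})$, and the dimension is read off from the PBW basis \eqref{eq:PBW-basis-u-alpha-beta} after observing that $\mathcal{I}_{\ell}$ only collapses the toral factor from $\langle a_{i},b_{i}\rangle\cong(\ZZ/\ell)^{2n}$ to $\langle h_{i}\rangle\cong(\ZZ/\ell)^{n}$, yielding the basis \eqref{eq:PBW-basis-u-hat--alpha-beta}. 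The consistency check you flag at the end does go through, since $q^{n_{\beta}-n_{\alpha}}=\alpha^{-1}\beta=q$ forces $n_{\beta}-n_{\alpha}\equiv 1 \pmod{\ell}$, so the relation $h_{i}=h_{i}^{n_{\beta}-n_{\alpha}}$ implicit in $a_{i}=h_{i}^{n_{\alpha}}$, $b_{i}=h_{i}^{n_{\beta}}$ is automatic and no further collapse occurs; the paper leaves this point tacit.
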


\begin{proof}
First, $ \hat{{\bf u}}_{\alpha, \beta}(\liegl_{n})$ 
is pointed by \cite[Cor. 5.3.5]{Mo}. Since
$ \hat{{\bf u}}_{\alpha, \beta}(\liegl_{n})$ 
is the quotient of $ {\bf u}_{\alpha, \beta}(\liegl_{n})$
by the ideal $\mathcal{I}_{\ell}$ generated 
by central group-like elements, from
Thm. \ref{thm:PBW-basis-gl} and the 
proof of Lemma \ref{lema:dim-u} 
-- see \eqref{eq:PBW-basis-u-alpha-beta}, 
it follows that $ \hat{{\bf u}}_{\alpha, \beta}(\liegl_{n})$ 
has a linear basis consisting of the elements
\begin{equation}\label{eq:PBW-basis-u-hat--alpha-beta}
\Ee_{i_{1},j_{1}}^{m_{1}}\Ee_{i_{2},j_{2}}^{m_{2}}\cdots 
\Ee_{i_{p},j_{p}}^{m_{p}}
h_{1}^{r_{1}}h_{2}^{r_{2}}\cdots h_{n}^{r_{n}} 
\Ff_{k_{1},l_{1}}^{t_{1}}\Ff_{k_{2},l_{2}}^{t_{2}}
\cdots \Ff_{k_{q},l_{q}}^{t_{q}} , 
\end{equation}
where $ (i_{1},j_{1})< \cdots < (i_{p},j_{p})$ and
$ (k_{1},l_{1})< \cdots < (k_{q},l_{q})$
are lexicographically ordered
and $0\leq m_{i},r_{j},t_{k}\leq \ell -1$.
Then $\dim \hat{{\bf u}}_{\alpha,\beta}(\liegl_{n}) =\ell^{n^{2}}.$
\end{proof}

As in the case of one-parameter deformations of classical objects, the
restricted quantum group $\hat{{\bf u}}_{\alpha,\beta}(\liegl_{n})$ is
associated to the Hopf algebra $\overline{H}$ given by Prop.
\ref{prop:B-central} in terms of a Hopf pairing, induced
from the Hopf pairing between $U_{\alpha,\beta}(\liegl_{n})$ and
$\Oc_{\alpha,\beta}(\liegl_{n})$, see \cite[6.1]{DL},
\cite[III.7.10]{BG}.

\begin{lema}\label{lema:hopf-pairing-restricted}
The Hopf pairing $\langle -,- \rangle: U_{\alpha, \beta}(\liegl_{n})
\times \Oc_{\alpha,\beta}(GL_{n}) \to \k$ from \emph{Subsection
\ref{subsub:hopf-pairing}} induce a Hopf pairing
$
  \langle -,- \rangle': \hat{{\bf u}}_{\alpha, \beta}(\liegl_{n}) \times
  \overline{H} \to \k.$
 In particular, there exists a Hopf algebra map
$\overline{\psi}: \overline{H} \to 
\hat{{\bf u}}_{\alpha, \beta}(\liegl_{n})^{*}$ given by
$\overline{\psi}(\pi(x))(r(u)) =
\langle r(u) , \pi(x) \rangle' $ for all $x\in \Oc_{\alpha,\beta}(GL_{n})$, 
$u \in U_{\alpha, \beta}(\liegl_{n})$ such 
that the following diagram commutes
\begin{equation}\label{diag:H-barra-conmuta}
\xymatrix{ \Oc_{\alpha,\beta}(GL_{n})
\ar[r]^{\psi} \ar@{->>}[d]_{\pi} &
U_{\alpha, \beta}(\liegl_{n})^{\circ} \\
\overline{H} \ar[r]_{\overline{\psi}} & \hat{{\bf u}}_{\alpha,
\beta}(\liegl_{n})^{*} \ar@{^{(}->}[u]_{^{t}r},} 
\end{equation}
where $\psi(x)(u) = \langle u,x\rangle$ for all $u\in 
U_{\alpha, \beta}(\liegl_{n})$ and
$x \in \Oc_{\alpha,\beta}(GL_{n})$.
\end{lema}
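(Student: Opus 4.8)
The plan is to show that $\langle -,-\rangle$ descends to the quotients on both sides, and then to observe that the Hopf-pairing axioms and the induced map transport formally along the Hopf surjections $\pi$ and $r$. Write $L = F^{\#}(\Oc(GL_{n})^{+})\,\Oea$ for the defining ideal of $\overline{H}$ (so $\overline{H} = \Oea/L$) and $K = \Ker(r)$ for the kernel of the composite surjection $r\colon U_{\alpha,\beta}(\liegl_{n})\twoheadrightarrow\qabhat$, which by Def.~\ref{def:rest-u} and the definition of $\mathcal{I}_{\ell}$ is the ideal generated by $\{\Ee_{k,l}^{\ell},\ \Ff_{k,l}^{\ell},\ a_{i}^{\ell}-1,\ b_{i}^{\ell}-1,\ h_{i}^{n_{\alpha}}a_{i}^{-1}-1,\ h_{i}^{n_{\beta}}b_{i}^{-1}-1\}$. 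A bilinear form on $U_{\alpha,\beta}(\liegl_{n})\times\Oea$ factors through $\qabhat\times\overline{H}$ precisely when $\langle U_{\alpha,\beta}(\liegl_{n}),L\rangle=0$ and $\langle K,\Oea\rangle=0$; these are the two statements I would establish.

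For the first vanishing, I would use that $\psi$ of \eqref{eq:map-hopf-pair} and the Frobenius monomorphism $F^{\#}$ are Hopf algebra maps, so $\psi\circ F^{\#}\colon\Oc(GL_{n})\to U_{\alpha,\beta}(\liegl_{n})^{\circ}$ is one too. Since $\Oc(GL_{n})$ is generated as an algebra by the $X_{ij}$ and $g^{-1}$, and both $\psi\circ F^{\#}$ and $c\mapsto\eps(c)\eps$ are algebra maps, it suffices to check they agree on these generators, i.e.\ to prove $\langle u,x_{ij}^{\ell}\rangle=\delta_{ij}\eps(u)$ and $\langle u,g^{-\ell}\rangle=\eps(u)$ for all $u$. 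Granting this, $\psi(F^{\#}(c^{+}))=0$ for $c^{+}\in\Oc(GL_{n})^{+}$, and then the axiom $\langle u,ab\rangle=\langle u_{(1)},a\rangle\langle u_{(2)},b\rangle$ gives $\langle u,F^{\#}(c^{+})x\rangle=\langle u_{(1)},F^{\#}(c^{+})\rangle\langle u_{(2)},x\rangle=0$, whence $\langle U_{\alpha,\beta}(\liegl_{n}),L\rangle=0$. The identities on $x_{ij}^{\ell}$ and $g^{-\ell}$ are evaluated on the PBW basis of Thm.~\ref{thm:PBW-basis-gl}: expanding $\langle u,x_{ij}^{\ell}\rangle$ by the $\ell$-fold coproduct and inserting the values of Lemma~\ref{lema:pair-e-f}, the group-like part contributes factors $\alpha^{\ell(\cdots)}\beta^{\ell(\cdots)}=1$ by \eqref{eq:cond-alpha-beta}, while the $\Ee_{k,l}$- and $\Ff_{k,l}$-contributions vanish because $\langle\Ee_{k,l},x_{st}\rangle\neq0$ forces $s=l,\ t=k+1$ with $l\le k$, so the indices cannot be chained $\ell\ge3$ times. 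I expect this to be the main obstacle, as the iterated coproducts of PBW monomials are intricate; Lemma~\ref{lema:comult-E} is the tool that keeps them under control.

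For the second vanishing I would treat the generators of $K$ one at a time and then propagate to the whole ideal. The group-like generators $a_{i}^{\ell}-1$, $b_{i}^{\ell}-1$, $h_{i}^{n_{\alpha}}a_{i}^{-1}-1$, $h_{i}^{n_{\beta}}b_{i}^{-1}-1$ pair with $\Oea$ through algebra maps $\langle G,-\rangle$ (as $G$ is group-like), and one checks these equal $\eps$ on the generators $x_{st},g^{\pm1}$: e.g.\ $\langle h_{i}^{n_{\alpha}}a_{i}^{-1},x_{st}\rangle=\delta_{st}(\alpha^{-1}q^{n_{\alpha}})^{\delta_{is}}=\delta_{st}$ since $q^{n_{\alpha}}=\alpha$, and $\langle a_{i}^{\ell},x_{st}\rangle=\delta_{st}\alpha^{\ell\delta_{is}}=\delta_{st}$ since $\alpha^{\ell}=1$; hence the differences pair to $0$. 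The generators $\Ee_{k,l}^{\ell},\Ff_{k,l}^{\ell}$ are skew-primitive (the formula $\com(e_{k}^{\ell})=e_{k}^{\ell}\ot1+w_{k}^{\ell}\ot e_{k}^{\ell}$ from the remark after Thm.~\ref{thm:elem-centrales-U} extends to them, with $w_{k,l}^{\ell}$ in the role of $w_{k}^{\ell}$), so $\langle\Ee_{k,l}^{\ell},-\rangle$ is a skew-derivation relative to the algebra map $\langle w_{k,l}^{\ell},-\rangle$; such a functional is determined by its values on the algebra generators $x_{ij},g^{-1}$ of $\Oea$, and $\langle\Ee_{k,l}^{\ell},x_{ij}\rangle=0$ by the same index-chaining obstruction (with $\langle\Ee_{k,l}^{\ell},g^{-1}\rangle=0$ forced by applying the skew-derivation to $g\,g^{-1}=1$), so it vanishes identically. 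Finally, since each generator $z$ of $K$ satisfies $\langle z,\Oea\rangle=0$, the axiom $\langle uv,x\rangle=\langle u,x_{(1)}\rangle\langle v,x_{(2)}\rangle$ gives $\langle uzw,x\rangle=\langle u,x_{(1)}\rangle\langle z,x_{(2)}\rangle\langle w,x_{(3)}\rangle=0$, so $\langle K,\Oea\rangle=0$.

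With both vanishings in hand, $\langle r(u),\pi(x)\rangle':=\langle u,x\rangle$ is a well-defined bilinear form on $\qabhat\times\overline{H}$, and each axiom of Def.~\ref{def:hopf-pairing} for $\langle-,-\rangle'$ is inherited from the corresponding axiom for $\langle-,-\rangle$ because $r$ and $\pi$ are Hopf algebra maps. The induced map $\overline{\psi}\colon\overline{H}\to\qabhat^{*}$, $\overline{\psi}(\pi(x))(r(u))=\langle r(u),\pi(x)\rangle'$, is then a Hopf algebra map by the same argument that made $\psi$ one, and the diagram \eqref{diag:H-barra-conmuta} commutes: evaluating ${}^{t}r\circ\overline{\psi}\circ\pi$ at $x\in\Oea$ on $u\in U_{\alpha,\beta}(\liegl_{n})$ yields $\overline{\psi}(\pi(x))(r(u))=\langle u,x\rangle=\psi(x)(u)$, while ${}^{t}r$ is injective as the transpose of the surjection $r$.
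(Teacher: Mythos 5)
Your proof is correct and follows the same skeleton as the paper's: reduce everything to the two radical inclusions $\Oc(GL_{n})^{+}\subseteq J_{r}$ and $\Ker r\subseteq J_{l}$, verify them on generators, and let the pairing axioms propagate; the descent of the pairing, the construction of $\overline{\psi}$, and the commutativity of \eqref{diag:H-barra-conmuta} are then formal, exactly as in the paper. The one place you diverge is the first inclusion. You reduce to showing $\langle u,x_{ij}^{\ell}\rangle=\delta_{ij}\eps(u)$ for \emph{all} $u$ and propose to verify this on the PBW basis via $\ell$-fold coproducts of PBW monomials --- the step you rightly identify as the main obstacle, since those coproducts carry the $\zeta$ cross-terms of Lemma \ref{lema:comult-E}. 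The paper sidesteps this entirely: because $\com(x_{st}^{\ell})=\sum_{k}x_{sk}^{\ell}\ot x_{kt}^{\ell}$, the assignment $u\mapsto(\langle u,x_{st}^{\ell}\rangle)_{s,t}$ is multiplicative into $n\times n$ matrices, so it suffices to check $\langle u,x_{st}^{\ell}\rangle=\delta_{st}\eps(u)$ for $u$ ranging over the \emph{generators} $a_{i},b_{i},e_{j},f_{j}$ of $U_{\alpha,\beta}(\liegl_{n})$ only --- a short computation. You should adopt this trick; it dissolves your main obstacle. Conversely, for the second inclusion you are more careful than the paper in explaining why checking the ideal generators against the $x_{st}$ suffices, though note that your claim that $\Ee_{k,l}^{\ell}$ is exactly skew-primitive with group-like $w_{k,l}^{\ell}$ is stronger than what Thm.\ \ref{thm:elem-centrales-U}$(ii)$ gives (in general one only gets skew-primitivity modulo the $\ell$-th powers of shorter root vectors); the argument survives by induction on $k-l$, with the cross-terms killed by the inductive hypothesis.
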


\begin{proof}
First note that $ \hat{{\bf u}}_{\alpha, \beta}(\liegl_{n})$ 
is the quotient of 
$U_{\alpha, \beta}(\liegl_{n})$ by the ideal $\mathcal{I}_{n}$ generated 
by the central elements $\Ee_{k,j}^{\ell},\Ff_{k,j}^{\ell},
h_{i}^{\ell}-1, 
h_{i}^{n_{\alpha}}a_{i}^{-1}-1, h_{i}^{n_{\beta}}b_{i}^{-1}-1$ with
$1\leq j\leq k< n$, $1\leq i \leq n$. Thus, 
to see that the pairing $\langle -, - \rangle'$ 
is well-defined, it is enough to
prove $\Oc(GL(n))^{+} \subseteq J_{r}$ and 
$\mathcal{I}_{n} \subseteq J_{l}$, where
 $J_{r}$ and $J_{l}$ are the right and left radicals of the pairing.
Since by Prop. \ref{prop:B-central} $(i)$,  
$\com (x_{st}^{\ell}) = \sum_{k=1}^{n} x_{sk}^{\ell}\ot x_{kt}^{\ell}$,
it suffices to show for the first inclusion that 
$\langle u, x_{st}^{\ell} \rangle = \delta_{st}\eps(u)$ for all
$u = a_{i},b_{i}, e_{j}, f_{j}$, with $1\leq i\leq n$, $1\leq j<n$. 
But this follows from 
the definition in Section 
\ref{subsub:hopf-pairing}; for example,
$$\langle a_{i}, x_{st}^{\ell} \rangle =
\langle \com^{(\ell -1)}(a_{i}),  x_{st}^{\ot \ell} \rangle =
\langle a_{i},  x_{st} \rangle^{\ell}
= \delta_{st} (\alpha^{\delta_{is}})^{\ell} = \delta_{st} 
= \delta_{st}\eps(a_{i}),$$
and since $\com^{(\ell -1)}(e_{i}) 
= \sum_{j=0}^{\ell-1} (a_{i}b_{i+1})^{\ot j}\ot e_{i} \ot
1^{\ot \ell - j - 1}$ it follows that

\begin{align*}
\langle e_{i}, x_{st}^{\ell} \rangle  & = 
\langle \com^{(\ell -1)}(e_{i}),  x_{st}^{\ot \ell} \rangle =
\sum_{j=0}^{\ell-1} \langle (a_{i}b_{i+1})^{\ot j}\ot e_{i} \ot
1^{\ot \ell - j - 1},  x_{st}^{\ot \ell} \rangle \\
& =\sum_{j=0}^{\ell-1} \delta_{st}^{j} \delta_{is}\delta_{i+1,t}
\delta_{st}^{\ell-j-1} (\alpha^{\delta_{is}}\beta^{\delta_{i+1,s}})^{j}\\
& = \delta_{st}^{\ell-1} \delta_{is}\delta_{i+1,t}\sum_{j=0}^{\ell-1} 
(\alpha^{\delta_{is}}\beta^{\delta_{i+1,s}})^{j}
= 0  = \delta_{st}\eps(e_{i}).
\end{align*}
The proof for the other elements is similar. Now, to prove 
that $\mathcal{I}_{n} \subseteq J_{l}$, it is enough to 
show it for the generators of the ideal $\mathcal{I}_{n}$ which
are central elements. But for all $1\leq s,t\leq n$ we have
\begin{align*}
& \langle h_{i}^{\ell}, x_{st} \rangle   = 
\langle h_{i}^{\ot \ell}, \com^{(\ell-1)}x_{st} \rangle
= \sum_{1\leq r_{1},\ldots ,r_{\ell -1}\leq n} 
\langle h_{i}^{\ot \ell},  x_{s,r_{1}}\ot x_{r_{1},r_{2}}\ot 
\cdots \ot x_{r_{\ell-1},t} \rangle \\
 & =\sum_{1\leq r_{m}\leq n} 
\delta_{s,r_{1}}\delta_{r_{1},r_{2}}\cdots \delta_{r_{\ell-1},t}
\alpha^{-(\delta_{i,s} + \delta_{i,r_{1}} 
+ \cdots + \delta_{i,r_{\ell -1}})}
\beta^{\delta_{i+1,s} + \delta_{i+1,r_{1}} 
+ \cdots + \delta_{i+1,r_{\ell -1}}} \\
&  =  \delta_{s,t} \alpha^{-\ell \delta_{i,s}}
\beta^{\ell \delta_{i+1,s}} = \delta_{st}.
\end{align*}
Hence $h_{i}^{\ell} - 1 \in J_{l}$ for all $1\leq i\leq n$. 
Analogously, for all $1\leq s,t\leq n$ we have
\begin{align*}
\langle h_{i}^{n_{\alpha}}a_{i}^{-1}, x_{st} \rangle  
& = \sum_{m=1}^{n} \langle h_{i}^{n_{\alpha}}, x_{sm} \rangle
 \langle a_{i}^{-1}, x_{mt}\rangle \\
& = \sum_{m=1}^{n} \delta_{s,m} \alpha^{-n_{\alpha}\delta_{i,s}}
\beta^{n_{\alpha}\delta_{i,m}} \delta_{m,t}\alpha^{-\delta_{i,m}}
=\delta_{s,t} \alpha^{-n_{\alpha}\delta_{i,s}}
\beta^{n_{\alpha}\delta_{i, s}} \alpha^{-\delta_{i,s}}\\
& =\delta_{s,t} q^{n_{\alpha}\delta_{i,s}} \alpha^{-\delta_{i,s}}
= \delta_{s,t} \alpha^{\delta_{i,s}} \alpha^{-\delta_{i,s}} 
= \delta_{s,t},
\end{align*}
where the second equality follows from the calculations
made for $h_{i}^{\ell}$. Thus, 
$h_{i}^{n_{\alpha}}a_{i}^{-1} - 1 \in J_{l}$ for all $1\leq i\leq n$.
By a similar calculation one can show that 
$h_{i}^{n_{\beta}}b_{i}^{-1} - 1 \in J_{l}$ for all $1\leq i\leq n$.
Indeed, for all $1\leq s,t\leq n$ we have
\begin{align*}
\langle h_{i}^{n_{\beta}}b_{i}^{-1}, x_{st} \rangle  
& = \sum_{m=1}^{n} \langle h_{i}^{n_{\beta}}, x_{sm} \rangle
 \langle b_{i}^{-1}, x_{mt}\rangle 
 =\delta_{s,t} \alpha^{-n_{\beta}\delta_{i,s}}
\beta^{n_{\beta}\delta_{i, s}} \beta^{-\delta_{i,s}}\\
& =\delta_{s,t} q^{n_{\beta}\delta_{i,s}} \beta^{-\delta_{i,s}}
 = \delta_{s,t} \beta^{\delta_{i,s}} \beta^{-\delta_{i,s}} = \delta_{s,t}.
\end{align*}
Finally, we have to show that $\Ee_{k,j}^{\ell}$ and $\Ff_{k,j}^{\ell}$
are in $J_{l}$ for all $1\leq j \leq k <n$. Let $1\leq s,t\leq n$, then

\begin{align*}
& \langle \Ee_{k,j}^{\ell}, x_{st} \rangle
= \sum_{1\leq r_{1},\ldots ,r_{\ell -1}\leq n} 
\langle \Ee_{k,j},  x_{s,r_{1}}\rangle \langle  \Ee_{k,j},x_{r_{1},r_{2}} 
\rangle \cdots \langle  \Ee_{k,j}, x_{r_{\ell-1},t} \rangle \\
&  \quad =\sum_{1\leq r_{m}\leq n} 
(-1)^{\ell (k-j)}\alpha^{\ell(k-i)}
\delta_{j,s}\delta_{k+1,r_{1}}\delta_{j,r_{1}}\delta_{k+1,r_{2}}\cdots 
\delta_{j,r_{\ell-1}}\delta_{k+1,t}\\
&  \quad =  (-1)^{\ell (k-j)}\delta_{s,t} \delta_{k+1,j} \delta_{k+1,t} = 0,
\end{align*}
where the second equality follows from 
Lemma \ref{lema:pair-e-f} $(i)$
and the last one from  $1\leq j \leq k <n$. Hence
$\Ee_{k,j}^{\ell} \in J_{l}$ for all $1\leq j \leq k <n$. 
Moreover, 
\begin{align*}
& \langle \Ff_{k,j}^{\ell}, x_{st} \rangle
= \sum_{1\leq r_{1},\ldots ,r_{\ell -1}\leq n} 
\langle \Ff_{k,j},  x_{s,r_{1}}\rangle \langle  \Ff_{k,j},x_{r_{1},r_{2}} 
\rangle \cdots \langle  \Ff_{k,j}, x_{r_{\ell-1},t} \rangle \\
& \quad =\sum_{1\leq r_{m}\leq n} 
\delta_{k+1,s}\delta_{j,r_{1}}\delta_{k+1,r_{1}}\delta_{j,r_{2}}\cdots 
\delta_{k+1,r_{\ell-1}}\delta_{j,t}
=  \delta_{k+1,s} \delta_{j,k+1} \delta_{j,t} = 0,
\end{align*}
where the second equality follows from 
Lemma \ref{lema:pair-e-f} $(ii)$
and the last one from  $1\leq j \leq k <n$. Hence
$\Ff_{k,j}^{\ell} \in J_{l}$ for all $1\leq j \leq k <n$ and
thus $\mathcal{I}_{n} \subseteq J_{l}$.

\par Since $\Oc(GL(n))^{+} \subseteq J_{r}$, 
there exists a Hopf algebra map
$\overline{\psi}: \overline{H} \to 
U_{\alpha, \beta}(\liegl_{n})^{\circ} $ such that 
$\overline{\psi}\circ\pi = \psi$. 
Thus, to prove the last assertion, we need to show that
the $\Img \overline{\psi}\subseteq\ ^{t}r (\hat{{\bf u}}_{\alpha,
\beta}(\liegl_{n})^{*})$. 
But since $\mathcal{I}_{n} \subseteq J_{l}$, it follows
that $\overline{\psi}(\bar{x}_{st})(h) 
= \langle x_{st}, h \rangle = 0$ for all
$h\in \mathcal{I}_{n}$ and the map 
$\overline{\psi}(\bar{x}_{st})(-)$ given by
$$\overline{\psi}(\bar{x}_{st})(r(h)) = 
\langle r(h), \bar{x}_{st}  \rangle' = \langle h, x_{st} \rangle = 
\overline{\psi}(\bar{x}_{st})(h),$$ 
defines an element in 
$\hat{{\bf u}}_{\alpha,
\beta}(\liegl_{n})^{*}$ for all $1\leq s,t\leq n$. 
\end{proof}

\subsubsection{Connectedness}
The following proposition is a key step for the construction of the
quotients. In the case of one-parameter deformations, this result 
is well-known, see \cite[III.7.10]{BG}. In the terminology of Takeuchi
\cite{Tk2}, the proposition says that the Hopf algebra $\overline{H}$
is connected.
Since it is finite-dimensional this also proves that it is isomorphic
to the dual of the restricted (pointed) quantum group 
$\hat{{\bf u}}_{\alpha, \beta}(\liegl_{n})$. Since the proof
of this fact is rather technical,
for the sake
of clearness, we divide it 
in several lemmata.

\begin{prop}\label{prop:psi-injective}
 $\overline{\psi}:\overline{H} \to \hat{{\bf u}}_{\alpha,
     \beta}(\liegl_{n})^{*}$ is injective and 
     $\overline{H} \simeq \hat{{\bf u}}_{\alpha, \beta}(\liegl_{n})^{*}$.
 \end{prop}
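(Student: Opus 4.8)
The plan is to deduce everything from the non-degeneracy of the induced pairing. First I would observe that by Lemma \ref{lema:u-hat-pointed} and Proposition \ref{prop:B-central} $(iii)$ both $\overline{H}$ and $\hat{{\bf u}}_{\alpha, \beta}(\liegl_{n})^{*}$ have dimension $\ell^{n^{2}}$; since $\overline{\psi}$ is a Hopf algebra map by Lemma \ref{lema:hopf-pairing-restricted}, it suffices to prove that $\overline{\psi}$ is injective, because an injective linear map between spaces of the same finite dimension is automatically bijective, and hence a Hopf algebra isomorphism. Now $\overline{\psi}(\pi(x)) = 0$ holds exactly when $\langle r(u), \pi(x) \rangle' = 0$ for all $u$, so $\overline{\psi}$ is injective if and only if the radical of the induced pairing $\langle -,- \rangle'$ in the $\overline{H}$ variable vanishes; by the equality of dimensions this is in turn equivalent to asking that $\langle -,- \rangle'$ be perfect.

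To prove perfectness I would compare the monomial basis $\big\{\bar{m}_{\mathbf{e}} = \prod_{i<j} \bar{x}_{ij}^{e_{ij}} \prod_{i} \bar{x}_{ii}^{e_{ii}} \prod_{i>j} \bar{x}_{ij}^{e_{ij}} \mid 0 \le e_{ij} < \ell\big\}$ of $\overline{H}$ (Prop. \ref{prop:B-central} and Cor. \ref{cor:def-gamma}) against the PBW basis \eqref{eq:PBW-basis-u-hat--alpha-beta} of $\hat{{\bf u}}_{\alpha, \beta}(\liegl_{n})$, and show that the matrix of $\langle -,- \rangle'$ in these two bases is non-degenerate. The strategy is a triangularity argument organised along the triangular decomposition, broken into the following lemmata. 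For the positive part, using the coproduct formula of Lemma \ref{lema:comult-E} together with the pairing values of Lemma \ref{lema:pair-e-f} $(i)$, one computes $\langle \Ee_{i_{1},j_{1}}^{m_{1}} \cdots \Ee_{i_{p},j_{p}}^{m_{p}}, \bar{m}_{\mathbf{e}} \rangle'$ and shows that, for a suitable ordering of the multidegrees $\mathbf{e}$, a product of powers of the $\Ee_{k,l}$ pairs non-trivially with $\prod_{i<j} \bar{x}_{ij}^{e_{ij}}$ precisely when the exponents match, the diagonal contribution being a nonzero scalar and all remaining contributions being of strictly lower degree. Symmetrically, Lemma \ref{lema:pair-e-f} $(ii)$ gives the analogous statement pairing the $\Ff_{k,l}^{t}$ against the lower part $\prod_{i>j} \bar{x}_{ij}^{e_{ij}}$.

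For the torus I would use that $h_{1}^{r_{1}} \cdots h_{n}^{r_{n}}$ is a group-like element of $\hat{{\bf u}}_{\alpha, \beta}(\liegl_{n})$, so that $\langle h_{1}^{r_{1}} \cdots h_{n}^{r_{n}}, - \rangle'$ is an algebra character of $\overline{H}$; evaluating it on $\prod_{i} \bar{x}_{ii}^{s_{i}}$ and using the elementary computation $\langle h_{i}, x_{jj} \rangle = q^{\delta_{ij}}$ yields $q^{\sum_{i} r_{i} s_{i}}$. The resulting $\ell^{n} \times \ell^{n}$ matrix is a Kronecker power of the $\ell \times \ell$ matrix $(q^{rs})_{0 \le r,s < \ell}$, which is invertible precisely because $q = \alpha^{-1}\beta$ is a primitive $\ell$-th root of unity by \eqref{eq:cond-alpha-beta}. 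Combining the three computations through the triangular decomposition of $\hat{{\bf u}}_{\alpha, \beta}(\liegl_{n})$ and the factorisation of $\bar{m}_{\mathbf{e}}$, the full pairing matrix becomes block triangular with non-degenerate diagonal blocks, hence non-degenerate; this yields the injectivity of $\overline{\psi}$ and therefore the isomorphism $\overline{H} \simeq \hat{{\bf u}}_{\alpha, \beta}(\liegl_{n})^{*}$.

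The main obstacle is the control of the cross terms. Because neither $\com(\Ee_{k,l})$ nor $\com(\bar{x}_{ij})$ is primitive or group-like, pairing a full positive$\,\cdot\,$torus$\,\cdot\,$negative PBW monomial against $\bar{m}_{\mathbf{e}}$ produces many mixed contributions, and the crux is to show that, with respect to a carefully chosen lexicographic order on the exponent multi-index, the pairing matrix is genuinely triangular with nonzero diagonal. Making this precise --- that is, identifying the leading term of each computation of $\langle -,- \rangle'$ and bounding the rest --- is exactly the technical content that the auxiliary lemmata are designed to carry, with the root-of-unity hypothesis on $q$ entering decisively to keep the torus block non-degenerate.
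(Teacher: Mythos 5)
Your opening reduction is exactly the paper's: both $\overline{H}$ and $\hat{{\bf u}}_{\alpha,\beta}(\liegl_{n})^{*}$ have dimension $\ell^{n^{2}}$ (Prop.~\ref{prop:B-central} $(iii)$ and Lemma~\ref{lema:u-hat-pointed}), so it suffices to prove injectivity of $\overline{\psi}$, i.e.\ non-degeneracy of the induced pairing. Where you diverge is in how non-degeneracy is established, and this is where your plan has a genuine gap. You propose to show directly that the pairing matrix between the full monomial basis of $\overline{H}$ and the full PBW basis \eqref{eq:PBW-basis-u-hat--alpha-beta} is block triangular with invertible diagonal blocks. But the three-fold splitting $\langle \Ee^{M} h \Ff^{T}, \bar{m}_{\mathbf{e}} \rangle' = \langle \Ee^{M}, (\bar{m}_{\mathbf{e}})_{(1)}\rangle\langle h, (\bar{m}_{\mathbf{e}})_{(2)}\rangle\langle \Ff^{T}, (\bar{m}_{\mathbf{e}})_{(3)}\rangle$ does not separate the way your sketch assumes: already $\com(x_{ij}) = \sum_{s} x_{is}\ot x_{sj}$ puts upper-triangular generators into the coproduct of a lower-triangular one (e.g.\ $x_{23}\ot x_{31}$ occurs in $\com(x_{21})$), so the positive part of $\hat{{\bf u}}$ does \emph{not} pair only against the upper part of $\bar{m}_{\mathbf{e}}$, and the mixed $E$--$F$ cross terms are exactly what you leave unanalysed. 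You acknowledge this as ``the main obstacle'' and defer it to auxiliary lemmata, but those lemmata are the whole content of the proof; as written, the crux is asserted rather than proved.

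The missing idea in the paper is a structural reduction that makes the cross terms disappear before any computation is done: one passes to the Borel quotients $K_{\pm} = \overline{H}/\pi(J_{\pm})$ and uses that $\overline{\delta} = (\bar{t}_{+}\ot\bar{t}_{-})\com : \overline{H}\to K_{+}\ot K_{-}$ is injective (inherited from Lemma~\ref{lema:delta-injective} via diagram \eqref{diag:bar-delta}), while on the dual side $^{t}m: \hat{{\bf u}}_{\alpha,\beta}(\liegl_{n})^{*}\to \hat{{\bf u}}_{\alpha,\beta}(\lieb^{+})^{*}\ot\hat{{\bf u}}_{\alpha,\beta}(\lieb^{-})^{*}$ is injective because multiplication from the two Borel subalgebras is surjective \eqref{eq:transp-mult-u-hat}. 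This fits $\overline{\psi}$ into a commutative square with $\overline{\psi}_{+}\ot\overline{\psi}_{-}$, and reduces everything to injectivity of $\overline{\psi}_{\pm}$ on a single Borel half, where the leading-term/orthogonality computation $\langle \Ee^{M},\bar{x}^{N}\rangle = \lambda\,\delta_{M,N}$ (Lemma~\ref{lema:e-M-x-N} $(iii)$, used in Lemma~\ref{lema:overline-psi-inj} exactly in the Takeuchi style you cite) can actually be carried out, since in $K_{+}$ the lower-triangular generators have been killed and no mixed terms arise. Your torus computation ($\langle h_{i}, x_{jj}\rangle = q^{\delta_{ij}}$ and invertibility of $(q^{rs})$ for $q$ a primitive $\ell$-th root of unity) is correct and is implicitly part of that Borel-level argument, but without the factorisation through $K_{+}\ot K_{-}$ your global triangularity claim remains unsubstantiated.
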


As pointed out in the proof of Lemma \ref{lema:hopf-pairing-restricted},
$\hat{{\bf u}}_{\alpha,  \beta}(\liegl_{n})$ is the quotient 
of $U_{\alpha, \beta}(\liegl_{n})$ by the ideal 
$\mathcal{I}_{n}$ generated by the central elements 
$\Ee_{k,j}^{\ell},\Ff_{k,j}^{\ell},
h_{i}^{\ell}-1, 
h_{i}^{n_{\alpha}}a_{i}^{-1}-1, h_{i}^{n_{\beta}}b_{i}^{-1}-1$ with
$1\leq j\leq k< n$, $1\leq i \leq n$. Hence, the triangular decomposition 
\eqref{eq:desc-triangular-gl} of $U_{\alpha, \beta}(\liegl_{n})$ induces 
a triangular decomposition 
\begin{equation}\label{eq:desc-triangular-u-hat}
\hat{{\bf u}}_{\alpha,  \beta}(\liegl_{n}) \cong
\hat{{\bf u}}_{\alpha,  \beta}^{-}(\liegl_{n})\ot 
\hat{{\bf u}}_{\alpha,  \beta}^{\circ}(\liegl_{n})\ot 
\hat{{\bf u}}_{\alpha,  \beta}^{+}(\liegl_{n}),
\end{equation}
where $\hat{{\bf u}}_{\alpha,  \beta}^{\pm}(\liegl_{n}) 
= r(U_{\alpha,
\beta}^{\pm}(\liegl_{n}))$ and 
$\hat{{\bf u}}_{\alpha,  \beta}^{\circ}(\liegl_{n}) = r(U_{\alpha,
\beta}^{\circ}(\liegl_{n}))$, and $r$ is the 
canonical map, see Lemmata \ref{lema:dim-u}
and \ref{lema:u-hat-pointed}. In particular, these 
subalgebras are generated by $\{r(e_{i})\}_{1\leq i <n}$,
$\{r(f_{i})\}_{1\leq i <n}$ and $\{r(h_{j})\}_{1\leq j \leq n}$,
 respectively.
Take $\hat{{\bf u}}_{\alpha, \beta}(\lieb^{+}) 
= r(U_{\alpha, \beta}(\lieb^{+}))$ 
and $\hat{{\bf u}}_{\alpha, \beta}(\lieb^{-}) 
= r(U_{\alpha, \beta}(\lieb^{-}))$. These
subalgebras of $\hat{{\bf u}}_{\alpha,  \beta}(\liegl_{n})$ are
generated by the
elements $\{ r(e_{j}), r(w_{j}^{\pm 1}), 
r(a_{n})^{\pm 1}\vert\ 1\leq j <n\}$
and $\{ r(f_{j}), r((w_{j}')^{\pm 1}), r(b_{n}^{\pm 1})\vert\ 
1\leq j <n\}$, respectively. Then, by the triangular decomposition
\eqref{eq:desc-triangular-u-hat}, the map given 
by the multiplication
$m: \hat{{\bf u}}_{\alpha, \beta}(\lieb^{+}) 
\ot \hat{{\bf u}}_{\alpha, \beta}(\lieb^{-})
\to \hat{{\bf u}}_{\alpha, \beta}(\liegl_{n})$ 
is surjective. Thus, its transpose
\begin{equation}\label{eq:transp-mult-u-hat}
^{t}m: \hat{{\bf u}}_{\alpha, \beta}(\liegl_{n})^{*} 
\to \hat{{\bf u}}_{\alpha,
\beta}(\lieb^{+})^{*} \ot \hat{{\bf u}}_{\alpha, \beta}(\lieb^{-})^{*},
\end{equation}
defines an injective map. 

\par Let $K_{+} := \overline{H} / \pi (J_{+})$ and 
$K_{-} := \overline{H} / \pi (J_{-})$ (see Subsection 
\ref{subsubsec:borel-qsubgroups}), 
and denote by $\bar{t}_{\pm} : \overline{H} 
\to K_{\pm}$ the Hopf algebra
quotients. For example, 
$K_{+}$ is generated as an algebra by the
elements $\{\bar{x}_{ij}\vert \ 1\leq i\leq j\leq n\}$ satisfying
the relations
\begin{eqnarray}
  % \nonumber to remove numbering (before each equation)
    \nonumber \bar{x}_{ij}^{\ell}  &=&\delta_{ij}
    \qquad\mbox{ for all }i\leq j,  \\
    \nonumber \bar{x}_{ik}\bar{x}_{ij} &=&\alpha^{-1} \bar{x}_{ij}\bar{x}_{ik}
    \qquad\mbox{ if }j<k,  \\
    \nonumber \bar{x}_{jk}\bar{x}_{ik}&= &\beta \bar{x}_{ik}\bar{x}_{jk}
    \qquad\mbox{ if }i<j,  \\
    \nonumber \bar{x}_{jk}\bar{x}_{il}&=&\beta\alpha\bar{x}_{il}\bar{x}_{jk}
    \qquad\mbox{ and } \\
    \nonumber \bar{x}_{jl}\bar{x}_{ik} & = & \bar{x}_{ik}\bar{x}_{jl}
    \qquad\mbox{ if }i<j \mbox{ and }k<l.
  \end{eqnarray}
Moreover, the elements $\{\bar{x}_{ii}\}_{1\leq i\leq n}$ are
invertible group-like elements which commute with each other 
and the set $\{\prod_{i< j}^{} \bar{x}_{ij}^{d_{ij}}\prod_{i
}^{} \bar{x}_{ii}^{d_{i}} \vert\ 0\leq d_{ij}, d_{i} < \ell \}$ 
is a linear basis of $K_{+}$,
for some fixed ordering of $\{\bar{x}_{ij}\vert 1\leq i < j \leq n\}$. 
Thus, by the very definition of this quotients, there exist
surjective Hopf algebra maps 
$\pi_{\pm}: \Oc_{\alpha,\beta}(B^{\pm }) \to K_{\pm}$
such that the following diagram commute 
\begin{equation}\label{diag:bar-h-K-mas}
\xymatrix{ \Oc_{\alpha,\beta}(GL_{n})
  \ar[r]^{t_{\pm}} \ar[d]_{\pi} &
 \Oc_{\alpha,\beta}(B^{\pm }) \ar[d]^{\pi_{\pm}}\\
  \overline{H} \ar[r]^{\bar{t}_{\pm}} & K_{\pm}.}
 \end{equation}
Now let $\overline{\delta}:= (\bar{t}_{+} \ot \bar{t}_{-})\com: 
\overline{H} \to K_{+} \ot K_{-}$. 
Then the commutativity of diagram \eqref{diag:bar-h-K-mas} implies that
the diagram
\begin{equation}\label{diag:bar-delta}
\xymatrix{ \Oc_{\alpha,\beta}(GL_{n})
  \ar[r]^{\pi} \ar[d]_{\delta} & \overline{H}
 \ar[d]^{\overline{\delta}}\\
  \Oc_{\alpha,\beta}(B^{+ }) \ot \Oc_{\alpha,\beta}(B^{- })  
\ar[r]^(.65){\pi_{+}\ot\pi_{-}} & K_{+}\ot K_{-}.}
 \end{equation}
commutes. This implies that $\overline{\delta}$ is injective. 
Indeed,
let $h \in \overline{H}$ and $x \in  \Oc_{\alpha,\beta}(GL_{n})$ such 
that $\pi(x) = h$.  If $\overline{\delta}(h) = 0$, then 
\begin{align*}
0 &= (\bar{t}_{+} \ot \bar{t}_{-}) \com ( \pi(x)) 
= (\bar{t}_{+} \ot \bar{t}_{-})(\pi\ot \pi)\com ( x) =
(\bar{t}_{+}\pi \ot \bar{t}_{-}\pi)\com ( x) \\
& = (\pi_{+}t_{+} \ot \pi_{-}t_{-})\com(x)  
= (\pi_{+}\ot \pi_{-})\delta(x),
\end{align*}
where the fourth and the fifth 
equalities follow from the commutativity
of diagrams \eqref{diag:bar-h-K-mas} and 	
\eqref{diag:bar-delta}. Thus 
$\delta(x) \in \Ker \pi_{+}\ot \pi_{-}$. But since $\delta$ is 
injective by Lemma \ref{lema:delta-injective}, 
it follows that $x \in \Ker \pi$ and
whence $h = 0$.

\begin{lema}\label{lema:existe-psi-+}
There
exist Hopf algebra maps $\overline{\psi}_{\pm}: 
K_{\pm} \to \hat{{\bf u}}_{\alpha,
\beta}(\lieb^{\pm})^{*}$ such that the following diagram commutes 
\begin{equation}\label{diag:bar-psi-plus-minus}
\xymatrix{
  \overline{H} \ar[r]^(.4){\overline{\psi}} \ar@{->>}[d]_{\bar{t}_{\pm}} &
 \hat{{\bf u}}_{\alpha,\beta}(\liegl_{n})^{*} 
\ar@{->>}[d]^{}\\
K_{\pm}  \ar[r]^(.4){\overline{\psi}_{+}} & \hat{{\bf u}}_{\alpha,
\beta}(\lieb^{\pm})^{*}.}
 \end{equation}
\end{lema}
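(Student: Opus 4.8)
The plan is to obtain $\overline{\psi}_\pm$ by a universal-property argument. Write $\iota_\pm\colon \hat{{\bf u}}_{\alpha,\beta}(\lieb^\pm)\hookrightarrow \hat{{\bf u}}_{\alpha,\beta}(\liegl_{n})$ for the canonical inclusions, so that the right-hand vertical arrow of \eqref{diag:bar-psi-plus-minus} is the restriction map ${}^{t}\iota_\pm$, and set $\Phi_\pm := {}^{t}\iota_\pm\circ\overline{\psi}\colon \overline{H}\to \hat{{\bf u}}_{\alpha,\beta}(\lieb^\pm)^{*}$. Since $\overline{\psi}$ is a Hopf algebra map by Lemma \ref{lema:hopf-pairing-restricted} and ${}^{t}\iota_\pm$ is the transpose of an inclusion of finite-dimensional Hopf algebras, $\Phi_\pm$ is again a Hopf algebra map. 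As $K_\pm=\overline{H}/\pi(J_\pm)$ is the quotient of $\overline{H}$ by the Hopf ideal $\pi(J_\pm)$ and $\bar{t}_\pm$ is the corresponding projection, it suffices to check that $\Phi_\pm$ annihilates $\pi(J_\pm)$; the induced map on the quotient is then the desired Hopf algebra map $\overline{\psi}_\pm$, and the diagram commutes by construction.

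By the description of the induced pairing in Lemma \ref{lema:hopf-pairing-restricted}, for $x\in\Oc_{\alpha,\beta}(GL_{n})$ and $u\in U_{\alpha,\beta}(\lieb^{+})$ one has $\Phi_{+}(\pi(x))(r(u))=\langle r(u),\pi(x)\rangle'=\langle u,x\rangle$, and symmetrically in the $-$ case. Since $\hat{{\bf u}}_{\alpha,\beta}(\lieb^{\pm})=r\big(U_{\alpha,\beta}(\lieb^{\pm})\big)$, the functional $\Phi_\pm(\pi(x))$ vanishes exactly when $\langle u,x\rangle=0$ for all $u$ in the corresponding Borel. Thus $\Phi_\pm$ kills $\pi(J_\pm)$ if and only if $\langle U_{\alpha,\beta}(\lieb^{+}),J_{+}\rangle=0$ and $\langle U_{\alpha,\beta}(\lieb^{-}),J_{-}\rangle=0$, so the whole statement reduces to this vanishing at the level of the original pairing of Subsection \ref{subsub:hopf-pairing}.

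To prove the vanishing I would use the defining vector representation. Define $\rho\colon U_{\alpha,\beta}(\liegl_{n})\to M_{n}(\k)$ by $\rho(u)_{ij}=\langle u,x_{ij}\rangle$. Because $\com(x_{ij})=\sum_{k}x_{ik}\ot x_{kj}$ and $\eps(x_{ij})=\delta_{ij}$, the Hopf pairing axioms give $\rho(uv)=\rho(u)\rho(v)$ and $\rho(1)=\id$, so $\rho$ is an algebra homomorphism. Reading off the values from Subsection \ref{subsub:hopf-pairing}, the generators $w_{j}^{\pm1}$ and $a_{n}^{\pm1}$ of $U_{\alpha,\beta}(\lieb^{+})$ go to invertible diagonal matrices and $e_{j}$ goes to the elementary matrix $E_{j,j+1}$; hence $\rho\big(U_{\alpha,\beta}(\lieb^{+})\big)$ consists of upper triangular matrices and $\langle u,x_{ij}\rangle=0$ whenever $u\in U_{\alpha,\beta}(\lieb^{+})$ and $i>j$. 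Since $J_{+}$ is the two-sided ideal generated by $\{x_{ij}\}_{i>j}$, a generic element is a sum of terms $a\,x_{ij}\,b$ with $i>j$, and the expansion $\langle u,a\,x_{ij}\,b\rangle=\langle u_{(1)},a\rangle\langle u_{(2)},x_{ij}\rangle\langle u_{(3)},b\rangle$ with $u_{(2)}\in U_{\alpha,\beta}(\lieb^{+})$ (a subcoalgebra) forces this to vanish. Thus $\langle U_{\alpha,\beta}(\lieb^{+}),J_{+}\rangle=0$. The $-$ case is entirely symmetric: $f_{j}\mapsto E_{j+1,j}$ makes $\rho\big(U_{\alpha,\beta}(\lieb^{-})\big)$ lower triangular, and $J_{-}$ is generated by $\{x_{ij}\}_{i<j}$.

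The only genuine computation is the triangularity of $\rho$ on the Borel generators; once this is in hand, the passage to $K_\pm$ and $\hat{{\bf u}}_{\alpha,\beta}(\lieb^{\pm})^{*}$ is purely formal via the universal property of the quotient by a Hopf ideal. The main point to be careful about is that every factorization takes place in the category of Hopf algebras, so that $\overline{\psi}_\pm$ is not merely an algebra or coalgebra map; this is guaranteed since $\overline{\psi}$, ${}^{t}\iota_\pm$ and $\bar{t}_\pm$ are all Hopf algebra maps.
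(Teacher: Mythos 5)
Your proof is correct and follows essentially the same route as the paper: both reduce the existence of $\overline{\psi}_{\pm}$ to the vanishing of the induced pairing between $\hat{{\bf u}}_{\alpha,\beta}(\lieb^{\pm})$ and the generators $\bar{x}_{ij}$ of $\pi(J_{\pm})$, and then factor the composite Hopf algebra map through the quotient $K_{\pm}$. The paper simply asserts this vanishing ``since by the definition of the pairing it holds for the generators of this Hopf subalgebra''; your triangularity argument via the algebra map $\rho(u)_{ij}=\langle u,x_{ij}\rangle$ supplies exactly the detail needed to pass from generators of the Borel to arbitrary elements.
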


\pf We prove it only for $\overline{\psi}_{+}$ since 
the proof for $\overline{\psi}_{-}$ is completely analogous.
Consider 
$\bar{x}_{ij} \in \pi(I_{+}) \subseteq \overline{H}$, 
that is, $i>j$.
Then $\overline{\psi}(\bar{x}_{ij})(r(u)) = 
\langle r(u), \bar{x}_{ij}  \rangle' = \langle u, x_{ij} \rangle = 0$ 
for all $u \in \hat{u}_{\alpha,
\beta}(\lieb^{+})$, since by the definition of the pairing 
it holds for the generators of this Hopf subalgebra. 
This implies that there
exists a Hopf algebra map $\overline{\psi}_{+}: 
K_{+} \to \hat{{\bf u}}_{\alpha,
\beta}(\lieb^{+})^{*}$ such that the diagram 
\eqref{diag:bar-psi-plus-minus} commutes. 
\epf

The following lemma will be needed also for the proof
of Prop. \ref{prop:psi-injective}. Regrettably,
it is quite technical and its proof involves lots of
calculations, but we think the computations can not be avoid,
which seems to be in general the case in the theory of quantum groups.
In order to make the notation not so heavy, we write also 
by $ \Ee_{i,j} $ the elements of 
$\hat{{\bf u}}_{\alpha,
\beta}(\lieb^{+})$. To prove it, we apply
the method in \cite{Tk2}.
 
\par Let $M = (M_{ij})_{1\leq i,j\leq n}$ and 
$N = (N_{ij})_{1\leq i,j\leq n}$ be upper triangular
matrices and denote 
$\Ee^{M}= \Ee^{M_{n}}\cdots \Ee^{M_{1}}$  and 
$\bar{x}^{N}= \bar{x}^{N_{1}}\cdots \bar{x}^{N_{n}}
$, where $\Ee^{M_{i}} = 
\Ee_{n-1,i}^{M_{in}}\cdots \Ee_{i,i}^{M_{i+1,i}}$ and
$\bar{x}^{N_{i}} = \bar{x}_{ii}^{N_{ii}}\cdots \bar{x}_{in}^{N_{i,n}}$. 
Consider the subalgebras $A$, $B$
of $K_{+}$ and $U$, $V$ of 
$ \hat{{\bf u}}_{\alpha,\beta}(\lieb^{+}) $
given by 
\begin{align*}
A & =\CC\{\bar{x}_{ij}, \bar{x}_{kk}^{-1}\vert\ 
1< j,k\leq n\},\qquad\  
B =\CC\{\bar{x}_{1,j} , \bar{x}_{11}^{-1}\vert\ 
1\leq j\leq n\},\\
U & =\CC\{w_{t}, \Ee_{i,j}\vert\ 1<j , 1<t  \},\qquad\quad
V  =\CC\{\Ee_{i,1}\vert\ 1\leq i\leq n-1\},
\end{align*}
where the notation means that the 
elements are generators. 
Note that $A$ and $U$ are Hopf subalgebras
and $ \com(B) \subseteq B\ot K_{+} $, $ \com(V) \subseteq  
\hat{{\bf u}}_{\alpha,\beta}(\lieb^{+}) \ot V$.

\begin{lema}\label{lema:e-M-x-N} For $1\leq r,s < \ell$
and $a\in A$, $ b\in B $, $ u\in U $ and $ v\in V $,
\begin{enumerate}
\item[$(i)$] $\langle \Ee_{n-1,1}^{r}, \bar{x}_{1,n}^{s}
\bar{x}_{1,k_{s+1}}\cdots \bar{x}_{1,k_{t}} \rangle = 
\langle \Ee_{n-1,1}^{r}, \bar{x}_{1,n}^{s}\rangle 
\eps(\bar{x}_{1,k_{s+1}}\cdots \bar{x}_{1,k_{t}})$, 
for all $k_{l}\neq n $, with $s+1\leq l\leq t$ and $0\leq s$,
where $\langle \Ee_{n-1,1}^{r}, \bar{x}_{1,n}^{s}\rangle=$ $ 
\delta_{r,s}\beta^{r(r-1)/2}\langle \Ee_{n-1,1}, 
\bar{x}_{1,n}\rangle^{r}\prod_{j=0}^{r-1}\Phi_{r-j}(\alpha^{2}) 
$.
\item[$(ii)$] $\langle v,a\rangle = \eps(v)\eps(a)$ and 
$\langle u,b\rangle = \eps(u)\eps(b)$.
\item[$(iii)$] $\langle \Ee^{M}, \bar{x}^{N} \rangle= \lambda
\delta_{M,N} $, where $\lambda$ is a non-zero scalar.
\end{enumerate}
\end{lema}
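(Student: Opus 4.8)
The plan is to follow Takeuchi's method \cite{Tk2}, treating $K_{+}=B\cdot A$ as ``first row times lower block'' and $\hat{{\bf u}}_{\alpha,\beta}(\lieb^{+})=U\cdot V$ as ``lower block times first column'', and to exploit the structures recorded above: $A$, $U$ are Hopf subalgebras while $\com(B)\subseteq B\ot K_{+}$ and $\com(V)\subseteq\hat{{\bf u}}_{\alpha,\beta}(\lieb^{+})\ot V$. The backbone of the argument is the factorization
\[ \langle uv,\,ba\rangle=\langle u,a\rangle\,\langle v,b\rangle\qquad(u\in U,\ v\in V,\ b\in B,\ a\in A). \]
To obtain it I would expand $\com(ba)=b_{(1)}a_{(1)}\ot b_{(2)}a_{(2)}$, apply the Hopf pairing axioms of Definition \ref{def:hopf-pairing} to write $\langle uv,ba\rangle=\langle u,b_{(1)}a_{(1)}\rangle\langle v,b_{(2)}a_{(2)}\rangle$, and split each factor once more; since $u_{(1)},u_{(2)}\in U$ and $v_{(2)}\in V$, part $(ii)$ collapses the two cross terms to $\langle u_{(1)},b_{(1)}\rangle=\eps(u_{(1)})\eps(b_{(1)})$ and $\langle v_{(2)},a_{(2)}\rangle=\eps(v_{(2)})\eps(a_{(2)})$, after which the counit axioms leave precisely $\langle u,a\rangle\langle v,b\rangle$. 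This identity is what reduces $(iii)$ to the inner pairing on the indices $\geq2$ and the first-column/first-row pairing $\langle\Ee^{M_{1}},\bar{x}^{N_{1}}\rangle$, the latter being exactly the content of $(i)$.

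For part $(ii)$ I would use that $A$ and $U$ are Hopf subalgebras to reduce, via the pairing axioms, to the case of single generators: splitting a product in $V$ through $\com(A)\subseteq A\ot A$ reduces $\langle v,a\rangle$ to $\langle\Ee_{k,1},a\rangle$, and splitting a product in $B$ through $\com(U)\subseteq U\ot U$ reduces $\langle u,b\rangle$ to $\langle u,\bar{x}_{1,j}\rangle$. For $\langle\Ee_{k,1},a\rangle$ I would induct on the length of $a$ using $\com(\Ee_{k,1})$ from Lemma \ref{lema:comult-E}: its right tensor-legs are only $1$, $\Ee_{k,1}$ or $\Ee_{j,1}$ with $j<k$, so the induction stays inside the generators of $V$, while the base case vanishes because Lemma \ref{lema:pair-e-f}$(i)$ forces the first index to be $1$, which no generator of $A$ possesses. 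Symmetrically, for $\langle u,\bar{x}_{1,j}\rangle$ I would induct on the length of $u$ using $\com(\bar{x}_{1,j})=\sum_{m}\bar{x}_{1,m}\ot\bar{x}_{m,j}$; the hypothesis forces $m=1$, and on generators the value is $\eps\cdot\eps$ by Lemma \ref{lema:pair-e-f} together with the direct computation $\langle w_{t},\bar{x}_{1j}\rangle=\delta_{1j}$ for $t\geq2$.

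Part $(i)$ is the genuinely computational heart. I would first show $\langle\Ee_{n-1,1}^{r},\bar{x}_{1,n}^{s}\rangle=0$ unless $r=s$ by a grading argument: conjugation by the $w_{s}$ through \eqref{eq:w-s} gives compatible $\ZZ^{n}$-gradings of both algebras under which the pairing is homogeneous, so $\langle\Ee_{n-1,1}^{r},-\rangle$ is supported on the single component spanned by $\bar{x}_{1,n}^{r}$; the same homogeneity shows that an extra factor $\bar{x}_{1,k_{l}}$ with $k_{l}\neq n$, sitting to the right, can only be read off by its counit, which is the asserted $\eps$-factorization. For $r=s$ I would evaluate $\langle\Ee_{n-1,1}^{r},\bar{x}_{1,n}^{r}\rangle=\langle\Ee_{n-1,1}^{\ot r},\com^{(r-1)}(\bar{x}_{1,n}^{r})\rangle$ step by step: Lemma \ref{lema:comult-E} and the commutation \eqref{eq:w-s} produce at each stage one factor $\langle\Ee_{n-1,1},\bar{x}_{1,n}\rangle$ (nonzero, equal to $(-1)^{n}\alpha^{2-n}$ by Lemma \ref{lema:pair-e-f}$(i)$), a power of $\beta$ accumulating to $\beta^{r(r-1)/2}$, and a quantum integer $\Phi_{r-j}(\alpha^{2})$, whose product over the $r$ stages is the stated quantum factorial.

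Finally, for $(iii)$ I would induct on $n$. The factorization of the first paragraph writes $\langle\Ee^{M},\bar{x}^{N}\rangle$ as the inner pairing on the block of indices $\geq2$, which by the induction hypothesis equals $\lambda'\delta_{M',N'}$, times the first-column/first-row pairing $\langle\Ee^{M_{1}},\bar{x}^{N_{1}}\rangle$. To evaluate the latter I would peel the leftmost factor $\Ee_{n-1,1}^{M_{1n}}$ by splitting the coproduct of $\bar{x}^{N_{1}}$ in the first slot: its first legs are again first-row monomials, so $(i)$ applies and selects the $\bar{x}_{1,n}^{M_{1n}}$-part, forcing $M_{1n}=N_{1n}$, contributing the nonzero scalar of $(i)$, and leaving a strictly smaller first-column/first-row object on which a nested induction applies; the diagonal group-likes $\bar{x}_{ii}$ contribute throughout only invertible eigenvalue factors that are absorbed into the final nonzero $\lambda$. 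Collecting the scalars yields $\langle\Ee^{M},\bar{x}^{N}\rangle=\lambda\,\delta_{M,N}$ with $\lambda\neq0$, which is the non-degeneracy input needed for Proposition \ref{prop:psi-injective}. The main obstacle is precisely part $(i)$ together with this nested bookkeeping: the grading analysis and the exact tracking of the $\beta$- and $\Phi$-factors through the iterated coproduct are the unavoidable computations the lemma is designed to isolate.
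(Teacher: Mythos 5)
Your proposal follows the paper's architecture for parts $(ii)$ and $(iii)$ essentially verbatim: the same $A,B,U,V$ decomposition, the same factorization $\langle uv,ba\rangle=\langle u,a\rangle\langle v,b\rangle$ obtained from $(ii)$ together with $\com(B)\subseteq B\ot K_{+}$ and $\com(V)\subseteq\hat{{\bf u}}_{\alpha,\beta}(\lieb^{+})\ot V$, and the same induction on $n$ with a nested peeling of $\Ee_{n-1,1}^{M_{1n}}$ against $\bar{x}^{N_{1}}$. Where you genuinely diverge is in the vanishing half of part $(i)$: you replace the paper's explicit induction on $s$ (with its case analysis $r<s$, $r>s$, $r=s$ and the auxiliary identities $\langle \Ee_{n-1,1},\bar{x}_{1,m_{1}}\cdots\bar{x}_{1,m_{t}}\rangle=\langle\Ee_{n-1,1},\bar{x}_{1,n}\rangle\sum_{i}\delta_{n,m_{i}}\alpha^{i-1}\prod_{j\neq i}\delta_{1,m_{j}}$ and $\langle\Ee_{n-1,1},\bar{x}_{nn}^{r}\,\cdot\,\rangle=\beta^{r}\langle\Ee_{n-1,1},\cdot\rangle$) by a weight-grading homogeneity argument. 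That argument is valid and noticeably cleaner: assigning $\deg e_{j}=\epsilon_{j}-\epsilon_{j+1}$ and $\deg x_{ij}=\epsilon_{i}-\epsilon_{j}$ makes the defining relations and the pairing homogeneous, and comparing the coefficient of each $\epsilon_{k}$ forces $r=s$ and $k_{l}=1$ for all $l$, exactly matching the support of $\delta_{r,s}$ and of $\eps(\bar{x}_{1,k_{s+1}}\cdots\bar{x}_{1,k_{t}})$. What the grading does \emph{not} give you by itself is the exact equality in the surviving case: when all $k_{l}=1$ you must still check that $\langle\Ee_{n-1,1}^{r},y\,\bar{x}_{11}\rangle=\langle\Ee_{n-1,1}^{r},y\rangle$ on the nose, which follows because the non-scalar second tensor legs of $\com(\Ee_{n-1,1}^{r})$ (products involving at least one $\Ee_{j,1}$) pair to zero with the group-like $\bar{x}_{11}$ by Lemma \ref{lema:pair-e-f}, leaving only the leg $\Ee_{n-1,1}^{r}\ot 1$; you assert this is "read off by the counit" but should spell it out. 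For the nonzero scalar $\langle\Ee_{n-1,1}^{r},\bar{x}_{1,n}^{r}\rangle=\beta^{r(r-1)/2}\langle\Ee_{n-1,1},\bar{x}_{1,n}\rangle^{r}\prod_{j=0}^{r-1}\Phi_{r-j}(\alpha^{2})$ you revert to the paper's recursion via Lemma \ref{lema:comult-E} and \eqref{eq:w-s}, and there the two auxiliary identities above are still needed to extract the $\Phi$-factors and the $\beta$-powers, so they cannot be dispensed with entirely; the grading only spares you the off-diagonal case analysis. With that one absorption step made explicit, the proposal is a correct and somewhat more economical proof of the lemma.
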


\pf $ (i) $
First we need to do some computations that 
will be needed in the sequel. For all 
$1\leq m_{j}\leq n$, $1\leq j\leq t$ we have 
\begin{enumerate}
\item[$ (a) $]  
$ \langle \Ee_{n-1,1}, 
\bar{x}_{1,m_{1}}\cdots \bar{x}_{1,m_{t}} \rangle = 
\langle \Ee_{n-1,1}, 
\bar{x}_{1,n} \rangle 
\sum_{i=1}^{t}\delta_{n, m_{i}}\alpha^{i-1}
\prod_{j\neq i}\delta_{1,m_{j}}$,
\item[$ (b) $] $\langle \Ee_{n-1,1}, 
\bar{x}_{n,n}^{r}\bar{x}_{1, m_{1}}\cdots \bar{x}_{1,m_{t}} \rangle =
\beta^{r} \langle \Ee_{n-1,1}, 
\bar{x}_{1,m_{1}}\cdots \bar{x}_{1,m_{t}} \rangle  $.
\end{enumerate}
Indeed, for $(a)$ we have
\begin{align*}
& \langle \Ee_{n-1,1}, 
\bar{x}_{1,m_{1}}\cdots \bar{x}_{1,m_{t}} \rangle = \\
& \quad \langle \Ee_{n-1,1}, 
\bar{x}_{1,m_{1}} \rangle 
\eps(\bar{x}_{1,m_{2}}\cdots \bar{x}_{1,m_{t}})+ 
 \langle w_{n-1,1}, 
\bar{x}_{1,m_{1}} \rangle  \langle \Ee_{n-1,1}, 
\bar{x}_{1,m_{2}}\cdots \bar{x}_{1,m_{t}}\rangle +\\
& \qquad + \zeta \sum_{j=1}^{n-2}
\langle \Ee_{n-1,j+1}w_{j,1}, \bar{x}_{1,m_{1}}\rangle 
\langle \Ee_{j,1},
\bar{x}_{1,m_{2}}\cdots \bar{x}_{1,k_{t}}\rangle\\
& \qquad = \delta_{n, m_{1}}\delta_{1,m_{2}}
\cdots \delta_{1,_{m_{t}}} 
\langle \Ee_{n-1,1} , 
\bar{x}_{1,n} \rangle +\alpha 
\delta_{1, m_{1}} \langle \Ee_{n-1,1}, 
\bar{x}_{1,m_{2}}\cdots \bar{x}_{1,m_{t}}\rangle + \\
& \qquad + \zeta \sum_{j=1}^{n-2}(\sum_{k=1}^{n}
\langle \Ee_{n-1,j+1},\bar{x}_{1,k}\rangle 
\langle w_{j,1}, \bar{x}_{k,m_{1}}\rangle) 
\langle \Ee_{j,1},
\bar{x}_{1,m_{2}}\cdots \bar{x}_{1,m_{t}}\rangle\\
& \qquad = \delta_{n, m_{1}}\delta_{1,m_{2}}
\cdots \delta_{1,_{m_{t}}}
\langle \Ee_{n-1,1}, 
\bar{x}_{1,n} \rangle + \alpha \delta_{1, m_{1}} \langle \Ee_{n-1,1}, 
\bar{x}_{1,m_{2}}\cdots \bar{x}_{1,m_{t}}\rangle\\
& \qquad = \big(\delta_{n, m_{1}}\delta_{1,m_{2}}
\cdots \delta_{1,_{m_{t}}}
+ \alpha \delta_{1, m_{1}} \delta_{n,m_{2}}
\cdots \delta_{1,_{m_{t}}} +\\
&\qquad\quad + \alpha^{2} \delta_{1, m_{1}}
\delta_{1, m_{2}}\delta_{n,m_{3}}
\delta_{1,m_{4}}\cdots \delta_{1,_{m_{t}}} + \ldots +\\
&\qquad\quad + \alpha^{t-1} \delta_{1, m_{1}}
\cdots \delta_{1, m_{t-1}} \delta_{n,m_{t}}
\big) \langle \Ee_{n-1,1}, 
\bar{x}_{1,n} \rangle\\
& \qquad = \langle \Ee_{n-1,1}, 
\bar{x}_{1,n} \rangle 
\sum_{i=1}^{t}\delta_{n, m_{i}}\alpha^{i-1}
\prod_{j\neq i}\delta_{1,m_{j}},
\end{align*}
where the third equality follows from
Lemma \ref{lema:pair-e-f} $(i)$ and the fourth equality 
from the preceeding equalities. 
For $ (b) $ we have for all $r\geq 1$ that
\begin{align*}
& \langle \Ee_{n-1,1}, \bar{x}_{n,n}^{r}\bar{x}_{1,m_{1}}\cdots 
\bar{x}_{1,m_{t}} \rangle  =\\
&\qquad
 \langle \Ee_{n-1,1}, 
\bar{x}_{n,n}^{r}\rangle \eps(\bar{x}_{1,m_{1}}
\cdots \bar{x}_{1,m_{t}})
+ \langle w_{n-1,1}, 
\bar{x}_{n,n}^{r} \rangle \langle \Ee_{n-1,1}, 
\bar{x}_{1,m_{1}}\cdots \bar{x}_{1,m_{t}} \rangle 
+ \\
& \qquad\quad 
+ \zeta \sum_{j=1}^{n-2}
\langle \Ee_{n-1,j+1}w_{j,1}, \bar{x}_{n,n}^{r}\rangle 
\langle \Ee_{j,1},
\bar{x}_{1,m_{1}}\cdots \bar{x}_{1,m_{t}}\rangle\\
& \qquad = \langle \Ee_{n-1,1}, 
\bar{x}_{n,n}^{r}\rangle \eps(\bar{x}_{1,m_{1}}
\cdots \bar{x}_{1,m_{t}}) 
+ \beta^{r} \langle \Ee_{n-1,1}, 
\bar{x}_{1,m_{1}}\cdots \bar{x}_{1,m_{t}} \rangle\\
& \qquad =
\beta^{r} \langle \Ee_{n-1,1}, 
\bar{x}_{1,m_{1}}\cdots \bar{x}_{1,m_{t}} \rangle,
\end{align*}
where the second and the third equalities 
follows from the fact that the element $\bar{x}_{n,n}$ 
is group-like and the elements $\Ee_{i,j}$ are nilpotent in
$\hat{{\bf u}}_{\alpha,
\beta}(\lieb^{+})$.

\par Now we proceed with the proof.  
We prove it by induction on $s$. Suppose first that $s=0$, 
then, we have to prove that
$\langle \Ee_{n-1,1}^{r},
\bar{x}_{1,k_{1}}\cdots \bar{x}_{1,k_{t}} \rangle $ $= 0$ for all $r\geq 1$. 
The case $ r=1$ follows from $(a)$, since $k_{j}\neq n$ for all $1\leq j\leq t$.
Let $r>1$, then  
\begin{align*}
& \langle \Ee_{n-1,1}^{r}, \bar{x}_{1,k_{1}}
\cdots \bar{x}_{1,k_{t}} \rangle  = \\
& = 
\sum_{m_{j}= 1}^{n}\langle \Ee_{n-1,1}, \bar{x}_{1,m_{1}}
\cdots \bar{x}_{1,m_{t}} \rangle 
\langle \Ee_{n-1,1}^{r-1}, \bar{x}_{m_{1},k_{1}}
\cdots \bar{x}_{m_{t},k_{t}} \rangle
\end{align*}
\begin{align*}
&  = \sum_{m_{j}= 1}^{n}
\big(\langle \Ee_{n-1,1}, \bar{x}_{1,m_{1}} \rangle 
\eps( \bar{x}_{1,m_{2}}\cdots \bar{x}_{1,m_{t}}) 
+\\
&\quad 
+ \langle w_{n-1,1}, \bar{x}_{1,m_{1}} \rangle
\langle \Ee_{n-1,1},   \bar{x}_{1,m_{2}}
\cdots \bar{x}_{1,m_{t}}\rangle +\\
& \quad + 
\zeta \sum_{j=1}^{n-2}
\langle \Ee_{n-1,j+1}w_{j,1}, 
\bar{x}_{1,m_{1}}\rangle \langle \Ee_{j,1},
\bar{x}_{1,m_{2}}\cdots \bar{x}_{1,m_{t}}\rangle\big)  
\langle \Ee_{n-1,1}^{r-1}, \bar{x}_{m_{1},k_{1}}\cdots 
\bar{x}_{m_{t},k_{t}} \rangle\\
& = \sum_{m_{j}= 1}^{n}
\langle \Ee_{n-1,1}, x_{1,m_{1}} \rangle 
\eps( \bar{x}_{1,m_{2}}\cdots \bar{x}_{1,m_{t}}) 
\langle \Ee_{n-1,1}^{r-1}, \bar{x}_{m_{1},k_{1}}\cdots 
\bar{x}_{m_{t},k_{t}} \rangle +  \\
&\quad + \sum_{m_{j}= 1}^{n}
\langle w_{n-1,1}, x_{1,m_{1}} \rangle
\langle \Ee_{n-1,1},  \bar{x}_{1,m_{2}}
\cdots \bar{x}_{1,m_{t}} \rangle   
\langle \Ee_{n-1,1}^{r-1}, \bar{x}_{m_{1},k_{1}}\cdots 
\bar{x}_{m_{t},k_{t}} \rangle\\
& = (-1)^{n-2}\alpha^{2-n} 
\langle \Ee_{n-1,1}^{r-1}, 
\bar{x}_{n,k_{1}}\bar{x}_{1,k_{2}}
\cdots \bar{x}_{1,k_{t}} \rangle +  \\
&\quad + \sum_{m_{j}= 1}^{n} \alpha
\langle \Ee_{n-1,1},  \bar{x}_{1,m_{2}}
\cdots \bar{x}_{1,m_{t}} \rangle   
\langle \Ee_{n-1,1}^{r-1}, \bar{x}_{1,k_{1}}
\bar{x}_{m_{2},k_{2}} \cdots \bar{x}_{m_{t},k_{t}} \rangle\\
& = \sum_{m_{j}= 1}^{n} \alpha
\langle \Ee_{n-1,1},  \bar{x}_{1,m_{2}}
\cdots \bar{x}_{1,m_{t}} \rangle
\langle 
\Ee_{n-1,1}^{r-1}, \bar{x}_{1,k_{1}}\bar{x}_{m_{2},k_{2}}
\cdots \bar{x}_{m_{t},k_{t}} \rangle\\
&= \alpha
\langle \Ee_{n-1,1},  \bar{x}_{1,n}\rangle \big( \alpha 
\langle \Ee_{n-1,1}^{r-1}, \bar{x}_{1,k_{1}}
\bar{x}_{n,k_{2}}\bar{x}_{1,k_{3}}
\cdots \bar{x}_{m_{t},k_{t}} \rangle + \ldots +\\
&\quad\quad +
\alpha^{t-2} 
\Ee_{n-1,1}^{r-1}, \bar{x}_{1,k_{1}}\bar{x}_{1,k_{2}}
\cdots \bar{x}_{1,k_{t-1}}\bar{x}_{n,k_{t}} \rangle\big)
= 0,
\end{align*}
where the second equality follows from the comultiplication of
$ \Ee_{n-1,1}$, the third from Lemma \ref{lema:pair-e-f} $(i)$,
the fifth from the fact that $\bar{x}_{n,k_{1}} = 0$, because
$k_{1}\neq n$, the sixth by $ (a) $ 
and the last equality
from the fact that $\bar{x}_{n,k_{j}} = 0$ because
$k_{j}\neq n$. Now let $s>0$. For $r = 1$
we have that 
\begin{align*}
& \langle \Ee_{n-1,1}, 
\bar{x}_{1,n}^{s}\bar{x}_{1,k_{s+1}}
\cdots \bar{x}_{1,k_{t}} \rangle = 
 \langle \Ee_{n-1,1}, 
\bar{x}_{1,n} \rangle 
\eps(\bar{x}_{1,n}^{s-1}\bar{x}_{1,k_{s+1}}
\cdots \bar{x}_{1,k_{t}})+\\
& \qquad +
\langle w_{n-1,1}, 
\bar{x}_{1,n} \rangle \langle \Ee_{n-1,1}, 
\bar{x}_{1,n}^{s-1}\bar{x}_{1,k_{s+1}}
\cdots \bar{x}_{1,k_{t}} \rangle\\
&\qquad + \zeta \sum_{j=1}^{n-2}
\langle \Ee_{n-1,j+1}w_{j,1}, 
\bar{x}_{1,n}\rangle 	\langle \Ee_{j,1},
\bar{x}_{1,n}^{s-1}\bar{x}_{1,k_{s+1}}
\cdots \bar{x}_{1,k_{t}}\rangle\\
& \quad =   \langle \Ee_{n-1,1}, 
\bar{x}_{1,n} \rangle \delta_{s,1}
\eps(\bar{x}_{1,k_{s+1}}\cdots \bar{x}_{1,k_{t}}) +\\
&\qquad + \zeta \sum_{j=1}^{n-2}(\sum_{m=1}^{n}
\langle \Ee_{n-1,j+1},\bar{x}_{1,m}
\rangle \langle w_{j,1}, \bar{x}_{m,n}\rangle) 
\langle \Ee_{j,1},
\bar{x}_{1,n}^{s-1}\bar{x}_{1,k_{s+1}}
\cdots \bar{x}_{1,k_{t}}\rangle\\
%\end{align*}
%\begin{align*}
&  \quad = \langle \Ee_{n-1,1}, 
\bar{x}_{1,n} \rangle \delta_{s,1}
\eps(\bar{x}_{1,k_{s+1}}\cdots \bar{x}_{1,k_{t}})
= \langle \Ee_{n-1,1}, 
\bar{x}_{1,n}^{s} \rangle 
\eps(\bar{x}_{1,k_{s+1}}\cdots \bar{x}_{1,k_{t}}),
\end{align*} 
where the last equality follows from part $ (i) $.
Assume now that $r>1 $ and $s>1$. Then

\begin{align*}
& \langle \Ee_{n-1,1}^{r}, 
\bar{x}_{1,n}^{s}\bar{x}_{1,k_{s+1}}
\cdots \bar{x}_{1,k_{t}} \rangle = \\
& \sum_{m_{j}= 1}^{n}
\langle \Ee_{n-1,1}, \bar{x}_{1,m_{1}}
\cdots \bar{x}_{1,m_{t}} \rangle 
\langle \Ee_{n-1,1}^{r-1}, \bar{x}_{m_{1},n}
\cdots \bar{x}_{m_{s},n} 
\bar{x}_{m_{s+1},k_{s+1}}\cdots 
\bar{x}_{m_{t},k_{t}} \rangle\\
& = 
\langle \Ee_{n-1,1}, \bar{x}_{1,n}\rangle 
\langle \Ee_{n-1,1}^{r-1}, 
\bar{x}_{n,n}\bar{x}_{1,n}\cdots \bar{x}_{1,n} 
\bar{x}_{1,k_{s+1}}\cdots \bar{x}_{1,k_{t}} \rangle +\\
&\quad + \alpha\langle \Ee_{n-1,1}, \bar{x}_{1,n}\rangle 
\langle \Ee_{n-1,1}^{r-1}, \bar{x}_{1,n}
\bar{x}_{n,n}\bar{x}_{1,n}\cdots \bar{x}_{1,n} 
\bar{x}_{1,k_{s+1}}\cdots 
\bar{x}_{1,k_{t}} \rangle +\cdots +\\
&\quad + \alpha^{s}
\langle \Ee_{n-1,1}, \bar{x}_{1,n}\rangle 
\langle \Ee_{n-1,1}^{r-1}, 
\bar{x}_{1,n}\bar{x}_{1,n}\cdots \bar{x}_{1,n} 
\bar{x}_{n,k_{s+1}}\cdots \bar{x}_{1,k_{t}} \rangle
 +\cdots +\\
&\quad + \alpha^{t-1}\langle 
\Ee_{n-1,1}, \bar{x}_{1,n}\rangle 
\langle \Ee_{n-1,1}^{r-1}, 
\bar{x}_{1,n}\bar{x}_{1,n}\cdots \bar{x}_{1,n} 
\bar{x}_{1,k_{s+1}}\cdots \bar{x}_{n,k_{t}} \rangle\\
& = 
\langle \Ee_{n-1,1}, \bar{x}_{1,n}\rangle 
\langle \Ee_{n-1,1}^{r-1}, 
\bar{x}_{n,n}\bar{x}_{1,n}\cdots \bar{x}_{1,n} 
\bar{x}_{1,k_{s+1}}\cdots \bar{x}_{1,k_{t}} \rangle +\\
&\quad + \alpha\langle \Ee_{n-1,1}, \bar{x}_{1,n}\rangle 
\langle \Ee_{n-1,1}^{r-1}, \bar{x}_{1,n}
\bar{x}_{n,n}\bar{x}_{1,n}\cdots \bar{x}_{1,n} 
\bar{x}_{1,k_{s+1}}\cdots 
\bar{x}_{1,k_{t}} \rangle + \cdots +\\
&\quad + \alpha^{s-1}
\langle \Ee_{n-1,1}, \bar{x}_{1,n}\rangle 
\langle \Ee_{n-1,1}^{r-1}, 
\bar{x}_{1,n}\bar{x}_{1,n}\cdots \bar{x}_{n,n} 
\bar{x}_{1,k_{s+1}}\cdots \bar{x}_{1,k_{t}} \rangle\\
& = 
\langle \Ee_{n-1,1}, \bar{x}_{1,n}\rangle 
\langle \Ee_{n-1,1}^{r-1}, 
\bar{x}_{n,n}\bar{x}_{1,n}^{s-1}
\bar{x}_{1,k_{s+1}}\cdots \bar{x}_{1,k_{t}} \rangle +\\
&\quad + \alpha^{2}\langle \Ee_{n-1,1}, \bar{x}_{1,n}\rangle 
\langle \Ee_{n-1,1}^{r-1}, \bar{x}_{n,n}\bar{x}_{1,n}^{s-1}
\bar{x}_{1,k_{s+1}}\cdots 
\bar{x}_{1,k_{t}} \rangle + \cdots +\\
&\quad + \alpha^{2(s-1)}
\langle \Ee_{n-1,1}, \bar{x}_{1,n}\rangle 
\langle \Ee_{n-1,1}^{r-1}, 
\bar{x}_{n,n}\bar{x}_{1,n}^{s-1} 
\bar{x}_{1,k_{s+1}}\cdots \bar{x}_{1,k_{t}} \rangle\\
& = \Phi_{s}(\alpha^{2})\langle \Ee_{n-1,1}, \bar{x}_{1,n}\rangle
\langle \Ee_{n-1,1}^{r-1}, 
\bar{x}_{n,n}\bar{x}_{1,n}^{s-1} 
\bar{x}_{1,k_{s+1}}\cdots \bar{x}_{1,k_{t}} \rangle,
\end{align*} 
where the second equality follows by $ (a) $, the third from 
$k_{j}\neq n$ for all $s+1\leq j\leq t$ and the fourth from
the commuting relations of $\bar{x}_{i,j}$.
But 
\begin{align*}
& \langle \Ee_{n-1,1}^{r}, 
\bar{x}_{n,n}\bar{x}_{1,n}^{s} 
\bar{x}_{1,k_{s+1}}\cdots \bar{x}_{1,k_{t}} \rangle = 
\sum_{m_{j}= 1}^{n}\langle \Ee_{n-1,1}, \bar{x}_{n,m_{0}}
\bar{x}_{1,m_{1}}\cdots \bar{x}_{1,m_{t}}\rangle \cdot\\
& \qquad\quad \cdot 
\langle \Ee_{n-1,1}^{r-1}, \bar{x}_{m_{0},n}\bar{x}_{m_{1},n}
\cdots \bar{x}_{m_{s},n}\bar{x}_{m_{s+1},k_{s+1}}\cdots  
\bar{x}_{m_{t},k_{t}} \rangle\\
& \qquad = \sum_{m_{j}= 1}^{n}\langle \Ee_{n-1,1}, \bar{x}_{n,n}
\bar{x}_{1,m_{1}}\cdots \bar{x}_{1,m_{t}}\rangle \cdot\\
& \qquad\quad \cdot 
\langle \Ee_{n-1,1}^{r-1}, \bar{x}_{n,n}\bar{x}_{m_{1},n}
\cdots \bar{x}_{m_{s},n}\bar{x}_{m_{s+1},k_{s+1}}\cdots  
\bar{x}_{m_{t},k_{t}} \rangle\\
& \qquad = \sum_{m_{j}= 1}^{n} \beta \langle \Ee_{n-1,1}, 
\bar{x}_{1,m_{1}}\cdots \bar{x}_{1,m_{t}}\rangle \cdot\\
& \qquad\quad \cdot 
\langle \Ee_{n-1,1}^{r-1}, \bar{x}_{n,n}\bar{x}_{m_{1},n}
\cdots \bar{x}_{m_{s},n}\bar{x}_{m_{s+1},k_{s+1}}\cdots  
\bar{x}_{m_{t},k_{t}} \rangle\\
& \qquad =\beta \langle \Ee_{n-1,1}, 
\bar{x}_{1,n}\rangle \big(
\langle \Ee_{n-1,1}^{r-1}, \bar{x}_{n,n}^{2}\bar{x}_{1,n}^{s-1}
\bar{x}_{1,k_{s+1}}\cdots  
\bar{x}_{1,k_{t}} \rangle +\\
& \qquad\quad + \alpha^{2} 
\langle \Ee_{n-1,1}^{r-1}, \bar{x}_{n,n}^{2}\bar{x}_{1,n}^{s-1}
\bar{x}_{1,k_{s+1}}\cdots  
\bar{x}_{1,k_{t}} \rangle +\cdots +\\
& \qquad\quad + \alpha^{2(s-1)} 
\langle \Ee_{n-1,1}^{r-1}, \bar{x}_{n,n}^{2}\bar{x}_{1,n}^{s-1}
\bar{x}_{1,k_{s+1}}\cdots  
\bar{x}_{1,k_{t}} \rangle\big)\\
 & \qquad =\beta \langle \Ee_{n-1,1}, 
\bar{x}_{1,n}\rangle \Phi_{s}(\alpha^{2})
\langle \Ee_{n-1,1}^{r-1}, \bar{x}_{n,n}^{2}\bar{x}_{1,n}^{s-1}
\bar{x}_{1,k_{s+1}}\cdots  
\bar{x}_{1,k_{t}} \rangle,
\end{align*} 
where the fourth equality follows from $ (a) $ and the 
commuting relations of the elements $\bar{x}_{i,j}$.
Following in this way we get for $m \geq$ min$(r,s) + 1$ that

\begin{align*}
& \langle \Ee_{n-1,1}^{r}, 
\bar{x}_{n,n}\bar{x}_{1,n}^{s} 
\bar{x}_{1,k_{s+1}}\cdots \bar{x}_{1,k_{t}} \rangle =\\
&\beta^{m(m+1)/2} \langle \Ee_{n-1,1}, 
\bar{x}_{1,n}\rangle^{m} \prod_{j=0}^{m-1}\Phi_{s-j}(\alpha^{2})
\langle \Ee_{n-1,1}^{r-m}, \bar{x}_{n,n}^{m+1}\bar{x}_{1,n}^{s-m}
\bar{x}_{1,k_{s+1}}\cdots  
\bar{x}_{1,k_{t}} \rangle.
\end{align*}
If $r < s$, then taking $m = r-1$ we get by $(b)$, $(a)$ and
$k_{j}\neq n$ that  
\begin{align*}
& \langle \Ee_{n-1,1}^{r}, 
\bar{x}_{n,n}\bar{x}_{1,n}^{s} 
\bar{x}_{1,k_{s+1}}\cdots \bar{x}_{1,k_{t}} \rangle =\\
&\ \beta^{(r-1)r/2} \langle \Ee_{n-1,1}, 
\bar{x}_{1,n}\rangle^{r-1} \prod_{j=0}^{r-2}\Phi_{s-j}(\alpha^{2})
\langle \Ee_{n-1,1}, \bar{x}_{n,n}^{r}\bar{x}_{1,n}^{s-r+1}
\bar{x}_{1,k_{s+1}}\cdots  
\bar{x}_{1,k_{t}} \rangle\\
&\ =  \beta^{(r-1)r/2} \langle \Ee_{n-1,1}, 
\bar{x}_{1,n}\rangle^{r-1} \prod_{j=0}^{r-2}\Phi_{s-j}(\alpha^{2})
\beta^{r}\langle \Ee_{n-1,1},\bar{x}_{1,n}^{s-r+1}
\bar{x}_{1,k_{s+1}}\cdots  
\bar{x}_{1,k_{t}} \rangle\\
&\ 	 = 0.
\end{align*}
If $s<r$ then taking $m = s$ we get again 
by $(b)$, $(a)$ and
$k_{j}\neq n$ that 
\begin{align*}
& \langle \Ee_{n-1,1}^{r}, 
\bar{x}_{n,n}\bar{x}_{1,n}^{s} 
\bar{x}_{1,k_{s+1}}\cdots \bar{x}_{1,k_{t}} \rangle =\\
&\qquad \beta^{s(s+1)/2} \langle \Ee_{n-1,1}, 
\bar{x}_{1,n}\rangle^{s} \prod_{j=0}^{s-1}\Phi_{s-j}(\alpha^{2})
\langle \Ee_{n-1,1}^{r-s}, \bar{x}_{n,n}^{s}
\bar{x}_{1,k_{s+1}}\cdots  
\bar{x}_{1,k_{t}} \rangle\\
&= \beta^{s(s+1)/2} \langle \Ee_{n-1,1}, 
\bar{x}_{1,n}\rangle^{s} \prod_{j=0}^{s-1}\Phi_{s-j}(\alpha^{2})
\sum_{m_{j}=1}^{n}
\langle \Ee_{n-1,1}, \bar{x}_{n,n}^{s}
\bar{x}_{1,m_{s+1}}\cdots  
\bar{x}_{1,m_{t}} \rangle \cdot\\
&\quad  \cdot \langle \Ee_{n-1,1}^{r-s-1}, \bar{x}_{n,n}^{s}
\bar{x}_{m_{s+1},k_{s+1}}\cdots  
\bar{x}_{m_{t},k_{t}} \rangle \\
&= \beta^{s(s+1)/2} \langle \Ee_{n-1,1}, 
\bar{x}_{1,n}\rangle^{s} \prod_{j=0}^{s-1}\Phi_{s-j}(\alpha^{2})
\sum_{m_{j}=1}^{n}\beta^{s}
\langle \Ee_{n-1,1}, 
\bar{x}_{1,m_{s+1}}\cdots  
\bar{x}_{1,m_{t}} \rangle \cdot\\
&\quad  \cdot 
\langle \Ee_{n-1,1}^{r-s-1}, \bar{x}_{n,n}^{s}
\bar{x}_{m_{s+1},k_{s+1}}\cdots  
\bar{x}_{m_{t},k_{t}} \rangle = 0.
\end{align*}
Finally, if $r=s$, taking $m = r-1$ we get
by $(b)$, $(a)$ and
$k_{j}\neq n$ that
\begin{align*}
& \langle \Ee_{n-1,1}^{r}, 
\bar{x}_{n,n}\bar{x}_{1,n}^{s} 
\bar{x}_{1,k_{s+1}}\cdots \bar{x}_{1,k_{t}} \rangle =\\
&\qquad \beta^{(r-1)r/2} \langle \Ee_{n-1,1}, 
\bar{x}_{1,n}\rangle^{r-1} \prod_{j=0}^{r-2}\Phi_{r-j}(\alpha^{2})
\langle \Ee_{n-1,1}, \bar{x}_{n,n}^{r}\bar{x}_{1,n}
\bar{x}_{1,k_{s+1}}\cdots  
\bar{x}_{1,k_{t}} \rangle\\
&\qquad  = \beta^{(r-1)r/2} \langle \Ee_{n-1,1}, 
\bar{x}_{1,n}\rangle^{r-1} \prod_{j=0}^{r-2}\Phi_{r-j}(\alpha^{2})
\beta^{r}\langle \Ee_{n-1,1}, \bar{x}_{1,n}
\bar{x}_{1,k_{s+1}}\cdots  
\bar{x}_{1,k_{t}} \rangle\\
&\qquad  = \beta^{(r+1)r/2} \langle \Ee_{n-1,1}, 
\bar{x}_{1,n}\rangle^{r} \prod_{j=0}^{r-2}\Phi_{r-j}(\alpha^{2})
\eps(
\bar{x}_{1,k_{s+1}}\cdots  
\bar{x}_{1,k_{t}}).
\end{align*}
Hence, it follows that 
\begin{align*}
& \langle \Ee_{n-1,1}^{r}, 
\bar{x}_{1,n}^{s}\bar{x}_{1,k_{s+1}}
\cdots \bar{x}_{1,k_{t}} \rangle = \\
& \qquad \Phi_{s}(\alpha^{2})
\langle \Ee_{n-1,1}, \bar{x}_{1,n}\rangle
\langle \Ee_{n-1,1}^{r-1}, 
\bar{x}_{n,n}\bar{x}_{1,n}^{s-1} 
\bar{x}_{1,k_{s+1}}\cdots \bar{x}_{1,k_{t}} \rangle\\
& \qquad = \delta_{r,s}
\beta^{r(r-1)/2} \langle \Ee_{n-1,1}, 
\bar{x}_{1,n}\rangle^{r} \prod_{j=0}^{r-1}\Phi_{r-j}(\alpha^{2})
\eps(
\bar{x}_{1,k_{s+1}}\cdots  
\bar{x}_{1,k_{t}}).
\end{align*} 
Finally, a direct computation shows that 
$$ \langle \Ee_{n-1,1}^{r}, 
\bar{x}_{1,n}^{r}\rangle = \beta^{r(r-1)/2}
\langle \Ee_{n-1,1}, 
\bar{x}_{1,n}\rangle^{r}
\prod_{j=0}^{r-1}\Phi_{r-j}(\alpha^{2}) 
.$$

$ (ii) $ Clearly, it is enough to prove it on the generators.
By Lemma \ref{lema:pair-e-f} $ (i) $ we have 
for all $1< i\leq j \leq n$ and $1\leq k\leq n-1 $ that
$\langle \Ee_{k,1}, \bar{x}_{i,j}\rangle = 0 = \eps(\Ee_{k,1})\eps(\bar{x}_{i,j})$.
The proof of the other equality is completely analogous. 

\par $ (iii) $ We prove it by induction on $n$. By $ (ii) $ 
and the paragraph before the lemma we have
\begin{align*}
\langle \Ee^{M}, \bar{x}^{N} \rangle & =
 \langle \Ee^{M_{n}}\cdots \Ee^{M_{1}}, 
 \bar{x}^{N_{1}}\cdots \bar{x}^{N_{n}} \rangle\\
 &= \langle (\Ee^{M_{n}}\cdots \Ee^{M_{1}})_{(1)}, 
 \bar{x}^{N_{1}}\rangle \langle (\Ee^{M_{n}}\cdots \Ee^{M_{1}})_{(2)},
  \bar{x}^{N_{2}} \cdots \bar{x}^{N_{n}} \rangle\\
& =\langle (\Ee^{M_{n}})_{(1)}\cdots (\Ee^{M_{1}})_{(1)}, 
 \bar{x}^{N_{1}}\rangle \langle (\Ee^{M_{n}})_{(2)}
 \cdots (\Ee^{M_{1}})_{(2)},
  \bar{x}^{N_{2}} \cdots \bar{x}^{N_{n}} \rangle\\
&= \langle (\Ee^{M_{n}})_{(1)}\cdots (\Ee^{M_{1}})_{(1)}, 
 \bar{x}^{N_{1}}\rangle \cdot \\
 &\qquad \cdot
 \langle (\Ee^{M_{n}})_{(2)}\cdots (\Ee^{M_{2}})_{(2)},
  (\bar{x}^{N_{2}} \cdots \bar{x}^{N_{n}})_{(1)} \rangle
  \cdot \\
 &\qquad \cdot     
  \langle (\Ee^{M_{1}})_{(2)},
 ( \bar{x}^{N_{2}} \cdots \bar{x}^{N_{n}})_{(2)} \rangle\\
&= \langle (\Ee^{M_{n}})_{(1)}\cdots (\Ee^{M_{1}})_{(1)}, 
 \bar{x}^{N_{1}}\rangle \langle (\Ee^{M_{n}})_{(2)}
 \cdots (\Ee^{M_{2}})_{(2)},
  \bar{x}^{N_{2}} \cdots \bar{x}^{N_{n}} \rangle \cdot\\
&\qquad \cdot  
  \eps( (\Ee^{M_{1}})_{(2)})\\
&= \langle (\Ee^{M_{n}})_{(1)}\cdots (\Ee^{M_{2}})_{(1)}, 
 (\bar{x}^{N_{1}})_{(1)}\rangle 
 \langle (\Ee^{M_{1}})_{(1)}, 
 (\bar{x}^{N_{1}})_{(2)}\rangle\cdot \\
&\qquad \cdot  
 \langle (\Ee^{M_{n}})_{(2)}\cdots (\Ee^{M_{2}})_{(2)},
  \bar{x}^{N_{2}} \cdots \bar{x}^{N_{n}} \rangle 
  \eps( (\Ee^{M_{1}})_{(2)})\\  
&= \eps ((\Ee^{M_{n}})_{(1)}\cdots (\Ee^{M_{2}})_{(1)}) 
\eps( (\bar{x}^{N_{1}})_{(1)}) 
 \langle \Ee^{M_{1}}, 
 (\bar{x}^{N_{1}})_{(2)}\rangle\cdot \\
&\qquad \cdot  
 \langle (\Ee^{M_{n}})_{(2)}\cdots (\Ee^{M_{2}})_{(2)},
  \bar{x}^{N_{2}} \cdots \bar{x}^{N_{n}} \rangle\\  
&=  \langle \Ee^{M_{1}}, 
 \bar{x}^{N_{1}}\rangle
 \langle \Ee^{M_{n}}\cdots \Ee^{M_{2}},
  \bar{x}^{N_{2}} \cdots \bar{x}^{N_{n}} \rangle.
\end{align*}
Thus, by induction we need only to evaluate the first factor
\begin{equation}\label{eq:first-factor}
\langle \Ee^{M_{1}}, 
 \bar{x}^{N_{1}}\rangle = 
 \langle \Ee_{n-1,1}^{M_{1,n}}\cdots 
 \Ee_{1,1}^{M_{1,2}}, 
 \bar{x}_{11}^{N_{11}}\cdots  \bar{x}_{1n}^{N_{1n}}\rangle.
\end{equation}
But
\begin{align*}
&\langle \Ee_{n-1,1}^{M_{1,n}}\cdots 
 \Ee_{1,1}^{M_{1,2}}, 
 \bar{x}_{11}^{N_{11}}\cdots  \bar{x}_{1n}^{N_{1n}}\rangle  =
\\
& \qquad \alpha^{\sum_{r<s}N_{1r}N_{1s}}
 \langle \Ee_{n-1,1}^{M_{1,n}}\cdots 
 \Ee_{1,1}^{M_{1,2}}, 
 \bar{x}_{1n}^{N_{1n}}\cdots  \bar{x}_{11}^{N_{11}}\rangle\\
\end{align*}
\begin{align*}
& \qquad = \alpha^{\sum_{r<s}N_{1r}N_{1s}} \langle \Ee_{n-1,1}^{M_{1,n}}, 
 (\bar{x}_{1n}^{N_{1n}})_{(1)}\cdots  (\bar{x}_{11}^{N_{11}})_{(1)}\rangle
 \cdot\\
 & \qquad\quad \cdot
 \langle \Ee_{n-2,1}^{M_{1,n-1}} \cdots 
 \Ee_{1,1}^{M_{1,2}}, 
 (\bar{x}_{1n}^{N_{1n}})_{(2)}\cdots  (\bar{x}_{11}^{N_{11}})_{(2)}\rangle.
\end{align*}
Since $\langle \Ee_{k,1}, \bar{x}_{ln}\rangle = 0 = 
\eps(\Ee_{k,1})\eps(\bar{x}_{ln})$ for all
$k<n-1$  and $1\leq l\leq n$ we obtain by $(i)$ that 
\begin{align*}
&\langle \Ee_{n-1,1}^{M_{1,n}}\cdots 
 \Ee_{1,1}^{M_{1,2}}, 
 \bar{x}_{11}^{N_{11}}\cdots  
 \bar{x}_{1n}^{N_{1n}}\rangle  =\\
 & \quad = \alpha^{\sum_{r<s}N_{1r}N_{1s}} 
\langle \Ee_{n-1,1}^{M_{1,n}}, 
 (\bar{x}_{1n}^{N_{1n}}))_{(1)}\cdots  
(\bar{x}_{11}^{N_{11}})_{(1)}\rangle
 \eps ((\bar{x}_{1n}^{N_{1n}})_{(2)})\cdot\\
 & \quad\quad \cdot
 \langle \Ee_{n-2,1}^{M_{1,n-1}} \cdots 
 \Ee_{1,1}^{M_{1,2}}, 
 (\bar{x}_{1n-1}^{N_{1n-1}})_{(2)}\cdots  
 (\bar{x}_{11}^{N_{11}})_{(2)}\rangle\\
 & \quad = \alpha^{\sum_{r<s}N_{1r}N_{1s}} 
\langle \Ee_{n-1,1}^{M_{1,n}}, 
 \bar{x}_{1n}^{N_{1n}}(\bar{x}_{1n-1}^{N_{1n-1}})_{(1)} 
 \cdots  (\bar{x}_{11}^{N_{11}})_{(1)}\rangle
 \cdot\\
 & \quad\quad \cdot
 \langle \Ee_{n-2,1}^{M_{1,n-1}} \cdots 
 \Ee_{1,1}^{M_{1,2}}, 
 (\bar{x}_{1n-1}^{N_{1n-1}})_{(2)}	\cdots  
 (\bar{x}_{11}^{N_{11}})_{(2)}\rangle\\
 & \quad = \alpha^{\sum_{r<s}N_{1r}N_{1s}} 
\langle \Ee_{n-1,1}^{M_{1,n}}, 
 \bar{x}_{1n}^{N_{1n}}\rangle
 \langle \Ee_{n-2,1}^{M_{1,n-1}} \cdots 
 \Ee_{1,1}^{M_{1,2}}, 
 \bar{x}_{1n-1}^{N_{1n-1}}	\cdots  
 \bar{x}_{11}^{N_{11}}\rangle.
\end{align*}
Then the claim follows by the induction hypothesis and 
item $(i)$.
\epf

In view of Lemma \ref{lema:e-M-x-N} $(iii)$, the proof
of the following lemma is completely analogous to
\cite[Thm. 4.3]{Tk}.

\begin{lema}\label{lema:overline-psi-inj}
$\overline{\psi}_{+}$ and 
$\overline{\psi}_{-}$ are injective.
\end{lema}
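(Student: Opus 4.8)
The plan is to establish the injectivity of $\overline{\psi}_{+}$ only, since the argument for $\overline{\psi}_{-}$ is obtained by interchanging the roles of the $\Ee_{i,j}$ and the $\Ff_{i,j}$ (equivalently, by replacing the upper Borel by the lower one), exactly as in Lemma \ref{lema:existe-psi-+}. As $K_{+}$ is finite-dimensional, injectivity of the induced map $\overline{\psi}_{+}\colon K_{+}\to \hat{{\bf u}}_{\alpha, \beta}(\lieb^{+})^{*}$ is equivalent to the non-degeneracy of the pairing $\langle -,-\rangle\colon \hat{{\bf u}}_{\alpha, \beta}(\lieb^{+})\times K_{+}\to\k$ in the $K_{+}$-variable, that is, to the statement that $\langle u,x\rangle=0$ for all $u$ forces $x=0$. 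First I would fix the PBW-type basis $\{\bar{x}^{N}\}_{N}$ of $K_{+}$ introduced before Lemma \ref{lema:e-M-x-N}, with $N$ ranging over the upper-triangular matrices whose entries lie in $\{0,\dots,\ell-1\}$, together with the corresponding family $\{\Ee^{M}\}_{M}$ inside $\hat{{\bf u}}_{\alpha, \beta}(\lieb^{+})$.

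The key step is then purely formal, and is precisely the mechanism underlying \cite[Thm. 4.3]{Tk}. By Lemma \ref{lema:e-M-x-N}$(iii)$ one has $\langle \Ee^{M},\bar{x}^{N}\rangle=\lambda_{N}\,\delta_{M,N}$ with $\lambda_{N}\neq 0$, so that $\overline{\psi}_{+}(\bar{x}^{N})(\Ee^{M})=\lambda_{N}\,\delta_{M,N}$. Evaluating a hypothetical relation $\sum_{N}c_{N}\,\overline{\psi}_{+}(\bar{x}^{N})=0$ on $\Ee^{M}$ yields $c_{M}\lambda_{M}=0$, hence $c_{M}=0$ for every $M$. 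Thus the functionals $\{\overline{\psi}_{+}(\bar{x}^{N})\}_{N}$ are linearly independent in $\hat{{\bf u}}_{\alpha, \beta}(\lieb^{+})^{*}$, and since $\{\bar{x}^{N}\}_{N}$ is a basis of $K_{+}$ this is exactly the injectivity of $\overline{\psi}_{+}$. In other words, once the orthogonality of the two PBW-type families is in hand, the conclusion is immediate and requires no further computation.

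All the genuine work therefore sits in Lemma \ref{lema:e-M-x-N}, which I would not reproduce: the delicate point is the interaction between the nilpotent generators $\Ee_{i,j}$ and the group-like generators $\bar{x}_{ii}$ (resp.\ $w_{j}$, $a_{n}$). The triangular structure recorded by the subalgebras $A,B,U,V$ is what tames this: parts $(i)$ and $(ii)$ of that lemma ensure that $V$ pairs as $\eps$ with $A$ and that $U$ pairs as $\eps$ with $B$, so that the matrix of the pairing is block-triangular and the global orthogonality $(iii)$ reduces, by induction on $n$, to the ``first-row'' computation of part $(i)$. The non-vanishing of the scalars $\lambda_{N}$ is in turn forced by $q=\alpha^{-1}\beta$ being a primitive $\ell$-th root of unity, which is how the parameter hypothesis \eqref{eq:cond-alpha-beta} enters. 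The main obstacle is thus entirely absorbed into Lemma \ref{lema:e-M-x-N}$(iii)$, and the present lemma merely assembles that orthogonality into a linear-independence statement.
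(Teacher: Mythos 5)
Your proposal is correct and follows essentially the same route as the paper: reduce to $\overline{\psi}_{+}$, invoke the orthogonality $\langle \Ee^{M},\bar{x}^{N}\rangle=\lambda\,\delta_{M,N}$ of Lemma \ref{lema:e-M-x-N}$(iii)$, and read off linear independence of the functionals $\overline{\psi}_{+}(\bar{x}^{N})$. The only (harmless) difference is that the paper, following Takeuchi's argument, first normalizes the diagonal exponents and evaluates on the $\Ee^{M}$ of maximal total degree, whereas you evaluate directly on each $\Ee^{M}$; under the stated form of $(iii)$ both are valid.
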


\pf We prove it only for 
$\overline{\psi}_{+}: K_{+} \to \hat{{\bf u}}_{\alpha, \beta}
(\lieb^{+})^{*}$ since
the proof for 
$\overline{\psi}_{-}$ is analogous.
Let $\sum c_{N}\bar{x}^{N}$ be in the kernel of  
$\overline{\psi}_{+}$. By multiplying by a suitable power
of $\bar{x}_{11}\cdots \bar{x}_{nn}$, we may assume that  $N_{ij} \in \NN_{0}$
for all $1\leq i\leq j\leq n$ with $c_{N}\neq 0$. Now choose an
element $M$ among $N$ such that $c_{M}\neq 0$ and 
$M_{11} + M_{12} + \cdots + M_{nn}$ has the largest value. Then
by Lemma \ref{lema:e-M-x-N} $(iii)$, 
$$0 = \overline{\psi}_{+}\big(\sum c_{N}\bar{x}^{N}\big) (\Ee^{M}) = 
\langle \Ee^{M}, \sum c_{N}\bar{x}^{N}\rangle = c_{M},$$ 
a contradiction. Thus $\overline{\psi}_{+}$ in injective.
\epf 

\noindent \emph{Proof of Prop. \ref{prop:psi-injective}}: 
By the definition of the maps, 
we have the diagram
$$ \xymatrix{
  \overline{H} \ar[r]^(.35){ \overline{\delta}} \ar[d]_{\overline{\psi}} &
K_{+} \ot K_{-} \ar[d]^{\overline{\psi}_{+}\ot \overline{\psi}_{-}}\\
  \hat{{\bf u}}_{\alpha,
\beta}(\liegl_{n})^{*} \ar[r]^(.35){^{t}m} & \hat{{\bf u}}_{\alpha,
\beta}(\lieb^{+})^{*}\ot \hat{{\bf u}}_{\alpha, \beta}(\lieb^{-})^{*},}$$
which is commutative. By Lemma \ref{lema:overline-psi-inj}, 
$\overline{\psi}_{+}$ and $\overline{\psi}_{-}$ 
are injective, then 
$\overline{\psi}_{+}\ot \overline{\psi}_{-}$ is injective.
Since $^{t}m$ and $\overline{\delta}$ are injective 
by \eqref{eq:transp-mult-u-hat}
and \eqref{diag:bar-delta}, it follows that 
$\overline{\psi}$ is also injective. Since
both Hopf algebras have the same dimension by 
Prop. \ref{prop:B-central} $(iii)$ and
Lemma \ref{lema:u-hat-pointed}, it follows that 
$\overline{\psi}$ is an isomorphism.
\qed

\smallbreak 
Using Cor. \ref{cor:PI-hopf-triple-example} we get the 
following corollary.

\begin{cor}\label{cor:PI-hopf-triple-example2}
$(\Oc(GL_{n}),\Oc_{\alpha,\beta}(GL_{n}), \hat{{\bf u}}_{\alpha,
\beta}(\liegl_{n})^{*})$ is a PI-Hopf triple
and one has the central extension of Hopf algebras
\begin{equation}\label{suc:principal}
  1\to \Oc(GL_{n}) \xrightarrow{\iota}
  \Oc_{\alpha,\beta}(GL_{n}) \xrightarrow{\pi}
  \hat{{\bf u}}_{\alpha,
\beta}(\liegl_{n})^{*} \to 1.\qed
\end{equation}
\end{cor}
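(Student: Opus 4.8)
The plan is to combine the two facts that immediately precede the statement, namely Corollary \ref{cor:PI-hopf-triple-example}, which already establishes that $(\Oc(GL_{n}),\Oc_{\alpha,\beta}(GL_{n}),\overline{H})$ is a PI-Hopf triple together with its associated central exact sequence, and Proposition \ref{prop:psi-injective}, which furnishes a Hopf algebra isomorphism $\overline{\psi}\colon\overline{H}\xrightarrow{\sim}\hat{{\bf u}}_{\alpha,\beta}(\liegl_{n})^{*}$. The corollary is then obtained by transporting the structure of Corollary \ref{cor:PI-hopf-triple-example} along this isomorphism. Since everything is a formal consequence of results already proved, there is essentially no obstacle here; all of the genuine work lies in the proof of Proposition \ref{prop:psi-injective}, which supplies the crucial identification of $\overline{H}=H/B^{+}H$ with $\hat{{\bf u}}_{\alpha,\beta}(\liegl_{n})^{*}$.

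First I would check the PI-Hopf triple conditions of Definition \ref{def:pi-hopf-triple} for the triple $(\Oc(GL_{n}),\Oc_{\alpha,\beta}(GL_{n}),\hat{{\bf u}}_{\alpha,\beta}(\liegl_{n})^{*})$. Conditions $(i)$ and $(ii)$ concern only the inclusion $\Oc(GL_{n})\subseteq\Oc_{\alpha,\beta}(GL_{n})$ and make no reference to the finite-dimensional quotient; these were verified verbatim in Corollary \ref{cor:PI-hopf-triple-example} (relying on Prop. \ref{prop:B-central}), so they remain in force. For condition $(iii)$ one must exhibit the finite-dimensional Hopf algebra $H/B^{+}H$, which by construction is precisely $\overline{H}$; Proposition \ref{prop:psi-injective} identifies it, as a Hopf algebra, with $\hat{{\bf u}}_{\alpha,\beta}(\liegl_{n})^{*}$, which is finite-dimensional by Lemma \ref{lema:u-hat-pointed}. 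Hence the triple is a PI-Hopf triple.

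Finally, for the central extension I would compose the projection $\pi\colon\Oc_{\alpha,\beta}(GL_{n})\to\overline{H}$ of Corollary \ref{cor:PI-hopf-triple-example} with $\overline{\psi}$. As $\overline{\psi}$ is an isomorphism of Hopf algebras, the sequence
$$1\to\Oc(GL_{n})\xrightarrow{\iota}\Oc_{\alpha,\beta}(GL_{n})\xrightarrow{\overline{\psi}\circ\pi}\hat{{\bf u}}_{\alpha,\beta}(\liegl_{n})^{*}\to 1$$
is again central and exact in the sense of Definition \ref{def:sucex}: injectivity of $\iota$ and centrality of its image are unchanged, $\overline{\psi}\circ\pi$ is surjective, $\Ker(\overline{\psi}\circ\pi)=\Ker\pi=\Oc_{\alpha,\beta}(GL_{n})\,\Oc(GL_{n})^{+}$ since $\overline{\psi}$ is injective, and the left coinvariants are preserved under composition with an isomorphism on the quotient. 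Alternatively, once the PI-Hopf triple structure is in place, one may invoke Remark \ref{rmk:PI-hopf-triple-sus-ex} to obtain the central extension directly. This completes the argument.
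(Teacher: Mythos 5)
Your argument is correct and is exactly what the paper intends: the corollary is stated as an immediate consequence of Corollary \ref{cor:PI-hopf-triple-example} once Proposition \ref{prop:psi-injective} identifies $\overline{H}$ with $\hat{{\bf u}}_{\alpha,\beta}(\liegl_{n})^{*}$ as Hopf algebras. Your verification that the PI-Hopf triple axioms and the exactness/centrality of the sequence transport along the isomorphism $\overline{\psi}$ matches the paper's (tacit) reasoning.
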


\subsection{Construction of the quotients}\label{subsec:a-b-root-constr}
In this subsection we perform the construction of the quotients
of $ \Oc_{\alpha,\beta}(GL_{n})$, as in \cite{AG}. 
By Corollary \ref{cor:PI-hopf-triple-example2},
$\Oc_{\alpha,\beta}(GL_{n})$
fits into a central exact sequence 
$$
 1\to \Oc(GL_{n}) \xrightarrow{\iota}
  \Oc_{\alpha,\beta}(GL_{n}) \xrightarrow{\pi}
   \hat{{\bf u}}_{\alpha,\beta}(\liegl_{n})^{*} \to 1.
$$
Let $\varphi:  \Oc_{\alpha,\beta}(GL_{n}) \to A$ 
be a surjective Hopf algebra
map.  The Hopf subalgebra $K= \varphi(\Oc(GL_{n}))$ 
is central in $A$ and
whence $A$ is an $H$-extension of $K$, 
where $H$ is the Hopf algebra
$H = A/ AK^{+}$. Indeed, by Lemma \ref{lema:triple-affine} $(i)$, 
 $\Oc_{\alpha,\beta}(GL_{n})$ is noetherian and 
whence $A$ is also noetherian. 
Then by \cite[Thm.
3.3]{Sch1}, $A$ is faithfully flat over every 
central Hopf subalgebra; in particular
over $K$, and the claim follows directly from \cite[Prop.
3.4.3]{Mo}. Since $K$ is a quotient of $\Oc(GL_{n})$, there is an
algebraic group $\Ga$ and an injective map of algebraic groups
$\sigma: \Ga \to GL_{n}$ such that $K \simeq \Oc(\Ga)$. Moreover, since
$\varphi(\Oc_{\alpha,\beta}(GL_{n})\Oc(GL_{n})^{+}) = AK^{+}$, 
we have $\Oea\Oc(GL_{n})^{+} \subseteq
\Ker \hat{\pi}\varphi$, where $\hat{\pi}: A \to H$ is the canonical
projection. Since  
$\qabhat^{*} \simeq \Oea /[\Oea\Oc(G)^{+}]$
by Prop. \ref{prop:psi-injective}, there
is a surjective map $r:\qabhat^{*} \to H$ 
and the following diagram
\begin{equation}\label{diag:propqsubgr}
 \xymatrix{1
\ar[r]^{} & \Oc(GL_{n}) \ar[r]^{\iota} \ar[d]_{^{t}\sigma}  & \Oea
\ar[r]^{\pi}
\ar[d]^{\varphi} & \hat{{\bf u}}_{\alpha,\beta}(\liegl_{n})^{*} 
\ar[r]^{}\ar[d]^{r} & 1\\
1 \ar[r]^{} &   \Oc(\Ga) \ar[r]^{\hat{\iota}} & A
\ar[r]^{\hat{\pi}} & H \ar[r]^{} & 1,}
\end{equation}
is commutative. Therefore, every quotient of 
$\Oea$ fits into a similar diagram
and can be constructed using central extensions. 
As in \cite{AG} we construct the quotients in three steps. 

\subsubsection{First step}\label{subsub:first-step}
The surjective map 
$r: \qabhat^{*} \to H$ induces an injective
Hopf algebra map $^{t}r: H^{*} \to \qabhat$. 
Denote also by $H^{*}$ the Hopf subalgebra of $\qabhat$ 
given by the image of $^{t}r$. 
Since $\qabhat$ is finite-dimensional and pointed by
Lemma \ref{lema:u-hat-pointed}, 
by \cite[Cor. 1.12]{AG} -- see also \cite{CM, MuI} -- 
the Hopf subalgebras of
$\qabhat$ are parameterized by triples $(\Sigma, I_{+},I_{-})$ 
where $\Sigma$ is
a subgroup of $\Tt:=$ $ G(\qabhat)$ $  \simeq (\ZZ / \ell\ZZ)^{n}$,
$$
I_{+} = \{i\vert\ e_{i} \in H^{*},\  1\leq i< n\}\mbox{ and }
I_{-} = \{i\vert\ f_{i} \in H^{*},\  1\leq i< n\},
$$
such that $w_{i} = 
h_{i}^{n_{\alpha}}h_{i+1}^{n_{\beta}} \in \Sigma$ 
if $i \in I_{+}$ and $w_{j}' = 
h_{j+1}^{n_{\alpha}}h_{j}^{n_{\beta}} \in \Sigma$ if $j \in I_{-}$.

\bigbreak
\subsubsection*{The Hopf subalgebras $\QEabl$ of $\QEabg$ and
$\qablhat$ of $\qabhat$.}

\begin{definition}\label{def:gammaL}
For every triple $(\Sigma, I_+, I_-)$ define $\QEabl$ to be the
subalgebra of $\QEabg$ generated by the elements
$$
\{a_{k}, a_{k}^{-1}, b_{k}, b_{k}^{-1}, e_{i}, f_{j}\vert\ 1\leq k \leq n,
i\in I_{+},\ j\in I_{-} \}.$$
\end{definition}
Note that
$\QEabl$ does not depend on $\Sigma$. With this definition, 
the following proposition
is clear.

\begin{prop}\label{prop:QEabl-hopf-subalgebra}
$\QEabl$ is a Hopf subalgebra of $\QEabg$.\qed
\end{prop}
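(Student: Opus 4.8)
The plan is to check directly that the subalgebra $\QEabl$ is stable under the comultiplication, the counit and the antipode of $\QEabg$, so that it inherits the structure of a Hopf subalgebra. Since $\com$ and $\eps$ are algebra maps, $\SS$ is an anti-algebra map, and $\QEabl$ is generated as an algebra by the elements $a_k^{\pm 1}, b_k^{\pm 1}$ with $1\leq k\leq n$, together with $e_i$ ($i\in I_+$) and $f_j$ ($j\in I_-$), it is enough to verify the three closure conditions on this generating set. First I would dispose of the group-like generators: each $a_k$ and $b_k$ is group-like, with $\com(a_k)=a_k\ot a_k$, $\eps(a_k)=1$ and $\SS(a_k)=a_k^{-1}\in\QEabl$, and likewise for the $b_k$ and their inverses.

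The key step is the verification for the skew-primitive generators. Using the coproducts recalled in Subsection \ref{subsec:U-alpha-beta}, for $i\in I_+$ one has $\com(e_i)=e_i\ot 1 + w_i\ot e_i$ with $w_i=a_ib_{i+1}$, and for $j\in I_-$ one has $\com(f_j)=1\ot f_j + f_j\ot w_j'$ with $w_j'=a_{j+1}b_j$. The decisive observation, and the reason the proposition is essentially immediate, is that Definition \ref{def:gammaL} places \emph{all} the group-like generators $a_k,b_k$ ($1\leq k\leq n$) into $\QEabl$, independently of the sets $I_\pm$. Hence $w_i=a_ib_{i+1}$ and $w_j'=a_{j+1}b_j$, along with their inverses, already lie in $\QEabl$. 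It follows that $\com(e_i)\in\QEabl\ot\QEabl$ and $\com(f_j)\in\QEabl\ot\QEabl$, while $\eps(e_i)=\eps(f_j)=0$, so $\QEabl$ is a subbialgebra.

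Finally I would check stability under the antipode. Since $e_i$ is a $(w_i,1)$-primitive and $f_j$ is a $(1,w_j')$-primitive, the antipode axiom $m(\SS\ot\id)\com=\eps$ yields the standard formulas $\SS(e_i)=-w_i^{-1}e_i$ and $\SS(f_j)=-f_j(w_j')^{-1}$. As $w_i^{-1}$ and $(w_j')^{-1}$ belong to $\QEabl$ by the same observation as above, both $\SS(e_i)$ and $\SS(f_j)$ lie in $\QEabl$. Together with $\SS(a_k)=a_k^{-1}$ and $\SS(b_k)=b_k^{-1}$, this gives $\SS(\QEabl)\subseteq\QEabl$, completing the argument. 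There is no genuine obstacle here: the only subtlety is the bookkeeping point that all the group-likes $a_k,b_k$ ($1\leq k\leq n$) are built into $\QEabl$, which is exactly what forces the elements $w_i,w_j'$ appearing in the coproducts to remain inside $\QEabl$.
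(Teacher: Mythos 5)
Your verification is correct and is precisely the routine check the paper has in mind when it declares the proposition ``clear'' and gives no proof: since all the group-likes $a_k^{\pm 1}, b_k^{\pm 1}$ are included in $\QEabl$ by Definition \ref{def:gammaL}, the elements $w_i, w_j'$ and their inverses lie in $\QEabl$, so stability under $\com$, $\eps$ and $\SS$ follows immediately from the skew-primitivity of the $e_i$ and $f_j$. Nothing to add.
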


Let $I_{n}$ be the Hopf ideal of $\QEabg$ given by Thm. 
\ref{thm:elem-centrales-U} $(ii)$ and define $J_{n} = I_{n}\cap \QEabl$.
Clearly, $J_{n}$ is a Hopf ideal of $\QEabl$ and the quotient defines
the Hopf algebra $\qabl = \QEabl / J_{n}$ such that
the diagram 
$$
 \xymatrix{
\QEabl \ar@{^{(}->}[r]^{} \ar@{->>}[d]_{}  & \QEabg
\ar@{->>}[d]^{} \\ 
\qabl \ar@{^{(}->}[r]^{j} & \qab,}$$
commutes, where $j: \qabl \to \qab$ 
is given by $j(a + J_{n}) = a + I_{n}$ for 
all $a \in \QEabl$.
Clearly, $j$ is a well-defined Hopf algebra map 
that makes the diagram commute.
Moreover, it is injective since $j(a + J_{n}) = 0$ 
if and only if $a \in I_{n} \cap \QEabl = J_{n}$.

\par  
Recall that $\qabhat$ is the Hopf algebra given by
$\qab / \mathcal{I}_{\ell}$, where
$\mathcal{I}_{\ell}$ is the ideal 
generated by the central group-likes 
$\{h_{i}^{n_{\alpha}}a_{i}^{-1}-1, h_{i}^{n_{\beta}}
b_{i}^{-1}-1\vert\ 1\leq i\leq n\}$. Then,
$\mathcal{J}_{\ell} := \mathcal{I}_{\ell} \cap \qabl$ 
is a Hopf ideal of $\qabl$. Thus
we define
\begin{equation}\label{eq:def-qablhat}
\qablhat:= \qabl / \mathcal{J}_{\ell},
\end{equation}
to be the finite-dimensional 
Hopf algebra given by the quotient. 

\par Recall that all Hopf subalgebras of $\qabhat$ are determined by 
triples $(\Sigma,J_{+},J_{-})$ where $\Sigma \subseteq \mathbb{T}$
is a subgroup. Since $\QEabl$ is determined by  
$I_{+}$ and  $I_{-}$, then clearly $\qablhat$ is the Hopf subalgebra 
of $\qabhat$ that corresponds to the triple $(\mathbb{T},I_{+},I_{-})$.

\subsubsection*{The quantized coordinate algebra $\Oeal$}
In this subsection we construct the quantum groups $\Oeal$
associated to the triple $(\Tt, I_{+}, I_{-})$. Since the pairing
between $\QEabg$ and $\Oea$ is degenerate, we can not follow
directly the construction made in \cite[2.1.3]{AG}. The advantage here
is that the Hopf algebra $\Oea$ is given by generators and relations. 
This allows us to give an explicit 
construction of $\Oeal$. This construction can be already seen in the work of
M\"uller \cite{Mu, MuI} but without mention of the triple. 
For every triple $(\Tt, I_{+}, I_{-})$ define the sets
\begin{equation}\label{eq:def-de-los-I}
\II_{+} = \left\{(i,j) \vert\ i\leq k < j,\ k\notin I_{+}\right \},
\quad
\II_{-} = \left\{(i,j) \vert\ j\leq k < i,\ k \notin I_{-} \right \},
\end{equation}
and let $\II$ be the two-sided ideal of $\Oea$ generated
by the elements $\left\{x_{i,j} \vert\ (i,j)\in \II_{+}\cup \II_{-}  \right \}$.
Then $\II$ is a Hopf ideal and one has the central sequence of
Hopf algebras
\begin{equation}\label{eq:sucex-J}
\xymatrix{ 1 \ar[r]^{} & \Oc(GL_{n}) / \JJ  
\ar@{^{(}->}[r]^{}   & \Oea /  \II 
\ar@{->>}[r]^{} & \overline{H} / \pi(\II) \ar[r]^{} & 1,} 
\end{equation}
where $\JJ = \II \cap \Oc(GL_{n})$ and $\pi(\II)$ are
Hopf ideals of $\Oc(GL_{n})$ and $\overline{H}\simeq \qablhat^{*}$, 
respectively.

\begin{lema}\label{lema:qablhat}
\begin{enumerate}
 \item[$(a)$] $\JJ$ is the ideal of $\Oc(GL_{n})$ generated
by the elements $\{x_{i,j}^{\ell} \vert\ (i,j)\in \II_{+}\cup \II_{-}\}$.

\item[$(b)$] $\overline{H} / \pi(\II) \simeq \qablhat^{*}$.

\item[$(c)$] The sequence \eqref{eq:sucex-J} is exact and fits into the
commutative diagram 
\begin{equation}\label{diag:sucex-J}
\xymatrix{ 1 \ar[r]^{} & \Oc(GL_{n})  
\ar@{^{(}->}[r]^{} \ar@{->>}[d]^{}  & \Oea  
\ar@{->>}[r]^{\pi}\ar@{->>}[d]^{\Res} & \qabhat^{*} 
\ar[r]^{}\ar@{->>}[d]^{\res}  & 1\\
1 \ar[r]^{} & \Oc(GL_{n}) / \JJ  
\ar@{^{(}->}[r]^{}   & \Oea /  \II 
\ar@{->>}[r]^(.55){\pi_{L}} & \qablhat^{*} \ar[r]^{} & 1,} 
\end{equation}

\item[$(d)$] The Hopf algebra surjection $\pi_{L}$ 
admits a coalgebra section 
$\gamma_{L}$.
\end{enumerate}
\end{lema}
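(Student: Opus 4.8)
The plan is to reduce all four parts to a single structural fact: the quotient $\Oeal = \Oea/\II$ admits a PBW-type basis consisting of the ordered monomials $\prod x_{ij}^{e_{ij}}$ in the \emph{surviving} variables $x_{ij}$ with $(i,j)\notin\II_+\cup\II_-$ (together with the powers of $g^{-1}$); equivalently, that $\II$ is spanned as a $\k$-space by those monomial basis elements of $\Oea$ in which at least one forbidden variable $x_{ij}$, $(i,j)\in S:=\II_+\cup\II_-$, occurs. First I would verify that $S$ is \emph{closed} under the straightening relations of Definition \ref{def:2-par-quant-ogln}. The only relation that can create a monomial not manifestly of the forbidden type is $x_{jl}x_{ik}-x_{ik}x_{jl}=(\beta-\alpha)x_{il}x_{jk}$; using the descriptions $(i,j)\notin\II_+ \Leftrightarrow \{i,\dots,j-1\}\subseteq I_+$ and $(i,j)\notin\II_- \Leftrightarrow \{j,\dots,i-1\}\subseteq I_-$, one checks that whenever $x_{il}$ and $x_{jk}$ both survive then so do $x_{ik}$ and $x_{jl}$, so no new relation among surviving variables is forced. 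A Bergman diamond-lemma argument, exactly as in M\"uller \cite{Mu, MuI}, then yields the basis. This is the main obstacle; the rest is formal.

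Granting the basis, part $(a)$ is bookkeeping. Using the centrality of the $\ell$-th powers and the section $\gamma$ of Corollary \ref{cor:def-gamma}, $\Oea$ is free over $\Oc(GL_{n})=F^{\#}(\Oc(GL_{n}))$ with basis $\{\gamma(\bar{x}^{\mathbf{r}}):0\le r_{ij}<\ell\}$, and every monomial factors uniquely as $x^{\mathbf{e}}=X^{\mathbf{q}}\,\gamma(\bar{x}^{\mathbf{r}})$ with $X^{\mathbf{q}}\in\Oc(GL_{n})$. Such a monomial involves a forbidden variable if and only if $X^{\mathbf{q}}$ lies in the ideal $\JJ'$ of $\Oc(GL_{n})$ generated by $\{x_{ij}^{\ell}:(i,j)\in S\}$, or $\mathbf{r}$ carries a forbidden nonzero exponent. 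Intersecting the induced $\Oc(GL_{n})$-module decomposition of $\II$ with the summand $\mathbf{r}=\mathbf{0}$, namely $\Oc(GL_{n})\cdot 1$, returns precisely $\JJ'$; hence $\JJ=\II\cap\Oc(GL_{n})=\JJ'$, which is $(a)$.

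For part $(b)$ I would argue by duality. The inclusion $\qablhat\hookrightarrow\qabhat$ dualizes, via the isomorphism $\overline{\psi}\colon\overline{H}\xrightarrow{\sim}\qabhat^{*}$ of Proposition \ref{prop:psi-injective}, to a Hopf surjection $\overline{H}\twoheadrightarrow\qablhat^{*}$ with kernel the Hopf ideal $(\qablhat)^{\perp}$. I would show $\pi(\II)\subseteq(\qablhat)^{\perp}$ on generators: for $(i,j)\in S$ and a PBW generator of $\qablhat$ one has $\langle u,x_{ij}\rangle=0$, since by Lemma \ref{lema:pair-e-f} the form $\langle\Ee_{k,l},x_{ij}\rangle$ (resp. $\langle\Ff_{k,l},x_{ij}\rangle$) is nonzero only for the single pair $(i,j)=(l,k+1)$ (resp. $(k+1,l)$), which lies in $S^{c}$ exactly when $\Ee_{k,l}$ (resp. $\Ff_{k,l}$) lies in $\qablhat$; the concatenation formula $\langle uv,x_{ij}\rangle=\sum_{s}\langle u,x_{is}\rangle\langle v,x_{sj}\rangle$ and induction on the length of a PBW monomial propagate the vanishing to all of $\qablhat$. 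Thus $\pi(\II)\subseteq(\qablhat)^{\perp}$ gives a surjection $\overline{H}/\pi(\II)\twoheadrightarrow\qablhat^{*}$, which is an isomorphism by dimension count: the forbidden-free basis from $(a)$ gives $\dim\overline{H}/\pi(\II)=\ell^{|S^{c}|}$, while the PBW basis of $\qablhat$ (the torus $(\ZZ/\ell\ZZ)^{n}$ times the Levi root vectors) gives $\dim\qablhat=\ell^{n+u+v}=\ell^{|S^{c}|}$, where $u,v$ count the upper/lower within-block pairs.

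Finally, $(c)$ and $(d)$ follow formally. For $(c)$ I would check that $(\Oc(GL_{n})/\JJ,\ \Oeal,\ \qablhat^{*})$ is a PI-Hopf triple, mirroring Corollary \ref{cor:PI-hopf-triple-example2}: by $(a)$ the quotient $\Oc(GL_{n})/\JJ$ injects into $\Oeal$ and, being a localization of a polynomial algebra in the surviving coordinates, is a domain, equal to $\Oc(L)$ for the closed subgroup $L$ of Remark \ref{rmk:definicion-L} (the coideal identity $\Delta(X_{ij})\in \JJ'\ot\Oc(GL_{n})+\Oc(GL_{n})\ot\JJ'$ for $(i,j)\in S$ confirms $\JJ$ is a Hopf ideal and $L$ a subgroup); it is central, and $\Oeal$ is module-finite over it since $\Oea$ is module-finite over $\Oc(GL_{n})$; moreover $\Oeal/(\Oc(L)^{+})\Oeal=\Oea/(\II+\Oc(GL_{n})^{+}\Oea)=\overline{H}/\pi(\II)\simeq\qablhat^{*}$ by $(b)$. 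Remark \ref{rmk:PI-hopf-triple-sus-ex} then yields the central exact sequence \eqref{eq:sucex-J}, and \eqref{diag:sucex-J} commutes because $\Res,\res,\pi,\pi_{L}$ are the canonical quotients, so $\pi_{L}\circ\Res=\res\circ\pi$. For $(d)$ I would descend $\gamma$: since $\gamma$ sends each forbidden $\bar{x}^{\mathbf{r}}$ to $x^{\mathbf{r}}\in\II$, one has $\Res\circ\gamma$ vanishing on the coideal $\pi(\II)$, hence it factors through $\overline{H}/\pi(\II)\simeq\qablhat^{*}$ as a coalgebra map $\gamma_{L}$; the identities $\pi\circ\gamma=\id$, $\pi_{L}\circ\Res=\res\circ\pi$ and $\Res\circ\gamma=\gamma_{L}\circ\res$ force $\pi_{L}\circ\gamma_{L}=\id$, so $\gamma_{L}$ is the desired coalgebra section.
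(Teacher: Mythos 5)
Your proposal is correct, and its overall skeleton coincides with the paper's: part $(b)$ is proved by showing that $\pi(\II)$ annihilates $\qablhat$ under the pairing and concluding with a dimension count; part $(c)$ by centrality plus faithful flatness and the identification of the cokernel with $\overline{H}/\pi(\II)$; and part $(d)$ by descending the coalgebra section $\gamma$ of Corollary \ref{cor:def-gamma}. The genuine divergence is in the foundations. The paper disposes of $(a)$ in two lines ($\JJ$ lies in the image of $F^{\#}$, which is an algebra map, whence it is generated by the $x_{ij}^{\ell}$) and quotes \cite[Prop. 3.5]{DPW} for the monomial basis of $\overline{H}/\pi(\II)$, whereas you derive both from one structural statement --- that $\II$ is spanned by the PBW monomials of $\Oea$ containing a forbidden variable --- justified by checking that $\II_{+}\cup\II_{-}$ is closed under the only support-changing straightening relation $x_{jl}x_{ik}-x_{ik}x_{jl}=(\beta-\alpha)x_{il}x_{jk}$. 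Your implication ``$x_{il},x_{jk}$ survive $\Rightarrow x_{ik},x_{jl}$ survive'' is indeed valid (it follows from the interval descriptions $(i,j)\notin\II_{+}\Leftrightarrow\{i,\dots,j-1\}\subseteq I_{+}$ and $(i,j)\notin\II_{-}\Leftrightarrow\{j,\dots,i-1\}\subseteq I_{-}$ by a short case analysis on the relative order of the indices), and it is exactly the point one must check for the claim $\II\cap\Oc(GL_{n})=(x_{ij}^{\ell}\mid (i,j)\in\II_{+}\cup\II_{-})$ not to be circular; so your $(a)$ is more self-contained than the paper's. The other difference is in $(c)$: you verify that $(\Oc(GL_{n})/\JJ,\ \Oeal,\ \qablhat^{*})$ is a PI-Hopf triple and invoke Lemma \ref{lema:triple-affine} and Remark \ref{rmk:PI-hopf-triple-sus-ex}, which requires observing that $\Oc(GL_{n})/\JJ$ is a domain (true, being a localization of a polynomial ring in the surviving coordinates), while the paper applies Schneider's faithful-flatness theorem \cite[Thm. 3.3]{Sch1} to the noetherian algebra $\Oeal$ over its central Hopf subalgebra and thus avoids the domain hypothesis. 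Both routes yield the same central exact sequence, and your descent of $\gamma$ in $(d)$ agrees with the paper's construction of $\tilde{\gamma}_{L}$.
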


\pf $(a)$ By Prop. \ref{prop:B-central}, we identify
$\Oc(GL_{n})$ with its image under $F^{\#}$; that is, it is 
the Hopf subalgebra generated by the elements 
$\{x_{ij}^{\ell}\vert\ 1\leq i,j\leq n\}$.
Then $\JJ = \II \cap \Oc(GL_{n})$ is contained in the image of
$F^{\#}$, which is an algebra map 
and whence $\JJ$ is generated
by the elements $\{x_{ij}^{\ell} \vert\ (i,j)\in \II_{+}\cup \II_{-}\}$.

$(b)$ By definition, $\pi(\II) $ is the Hopf ideal of $\overline{H}$
generated by the elements 
$\{\bar{x}_{i,j} \vert\ (i,j)\in \II_{+}\cup \II_{-}\}$. Thus
the Hopf algebra $\overline{H}/\pi(\II)$ has a basis given by 
\begin{equation}\label{eq:base-de-ul}
\Big\{\prod_{i, j}^{} \bar{x}_{ij}^{e_{ij}} 
\vert\ (i,j)\notin \II_{+}\cup \II_{-}, 0\leq e_{ij} < \ell \Big\},
\end{equation}
see \cite[Prop. 3.5]{DPW}. In particular, 
$\dim \overline{H}/\pi(\II) = \ell^{n^{2} - |\II_{+}\cup \II_{-}|}$.
Denote by $\res: \qabhat^{*} \to \qablhat^{*}$ the surjective
map induced by the inclusion $j : \qablhat \to \qabhat$, 
which is simply the restriction. Then by Prop. \ref{prop:psi-injective},
we have 
for all $x_{i,j}$ such that $(i,j) \in \II_{+}\cup \II_{-}$ 
that $\res(\bar{x}_{ij}) (h) = \bar{x}_{ij} (j(h)) = 0$ for all
$h \in \qablhat$, since $\qablhat$ is determined 
by the triple $(\Tt, I_{+}, I_{-})$.
Thus there exists a surjective Hopf algebra map 
$p:\overline{H}/\pi(\II) \twoheadrightarrow
\qablhat^{*}$. Since by Thm. \ref{thm:PBW-basis-gl},
$\dim \qablhat = \ell^{n^{2} - |\II_{+}\cup \II_{-}|}$, 
it follows that $p$ is an isomorphism.

\smallbreak $(c)$ Since $\Oc(GL_{n}) / \JJ$ is central in $\Oea /\II$ 
and $\Oea /\II$ is noetherian, by \cite[Thm. 3.3]{Sch1} 
$\Oea/\II$ is faithfully flat over $\Oc(GL_{n}) / \JJ$ 
and by \cite[Prop. 3.4.3]{Mo}
we have the central exact sequence
$$ 
\xymatrix{ 
1 \ar[r]^{} & \Oc(GL_{n}) / \JJ  
\ar@{^{(}->}[r]^{}   & \Oea /  \II 
\ar@{->>}[r]^{} & K \ar[r]^{} & 1,} $$
where $K$ is the Hopf algebra given by 
the quotient 
$$K  = (\Oea /\II) /[(\Oc(GL_{n}) / \JJ)^{+}(\Oea / \II)].$$ 
But
since $(\Oc(GL_{n}) / \JJ)^{+}(\Oea/\II) = (\Oc(GL_{n})^{+}\Oea) /\II$, it 
follows that $ K \simeq \overline{H} /\pi(\II) \simeq \qablhat^{*}$.
Thus we have the central exact sequence
$$ 
\xymatrix{ 
1 \ar[r]^{} & \Oc(GL_{n}) / \JJ  
\ar@{^{(}->}[r]^{}   & \Oea /  \II 
\ar@{->>}[r]^(.55){\pi_{L}} & \qablhat^{*} \ar[r]^{} & 1.} $$
To see that the above exact sequence fits into the commutative
diagram, we need only to show that $\res\pi = \pi_{L}\Res$. But for all
$x \in \Oea$ we have $\pi_{L}\Res(x) = \pi_{L} (x + \II) = 
\pi(x) + \pi(\II) = \res(\pi(x))$.

\smallbreak $(d)$ Using that the set in \eqref{eq:base-de-ul} is 
a basis of $\overline{H} / \pi(\II)$, 
we define as in Corollary \ref{cor:def-gamma}
the linear map $\tilde{\gamma}_{L}: \overline{H} / \pi(\II) \to \Oeal$ by
$\tilde{\gamma}_{L}\big(\prod_{i, j}^{} \bar{x}_{ij}^{e_{ij}}\big)
= \prod_{i, j}^{} x_{ij}^{e_{ij}}$. 
Clearly, $\tilde{\gamma}_{L}$ is a linear section of $\pi_{L}$. Again, 
a direct calculation shows that $\tilde{\gamma}_{L}$ 
is also a coalgebra map. Finally, since $\overline{H}/\pi(\II) \simeq \qablhat^{*}$,
the morphism $\gamma_{L}$ given by the composition of 
$\tilde{\gamma}_{L}$ and this isomorphism gives the 
desired coalgebra section. 
\epf

\begin{obs}\label{rmk:definicion-L}
Since $\Oc(GL_{n}) / \JJ$ is a commutative Hopf algebra which is
a quotient of $\Oc(GL_{n})$, there exists an algebraic
subgroup $L$ of $GL_{n}$ such that $\Oc(GL_{n}) / \JJ \simeq \Oc(L)$. 
Moreover,
by the lemma above, we know that this subgroup is given by the set
of complex matices $M= (m_{ij})$ of $GL_{n}$ such that $m_{ij} = 0$ if 
$(i,j) \in \II_{+}\cup \II_{-}$.
\end{obs}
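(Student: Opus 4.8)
The plan is to read both assertions off the explicit generators of $\JJ$ furnished by Lemma \ref{lema:qablhat}$(a)$, combined with the standard anti-equivalence between affine algebraic groups over $\CC$ and commutative affine Hopf $\CC$-algebras (see \cite{BG}). First I would note that $\Oc(GL_{n})/\JJ$ is a commutative Hopf algebra: it is commutative as a quotient of $\Oc(GL_{n})$, it is affine for the same reason, and it is a Hopf algebra because $\JJ$ is a Hopf ideal (as recorded just before the exact sequence \eqref{eq:sucex-J}). Since the ground field has characteristic zero, such a Hopf algebra is \emph{reduced}, so $\Oc(GL_{n})/\JJ \simeq \Oc(L)$ for a uniquely determined affine algebraic group $L$, and the canonical surjection $\Oc(GL_{n}) \twoheadrightarrow \Oc(GL_{n})/\JJ \simeq \Oc(L)$ dualizes to a closed immersion of algebraic groups $L \hookrightarrow GL_{n}$. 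This exhibits $L$ as a closed subgroup of $GL_{n}$ and settles the first assertion.

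To identify $L$ concretely, I would use the identification $\Oc(GL_{n}) = F^{\#}(\Oc(GL_{n}))$ of Prop. \ref{prop:B-central}, under which the generator $x_{ij}^{\ell} = F^{\#}(X_{ij})$ is precisely the matrix coefficient $X_{ij}$. By Lemma \ref{lema:qablhat}$(a)$, $\JJ$ is generated by $\{x_{ij}^{\ell} \mid (i,j)\in \II_{+}\cup\II_{-}\}$, hence by the coordinate functions $\{X_{ij} \mid (i,j)\in \II_{+}\cup\II_{-}\}$. As $\k = \CC$ is algebraically closed and $\Oc(L)$ is reduced, $L$ is determined by its $\k$-points, which form the common zero locus of the generators of $\JJ$ inside $GL_{n}$. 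Evaluating $X_{ij}$ at a matrix $M=(m_{ij})$ returns $m_{ij}$, so this locus is exactly $\{M=(m_{ij})\in GL_{n} \mid m_{ij}=0 \text{ for all } (i,j)\in \II_{+}\cup \II_{-}\}$, which is the second assertion.

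The one point I would not wish to leave to a black box is the reducedness invoked above, and I would verify it directly. Quotienting the presentation $\Oc(GL_{n}) = \CC[X_{ij}][g^{-1}]$ by the variables $X_{ij}$ with $(i,j)\in\II_{+}\cup\II_{-}$ gives $\Oc(GL_{n})/\JJ \simeq \CC[X_{ij} : (i,j)\notin \II_{+}\cup\II_{-}][\bar{g}^{-1}]$, a localization of a polynomial ring; here $\bar{g}$ is the image of the determinant $g$, and it is nonzero since the identity permutation contributes the monomial $X_{11}\cdots X_{nn}$, which survives (no diagonal index $(i,i)$ lies in $\II_{+}\cup\II_{-}$) and arises in the determinant only through the identity permutation. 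Hence $\Oc(GL_{n})/\JJ$ is a domain, so $\JJ$ is prime, $L$ is irreducible (in particular connected), and $\Oc(GL_{n})/\JJ$ is genuinely the coordinate ring of the point-set $L$ described above. The subgroup property of $L$ is automatic once $\JJ$ is known to be a Hopf ideal, but it can also be cross-checked by hand: the vanishing pattern $\II_{+}\cup\II_{-}$ defined in \eqref{eq:def-de-los-I}, with the strictly upper part governed by $I_{+}$ and the strictly lower part by $I_{-}$, is stable under matrix multiplication, so the prescribed zero conditions pass to products and inverses. I expect this purely combinatorial bookkeeping of the index sets to be the only real, though modest, obstacle; the rest is formal.
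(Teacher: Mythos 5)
Your proof is correct and follows the same route the paper (implicitly) takes: the remark is stated without proof, relying on the anti-equivalence between commutative affine Hopf algebras and algebraic groups together with the generators of $\JJ$ from Lemma \ref{lema:qablhat}$(a)$, and you supply exactly the standard details (Cartier reducedness in characteristic zero, the domain property of the quotient, and the Nullstellensatz identification of $L$ with the zero locus of the $X_{ij}$, $(i,j)\in\II_{+}\cup\II_{-}$). Your verification that $\bar{g}\neq 0$ because no diagonal index lies in $\II_{+}\cup\II_{-}$, and your cross-check that the vanishing pattern is multiplicatively closed, are both accurate.
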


We set then 
\begin{equation}\label{eq:def-O-L}
\Oeal := \Oea /  \II. 
\end{equation}
Consequently, we can re-write the commutative diagram 
\eqref{diag:sucex-J} as
\begin{equation}\label{diag:sucex-J-2}
\xymatrix{ 1 \ar[r]^{} & \Oc(GL_{n})  
\ar@{^{(}->}[r]^{} \ar@{->>}[d]^{}  & \Oea  
\ar@{->>}[r]^{\pi}\ar@{->>}[d]^{\Res} & \qabhat^{*} 
\ar[r]^{}\ar@{->>}[d]^{\res}  & 1\\
1 \ar[r]^{} & \Oc(L)  
\ar@{^{(}->}[r]^{}   & \Oeal
\ar@{->>}[r]^{\pi_{L}} & \qablhat^{*} \ar[r]^{} & 1.} 
\end{equation}

\subsubsection{Second step}
In this subsubsection we apply 
a previous result of \cite{AG} on quantum subgroups given by a
pushout construction, to perform the second step of the construction.

\par Fix a triple $(\Tt, I_{+}, I_{-})$ and let $L$
be the algebraic group 
associated to this triple as above, see Remark \ref{rmk:definicion-L}. 
Let $\Ga$ be an algebraic group and $\sigma: \Ga \to
GL_{n}$ be an injective homomorphism of algebraic 
groups such that $\sigma
(\Ga) \subseteq L$. Then we have a surjective Hopf algebra map $\
^{t}\sigma: \OO(L) \to \OO(\Ga)$. Applying the pushout construction
given by Prop. \ref{prop:cociente1}, we obtain a Hopf algebra 
$A_{\lgot, \sigma}$ which is part of an exact sequence of Hopf algebras
and fits into the following commutative diagram
\begin{equation}\label{diag:pushout-ol}
\xymatrix{1 \ar[r]^{} & \Oc(G) \ar[r]^{\iota} \ar[d]_{} & \Oea
\ar[r]^{\pi}
\ar[d]^{\Res} & \qabhat^{*} \ar[r]^{}\ar[d]^{\res} & 1\\
1 \ar[r]^{} &   \Oc(L)\ar[d]_{^{t}\sigma}  \ar[r]^{\iota_L} & \Oeal
\ar[r]^{\pi_L}\ar[d]^{\nu} &
\qablhat^* \ar[r]^{}\ar@{=}[d] & 1\\
1 \ar[r]^{} &   \OO(\Ga)  \ar[r]^{j} & A_{\lgot,
\sigma}\ar[r]^{\bar{\pi}} & \qablhat^* \ar[r]^{} & 1.}
\end{equation}

In particular, if $\Ga$ is finite then $\dim A_{\lgot,\sigma}= \vert
\Ga\vert \dim \qablhat = \vert \Ga\vert \ell^{n^{2}-|\II_{+}\cup \II_{-}|}$,
see \cite[Rmk. 2.11]{AG}.

\subsubsection{Third step}
In this subsection we make the third and last step of the
construction. As in \cite{AG}, it consists essentially on taking 
a quotient by a Hopf
ideal generated by differences of central group-like elements of
$A_{\lgot, \sigma}$. 

\par Recall that from the beginning of this section we fixed
a surjective Hopf algebra map $r: \qablhat^{*} \to H$ and $H^{*}$ is
determined by the triple $(\Sigma, I_{+}, I_{-})$. Since the Hopf
subalgebra $\qablhat$ is determined by the triple $(\mathbb{T},I_{+},
I_{-})$ with $\mathbb{T} \supseteq \Sigma$, we have that $H^{*}
\subseteq \qablhat \subseteq \qabhat$. Denote by $v : \qablhat^{*} \to H$ the
surjective Hopf algebra map induced by this inclusion. 

\begin{obs}\label{obs:hat-t} 
By definition, $\mathbb{T}$ is the group generated by
the group-like elements $h_{i},\ 1\leq i\leq n$. Let 
$\delta_{j},\ 1\leq j\leq n$ denote the characters on $\mathbb{T}$
given by $\delta_{j}(h_{i}) = q^{\delta_{i,j}}$, where 
$q= \alpha^{-1}\beta$ is
a primitive $ \ell$-th root of unity which is fixed. 
By Lemma \ref{lema:hopf-pairing-restricted} and Prop.
\ref{prop:psi-injective} we know that
$$ \langle \bar{x}_{jj}, h_{i} \rangle =   
\langle \bar{x}_{jj}, a_{i}^{-1}b_{i} \rangle = 
\langle \bar{x}_{jj}, a_{i}^{-1}
\rangle \langle\bar{x}_{jj}, b_{i} \rangle =
\alpha^{-\delta_{i,j}}\beta^{\delta_{i,j}} = q^{\delta_{i,j}}.$$
Thus, if we restrict 
$\langle \bar{x}_{jj}, - \rangle$ to
$\mathbb{T}$, we may identify $\bar{x}_{jj} = \delta_{j}$ for all
$1\leq i\leq n$.  
\end{obs}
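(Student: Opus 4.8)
The plan is to establish the single pairing value $\langle \bar{x}_{jj}, h_i\rangle' = q^{\delta_{ij}}$ and then to promote the functional $\overline{\psi}(\bar{x}_{jj})$ to a genuine character of $\mathbb{T}$ whose values on the generators $h_i$ pin it down as $\delta_j$. First I would transport the computation to the pairing $\langle -,-\rangle$ between $U_{\alpha,\beta}(\liegl_{n})$ and $\Oea$, which is legitimate because the pairing $\langle -,-\rangle'$ on $\qabhat\times\overline{H}$ is induced from it (Lemma \ref{lema:hopf-pairing-restricted}) and $\overline{\psi}$ is the isomorphism of Prop. \ref{prop:psi-injective}. Writing $h_i = a_i^{-1}b_i$ and applying axiom $(ii)$ of Definition \ref{def:hopf-pairing} against $\com(x_{jj}) = \sum_{s=1}^{n} x_{js}\ot x_{sj}$ gives
\begin{equation*}
\langle a_i^{-1}b_i, x_{jj}\rangle = \sum_{s=1}^{n} \langle a_i^{-1}, x_{js}\rangle\,\langle b_i, x_{sj}\rangle .
\end{equation*}
Since $a_i$ is group-like, $\langle a_i,-\rangle$ and $\langle a_i^{-1},-\rangle$ are mutually convolution-inverse characters of $\Oea$; evaluating $\langle a_i a_i^{-1}, x_{jj}\rangle = \eps(x_{jj}) = 1$ through $\com(x_{jj})$ together with $\langle a_i, x_{js}\rangle = \delta_{js}\alpha^{\delta_{ij}}$ forces $\langle a_i^{-1}, x_{jj}\rangle = \alpha^{-\delta_{ij}}$. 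As $\langle b_i, x_{sj}\rangle = \delta_{sj}\beta^{\delta_{is}}$ already kills every term with $s\neq j$, only $s=j$ survives and the sum equals $\alpha^{-\delta_{ij}}\beta^{\delta_{ij}} = q^{\delta_{ij}}$.

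Next I would verify that the restriction of $\overline{\psi}(\bar{x}_{jj})$ to the subgroup $\mathbb{T} = G(\qabhat)$ is multiplicative, so that it is an honest character of $\mathbb{T}$. For group-likes $g,g'\in\mathbb{T}$, axiom $(ii)$ applied to the induced pairing and $\com(\bar{x}_{jj}) = \sum_{s} \bar{x}_{js}\ot\bar{x}_{sj}$ yields $\langle gg', \bar{x}_{jj}\rangle' = \sum_{s} \langle g, \bar{x}_{js}\rangle'\,\langle g', \bar{x}_{sj}\rangle'$. The decisive point is that every element of $\mathbb{T}$ pairs trivially with the off-diagonal entries $\bar{x}_{js}$, $j\neq s$: the same computation as above gives $\langle h_i, \bar{x}_{js}\rangle' = 0$ for $j\neq s$, and a one-line induction on the number of factors (splitting $h_{i_1}\cdots h_{i_k}$ by axiom $(ii)$) extends this to any product of the $h_i$, hence to all of $\mathbb{T}$. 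Consequently only the term $s=j$ contributes and $\langle gg', \bar{x}_{jj}\rangle' = \langle g, \bar{x}_{jj}\rangle'\,\langle g', \bar{x}_{jj}\rangle'$.

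Finally, since $\mathbb{T}$ is generated by $h_1,\dots,h_n$ and the two characters $\overline{\psi}(\bar{x}_{jj})\vert_{\mathbb{T}}$ and $\delta_j$ agree on each generator --- both equal $q^{\delta_{ij}}$ on $h_i$, by the first step and by the defining relation $\delta_j(h_i) = q^{\delta_{ij}}$ --- they coincide on $\mathbb{T}$, which is exactly the identification $\bar{x}_{jj} = \delta_j$ asserted in the statement. I expect the second step to be the main obstacle: $\bar{x}_{jj}$ is not group-like in $\overline{H}$, so a priori $\overline{\psi}(\bar{x}_{jj})\vert_{\mathbb{T}}$ is only a linear functional, and it is precisely the vanishing of the off-diagonal pairings over the whole torus that turns it into a group character; once that is secured, agreement on generators makes the identification immediate.
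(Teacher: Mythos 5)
Your proposal is correct and follows essentially the same route as the paper: the remark's justification is exactly the computation $\langle h_i, x_{jj}\rangle = \sum_s \langle a_i^{-1}, x_{js}\rangle\langle b_i, x_{sj}\rangle = \alpha^{-\delta_{ij}}\beta^{\delta_{ij}}$, with only the diagonal term surviving. The only difference is that you spell out the convolution-inverse evaluation of $a_i^{-1}$ and the multiplicativity of $\langle -,\bar{x}_{jj}\rangle'$ on $\mathbb{T}$ (via vanishing on off-diagonal entries), details the paper leaves implicit; this is a faithful and slightly more careful rendering of the same argument.
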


\par Let $\mathbb{T}_{I}$ be the subgroup of
$\mathbb{T}$ generated by the elements 
$\{w_{i}, w'_{j}:\ i\in I_{+},\ j\in I_{-}\}$
and denote by $\rho: \widehat{\mathbb{T}} \to \widehat{\Sigma}$,
$\rho_{I}: \widehat{\mathbb{T}} \to \widehat{\mathbb{T}_{I}}$
the group homomorphisms between the character groups induced
by the inclusions. Set $N = \Ker \rho$ and $M_{I} = \Ker \rho_{I}$.

\par The following lemma is analogous to \cite[Lemma 2.14]{AG}.

\begin{lema}\label{lema:central-grouplikes-1}
\begin{enumerate}
\item[$(a)$] Every $ \chi $ of $M_{I}$ defines an element $\bar{\chi} $ 
of $ G(\qablhat^{*}) $ which is central. In particular, 
since $N \subseteq M_{I}$, $\bar{\chi} \in 
G(\qablhat^{*}) \cap \Z(\qablhat^{*})$ for all $\chi  \in N$. 
\item[$(b)$] $H \simeq \qablhat^{*} / (\bar{\chi} -1|\ \chi \in N)$.
\end{enumerate}
\end{lema}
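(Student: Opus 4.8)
The plan is to work entirely inside the convolution algebra $\qablhat^{*}$, exploiting that $\qablhat$ is pointed with $G(\qablhat)=\mathbb{T}$. First I would identify $G(\qablhat^{*})$ with $\widehat{\mathbb{T}}$: a group-like of $\qablhat^{*}$ is exactly an algebra character $\qablhat\to\k$, and since $e_{i}^{\ell}=0=f_{j}^{\ell}$ in $\qablhat$, every such character annihilates the $e_{i}$ and the $f_{j}$, hence is determined by its restriction to $\mathbb{T}$. Thus a character $\chi\in\widehat{\mathbb{T}}$ gives the group-like $\bar\chi\in G(\qablhat^{*})$ with $\bar\chi|_{\mathbb{T}}=\chi$ and $\bar\chi(e_{i})=\bar\chi(f_{j})=0$. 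The key observation for part $(a)$ is that a group-like $\bar\chi$ is central iff $\bar\chi\ast\phi=\phi\ast\bar\chi$ for every $\phi$; evaluating on $a\in\qablhat$ this reads $\phi\big(\sum\chi(a_{(1)})a_{(2)}\big)=\phi\big(\sum a_{(1)}\chi(a_{(2)})\big)$, and as functionals separate points of the finite-dimensional $\qablhat$, centrality of $\bar\chi$ is equivalent to the equality of the two maps $L_{\chi}(a)=\sum\chi(a_{(1)})a_{(2)}$ and $R_{\chi}(a)=\sum a_{(1)}\chi(a_{(2)})$.

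Because $\chi$ is an algebra character, $L_{\chi}$ and $R_{\chi}$ are algebra endomorphisms of $\qablhat$, so it suffices to compare them on the generators $h\in\mathbb{T}$, $e_{i}\ (i\in I_{+})$, $f_{j}\ (j\in I_{-})$. On group-likes both give $\chi(h)\,h$. From $\com(e_{i})=e_{i}\ot 1+w_{i}\ot e_{i}$ and $\bar\chi(e_{i})=0$ one gets $L_{\chi}(e_{i})=\chi(w_{i})e_{i}$ and $R_{\chi}(e_{i})=e_{i}$, while $\com(f_{j})=1\ot f_{j}+f_{j}\ot w_{j}'$ gives $L_{\chi}(f_{j})=f_{j}$ and $R_{\chi}(f_{j})=\chi(w_{j}')f_{j}$. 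Hence $L_{\chi}=R_{\chi}$ precisely when $\chi(w_{i})=1$ for all $i\in I_{+}$ and $\chi(w_{j}')=1$ for all $j\in I_{-}$, that is, when $\chi$ is trivial on $\mathbb{T}_{I}$, i.e. $\chi\in M_{I}$. This proves $(a)$; since $\mathbb{T}_{I}\subseteq\Sigma$ forces $N=\Ker\rho\subseteq\Ker\rho_{I}=M_{I}$, the ``in particular'' is immediate.

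For part $(b)$, by $(a)$ the set $N'=\{\bar\chi:\chi\in N\}$ is a subgroup of $G(\qablhat^{*})\cap\Z(\qablhat^{*})$ isomorphic to $N$, so $B=\k N'$ is a central Hopf subalgebra and $(\bar\chi-1\mid\chi\in N)=\qablhat^{*}B^{+}$. The surjection $v:\qablhat^{*}\to H$ dual to $H^{*}\hookrightarrow\qablhat$ sends $\bar\chi$ to its restriction to $H^{*}$, namely the group-like of $H$ attached to $\chi|_{\Sigma}$; thus $v(\bar\chi)=1$ exactly for $\chi\in N$, so $v$ factors through a surjection $\bar v:\qablhat^{*}/\qablhat^{*}B^{+}\to H$. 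To see $\bar v$ is an isomorphism I would argue by dimensions. Since $\operatorname{char}\k=0$, the Nichols--Zoeller theorem \cite{Mo} makes $\qablhat^{*}$ free over the central group subalgebra $B$, so $\dim\big(\qablhat^{*}/\qablhat^{*}B^{+}\big)=\dim\qablhat^{*}/|N|$. On the other hand $H^{*}$, the Hopf subalgebra attached to $(\Sigma,I_{+},I_{-})$, has the same PBW-type basis as $\qablhat$ but with the group part ranging over $\Sigma$ rather than $\mathbb{T}$, so $\dim H=\dim H^{*}=(|\Sigma|/|\mathbb{T}|)\dim\qablhat$. As restriction $\widehat{\mathbb{T}}\to\widehat{\Sigma}$ is surjective, $|N|=|\mathbb{T}|/|\Sigma|$, whence $\dim\qablhat^{*}/|N|=(|\Sigma|/|\mathbb{T}|)\dim\qablhat^{*}=\dim H$; a surjection between finite-dimensional spaces of equal dimension is an isomorphism.

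The step I expect to be the main obstacle is the dimension bookkeeping in $(b)$: one must know that $H^{*}$ for the triple $(\Sigma,I_{+},I_{-})$ carries a PBW basis whose $\Ee$- and $\Ff$-parts agree with those of $\qablhat$ and whose group part is exactly $\Sigma$. This is precisely the content of the parametrization of the Hopf subalgebras of $\qabhat$ by triples \cite[Cor. 1.12]{AG}, and it is what makes the whole argument parallel to \cite[Lemma 2.14]{AG}; the remaining verifications are the routine computations on generators carried out above.
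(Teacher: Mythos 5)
Your proof is correct, and its skeleton is the same as the paper's: define $\bar\chi$ by sending the $e_i,f_j$ to zero and restricting $\chi$ to the torus, prove centrality by reducing to generators via the triangular decomposition, and then produce the surjection $\qablhat^{*}/(\bar\chi-1\mid\chi\in N)\to H$ and conclude by a dimension count. Two points of comparison and one caveat. First, your centrality check via the equality of the algebra endomorphisms $L_{\chi}$ and $R_{\chi}$ on generators is a cleaner packaging of the paper's direct evaluation of $\bar\chi\theta$ and $\theta\bar\chi$ on PBW monomials $hm$; both arguments rest on the same triangular decomposition of $\qablhat$. Second, in part $(b)$ you justify $\dim\bigl(\qablhat^{*}/\qablhat^{*}B^{+}\bigr)=\dim\qablhat^{*}/|N|$ by freeness over the central group algebra $B=\k N'$ (Nichols--Zoeller), a step the paper asserts without comment; making it explicit is an improvement. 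The caveat: your opening identification $G(\qablhat^{*})\simeq\widehat{\mathbb{T}}$ is not correct in general --- when $i\in I_{+}\cap I_{-}$ the relation $e_{i}f_{i}-f_{i}e_{i}=\frac{1}{\alpha-\beta}(w_{i}-w_{i}')$ forces any algebra character of $\qablhat$ to satisfy $\chi(w_{i})=\chi(w_{i}')$, so not every $\chi\in\widehat{\mathbb{T}}$ extends to a group-like of $\qablhat^{*}$. This does not damage your argument, since you only need $\bar\chi$ for $\chi\in M_{I}$, which is trivial on $\mathbb{T}_{I}$ and hence satisfies this constraint automatically; but the well-definedness of $\bar\chi$ against this relation is precisely the point the paper verifies explicitly, and it should be checked rather than folded into a blanket identification.
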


\pf
$ (a) $ 
Let $\chi \in M_{I}$ and define 
$\bar{\chi} \in G(\qablhat^{*})$
on the generators of $\qablhat$ by 
$$\bar{\chi}(e_{i}) = 0,\qquad\bar{\chi}(f_{j}) = 0
\quad \mbox{ and } \quad \bar{\chi}(h_{k}) = \chi(h_{k})
\quad\mbox{ for all }i\in I_{+},\ j\in I_{-}.$$
To see that is well-defined, it is enough to show that 
$$\bar{\chi}(e_{i}f_{i} - f_{i}e_{i} ) = 
\frac{1}{\alpha - \beta}\bar{\chi}(a_{i}b_{i+1} - a_{i+1}b_{i})
\qquad\mbox{ for all }i\in I_{+}\cap I_{-}.$$ 
But $\bar{\chi}(a_{i}b_{i+1} - a_{i+1}b_{i}) = 
\bar{\chi}(a_{i}b_{i+1}) - \bar{\chi}(a_{i+1}b_{i}) $ 
$=\chi(w_{i}) - \chi(w'_{i}) = 0 $, since $w_{i}$ and
$w'_{i} \in \mathbb{T}_{I}$. Let us see now that $\bar{\chi}$ is central.
Since $\qablhat$ admits a triangular decomposition
which is induced by the triangular decomposition of $\qabhat$,
we may assume that an element of $\qablhat$ is a linear combination
of elements of the form $h m$ with $h \in \mathbb{T}$ and $m$ is a product
of some powers of the elements $e_{i},\ i \in I_{+}$ 
and $f_{j}, \ j \in I_{-}$.
Let $\theta \in \qablhat^{*}$, then 
\begin{align*} 
\bar{\chi}\theta(hm) & = \bar{\chi}(hm_{(1)}) \theta(hm_{(2)}) =
\bar{\chi}(h)\bar{\chi}(m_{(1)}) \theta(hm_{(2)})\\
& =
\chi(h)\epsilon(m_{(1)}) \theta(hm_{(2)})= \chi(h) 
\theta(hm)\mbox{ and }\\
\theta\bar{\chi}(hm) & =  \theta(hm_{(1)})\bar{\chi}(hm_{(2)}) =
\theta(hm_{(1)}) \bar{\chi}(h)\bar{\chi}(m_{(2)})\\
& =
\theta(hm_{(1)}) \chi(h)\epsilon(m_{(2)})= \chi(h) \theta(hm),
\end{align*}
which implies that $\bar{\chi}$ is central.

\par $ (b)$  By $(a)$ we know that $\bar{\chi}$ is a 
central group-like
element of $\qablhat^{*}$ for all $\chi \in N$. 
Hence the quotient
$\qablhat^{*} / (\bar{\chi} - 1\vert \chi\in N)$ 
is a Hopf algebra.

\par On the other hand, 
we know that $H^{*}$ is determined
by the triple $(\Sigma, I_{+},I_{-})$ and consequently 
$H^{*}$ is
included in $\qablhat$. If we denote $v: \qablhat^{*} \to H$ 
the surjective
map induced by this inclusion, we have that $\Ker v = \{f\in
\ql^{*}:\ f(h) = 0,\ \forall\ h\in H^{*}\}$. But $\bar{\chi} -1 \in
\Ker v$ for all $\chi\in N$, by definition. Hence 
there exists a surjective
Hopf algebra map
$$\gamma: \qablhat^{*}/(\bar{\chi}-1\vert\
\chi\in N) \twoheadrightarrow H.$$
But by the proof of Lemma \ref{lema:qablhat} and 
the fact that $\qablhat$ has a PBW-basis we have that
$$
\dim H  = \ell^{\vert I_{+}\vert + \vert I_{-} \vert}\vert 
\widehat{\Sigma}
\vert  = \ell^{\vert I_{+}\vert +
\vert I_{-} \vert}
\frac{\ell^{n}}{\vert N \vert} = 
\dim (\qablhat^{*}/(\bar{\chi}-1\vert\ \chi\in N)),
$$
which implies that $\gamma$ is an isomorphism.
\epf

Before going on with the construction we need the following
technical lemma. Let $\x = \{\bar{\chi}\vert\ \chi\in
M_{I}\}$ be the set of central group-like
elements of $\qablhat^{*}$ given by Lemma 
\ref{lema:central-grouplikes-1}.

\begin{lema}\label{lema:grupoz}
There exists a subgroup $\z:=\{\partial^{m}\vert\ m \in
M_{I}\}$ of $G(A_{\lgot, \sigma})$ isomorphic
to $\x$ consisting of central elements.
\end{lema}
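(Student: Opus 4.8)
The plan is to lift each central group-like $\bar\chi\in\x$ of $\qablhat^{*}$ to a central group-like of $A_{\lgot,\sigma}$ through a coalgebra section, and then to check that the resulting assignment is an injective group homomorphism with central image. Write a generic element of $M_{I}$ as $\chi$, so that $\x=\{\bar\chi:\chi\in M_I\}$ and the desired element $\partial^{m}$ of the statement is $\partial^{\chi}$ with $m=\chi$. First I would produce the lifts. By Lemma \ref{lema:qablhat} $(d)$ the surjection $\pi_{L}:\Oeal\to\qablhat^{*}$ has a coalgebra section $\gamma_{L}$, while the lower squares of \eqref{diag:pushout-ol} give $\bar{\pi}\circ\nu=\pi_{L}$. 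Hence $s:=\nu\circ\gamma_{L}:\qablhat^{*}\to A_{\lgot,\sigma}$ satisfies $\bar{\pi}\circ s=\id$, i.e. it is a coalgebra section of $\bar{\pi}$. Since a coalgebra map carries group-likes to group-likes, $\partial^{\chi}:=s(\bar\chi)$ is a group-like of $A_{\lgot,\sigma}$ with $\bar{\pi}(\partial^{\chi})=\bar\chi$. As $\chi\mapsto\bar\chi$ is injective (a character of $\mathbb{T}$ is determined by its values on the $h_{k}$, which are exactly $\bar\chi(h_k)=\chi(h_k)$), the map $\chi\mapsto\partial^{\chi}$ is already injective, so once the two remaining points are settled we obtain $\z\cong\x\cong M_{I}$.

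Next, centrality. Because $j(\Oc(\Ga))$ is central in $A_{\lgot,\sigma}$ and the algebra is generated by $j(\Oc(\Ga))$ together with the images $\nu\Res(x_{kl}),\nu\Res(g^{\pm1})$ of the surviving generators (those with $(k,l)\notin\II_{+}\cup\II_{-}$), it suffices to show $\partial^{\chi}$ commutes with each $\nu\Res(x_{kl})$. Conjugation by the group-like $\partial^{\chi}$ is a Hopf algebra automorphism $\theta$ of $A_{\lgot,\sigma}$ that fixes $\Oc(\Ga)$ pointwise and, since $\bar\chi$ is central in $\qablhat^{*}$ by Lemma \ref{lema:central-grouplikes-1} $(a)$, induces the identity on the quotient $\qablhat^{*}$; thus $\theta(\nu\Res(x_{kl}))$ and $\nu\Res(x_{kl})$ differ by a central group-like scalar. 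The core computation is to evaluate this scalar: using the commutation relations of Definition \ref{def:2-par-quant-ogln} between the diagonal generators and $x_{kl}$, together with the identification $\bar{x}_{jj}=\delta_{j}$ of Remark \ref{obs:hat-t}, it comes out as a product of values $\chi(w_{i})$ and $\chi(w_{j}')$ over the indices occurring in the surviving block of $x_{kl}$. Since $\chi\in M_{I}=\Ker\rho_{I}$ kills $w_{i}$ for $i\in I_{+}$ and $w_{j}'$ for $j\in I_{-}$ --- precisely the indices that remain in $\Oeal$ --- every such scalar equals $1$, so $\theta$ is the identity on generators and $\partial^{\chi}$ is central.

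Finally, the group law. For $\chi,\chi'\in M_{I}$ both $\partial^{\chi}\partial^{\chi'}$ and $\partial^{\chi\chi'}$ are group-likes lifting $\bar\chi\bar\chi'$, so their quotient lies in $G(A_{\lgot,\sigma})\cap\Ker\bar{\pi}$; because $\Oc(\Ga)={}^{\co\bar\pi}A_{\lgot,\sigma}$ by exactness of the bottom row of \eqref{diag:pushout-ol}, this quotient is a group-like of $\Oc(\Ga)$, i.e. an element of $\widehat{\Ga}$. I would show this correcting factor is trivial by verifying that $s$ is multiplicative on the commutative group Hopf subalgebra $\k\x\subseteq\qablhat^{*}$ spanned by the $\bar\chi$; through $\nu$ and $\gamma_{L}$ this reduces, by the faithful flatness used in Lemma \ref{lema:qablhat}, to the corresponding multiplicativity of the section $\gamma$ of Corollary \ref{cor:def-gamma} on the torus-type group-likes, which follows from the explicit formula $\gamma_{L}(\prod\bar{x}_{ij}^{e_{ij}})=\prod x_{ij}^{e_{ij}}$ together with the relations of $\Oea$. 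Consequently $\chi\mapsto\partial^{\chi}$ is a homomorphism, and $\z=\{\partial^{\chi}:\chi\in M_{I}\}$ is a central subgroup of $G(A_{\lgot,\sigma})$ isomorphic to $\x$.

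The main obstacle is the centrality step: unwinding the conjugation scalar in terms of $\chi(w_{i})$ and $\chi(w_{j}')$ and matching the surviving indices $(k,l)\notin\II_{+}\cup\II_{-}$ with the defining condition $\chi\in\Ker\rho_{I}$ is exactly where the explicit relations of $\Oea$ and the combinatorics of $\II_{\pm}$ enter. This is the part that cannot be made formal without the PBW-type bookkeeping, and it is the analogue of the delicate computation behind \cite[Lemma 2.14]{AG} in the present two-parameter, generators-and-relations setting.
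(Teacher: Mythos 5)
Your proposal is correct and follows essentially the same route as the paper: both lift the central group-likes $\x$ of $\qablhat^{*}$ through the coalgebra section $\gamma_{L}$ to the diagonal monomials $d^{m}=\prod_{i}\bar{x}_{ii}^{m_{i}}$, reduce centrality to the commutation relations of $\Oea$ combined with the condition $\chi\in M_{I}$ (which kills exactly the $w_{i}$, $w_{j}'$ indexed by the surviving generators), and deduce $\z\simeq\x$ from $\bar{\pi}\circ\nu\circ\gamma_{L}=\id$. The only cosmetic difference is that the paper verifies centrality of the $d^{m}$ already in $\Oeal$ and then pushes forward along the surjection $\nu$, whereas you work directly in $A_{\lgot,\sigma}$ and make the group-law check explicit; the underlying computation is the same.
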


\pf By Lemma \ref{lema:qablhat} $(d)$, 
we know that there exists
a coalgebra section
$\gamma_{L}: \qablhat^{*} \to \Oeal$. 
Then, we obtain the group
of group-like elements given by 
$\y = \{\gamma_{L}(\bar{\chi})\vert\ 
\chi\in
M_{I}\}$ in $\Oeal$. Let $d = \bar{x}_{11}\bar{x}_{22}
\cdots \bar{x}_{nn} \in \Oeal$ and denote $\pi_{L}(d) = D$.
By Remark \ref{obs:hat-t} we know that each 
$\bar{\chi} \in M_{I} \subseteq \mathbb{T} \simeq (\ZZ /\ell \ZZ)^{n}$ 
corresponds to an
element $D^{m}$ with $m \in (\ZZ /\ell \ZZ)^{n}$. 
By abuse of notation
we write $\bar{\chi} = D^{m}$ with $m\in M_{I}$.
Then using the definition
of $\gamma_{L}$ and the elements
$d^{m},\ m \in M_{I}$, it can be seen that 
the elements of $\y$ are central 
and can be described
as $\y = \{d^{m}\vert\ m\in
M_{I}\}$.

\par Since the map $\nu: \Oeal \to A_{\lgot, \sigma}$ given by
the pushout construction is surjective, the image of $\y$ defines a
group of central group-likes in $A_{\lgot, \sigma}$:
$$\z = \{\partial^{m} = \nu(d^{m})\vert\ m\in
M_{I}\}.$$
Besides, $\vert \z \vert = \vert \y \vert = \vert \x
\vert = \vert M_{I} \vert$. Indeed, $\bar{\pi}(\z) = \bar{\pi}\nu(\y) =
\pi_{L}(\y) = \pi_{L}\gamma_{L}(\x)= \x$ since the diagram
\eqref{diag:pushout-ol} is commutative and $\pi_{L}\gamma_{L} = \id$.
Hence $\vert \bar{\pi}(\z) \vert = \vert\x \vert$, from which the
assertion follows. \epf

For the definition of a
subgroup datum $\D= (I_{+}, I_{-}, N, \Ga,
\sigma, \delta)$, see
Def. \ref{def:subgroupdatum}. 
We prove next our first main result, which is needed to 
prove Thm. \ref{thm:main}.
 
\begin{theorem}\label{teo:constrfinal}
Let $\D = (I_{+}, I_{-}, N, \Ga, \sigma, \delta)$ be a subgroup
datum. Then there exists a Hopf algebra $A_{\D}$ which is a
quotient of $\Oea$ and fits into the central exact sequence
$$ 1\to \Oc(\Ga) \xrightarrow{\hat{\iota}} A_{\D}
\xrightarrow{\hat{\pi}} H \to 1. $$
Concretely, $A_{\D}$ is given by the quotient $A_{\lgot,
\sigma}/J_{\delta}$ where $J_{\delta}$ is the two-sided ideal
generated by the set $\{\partial^{z} - \delta(z)\vert\	 z\in N\}$
and the following diagram of exact sequences of Hopf algebras is
commutative
\begin{equation}\label{diag:const}
\xymatrix{1 \ar[r]^{}
& \Oc(GL_{n}) \ar[r]^{\iota} \ar[d]_{\res} & \Oea \ar[r]^{\pi}
\ar[d]^{\Res} & \qabhat^{*} \ar[r]^{}\ar[d]^{p} & 1\\
1 \ar[r]^{} &   \Oc(L)\ar[d]_{^{t}\sigma}  \ar[r]^{\iota_L} & \Oeal
\ar[r]^{\pi_L}\ar[d]^{\nu} &
\qablhat^* \ar[r]^{}\ar@{=}[d] & 1\\
1 \ar[r]^{} &   \Oc(\Ga)  \ar[r]^{j}\ar@{=}[d] & A_{\lgot,
\sigma}\ar[r]^{\bar{\pi}}\ar[d]^{t} & \qablhat^* \ar[r]^{}\ar[d]^{v} & 1\\
1 \ar[r]^{} &   \Oc(\Ga)  \ar[r]^{\hat{\iota}} &
A_{\D}\ar[r]^{\hat{\pi}}& H\ar[r]^{}& 1.}\end{equation}
\end{theorem}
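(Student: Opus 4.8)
The plan is to exhibit $A_{\D}$ as the pushout furnished by Prop. \ref{prop:cociente1}, applied to a suitable central Hopf subalgebra of $A_{\lgot,\sigma}$ together with a Hopf algebra epimorphism onto $\Oc(\Ga)$ manufactured from the datum $\delta$. Recall from Lemma \ref{lema:grupoz} the group $\z = \{\partial^{m}\vert\ m\in M_{I}\}$ of central group-likes of $A_{\lgot,\sigma}$, and write $\z_{N} = \{\partial^{z}\vert\ z\in N\}$ for the subgroup attached to $N\subseteq M_{I}$. Let $B$ be the subalgebra of $A_{\lgot,\sigma}$ generated by $\Oc(\Ga)$ and $\z_{N}$. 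Since the $\partial^{z}$ are central group-likes commuting with the central commutative subalgebra $\Oc(\Ga)$, $B$ is a central Hopf subalgebra of $A_{\lgot,\sigma}$, and as $A_{\lgot,\sigma}$ is noetherian it is faithfully flat over $B$ by \cite[Thm. 3.3]{Sch1}.

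First I would pin down the structure of $B$. Under $\bar{\pi}$ one has $\bar{\pi}(\partial^{z}) = \bar{\chi}$ for $z=\chi\in N$ — by commutativity of \eqref{diag:pushout-ol} together with $\pi_{L}\gamma_{L}=\id$ — while $\bar{\pi}$ sends $\Oc(\Ga)$ into the scalars; and since the group-likes $\bar{\chi}$ of Lemma \ref{lema:central-grouplikes-1} are pairwise distinct, they are linearly independent in $\qablhat^{*}$. The map $\nu\circ\gamma_{L}$ is a coalgebra section of $\bar{\pi}$ (because $\bar{\pi}\nu=\pi_{L}$ and $\pi_{L}\gamma_{L}=\id$), so the central extension $1\to\Oc(\Ga)\to A_{\lgot,\sigma}\xrightarrow{\bar{\pi}}\qablhat^{*}\to 1$ is cleft; together with faithful flatness this yields a left $\Oc(\Ga)$-module isomorphism $A_{\lgot,\sigma}\simeq \Oc(\Ga)\ot\qablhat^{*}$ carrying the $\partial^{z}$ to the independent $\bar{\chi}$. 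Hence the multiplication $\Oc(\Ga)\ot\k\z_{N}\to B$ is injective, that is $B\simeq \Oc(\Ga)\ot\k N$.

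With this identification I would define $p\colon B\to\Oc(\Ga)$ by $p\vert_{\Oc(\Ga)}=\id$ and $p(\partial^{z})=\delta(z)$, where $\delta(z)\in\widehat{\Ga}=G(\Oc(\Ga))$. Because $\delta$ is a group homomorphism, $p$ respects the group relations among the $\partial^{z}$ and so is a well-defined Hopf algebra epimorphism whose kernel $\Ker p$ is the ideal of $B$ generated by $\{\partial^{z}-\delta(z)\vert\ z\in N\}$; consequently $A_{\lgot,\sigma}\,\Ker p = J_{\delta}$. On the other hand $A_{\lgot,\sigma}/A_{\lgot,\sigma}B^{+}$ is computed in two stages: killing $\Oc(\Ga)^{+}$ produces $\qablhat^{*}$ through $\bar{\pi}$, and the remaining generators $\partial^{z}-1$ of $B^{+}$ map to $\bar{\chi}-1$, whence $A_{\lgot,\sigma}/A_{\lgot,\sigma}B^{+}\simeq \qablhat^{*}/(\bar{\chi}-1\vert\ \chi\in N)\simeq H$ by Lemma \ref{lema:central-grouplikes-1} $(b)$. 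Prop. \ref{prop:cociente1} then applies: $J_{\delta}=A_{\lgot,\sigma}\,\Ker p$ is a Hopf ideal, $A_{\D}=A_{\lgot,\sigma}/J_{\delta}$ is the pushout, $\Oc(\Ga)$ embeds as a central Hopf subalgebra via $\hat{\iota}=t\circ j$, and one obtains the central exact sequence $1\to\Oc(\Ga)\xrightarrow{\hat{\iota}}A_{\D}\xrightarrow{\hat{\pi}}H\to 1$.

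Finally, the commutativity of \eqref{diag:const} would follow by assembling the pieces: its first two rows are \eqref{diag:pushout-ol}, its third row is the defining central extension of $A_{\lgot,\sigma}$, and the bottom square is precisely the square of Remark \ref{rmk:decocaext} for the pushout, with $t$ the quotient map, $\hat{\pi}\circ t = v\circ\bar{\pi}$ and $\hat{\iota}=t\circ j$. That $A_{\D}$ is a quotient of $\Oea$ is then immediate from the tower $\Oea\twoheadrightarrow\Oeal\twoheadrightarrow A_{\lgot,\sigma}\twoheadrightarrow A_{\D}$. I expect the genuine obstacle to be the structural isomorphism $B\simeq\Oc(\Ga)\ot\k N$: showing that the central group-likes $\partial^{z}$ are free over $\Oc(\Ga)$ — so that $p$ is well defined and $\Ker p$ has the stated generators — is exactly where the cleftness of the extension and the linear independence of the $\bar{\chi}$ must be used carefully, everything else being a formal application of the central-extension machinery of Section \ref{sec:prelim}.
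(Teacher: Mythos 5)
Your proposal is correct in substance but reaches the theorem by a genuinely different route than the paper. The paper forms the quotient $A_{\D}=A_{\lgot,\sigma}/J_{\delta}$ directly, invokes M\"uller's result on exact sequences of quotients to obtain $1\to \Oc(\Ga)/\J_{\delta}\to A_{\D}\to \qablhat^{*}/\bar{\pi}(J_{\delta})\to 1$, identifies the right-hand term with $H$ by computing $\bar{\pi}(J_{\delta})$, and then must prove separately that $\J_{\delta}=J_{\delta}\cap\Oc(\Ga)=0$ (deferred to the argument of \cite[Thm.\ 2.17]{AG}). You instead enlarge the central Hopf subalgebra to $B=\Oc(\Ga)\cdot\k\z_{N}$, prove $B\simeq\Oc(\Ga)\ot\k N$, and apply the pushout machinery of Prop.\ \ref{prop:cociente1} to $p\colon B\to\Oc(\Ga)$; this buys you the injectivity of $\hat{\iota}$ for free, since Prop.\ \ref{prop:cociente1} already asserts that $K=\Oc(\Ga)$ embeds as a central Hopf subalgebra of the pushout. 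The two difficulties are essentially dual: the paper's $\J_{\delta}=0$ and your freeness of $B$ over $\Oc(\Ga)$ both encode the independence of the group-likes $\partial^{z}$ from $\Oc(\Ga)$. One caveat on your argument for that freeness: a coalgebra section of $\bar{\pi}$ does not by itself give cleftness (convolution invertibility of the section is not automatic), so that step as written is overstated; but the conclusion you actually need is elementary --- if $\sum_{z}a_{z}\partial^{z}=0$ with $a_{z}\in\Oc(\Ga)=A_{\lgot,\sigma}^{\co\bar{\pi}}$, applying $(\id\ot\bar{\pi})\com$ gives $\sum_{z}a_{z}\partial^{z}\ot D^{z}=0$, and since the $D^{z}$ are distinct group-likes (hence linearly independent) and each $\partial^{z}$ is invertible, every $a_{z}=0$. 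With that repair your proof is complete and arguably more self-contained than the paper's, which leans on \cite{AG} for the final injectivity.
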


\pf By Remark \ref{obs:hat-t}, $N$ determines a subgroup
$\Sigma$ of $\mathbb{T}$ and the triple $(\Sigma, I_{+}, I_{-})$
gives rise to a surjective Hopf algebra map $r: \qe^{*} \to H$.
Since $\sigma: \Ga \to L \subseteq G$ is injective, by the first
two steps developed before one can construct a Hopf algebra
$A_{\lgot,\sigma}$ which is a quotient of $\Oea$ and an extension
of $\Oc(\Ga)$ by $\qablhat^{*}$, where $\qablhat$ is the Hopf 
subalgebra of
$\qe$ associated to the triple $(\mathbb{T},I_{+},I_{-})$.
Moreover, by Lemma \ref{lema:central-grouplikes-1} $(b)$ and 
the proof of Lemma \ref{lema:grupoz}, $H$ is the
quotient of $\qablhat^{*}$ by the two-sided ideal 
$(D^{m}-1\vert\ m\in
N)$. If $\delta: N \to \widehat{\Ga}$ is a group map, then the
elements $\delta(m)$ are central group-like elements in $A_{\lgot,
\sigma}$ for all $m \in N$, and the two-sided ideal $J_{\delta}$
of $A_{\lgot, \sigma}$ generated by the set $\{\partial^{m} -
\delta(m)\vert m\in N\}$ is a Hopf ideal, because by Lemma
\ref{lema:grupoz} the elements $ \partial^{m} $
are central.
Hence, by \cite[Prop.
3.4 (c)]{Mu} the following sequence is exact
$1\to \Oc(\Ga)/ \J_{\delta} \to A_{\lgot,
\sigma}/J_{\delta} \to \ql^{*} /\bar{\pi}(J_{\delta})\to 1$,
where $\J_{\delta} = J_{\delta} \cap \Oc(\Ga)$. Since
$\bar{\pi}(\partial^{m}) = D^{m}$ and $\bar{\pi}(\delta(m)) = 1$ for
all $m\in N$, we have that $\bar{\pi}(J_{\delta})$ is the two-sided
ideal of $\qablhat^{*}$ given by $(D^{m} -1\vert\ m\in N)$, 
which implies
by Lemma \ref{lema:central-grouplikes-1} $(b)$ that $\qablhat^{*} /
\bar{\pi}(\J_{\delta}) = H$. Hence, if we denote $A_{\D} : =
A_{\lgot, \sigma}/J_{\delta}$, we can re-write the exact sequence of
above as
$$\xymatrix{1\to \Oc(\Ga)/ \J_{\delta} \to
A_{\D} \to H \to 1.}$$
To finish the proof 
it is enough to see that $\J_{\delta} = J_{\delta} \cap
\Oc(\Ga)= 0$, and this can be proved
exactly as it was proved in \cite[Thm. 2.17]{AG}. \epf

%%%%%%%%%%%%%%%%%%%%%%%%%%%%%%
%%%%%%%%%%%%%%%%%%%%%%%%%%%%%%

\subsection{Characterization of the quotients}
\label{subsec:a-b-root-charac}
Let $\theta: \Oea \to A$ be a surjective Hopf algebra map. In the following
we prove that $A\simeq A_{\D}$ as Hopf algebras for some subgroup
datum $\D$. 

\par By the first 
paragraph of Subsection \ref{subsec:a-b-root-constr}, we know that 
$ A $ fits into a commutative diagram

\begin{equation}\label{diag:propqsubgr-2}
 \xymatrix{1
\ar[r]^{} & \Oc(GL_{n}) \ar[r]^{\iota} \ar[d]_{^{t}\sigma}  & \Oea
\ar[r]^{\pi}
\ar[d]^{\theta} & \qabhat^{*} \ar[r]^{}\ar[d]^{r} & 1\\
1 \ar[r]^{} &   \Oc(\Ga) \ar[r]^{\hat{\iota}} & A
\ar[r]^{\hat{\pi}} & H \ar[r]^{} & 1,}
\end{equation}
where 
$\sigma: \Ga\to GL_{n}$ is an injective map of algebraic groups
and $H^{*}$ is a Hopf subalgebra of $\qabhat$ determined 
by a triple $(\Sigma,
I_+, I_-)$. 
Let $N$ correspond to $\Sigma$ as in the 
paragraph before Lemma \ref{lema:central-grouplikes-1}. 
Our aim is to show that there exists
$\delta$ such that $A \simeq A_{\D}$ for the subgroup datum $\D =
(I_{+}, I_{-}, N, \Ga, \sigma, \delta)$. Recall from 
Subsection \ref{subsub:first-step}
the definition
of the Hopf subalgebra $\qablhat$ of $\qabhat$ 
given by the sets $I_{+}$ and $I_{-}$. In particular,
$H^{*} \subseteq \qablhat \subseteq \qabhat$. Denote by
$v :\qablhat^{*} \to H$  
the surjective Hopf algebra map induced by the inclusion.
The following lemma is crucial for the 
determination of the quantum subgroups; see 
\cite[Lemma 3.1]{AG}.

\begin{lema}\label{lema:fact}
There exist Hopf algebra maps $u,\ w$ such that
the following diagram with exact rows commutes.
In particular, $\sigma(\Ga) \subseteq L \subseteq GL_{n}$. 
$$\xymatrix{1
\ar[r]^{} & \Oc(GL_{n})
\ar[r]^{\iota}\ar@/_2pc/@{.>}[dd]_(.7){^{t}\sigma} \ar[d]_{\res} &
\Oea \ar[r]^{\pi} \ar[d]^{\Res}\ar@/_2pc/@{.>}[dd]_(.7){\theta} 
& \qabhat^{*}
\ar[r]^{}\ar[d]_{p}
\ar@/^2pc/@{.>}[dd]^(.7){r} & 1\\
1 \ar[r]^{} &   \Oc(L)\ar[d]^{u}  \ar[r]^{\iota_L} & \Oeal
\ar[r]^{\pi_L}\ar[d]^{w} &
\qablhat^* \ar[r]^{}\ar[d]_{v} & 1\\
1 \ar[r]^{} &   \Oc(\Ga)  \ar[r]^{\hat{\iota}} &
A\ar[r]^{\hat{\pi}} & H \ar[r]^{} & 1.}$$
\end{lema}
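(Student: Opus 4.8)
The plan is to construct the two missing vertical arrows $u$ and $w$ by showing that the maps $\theta$ and $^{t}\sigma$ of diagram \eqref{diag:propqsubgr-2} descend along the quotient maps $\Res\colon\Oea\to\Oeal$ and $\res\colon\Oc(GL_{n})\to\Oc(L)$; the whole lemma will then collapse to one vanishing statement. First I would dispose of the right-hand column. Since $H^{*}\subseteq\qablhat\subseteq\qabhat$ is a chain of Hopf subalgebras (the first inclusion because $H^{*}$ corresponds to $(\Sigma,I_{+},I_{-})$, the second because $\qablhat$ corresponds to $(\Tt,I_{+},I_{-})$ with $\Sigma\subseteq\Tt$), the maps $p$ and $v$ are the transposes of these inclusions, that is, restriction of functionals, while $r$ is restriction to $H^{*}$; hence $r=v\circ p$ by functoriality of the dual, and the right square commutes.

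The heart of the matter is the claim that $\theta(x_{ij})=0$ for every $(i,j)\in\II_{+}\cup\II_{-}$. Granting this, $w$ exists by the universal property of $\Oeal=\Oea/\II$: the ideal $\II$ is generated by exactly these $x_{ij}$ and $\theta$ is an algebra map, so $\II\subseteq\Ker\theta$ and $\theta=w\Res$. Moreover $\theta(x_{ij}^{\ell})=\theta(x_{ij})^{\ell}=0$, so by Lemma \ref{lema:qablhat}$(a)$ the ideal $\JJ=\Ker\res$ lies in $\Ker({}^{t}\sigma)$, which yields $u$ with ${}^{t}\sigma=u\,\res$ and, dually, $\sigma(\Ga)\subseteq L$. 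Commutativity of the remaining squares then follows from $\res\,\pi=\pi_{L}\Res$ (Lemma \ref{lema:qablhat}$(c)$), from $r=v\circ p$, and from the surjectivity of the horizontal maps; exactness of the middle and bottom rows is the content of the central-extension construction via Prop. \ref{prop:cociente1}.

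To prove the vanishing I would start from $\hat{\pi}\,\theta(x_{ij})=r\,\pi(x_{ij})=v\bigl(p(\bar{x}_{ij})\bigr)=0$, where $p(\bar{x}_{ij})=\res(\bar{x}_{ij})=0$ was already checked in the proof of Lemma \ref{lema:qablhat}$(b)$ (the functional $\bar{x}_{ij}$ vanishes on $\qablhat$, which is cut out by $(\Tt,I_{+},I_{-})$). Thus $\theta(x_{ij})\in\Ker\hat{\pi}$. The remaining and most delicate point is to upgrade this to $\theta(x_{ij})=0$. I would analyse the right $H$-comodule structure $(\id\ot\hat{\pi})\com$ on $\theta(x_{ij})$ through $\com(x_{ij})=\sum_{s}x_{is}\ot x_{sj}$, using which of the $r(\bar{x}_{sj})$ are trivial, to show that $\theta(x_{ij})$ is a coinvariant and hence lies in $\hat{\iota}(\Oc(\Ga))=\theta\,\iota(\Oc(GL_{n}))$, the subalgebra generated by the $\ell$-th powers $\theta(x_{kl})^{\ell}$; a PBW-degree comparison --- $x_{ij}$ being of degree one against a classical subalgebra concentrated in degrees divisible by $\ell$, made precise through the coalgebra section $\gamma_{L}$ of Lemma \ref{lema:qablhat}$(d)$ --- then forces $\theta(x_{ij})=0$. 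This coinvariance-and-degree step is the main obstacle: it is where the argument mirrors \cite[Lemma 3.1]{AG}, but it must be executed with the explicit pairing formulas of Lemmata \ref{lema:pair-e-f} and \ref{lema:e-M-x-N}, precisely because $\Oea$ is presented by generators and relations rather than via matrix coefficients of representations.
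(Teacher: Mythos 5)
Your overall strategy coincides with the paper's: everything reduces to showing $\Ker\Res=\II\subseteq\Ker\theta$, and the first half of that reduction --- $\hat{\pi}\theta(x_{ij})=r\pi(x_{ij})=v\,p(\pi(x_{ij}))=v(\res(\bar{x}_{ij}))=0$ for $(i,j)\in\II_{+}\cup\II_{-}$, together with $r=v\circ p$ and $u=w\iota_{L}$ --- is exactly what the paper does. The gap is in your ``upgrade'' step. From $\theta(x_{ij})\in\Ker\hat{\pi}$ you propose to deduce that $\theta(x_{ij})$ is a right $H$-coinvariant and hence lies in $\hat{\iota}(\Oc(\Ga))$. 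But exactness of the bottom row only gives $\Ker\hat{\pi}=A\,\Oc(\Ga)^{+}$, which is much larger than $A^{\co\hat{\pi}}=\hat{\iota}(\Oc(\Ga))$, and the comodule computation you sketch does not close: $(\id\ot\hat{\pi})\com(\theta(x_{ij}))=\sum_{s}\theta(x_{is})\ot r(\bar{x}_{sj})$ always retains the term with $s=j$, where $r(\bar{x}_{jj})$ is a generally nontrivial group-like, plus possibly other surviving terms, so it cannot be identified with $\theta(x_{ij})\ot 1$ unless one already knows $\theta(x_{ij})=0$. Coinvariance is therefore not an available intermediate statement, and the step you yourself flag as the main obstacle does not go through as described.

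The paper's way around this is to use the weaker but correct consequence of exactness: $\theta(x_{ij})\in \Oc(\Ga)^{+}A=\theta(\Oc(GL_{n})^{+}\Oea)$, using that $\Oc(\Ga)={}^{t}\sigma(\Oc(GL_{n}))=\theta(\iota(\Oc(GL_{n})))$ and centrality. This lets one write $x_{ij}=c_{ij}+a_{ij}$ with $c_{ij}\in\Ker\theta$ and $a_{ij}\in\Oc(GL_{n})^{+}\Oea$, and the degree comparison is then carried out upstairs in $\Oea$ --- where the monomial basis is available and $\Oc(GL_{n})$ sits in degrees divisible by $\ell$ --- forcing $a_{ij}=0$ and hence $x_{ij}\in\Ker\theta$. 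If you replace the coinvariance claim by membership in $\Oc(\Ga)^{+}A$ and move the degree argument from $A$ to $\Oea$, your proof becomes the paper's; the remainder of your proposal (the right-hand column, the existence and commutativity statements once $\II\subseteq\Ker\theta$ is known, and the conclusion $\sigma(\Ga)\subseteq L$) is correct.
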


\pf To show the existence of the maps $u$ and $w$ it is enough to
show that $\Ker \Res \subseteq \Ker \theta$, since $u$ is simply $w
\iota_{L}$. This clearly implies that $v\pi_L = \hat\pi w$.

\par By \eqref{eq:def-O-L} we know that $\Ker \Res = \II$, where
$\II$ is the two-sided ideal of $\Oea$ generated
by the elements $\left\{x_{i,j} \vert\ (i,j)\in \II_{+}\cup \II_{-}  \right \}$
and 
$$\II_{+} = \left\{(i,j) \vert\ i\leq k < j,\ k\notin I_{+}\right \}
\mbox{ and }\ 
\II_{-} = \left\{(i,j) \vert\ j\leq k < i,\ k \notin I_{-} \right \}.
$$
Then $\hat{\pi}(\theta(x)) = r\pi(x) = vp\pi(x) = 
v \pi_{L} \Res (x) = 0$ for all
$x \in \II$, and this implies for all $x \in \II$ that 
$\theta (x) \in \Oc(\Ga)^{+}A $ $=
\theta(\Oc(GL_{n})^{+}\Oea)$. Then for all $x \in 
\Ker \Res$, there exist 
$a \in \Oc(GL_{n})^{+}\Oea$ and $c \in \Ker \theta$
such that $x = c+a$. In particular, this holds for 
all $x_{ij}$ with $(i,j) \in \II_{+}\cup \II_{-}$, that is
$x_{ij} = a_{ij} + c_{ij}$, for some $a_{ij} 
\in \Oc(GL_{n})^{+}\Oea $ and $c_{ij} \in \Ker \theta$.
Comparing
degrees in both sides of the equality we have that
$a_{ij} = 0$, which implies that each generator of $\II$
must lie in $\Ker \theta$.
\epf

\begin{obs}\label{rmk:gamma-matrices} 
Any algebraic group $\Ga$ 
appearing in an exact sequence given by a quotient
of $\Oea$ must be composed by block matrices 
$M = (m_{ij})_{1\leq i,j\leq n}$ such
that $m_{ij} = 0$ if $(i,j) \in \II_{+}\cup \II_{-}$.
\end{obs}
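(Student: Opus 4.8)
The plan is to deduce this statement immediately from Lemma~\ref{lema:fact} together with the explicit description of $L$ furnished by Remark~\ref{rmk:definicion-L}. Recall that, by the first paragraph of Subsection~\ref{subsec:a-b-root-constr}, any surjective Hopf algebra map $\theta\colon \Oea \to A$ produces the commutative diagram \eqref{diag:propqsubgr-2} with exact rows, in which $\sigma\colon \Ga \to GL_{n}$ is the injective morphism of algebraic groups determined by $\theta(\Oc(GL_{n})) = K \simeq \Oc(\Ga)$ and $^{t}\sigma$ the associated surjection $\Oc(GL_{n}) \twoheadrightarrow \Oc(\Ga)$. The combinatorial data $I_{+}, I_{-}$ attached to the quotient (via the triple $(\Sigma, I_{+}, I_{-})$ cutting out $H^{*} \subseteq \qabhat$) give the index sets $\II_{+}, \II_{-}$ of \eqref{eq:def-de-los-I}.

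First I would invoke Lemma~\ref{lema:fact}, whose final clause provides the inclusion $\sigma(\Ga) \subseteq L \subseteq GL_{n}$; this is where the actual work sits, since that lemma establishes the factorisation $\Ker\Res \subseteq \Ker\theta$ by comparing degrees of the generators $x_{ij}$ with $(i,j) \in \II_{+} \cup \II_{-}$. Next I would recall from Remark~\ref{rmk:definicion-L}, which in turn rests on Lemma~\ref{lema:qablhat}$(a)$, that $L$ is the closed subgroup of $GL_{n}$ cut out exactly by the vanishing of the coordinate functions indexed by $\II_{+} \cup \II_{-}$, namely $L = \{M = (m_{ij}) \in GL_{n} : m_{ij} = 0 \text{ for } (i,j) \in \II_{+} \cup \II_{-}\}$.

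Putting these together, since $\sigma$ is injective with $\sigma(\Ga) \subseteq L$, the group $\Ga$ is isomorphic to a closed subgroup of $L$, so every matrix $M = (m_{ij})$ in $\sigma(\Ga)$ satisfies $m_{ij} = 0$ whenever $(i,j) \in \II_{+} \cup \II_{-}$; this is precisely the assertion. There is no genuine obstacle here, as the remark is a direct reformulation of the inclusion $\sigma(\Ga) \subseteq L$ already proved. The only point demanding care is the bookkeeping: one must match the index conventions of \eqref{eq:def-de-los-I} with the block shape of $L$, so that the forced-zero entries of matrices in $\Ga$ are exactly those labelled by $\II_{+} \cup \II_{-}$ and not their transpose or complement.
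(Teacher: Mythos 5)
Your proposal is correct and follows exactly the route the paper intends: the remark is placed immediately after Lemma~\ref{lema:fact} precisely because it is the combination of that lemma's final clause $\sigma(\Ga)\subseteq L$ with the explicit description of $L$ in Remark~\ref{rmk:definicion-L}. Nothing further is needed.
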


The following lemma shows the convenience of characterizing the
quotients $A_{\lgot, \sigma}$ of $\Oea$ as pushouts.

\begin{lema}\label{lema:Acociente}
$A$ is a quotient of
$A_{\lgot, \sigma}$ and the following
diagram commutes
\begin{equation}\label{diag:gordo}
\xymatrix{1 \ar[r]^{} & \Oc(GL_{n}) \ar[r]^{\iota} \ar[d]_{\res} & \Oea
\ar[r]^{\pi}
\ar[d]^{\Res} & \qabhat^{*} \ar[r]^{}\ar[d]^{p} & 1\\
1 \ar[r]^{} &   \Oc(L)\ar[d]_{u}  \ar[r]^{\iota_L} & \Oeal
\ar[r]^{\pi_L}\ar[d]^{\nu} &
\qablhat^* \ar[r]^{}\ar@{=}[d] & 1\\
1 \ar[r]^{} &   \Oc(\Ga)  \ar[r]^{j}\ar@{=}[d] & A_{\lgot,
\sigma}\ar[r]^{\bar{\pi}}\ar[d]^{t} 
& \qablhat^* \ar[r]^{}\ar[d]^{v} & 1\\
1 \ar[r]^{} &   \Oc(\Ga)  \ar[r]^{\hat{\iota}} &
A\ar[r]^{\hat{\pi}}& H\ar[r]^{}& 1.}
\end{equation}
\end{lema}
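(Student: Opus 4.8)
The plan is to produce the quotient map $t\colon A_{\lgot,\sigma}\to A$ by viewing $A_{\lgot,\sigma}$ as an explicit quotient of $\Oeal$ and checking that the map $w$ already built in Lemma \ref{lema:fact} descends to it; the top two rows of \eqref{diag:gordo} are nothing but \eqref{diag:sucex-J-2}, so the only new content is the bottom row. Recall from Prop. \ref{prop:cociente1} and the second step of the construction that $A_{\lgot,\sigma}=\Oeal/(\JJ)$, where $\JJ=\Ker({}^{t}\sigma\colon\Oc(L)\to\Oc(\Ga))$ and $(\JJ)=\Oeal\JJ$, and that the structure maps $\nu\colon\Oeal\to A_{\lgot,\sigma}$, $j\colon\Oc(\Ga)\to A_{\lgot,\sigma}$ satisfy $\nu\iota_{L}=j\,{}^{t}\sigma$ and $\bar{\pi}\nu=\pi_{L}$.

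First I would invoke Lemma \ref{lema:fact}, which supplies Hopf algebra maps $u\colon\Oc(L)\to\Oc(\Ga)$ and $w\colon\Oeal\to A$ with $w\iota_{L}=\hat{\iota}u$, $\hat{\pi}w=v\pi_{L}$ and $w\,\Res=\theta$. I then identify $u={}^{t}\sigma$: Lemma \ref{lema:fact} gives $u\,\res={}^{t}\sigma\colon\Oc(GL_{n})\to\Oc(\Ga)$, and since $\sigma\colon\Ga\to GL_{n}$ factors through the inclusion $L\hookrightarrow GL_{n}$ (whose comorphism is $\res$), the map $u$ is exactly the comorphism of $\sigma\colon\Ga\to L$, that is, the map ${}^{t}\sigma\colon\Oc(L)\to\Oc(\Ga)$ used in the pushout.

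The main step is to show that $w$ annihilates the ideal $(\JJ)=\Oeal\JJ$, so that it factors through $A_{\lgot,\sigma}$. Because $\JJ\subseteq\Oc(L)$ lies in a central Hopf subalgebra of $\Oeal$, it suffices to evaluate $w$ on $\JJ$ itself: for $x\in\JJ$ one has $w(\iota_{L}(x))=\hat{\iota}(u(x))=\hat{\iota}({}^{t}\sigma(x))=0$, since $x\in\Ker{}^{t}\sigma$. Hence for $a\in\Oeal$, $w(a\,\iota_{L}(x))=w(a)\,w(\iota_{L}(x))=0$, so $w(\Oeal\JJ)=0$. As $(\JJ)$ is a Hopf ideal by Prop. \ref{prop:cociente1}, $w$ descends to a Hopf algebra map $t\colon A_{\lgot,\sigma}\to A$ with $t\nu=w$.

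Finally I would verify that $t$ completes \eqref{diag:gordo} and is surjective, which is a routine diagram chase. For the lower left square, $t\,j\,{}^{t}\sigma=t\nu\iota_{L}=w\iota_{L}=\hat{\iota}\,{}^{t}\sigma$, and surjectivity of ${}^{t}\sigma$ gives $t\,j=\hat{\iota}$; for the lower right square, $\hat{\pi}t\nu=\hat{\pi}w=v\pi_{L}=v\bar{\pi}\nu$, and surjectivity of $\nu$ gives $\hat{\pi}t=v\bar{\pi}$. Surjectivity of $t$ follows since $w=t\nu$ is onto (because $w\,\Res=\theta$ is onto). The only genuinely delicate point is the identification $u={}^{t}\sigma$, which is precisely what lets the central ideal $\JJ$ be killed by $w$; once $A_{\lgot,\sigma}$ is regarded as $\Oeal/\Oeal\JJ$, everything else is formal.
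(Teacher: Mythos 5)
Your proposal is correct and follows essentially the same route as the paper: both obtain $u$ and $w$ from Lemma \ref{lema:fact} and then factor $w$ through $A_{\lgot,\sigma}$, the paper by citing the pushout universal property of Prop. \ref{prop:cociente1} directly, you by unpacking that property via the explicit description $A_{\lgot,\sigma}=\Oeal/\Oeal\JJ$ and checking $w(\Oeal\JJ)=0$. Your explicit identification $u={}^{t}\sigma$ (via surjectivity of $\res$) is a point the paper leaves implicit, but it is the same argument.
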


\pf Using the maps $u,\ w$ given by Lemma \ref{lema:fact}, we have
that $w\iota_{L} = \hat{\iota}u$, that is, the following diagram
commutes

$$\xymatrix{\Oc(L)
\ar[r]^{\iota_{L}} \ar[d]_{u} &
\Ol \ar[d]^{\nu} \ar@/^/[ddr]^{w}\\
\Oc(\Ga) \ar[r]_{j} \ar@/_/[drr]_{\hat{\iota}}& A_{\lgot, \sigma}
\\
& & A.}$$
Since $A_{\lgot, \sigma}$ is a pushout, there exists a
unique Hopf algebra map $t: A_{\lgot, \sigma} \to A$ such that $tv
= w$ and $tj = \hat{\iota}$. 
This implies also that $ v\bar{\pi} = \hat{\pi}t$ and
therefore the diagram \eqref{diag:gordo}  is commutative. \epf

Let $(\Sigma, I_{+}, I_{-})$ be the triple that determines $H$.
Recall that $\mathbb{T}_{I}$ is the subgroup of
$\mathbb{T}$ generated by the elements 
$\{w_{i}, w'_{j}:\ i\in I_{+},\ j\in I_{-}\}$,
$\rho: \widehat{\mathbb{T}} \to \widehat{\Sigma}$ and
$\rho_{I}: \widehat{\mathbb{T}} \to \widehat{\mathbb{T}_{I}}$
denote the group homomorphisms between the character groups induced
by the inclusions and $N = \Ker \rho$, $M_{I} = \Ker \rho$.

\par By Lemmata \ref{lema:central-grouplikes-1} 
and \ref{lema:grupoz},
we know that the Hopf algebra $A_{\lgot, \sigma}$ contains a set of
central group-like elements $\z= \{\partial^{m}\vert\ m\in
M_{I}\}$ such that $\bar{\pi}(\partial^{m}) =
D^{m}$ for all $m\in M_{I}$ and $H =
\qablhat^{*}/(D^{m} -1\vert\ m\in N)$. To see that $A = A_{\D}$ for a
subgroup datum $\D = (I_{+}, I_{-}, N, \Ga, \sigma, \delta)$ it
remains to find a group map $\delta: N \to \widehat{\Ga}$ such that
$A \simeq A_{\lgot, \sigma}/ J_{\delta}$. This is given by the following
lemma which finishes one part of the proof of Thm. \ref{thm:main}.
The proof will be completed by Thm. \ref{teo:order-datum}
in the following subsection.

\begin{lema}\label{lema:thetaiso}
There exists a group homomorphism $\delta: N \to \widehat{\Ga}$ such
that $J_{\delta} = (\partial^{m} - \delta(m)\vert\ m\in N)$ is a
Hopf ideal of $A_{\lgot, \sigma}$ and $A\simeq A_{\D}= A_{\lgot,
\sigma}/ J_{\delta}$.
\end{lema}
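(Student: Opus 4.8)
The plan is to exhibit the map $\delta$ explicitly via the central group-likes $\partial^m$ and then verify that the induced map $A_{\lgot,\sigma}/J_\delta \to A$ is an isomorphism. First I would use Lemma \ref{lema:Acociente}, which gives the surjection $t : A_{\lgot,\sigma} \to A$ together with the commutativity $t j = \hat\iota$ and $v\bar\pi = \hat\pi t$. The key observation is that for each $m\in N$, the element $t(\partial^m) \in A$ satisfies $\hat\pi(t(\partial^m)) = v\bar\pi(\partial^m) = v(D^m) = 1$, the last equality because $H = \qablhat^*/(D^m-1\mid m\in N)$ by Lemma \ref{lema:central-grouplikes-1} $(b)$. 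Hence $t(\partial^m)$ lies in $\ker\hat\pi$, and since it is a group-like element, by exactness of the bottom row it must lie in $\hat\iota(\Oc(\Ga)) = {}^{\co\hat\pi}A$; that is, $t(\partial^m) = \hat\iota(d_m)$ for a unique group-like $d_m \in G(\Oc(\Ga)) = \widehat{\Ga}$.

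Next I would \emph{define} $\delta : N \to \widehat\Ga$ by $\delta(m) = d_m$, i.e.\ $\hat\iota(\delta(m)) = t(\partial^m)$. That $\delta$ is a group homomorphism follows because $t$, $\hat\iota$ are algebra maps and $m\mapsto \partial^m$ is a group homomorphism (by Lemma \ref{lema:grupoz}, $\z \simeq \x \simeq M_I$, so $\partial^{m}\partial^{m'} = \partial^{m+m'}$). With this choice, each generator $\partial^m - \delta(m)$ of $J_\delta$ is killed by $t$: indeed $t(\partial^m - \delta(m)) = t(\partial^m) - \hat\iota(\delta(m)) = 0$. Therefore $J_\delta \subseteq \ker t$, and $t$ factors through a surjective Hopf algebra map $\bar t : A_{\D} = A_{\lgot,\sigma}/J_\delta \to A$. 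By Thm. \ref{teo:constrfinal}, $A_{\D}$ itself fits into the central exact sequence $1\to \Oc(\Ga)\xrightarrow{\hat\iota} A_{\D}\xrightarrow{\hat\pi} H\to 1$, and $\bar t$ is a morphism of such extensions restricting to the identity on $\Oc(\Ga)$ and inducing the identity on $H$.

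To conclude that $\bar t$ is an isomorphism, I would invoke the five-lemma-type rigidity available for (faithfully flat) Hopf-Galois extensions: both $A_{\D}$ and $A$ are $H$-Galois extensions of the common base $\Oc(\Ga)$ of coinvariants, so Masuoka's result (the lemma attributed to Masuoka in the excerpt, just after Prop. \ref{prop:cociente1}) applies once we know $A$ is right $\Oc(\Ga)$-faithfully flat. That flatness holds because $\Oc(\Ga)$ is a central Hopf subalgebra of the noetherian algebra $A$, so by \cite[Thm. 3.3]{Sch1} $A$ is faithfully flat over it, exactly as argued at the start of Subsection \ref{subsec:a-b-root-constr}. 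Since $\bar t$ is an $H$-comodule algebra map that is the identity on the coinvariants $\Oc(\Ga)$, Masuoka's lemma forces $\bar t$ to be an isomorphism, giving $A \simeq A_{\D}$.

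The main obstacle I anticipate is the \emph{well-definedness and naturality of $\delta$}: one must be careful that $t(\partial^m)$ really lands in $\hat\iota(\Oc(\Ga))$ and not merely in $\ker\hat\pi$, and that the resulting $d_m$ is group-like in $\Oc(\Ga)$ rather than an arbitrary coinvariant. This requires knowing that group-likes of $A$ lying in $\ker\hat\pi$ are precisely the image of group-likes of $\Oc(\Ga)$, which uses the exactness of the bottom row together with the fact that $\hat\iota$ is injective. A secondary subtlety is checking that $J_\delta$ is genuinely a Hopf ideal (so that the quotient $A_\D$ is defined in the first place) — but this is already guaranteed by Thm. \ref{teo:constrfinal}, which established $J_\delta$ is a Hopf ideal whenever $\delta$ is a group homomorphism. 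Thus the real content is pinning down $\delta$ and then applying the faithful-flatness/Galois rigidity, both of which follow the template of \cite[Lemma 3.1]{AG} and \cite[Thm. 2.17]{AG}.
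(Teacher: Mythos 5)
Your proposal is correct and follows essentially the same route as the paper: define $\delta(m)$ as the group-like $t(\partial^m)$, which lands in $\Oc(\Ga)={}^{\co\hat\pi}A$ because $\hat\pi t(\partial^m)=v\bar\pi(\partial^m)=1$, observe $t(J_\delta)=0$ to get a surjection $A_\D\twoheadrightarrow A$ fitting into a morphism of central exact sequences that is the identity on $\Oc(\Ga)$ and on $H$, and conclude by Galois/faithful-flatness rigidity. The only cosmetic difference is that the paper invokes the packaged statement \cite[Cor. 1.15]{AG} for the last step, while you unwind it into Masuoka's lemma together with the faithful flatness of $A$ over its central Hopf subalgebra $\Oc(\Ga)$, which is exactly what that corollary rests on.
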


\pf Let $\partial^{m} \in \z$. Then $\hat{\pi}t(\partial^{m}) =
v\bar{\pi}(\partial^{m}) = 1$ for all $m \in N$, by Lemma
\ref{lema:central-grouplikes-1} $(b)$. Since $t(\partial^{m})$ is a
group-like element, this implies that $t(\partial^{m}) \in A^{\co
\hat{\pi}} = \Oc(\Ga)$. As $G(\Oc(\Ga)) = \widehat{\Ga}$, we have a
group homomorphism $\delta$ given by
$$\delta: N \to \widehat{\Ga},\qquad \delta(m) =
t(\partial^{m})\quad\forall\ m\in N.$$

Since the elements $ \delta(m)$ and  $
\partial^{m} $ are central in $A_{\lgot,\sigma}$, 
the two-sided ideal given by $J_{\delta} =
(\partial^{m}-\delta(m)\vert\ m\in N)$ is a Hopf ideal and
$t(J_{\delta}) = 0$. Consequently, we have a surjective Hopf
algebra map $ \theta: A_{\D}  \twoheadrightarrow A$, which makes
the following diagram commutative
\begin{equation}\label{diag:episeq}
\xymatrix{1 \ar[r]^{} &   \Oc(\Ga)
\ar[r]^{\tilde{\iota}}\ar@{=}[d] & A_{\D}
\ar[r]^{\tilde{\pi}}\ar@{->>}[d]^{\theta} & H\ar[r]^{}\ar@{=}[d] & 1\\
1 \ar[r]^{} &  \Oc(\Ga)  \ar[r]^{\hat{\iota}} & A
\ar[r]^{\hat{\pi}}& H\ar[r]^{}& 1.}
\end{equation}
Then $\theta$ is an isomorphism by \cite[Cor. 1.15]{AG}. \epf

%%%%%%%%%%%%%%%%%%%%%%%%%%%%%%%%%%%
%%%%%%%%%%%%%%%%%%%%%%%%%%%%%%%%%%%
\subsection{Relations between quantum
subgroups}\label{sec:equivalence}

We study now the category $ \quot(\Oea)$ of quotients
of $\Oea$ when $\alpha$ and $\beta$ satisfy the conditions 
given in \eqref{eq:cond-alpha-beta},
and show that every object is parametrized by a subgroup
data $\D$ of $ \Oea$. 

\par Let $U$ be any Hopf algebra and consider the category $\quot (U)$,
whose objects are surjective Hopf algebra maps $q: U \to A$. If $q:
U \to A$ and $q': U \to A'$ are such maps, then an arrow
$\xymatrix{q\ar[0,1]^{\alpha}& q'}$ in $\quot (U)$ is a Hopf algebra
map $\alpha: A\to A'$ such that $\alpha q = q'$. In this language, a
\emph{quotient} of $U$ is just an isomorphism class of objects in
$\quot (U)$; let $[q]$ denote the class of the map $q$. There is a
partial order in the set of quotients of $U$, given by $[q]\leq
[q']$ iff there exists an arrow $\xymatrix{q\ar[0,1]^{\alpha}& q'}$
in $\quot (U)$. Notice that $[q]\leq [q']$ and $[q']\leq [q]$
implies $[q] = [q']$.

\par We will describe the partial order in the set $[q_{\D}]$,
$\D$ a subgroup datum, of quotients $q_{\D}: \Oea\twoheadrightarrow
A_{\D}$ given by Thm. \ref{teo:constrfinal}, using the
data provided by the set of subgroup data. Eventually, this
will be the partial order in the set of all quotients of $\Oea$. We
begin by the following definition. By an abuse of notation we
write $[A_{\D}] = [q_{\D}]$.

\begin{definition}\label{defi:order-datum} Let
$\D= (I_{+}, I_{-}, N, \Ga, \sigma, \delta)$ and $\D'= (I'_{+},
I'_{-}, N', \Ga', \sigma', \delta')$ be subgroup data of $\Oea$. We say that
$\D \leq \D'$ iff

\begin{itemize}
\item[$\bullet$] $I'_{+} \subseteq I_{+}$ and $I'_{-} \subseteq
I_{-}$.

\smallbreak\item[$\bullet$] $N\subseteq N'$ or equivalently
$\Sigma' \subseteq \Sigma$.

\smallbreak\item[$\bullet$] There exists a morphism of algebraic
groups $\tau:\Ga' \to \Ga$ such that $\sigma \tau = \sigma'$.

\smallbreak\item[$\bullet$] $\delta'\eta = \ ^{t}\tau \delta$,
where $\eta: N \hookrightarrow N'$ denotes the canonical inclusion.
\end{itemize}

If we denote $I = I_{+}\cup I_{-}$ 
and $I' = I'_{+}\cup I'_{-}$, the first condition implies that $I'\subseteq
I$, $\mathbb{T}_{I'} \subseteq \mathbb{T}_{I}$. Denote by
$\rho_{I}: \widehat{\mathbb{T}} \to \widehat{\mathbb{T}_{I}}$
 and $\rho_{I'}: \widehat{\mathbb{T}} \to \widehat{\mathbb{T}_{I'}}$ 
the epimorphisms between the character groups induced
by the inclusions and let $M_{I} = \Ker \rho_{I}$,  
$M_{I'} = \Ker \rho_{I'}$. Then $M_{I} \subseteq M_{I'}$.

\par We say that $\D \simeq \D'$ iff $\D \leq \D'$ and $\D'
\leq \D$. This means 
that $I_{+} = I'_{+}$ and $I_{-} = I'_{-}$, 
$N = N'$,
there exists an isomorphism of
algebraic groups $\tau:\Ga' \to \Ga$ such that $\sigma \tau =
\sigma'$ and $\delta' = \ ^{t}\tau
\delta$.
\end{definition}

The following theorem completes the proof of 
Thm. \ref{thm:main} $ (b) $
and its proof is completely analogous to \cite[Thm. 2.20]{AG}.

\begin{theorem}\label{teo:order-datum} Let
$\D$ and $\D'$ be subgroup data. Then

(a) $[A_{\D}] \leq [A_{\D'}]$ iff $\D\leq\D'$.

(b) $[A_{\D}] = [A_{\D'}]$ iff $\D\simeq\D'$.\qed
\end{theorem}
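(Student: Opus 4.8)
The plan is to deduce (b) from (a) and to prove (a) by characterizing when a morphism $A_{\D}\to A_{\D'}$ exists in $\quot(\Oea)$. For (b), granting (a), $[A_{\D}]=[A_{\D'}]$ holds iff $[A_{\D}]\le[A_{\D'}]$ and $[A_{\D'}]\le[A_{\D}]$, hence iff $\D\le\D'$ and $\D'\le\D$; it then remains to see that these two inequalities are exactly the conditions defining $\D\simeq\D'$ in Definition \ref{defi:order-datum}. The only nontrivial point is that the group morphisms $\tau\colon\Ga'\to\Ga$ and $\tau'\colon\Ga\to\Ga'$ provided by the two inequalities are mutually inverse: from $\sigma\tau=\sigma'$ and $\sigma'\tau'=\sigma$ one gets $\sigma\tau\tau'=\sigma$, and since $\sigma$ is a closed immersion, hence left cancellable, $\tau\tau'=\id_{\Ga}$ and symmetrically $\tau'\tau=\id_{\Ga'}$; the equalities $I_{\pm}=I'_{\pm}$, $N=N'$ and $\delta'={}^{t}\tau\delta$ are then immediate.

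For the implication $\D\le\D'\Rightarrow[A_{\D}]\le[A_{\D'}]$ I would construct the comparison map following the three steps of Subsection \ref{subsec:a-b-root-constr}. The inclusions $I'_{\pm}\subseteq I_{\pm}$ yield $\II\subseteq\II'$ (if some $k$ witnessing $(i,j)\in\II_{+}$ lies outside $I_{+}$ it also lies outside $I'_{+}$), hence a surjection $p\colon\Oeal\twoheadrightarrow\Oc_{\alpha,\beta}(L')$ over $\Oea$ carrying the central $\Oc(L)$ onto $\Oc(L')$ via the restriction $\varrho\colon\Oc(L)\to\Oc(L')$. Setting $f=\nu'p\colon\Oeal\to A_{\lgot',\sigma'}$ and $g=j'\circ{}^{t}\tau\colon\Oc(\Ga)\to A_{\lgot',\sigma'}$, the identity $\sigma\tau=\sigma'$ (as maps into $L$) gives ${}^{t}\tau\,{}^{t}\sigma={}^{t}\sigma'\varrho$, so that $f$ and $g$ agree on $\Oc(L)$; the pushout property of Proposition \ref{prop:cociente1} then produces a Hopf map $\Phi\colon A_{\lgot,\sigma}\to A_{\lgot',\sigma'}$ over $\Oea$. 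Finally I would descend $\Phi$ to $A_{\D}\to A_{\D'}$ by checking $\Phi(J_{\delta})\subseteq J_{\delta'}$: tracking the diagonal group-likes shows $\Phi(\partial^{m})=(\partial')^{m}$ for $m\in M_{I}\subseteq M_{I'}$, while $\Phi(\delta(m))=j'({}^{t}\tau\delta(m))=j'(\delta'\eta(m))=\delta'(m)$ by the hypothesis $\delta'\eta={}^{t}\tau\delta$, whence $\Phi(\partial^{m}-\delta(m))=(\partial')^{m}-\delta'(m)\in J_{\delta'}$ for all $m\in N\subseteq N'$.

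Conversely, suppose $\alpha\colon A_{\D}\to A_{\D'}$ satisfies $\alpha q_{\D}=q_{\D'}$. Because $\Oc(\Ga)$ and $\Oc(\Ga')$ are the images of $\Oc(GL_{n})$ under $q_{\D}\iota$ and $q_{\D'}\iota$, the map $\alpha$ restricts to a surjection ${}^{t}\tau\colon\Oc(\Ga)\to\Oc(\Ga')$, dual to a closed immersion $\tau\colon\Ga'\to\Ga$, and ${}^{t}\tau\,{}^{t}\sigma=\alpha q_{\D}\iota=q_{\D'}\iota={}^{t}\sigma'\varrho$ forces $\sigma\tau=\sigma'$. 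Passing to quotients by the central subalgebras, $\alpha$ induces $\bar\alpha\colon H\to H'$ with $\bar\alpha r=r'$; dualizing and using $\overline H\simeq\qabhat^{*}$ (Proposition \ref{prop:psi-injective}) exhibits $H'^{*}$ as a Hopf subalgebra of $H^{*}\subseteq\qabhat$. By the parametrization of Hopf subalgebras of the pointed Hopf algebra $\qabhat$ recalled in Subsection \ref{subsub:first-step} (\cite[Cor.\ 1.12]{AG}), this inclusion forces $I'_{\pm}\subseteq I_{\pm}$ and $\Sigma'\subseteq\Sigma$, i.e.\ $N\subseteq N'$. For the condition on $\delta$, note that for $m\in N$ one has $\partial^{m}=\delta(m)$ in $A_{\D}$ and $(\partial')^{m}=\delta'(m)$ in $A_{\D'}$, while $\alpha(\partial^{m})=(\partial')^{m}$ since both are the image of the single diagonal monomial $\prod_{j}x_{jj}^{m_{j}}\in\Oea$ under $q_{\D}$ and $q_{\D'}$; as $\alpha$ restricts to ${}^{t}\tau$ on $\Oc(\Ga)$, this gives ${}^{t}\tau\delta(m)=\delta'(m)$, i.e.\ ${}^{t}\tau\delta=\delta'\eta$, so $\D\le\D'$.

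The main difficulty, exactly as in \cite[Thm.\ 2.20]{AG}, is the bookkeeping of the central group-like subgroup $\z=\{\partial^{m}\}$ of Lemma \ref{lema:grupoz} along the whole chain of maps --- in particular exploiting the identification $\bar x_{jj}=\delta_{j}$ of Remark \ref{obs:hat-t}, so that each $\partial^{m}$ is read off as the image of an honest diagonal monomial of $\Oea$ --- together with transporting the classification of Hopf subalgebras of $\qabhat$ through the duality of Proposition \ref{prop:psi-injective}. Everything else is the diagram chase already encoded in \eqref{diag:const} and \eqref{diag:gordo}, so no genuinely new computation beyond Lemmata \ref{lema:central-grouplikes-1}, \ref{lema:grupoz} and \ref{lema:thetaiso} should be required.
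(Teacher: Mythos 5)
Your argument is correct and is precisely the route the paper intends: the paper gives no independent proof of Theorem \ref{teo:order-datum}, stating only that it is ``completely analogous to \cite[Thm. 2.20]{AG}'', and your reconstruction — deducing $(b)$ from $(a)$ via cancellation of the injective $\sigma$, building the comparison map $A_{\D}\to A_{\D'}$ through the surjection $\Oeal\twoheadrightarrow\Oc_{\alpha,\beta}(L')$, the pushout property of Prop. \ref{prop:cociente1} and the descent $\Phi(J_{\delta})\subseteq J_{\delta'}$, and conversely reading off $(I_{\pm},\Sigma,\tau,\delta)$ from the induced maps on the central Hopf subalgebra and on $H$ via the classification of Hopf subalgebras of $\qabhat$ — is exactly that analogous argument. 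No gaps.
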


%%%%%%%%%%%%%%%%%%%%%%%%%%%%%%%%%%%
%%%%%%%%%%%%%%%%%%%%%%%%%%%%%%%%%%%

\subsection{Some properties of the quotients}\label{findimquo}
In this subsection we summarize some properties of the quotients
$A_{\D}$ following the study made
in \cite{AG1}. Let $\D= (I_{+}, I_{-}, N, \Ga, \sigma, \delta)$ be
a subgroup datum of $ \Oea $. By Thm.
\ref{teo:constrfinal}, $A_{\D}$ fits into the
commutative diagram

\begin{equation}\label{diag:facOel}
\xymatrix{1 \ar[r]^{} & \Oc(GL_{n})
\ar[r]^{\iota}\ar@/_2pc/@{.>}[dd]_(.7){^{t}\sigma} \ar[d]_{\res} &
\Oea \ar[r]^{\pi} \ar[d]^{\Res}\ar@/_2pc/@{.>}[dd]_(.7){q_{\D}} &
\qabhat^{*} \ar[r]^{}\ar[d]_{p}
\ar@/_2pc/@{.>}[dd]_(.7){r} & 1\\
1 \ar[r]^{} &   \Oc(L)\ar[d]^{u}  \ar[r]^{\iota_L} & \Oeal
\ar[r]^{\pi_L}\ar[d]^{w} &
\qablhat^* \ar[r]^{}\ar[d]^{v} & 1\\
1 \ar[r]^{} &   \Oc(\Ga)  \ar[r]^{j} &
A_{\D}\ar[r]^{\hat{\pi}} & H \ar[r]^{} & 1.}
\end{equation}
and by Lemma \ref{lema:central-grouplikes-1} $ (b) $
and the proof of Lemma \ref{lema:grupoz},
$H\simeq $ $\qablhat^{*} /
(D^{m} - 1\vert\ m\in N)$. Let $ \Tte $ be the 
diagonal torus of $GL_{n}(\CC)$. The following
lemma shows that it coincides with the group
of characters of $\Oea$.

\begin{lema}\label{lema:dualnonpointed} 
\begin{enumerate}
\item[$(a)$] $\Alg(\Oea, \CC) \simeq \Tte$.
\item[$(b)$] $j$ induces a group map $^{t}j: \Alg(A_{\D}, \CC) \to
\Ga$ and $\Img (\sigma\circ\ ^{t}j) \subseteq \Tte\cap
\sigma(\Ga)$.
\end{enumerate}
\end{lema}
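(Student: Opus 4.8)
The plan is to treat the two parts separately, establishing $(a)$ by a direct analysis of the defining relations of $\Oea$ and then deducing $(b)$ by a diagram chase through \eqref{diag:facOel} that reduces everything to $(a)$.

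For $(a)$, I would begin from the observation that a character $\phi\in\Alg(\Oea,\CC)$ is completely determined by the scalars $c_{ij}:=\phi(x_{ij})$, which must satisfy the images under $\phi$ of the relations in Definition \ref{def:2-par-quant-ogln}. Since $\CC$ is commutative, the first relation gives $(1-\alpha^{-1})c_{ij}c_{ik}=0$ for $j<k$, so by $\alpha\neq1$ at most one entry of each row is nonzero; symmetrically the second relation together with $\beta\neq1$ forces at most one nonzero entry per column. The fourth relation collapses to $(\beta-\alpha)c_{il}c_{jk}=0$ for $i<j$, $k<l$, so by $\alpha\neq\beta$ the support admits no ``inversion''. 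Because $\phi$ is defined on the localization $\Oea=\Oc_{\alpha,\beta}(M_n)[g^{-1}]$, the scalar $\phi(g)$ must be invertible; since the support is a partial permutation, exactly one term of the quantum determinant can survive, and $\phi(g)\neq 0$ forces that one term to occur, i.e. the support contains the graph of a permutation. Combining these facts the support is the graph of a permutation with no inversion, hence of the identity, so $c_{ij}=0$ for $i\neq j$ and $c_{ii}\in\CC^{\times}$. Conversely every diagonal assignment respects all relations, and a short computation with $\com(x_{ij})=\sum_s x_{is}\ot x_{sj}$ shows that convolution corresponds to coordinatewise multiplication; this gives the group isomorphism $\Alg(\Oea,\CC)\simeq\Tte\simeq(\CC^{\times})^n$.

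For $(b)$, I would first note that $\Alg(\Oc(\Ga),\CC)$ is by definition the group of $\CC$-points of $\Ga$, and that $^{t}j\colon\Alg(A_{\D},\CC)\to\Ga$, $\phi\mapsto\phi\circ j$, is a group homomorphism because $j$ is a morphism of Hopf algebras. The key input is the commutativity of the left column of \eqref{diag:facOel}, which yields $q_{\D}\circ\iota=j\circ{}^{t}\sigma$. Evaluating on $\phi\in\Alg(A_{\D},\CC)$, the point $\sigma({}^{t}j(\phi))\in GL_n$ is, as a character of $\Oc(GL_n)$, equal to $\phi\circ j\circ{}^{t}\sigma=\phi\circ q_{\D}\circ\iota$. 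Now $\phi\circ q_{\D}\in\Alg(\Oea,\CC)$ is diagonal by part $(a)$, say $(\phi\circ q_{\D})(x_{ij})=\delta_{ij}t_i$ with $t_i\in\CC^{\times}$; and since $\iota=F^{\#}$ sends $X_{ij}\mapsto x_{ij}^{\ell}$, the point $\sigma({}^{t}j(\phi))$ sends $X_{ij}\mapsto(\phi\circ q_{\D})(x_{ij})^{\ell}=\delta_{ij}t_i^{\ell}$, i.e. it equals the diagonal matrix $\operatorname{diag}(t_1^{\ell},\dots,t_n^{\ell})\in\Tte$. As this element lies in $\sigma(\Ga)$ by construction, I conclude $\Img(\sigma\circ{}^{t}j)\subseteq\Tte\cap\sigma(\Ga)$.

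I expect the main obstacle to be the relation analysis in part $(a)$: one must carefully extract exactly the right constraints (at most one nonzero per row and per column, absence of inversions, and a full permutation forced by the invertibility of $\phi(g)$) and check that together they rigidify the support to the diagonal rather than merely to a permutation matrix. Once $(a)$ is available, part $(b)$ is a formal diagram chase, the only delicate bookkeeping being that the $\ell$-th powers introduced by the Frobenius map $F^{\#}$ keep the resulting point inside the torus $\Tte$.
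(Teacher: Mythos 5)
Your proposal is correct and follows essentially the same route as the paper: part $(a)$ by showing that the defining relations of $\Oea$ together with the invertibility of $\phi(g)$ force every character to be supported on the diagonal, and part $(b)$ by the diagram chase through \eqref{diag:facOel} identifying $\sigma\circ{}^{t}j$ with ${}^{t}\iota\circ{}^{t}q_{\D}$. The only difference is one of detail: where the paper simply asserts that $\theta(x_{ij})=0$ for $i\neq j$ ``follows from the defining relations,'' you correctly spell out the row, column, and inversion constraints and the role of $\theta(g)\neq 0$ in rigidifying the support to the identity permutation.
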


\pf
$ (a) $ Let $ \Lambda \in \Tte $ with main diagonal 
$(\lambda_{1},\ldots, \lambda_{n}) \in (\CC^{\times})^{n}$. 
It defines an element  $\hat{\Lambda}\in \Alg(\Oea, \CC)$ by setting
$\hat{\Lambda} (x_{ij}) =\delta_{i,j} \lambda_{i}$ for all $1\leq i,j\leq n$. 
Hence we may define a group homomorphism $\varphi: \Tte \to \Alg(\Oea, \CC)$
by $\varphi(\Lambda) = \hat{\Lambda}$ for all $\Lambda \in \Tte$. 
This group map is clearly injective, so we need to prove that is also
surjective. Let $\theta \in \Alg(\Oea, \CC)$ and let $x_{ij}$   
be a generator of $ \Oea $ with $1\leq i\neq j\leq n$. Then 
from the defining relations it follows that
$\theta(x_{ij}) = 0$. Since $\theta(g) \neq 0$, we have that
$\theta(x_{kk}) \in \CC^{\times}$ and thus $ \theta = \varphi(\Lambda) $
with $ \lambda_{k} =  \theta(x_{kk}) $ for all $1\leq k \leq n$.

\par $ (b) $ The bottom exact sequence of \eqref{diag:facOel}
induces an exact sequence of groups
\begin{equation}\label{extgrupos} 1 \to G(H^{*}) = \Alg(H, \CC)
\xrightarrow{^{t}\pi} \Alg(A_{\D}, \CC) \xrightarrow{^{t}j}
\Alg(\Oc(\Ga), \CC) = \Ga,
\end{equation}
which fits into the commutative diagram of group maps
$$\xymatrix{1
\ar[r]^{} & G(\qabhat) \ar[r]^(.4){^{t}\pi} & \Alg(\Oea, \CC)
\ar[r]^(.45){^{t}\iota}
& \Alg(\Oc(GL_{n}),\CC) \\
1 \ar[r]^{} &   G(H^{*})  \ar[r]^{^{t}\pi}\ar[u]^{^{t}r} &
\Alg(A_{\D}, \CC) \ar[r]^{^{t}j}\ar[u]_{^{t}q_{\D}} & \Ga
\ar[u]_{\sigma}.}$$
Since $q_{\D}$ is surjective, $^{t}q_{\D}: \Alg(A_{\D}, \CC) \to \Alg(\Oea,
\CC)$ is injective. Thus the subgroup $(\sigma\circ\
^{t}j)(\Alg(A_{\D}, \CC))$ of $\sigma(\Ga)$ must be a subgroup
of $\Img\ ^{t}\iota = \Tte \subseteq GL_{n}=\Alg(\Oc(GL_{n}),\CC)$. 
\epf

We resume some properties of the Hopf algebras
$A_{\D}$ in the following proposition. Recall that
a twist of a Hopf algebra $A$ is an invertible element
$J \in A\otimes A$ such that
$$
 (1\ot J)(\id\ot \com)J  = (J\ot 1)(\com\ot \id)J\quad \mbox{ and }
\quad (\id\ot \epsilon)J  = 1 = (\epsilon \ot 1)J. 
$$
If $A^{J}$ denotes the Hopf algebra with 
the same algebra structure as $A$ but with the
comultiplication given by $\com_{J} = J\com J^{-1}$, then
we say that $A^{J}$ is a twist deformation of $A$.
We say that two Hopf algebras $A$ and $B$ are twist
equivalent if $B\simeq A^{J}$ for some twist $J$. 
The {\it Hopf center} of a Hopf algebra $A$ is the maximal central
Hopf subalgebra $\HZe(A)$ of $A$, see 
\cite[2.2.2]{A}. 

\begin{prop}\label{prop:propiedades}
Let $\D= (I_{+}, I_{-}, N, \Ga, \sigma, \delta)$ be a subgroup
datum.
\begin{enumerate}
  \item[$(a)$] If $A_{\D}$ is pointed, then $I_{+}\cap I_{-} =
  \emptyset$ and $\Ga$ is a subgroup of the group of upper 
  triangular matrices of some size.
  In particular, if $\Ga$ is finite, then it is abelian.
  \item[$(b)$] $A_{\D}$
  is semisimple if and only if $I_{+} \cup I_{-} = \emptyset$ and
  $\Ga$ is finite.
  \item[$(c)$] If $\dim A_{\D} < \infty$ and $A_{\D}^{*}$ is pointed, then
   $\sigma(\Ga)\subseteq \Tte$. 
  \item[$(d)$] If $A_{\D}$ is co-Frobenius then $\Ga$ is reductive.
  \item[$(e)$] If $\HZe(A_{D}) \ncong \HZe(A_{D'})$ then $A_{D}$ and $A_{D'}$
  are not twist equivalent.
\end{enumerate}
\end{prop}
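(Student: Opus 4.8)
The plan is to exploit the central exact sequence $1\to \Oc(\Ga)\xrightarrow{\hat\iota} A_{\D}\xrightarrow{\hat\pi} H\to 1$ furnished by Thm.~\ref{teo:constrfinal} together with diagram \eqref{diag:facOel}, reducing each property of $A_{\D}$ to a property of the commutative end $\Oc(\Ga)$ and of the finite-dimensional end $H\simeq \qablhat^{*}/(D^{m}-1\mid m\in N)$, whose structure is governed by the sets $I_{\pm}$ and the subgroup $\Sigma$.

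For $(a)$, since pointedness passes to Hopf subalgebras and to quotients, if $A_{\D}$ is pointed then both $\Oc(\Ga)$ and $H$ are pointed. A commutative pointed Hopf algebra is the coordinate ring of a trigonalizable group, so $\Oc(\Ga)$ pointed forces $\sigma(\Ga)$ into upper triangular matrices, and into an abelian group when $\Ga$ is finite. For $I_{+}\cap I_{-}=\emptyset$ I would argue by contradiction: if $i\in I_{+}\cap I_{-}$ then $e_{i},f_{i}\in H^{*}\subseteq\qablhat$, and the relation $[e_{i},f_{i}]=\frac{1}{\alpha-\beta}(a_{i}b_{i+1}-a_{i+1}b_{i})$ shows $H^{*}$ contains a copy of an ${\bf u}_{\alpha,\beta}(\liesl_{2})$-type Hopf subalgebra, which is a Drinfeld double and hence has non-pointed dual, contradicting pointedness of $H$. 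For $(b)$, note first that $A_{\D}$ is affine and noetherian (Lemma~\ref{lema:triple-affine}), so if it is semisimple it is Artinian and therefore finite-dimensional, forcing $\Ga$ finite. With $\Ga$ finite the extension is finite-dimensional, and semisimplicity of a Hopf algebra extension is equivalent to semisimplicity of both ends; since $\Oc(\Ga)$ is then semisimple, $A_{\D}$ is semisimple iff $H$ is, and $H$ is semisimple iff $H^{*}\subseteq\qablhat$ has no nonzero nilpotents, i.e. iff no generator $e_{i}$ or $f_{j}$ survives, i.e. iff $I_{+}\cup I_{-}=\emptyset$.

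For $(c)$ I would use that a surjection out of a pointed Hopf algebra is surjective on group-likes. Since $\dim A_{\D}<\infty$, the group $\Ga$ is finite and dualizing the central extension gives $1\to H^{*}\to A_{\D}^{*}\to \Oc(\Ga)^{*}=\CC\Ga\to 1$; if $A_{\D}^{*}$ is pointed, then $G(A_{\D}^{*})=\Alg(A_{\D},\CC)$ surjects onto $G(\CC\Ga)=\Ga$, which is exactly the assertion that $\,^{t}j\colon \Alg(A_{\D},\CC)\to\Ga$ of \eqref{extgrupos} is surjective. Lemma~\ref{lema:dualnonpointed}$(b)$ then yields $\sigma(\Ga)=\sigma(\,^{t}j(\Alg(A_{\D},\CC)))\subseteq \Tte\cap\sigma(\Ga)\subseteq\Tte$. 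For $(d)$, since $H$ is finite-dimensional and hence co-Frobenius, I would invoke the transfer result that the co-Frobenius property of $A_{\D}$ descends along the faithfully flat central extension to the Hopf subalgebra $\Oc(\Ga)$, and then the classical characterization that a commutative affine Hopf algebra $\Oc(\Ga)$ is co-Frobenius precisely when $\Ga$ is reductive.

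Finally, for $(e)$ the key observation is that $\HZe(-)$ is invariant under twisting. Given a twist $J$, write $B=\HZe(A_{\D})$; since $B\subseteq \Z(A_{\D})$ it is commutative and $\com(B)\subseteq B\ot B$ consists of central elements, so for $b\in B$ the central tensor $\com(b)$ commutes with $J$ and $J^{-1}$, whence $\com_{J}(b)=J\com(b)J^{-1}=\com(b)$; similarly the twisted antipode agrees with $\SS$ on $B$. Thus $B$ is a central Hopf subalgebra of $A_{\D}^{J}$ with unchanged structure, giving $\HZe(A_{\D})\subseteq\HZe(A_{\D}^{J})$; applying the same argument to the twist $J^{-1}$ of $A_{\D}^{J}$ yields the reverse inclusion, so $\HZe(A_{\D})=\HZe(A_{\D}^{J})$. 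Hence twist-equivalent Hopf algebras have isomorphic Hopf centers, and $(e)$ is the contrapositive. The main obstacle I anticipate lies not in $(e)$ but in $(a)$–$(d)$: the real work is pinning down and citing (or adapting from \cite{AG1}) the precise statements that pointedness, semisimplicity and the co-Frobenius property behave well along the central extension and are correctly read off from the two ends, rather than any single hard computation.
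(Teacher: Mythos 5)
Your proposal follows the same route as the paper: for item $(e)$ the paper gives exactly your argument (twist--invariance of the Hopf center $\HZe$), and for $(a)$--$(d)$ the paper simply writes that these are ``a small variation of \cite[Prop. 3.8]{AG1}'', whose proof is precisely the reduction you carry out to the two ends of the central exact sequence $1\to\Oc(\Ga)\to A_{\D}\to H\to 1$; in particular your use of Lemma \ref{lema:dualnonpointed}$(b)$ in part $(c)$, together with the fact that a Hopf surjection out of a pointed Hopf algebra is surjective on group-likes, is the intended argument.

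One justification in $(b)$ is wrong as stated: a finite-dimensional Hopf algebra is \emph{not} semisimple if and only if it ``has no nonzero nilpotents'' --- matrix algebras are semisimple and contain nilpotents, and conversely in ${\bf u}_{q}(\liesl_{2})$ the nilpotent generator $e$ does not lie in the Jacobson radical. The correct way to read semisimplicity of $H$ off the sets $I_{\pm}$ is via the coradical: $H^{*}$ is a pointed Hopf subalgebra of $\qablhat$; if some $e_{i}$ or $f_{j}$ lies in $H^{*}$ it is a nontrivial skew-primitive, so $H^{*}$ is not cosemisimple and hence, by Larson--Radford in characteristic zero, not semisimple; conversely if $I_{+}\cup I_{-}=\emptyset$ then $H^{*}=\k\Sigma$ is a group algebra, which is semisimple. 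This is a local repair --- the surrounding structure of your argument for $(b)$ (semisimple $\Rightarrow$ finite-dimensional $\Rightarrow$ $\Ga$ finite, and semisimplicity of a finite-dimensional extension being equivalent to semisimplicity of both ends) is sound, as is the rest of the proposal.
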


\pf Items $(a), \ldots, (d)$ are a small variation of 
\cite[Prop. 3.8]{AG1}.

\smallbreak $(e)$ 
If $A_{D} \cong A_{D'}^{J}$ for some twist $J \in
A_{D'}\ot A_{D'}$, then $\HZe(A_{D}) \cong \HZe(A_{D'}^{J})$. Since
$\HZe(A_{D'}^{J}) = \HZe(A_{D'})$, the claim follows.
\epf

We end the paper with the following theorem that gives
a new family of Hopf algebras coming from deformations
on two paramaters which can not be obtained as 
quotients of $\Oe$, with $G$ a connected,
simply connected,  simple complex Lie group, $\epsilon$
a primitive $s$-th root of unity, $s$ odd and $3\nmid s$ if 
$G$ is of type $G_{2}$, see \cite{AG}. 
Recall that if $(a_{ij})_{1\leq i,j\leq n}$
is a Cartan matrix, 
a subset $I \subseteq I_{n}= \{1,\ldots , n-1\}$
is called \textit{connected} if for all $i,j \in I$
there exist $i= k_{1}, k_{2},\ldots, k_{r+1}= j$ such
that $a_{k_{t}k_{t+1}}\neq 0$. 

\begin{theorem}\label{thm:pulenta}
Let $\D = (I_{+}, I_{-}, N, \Ga, \sigma, \delta)$ be a finite
subgroup datum such that $I_{+}\cap I_{-} \neq \emptyset$ and
$\sigma(\Ga)\nsubseteq\Tte$. Then $A_{\D}$ is non-semisimple and
non-pointed with non-pointed dual. If moreover $I= I_{+}=I_{-}$ 
and $I$ is connected, then $A_{\D}$ cannot be obtained as a quotient 
of $\Oe$.
\end{theorem}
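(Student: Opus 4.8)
The plan is to treat the two assertions separately: the first is an immediate consequence of the structural results already established, while the second requires a genuine comparison between the two-parameter and the one-parameter theories. For the statements on semisimplicity and pointedness I would simply invoke Proposition \ref{prop:propiedades}. Since $\D$ is a finite subgroup datum, $\Ga$ is finite and $A_\D$ is finite-dimensional. The hypothesis $I_+\cap I_-\neq\emptyset$ forces $I_+\cup I_-\neq\emptyset$, so $A_\D$ is not semisimple by part $(b)$, and it is not pointed by the contrapositive of part $(a)$. Finally, $\sigma(\Ga)\nsubseteq\Tte$ together with $\dim A_\D<\infty$ gives, via the contrapositive of part $(c)$, that $A_\D^{*}$ is not pointed. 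This settles the first sentence.

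For the second sentence I would argue by contradiction: suppose $A_\D$ is a quotient of $\Oe$ for some connected, simply connected, simple complex $G$ with $\epsilon$ as in \cite{AG}. Being finite-dimensional, $A_\D$ is then a finite-dimensional quotient, so by the classification in \cite{AG} it is isomorphic to a Hopf algebra sitting in a central exact sequence $1\to\Oc(\Ga')\to A_\D\to H'\to 1$ in which $(H')^{*}$ is a Hopf subalgebra of the Frobenius--Lusztig kernel ${\bf u}_{\epsilon}(\g)$, determined by subsets of the simple roots of $\g$. On the other hand, by Cor. \ref{cor:PI-hopf-triple-example2} and Thm. \ref{teo:constrfinal} the same $A_\D$ sits in the central exact sequence $1\to\Oc(\Ga)\to A_\D\to H\to 1$ with $H^{*}$ a Hopf subalgebra of the two-parameter object $\qabhat$. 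The strategy is to extract from each presentation the infinitesimal braiding of the finite pointed Hopf algebra attached to the quantum direction and to show that these braided vector spaces cannot be isomorphic.

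Concretely, since $I=I_+=I_-$ and $I$ is connected, $H^{*}$ contains the full two-parameter small quantum group of type $A_{|I|}$ spanned by $\{e_i,f_i\mid i\in I\}$, and a short computation with the relations of Def. \ref{def:Ualfa-beta} shows that the infinitesimal braiding of its positive part satisfies $q_{ii}=\alpha\beta^{-1}$ and, for adjacent $i,i+1\in I$, $q_{i,i+1}=\beta$ while $q_{i+1,i}=\alpha^{-1}$. As $\alpha\beta\neq 1$ under the standing assumptions \eqref{eq:cond-alpha-beta}, connectedness of $I$ guarantees such an adjacent pair, so this braiding is \emph{non-symmetric}. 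By contrast, inside any ${\bf u}_{\epsilon}(\g)$ with $\g$ simple the braiding between root vectors is governed by the symmetric invariant form, so that $q_{ij}=\epsilon^{(\alpha_i,\alpha_j)}=q_{ji}$ and the infinitesimal braiding of $(H')^{*}$ is symmetric. Since symmetry of the infinitesimal braiding is invariant under isomorphism of the associated braided vector space, and hence of the underlying pointed Hopf algebra, the two finite pieces cannot be isomorphic, which is the desired contradiction.

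The main obstacle is the descent step: one must show that an abstract Hopf algebra isomorphism between $A_\D$ and the putative quotient of $\Oe$ really does induce a comparison of these two infinitesimal braidings, i.e. that the pointed quantum-group quotient is preserved. I would handle this by passing to duals and analysing the coradical filtration of $A_\D^{*}$, whose associated graded and in particular its infinitesimal braiding is intrinsic; restricting to the normal pointed Hopf subalgebra covering the quantum direction recovers the braided space $V$ on one side and its one-parameter counterpart on the other. The delicate part, which is exactly where the hypotheses $I=I_+=I_-$ and $I$ connected do their work, is to pin down this pointed piece intrinsically and to verify that the symmetry invariant survives the identification; I expect this, rather than the braiding computation itself, to be where the real effort lies.
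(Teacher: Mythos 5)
Your handling of the first sentence is exactly the paper's: all three properties follow from Proposition \ref{prop:propiedades} (a)--(c), and your reading of the hypotheses is correct.

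For the second sentence you have correctly identified both the final numerical contradiction and the real obstacle, but the obstacle is left unresolved, and the route you sketch for it does not work as stated. The difficulty is that neither $A_{\D}$ nor $A_{\D}^{*}$ is pointed (that is precisely the content of the first sentence), so there is no ``infinitesimal braiding of $A_{\D}$'' until one has \emph{canonically} isolated a pointed piece: the coradical of $A_{\D}^{*}$ contains simple subcoalgebras of dimension greater than $1$, and the pointed Hopf subalgebra $H^{*}\subseteq A_{\D}^{*}$ covering the quantum direction is not a priori preserved by an abstract Hopf algebra isomorphism. The paper's missing ingredient is the Hopf center. One first proves $\HZe(\qablohat^{*})=\CC$, where $\qablohat$ is the Hopf subalgebra of $\qablhat$ generated by $\{e_{i},f_{i},w_{i},w_{i}'\mid i\in I\}$ --- this is exactly where $I=I_{+}=I_{-}$ and the connectedness of $I$ are used, via the classification of Hopf subalgebras of the pointed Hopf algebra $\qablohat$ and \cite[Lemma A.1]{AS-app}. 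It then follows, as in \cite[Lemma 3.10]{AG1}, that $\HZe(A_{\D})=\Oc(\widetilde{\Ga})$ for a suitable enlargement $\widetilde{\Ga}$ of $\Ga$ by the central group-likes of Lemma \ref{lema:grupoz}, so that $A_{\D}$ sits in a central exact sequence $1\to\Oc(\widetilde{\Ga})\to A_{\D}\to\qablohat^{*}\to 1$ whose quotient term is an isomorphism invariant. Any quotient $A$ of $\Oe$ admits the analogous intrinsic sequence $1\to\Oc(\widetilde{\Ga}_{1})\to A\to\qeo^{*}\to 1$ with $\HZe(\qeo^{*})=\CC$, so an isomorphism $A_{\D}\simeq A$ forces $\qablohat\simeq\qeo$. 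Only at that point does your comparison of braidings on skew-primitives apply, and it is then essentially the paper's own last step: both sides are pointed and generated by group-likes and skew-primitives, and matching the braiding data forces $\alpha=\beta^{-1}$, contradicting \eqref{eq:cond-alpha-beta}. Your computation of the two-parameter braiding matrix and the observation that its asymmetry is the obstruction are correct; what is missing is the Hopf-center reduction that makes the two pointed quotients comparable in the first place.
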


\pf
The first assertion follows from Prop. \ref{prop:propiedades}. 
Thus we show that $A_{\D}$ can not be obtained as
a quotient of $\Oe$.

\par Suppose that $I= I_{+}=I_{-}$ and let $\qablohat$ 
be the Hopf subalgebra of $\qablhat$ generated by 
$\{e_{i}, f_{i}, w_{i}, w'_{i}:\ i\in I\}$. 
Then $\HZe(\qablohat^{*}) = \CC$. Indeed, suppose on the 
contrary that $C = \HZe(\qablohat^{*}) \neq \CC$, then 
we have a central exact sequence $1\to C \to \qablohat^{*}
\to B \to 1$, where $B = \qablohat^{*}/C^{+}\qablohat^{*}$
is non-trivial.
Taking duals we get an exact sequence  
$1\to B^{*} \to \qablohat
\to C^{*} \to 1$, with $C^{*}$ a group algebra. Since
$\qablohat$ is pointed, by  \cite[Cor. 1.12]{AG},  $B^{*}$ is generated 
by  a subset of $\{e_{i}, f_{i}:\ i\in I\}$ and a subgroup 
$F$ of $\langle w_{i},w'_{i}:\ i\in I\rangle$ such that
$w_{j} \in F$ and $w'_{j} \in F$ if $e_{j} \in B^{*}$
or $f_{j} \in B^{*}$, respectively. But if $e_{j}$ or
$f_{j}$ belongs to $B^{*}$, then $w_{j}\in B^{*}$ or 
$w'_{j} \in B^{*}$ and by \cite[Lemma A.1]{AS-app}
and the fact that $I$ is connected this would imply that
$B^{*}= \qablohat$, a contradiction. Hence 
$B^{*}$ must be generated by group-likes. This 
is also not possible, since this would imply that $\qablohat$
is semisimple. Thus we must have 
$\HZe(\qablohat^{*}) \neq \CC$.

\par By Lemma \ref{lema:central-grouplikes-1} $(i)$
the set of group-likes $\x = \{\bar{\chi}\vert\ \chi\in
M_{I}\}$ of
$\qablhat^{*}$ is central and we have that $\qablhat^{*}/ 
(\bar{\chi}-1\vert\ \chi\in M_{I}) \simeq \qablohat^{*}$.
Since $N\subseteq M_{I}$, $\qablohat$ is a
quotient of $H$ and by Lemma  
\ref{lema:central-grouplikes-1} $(ii)$ we have 
$\qablohat^{*}\simeq H / (\bar{\chi}-1\vert\ \chi\in M_{I}/ N) $.
On the other hand, by Lemma \ref{lema:grupoz}
and Thm. \ref{teo:constrfinal}, $A_{\D}$
contains a group of central group-likes isomorphic to
$\w = \{\bar{\chi}\vert\ \chi\in
M_{I}/N\}$ and the Hopf subalgebra $\Oc(\widetilde{\Ga}):= \Oc(\Ga)\w$
is central in $A_{\D}$. Following \cite[Lemma 3.10]{AG1}, one
can prove that $\HZe(A_{\D}) =  \Oc(\widetilde{\Ga})$, 
$A_{\D}$ is given by a pushout and
$A_{\D}$ fits into the central exact sequence
$$1\to \Oc(\widetilde{\Ga}) \to A_{\D}
\to \qablohat^{*} \to 1. $$

Suppose $A_{\D} \simeq A $ as Hopf algebras, with 
$A$ a quotient of $\Oe$. Then  
by \cite[Lemma 3.10]{AG1}, $\Oe$ fits into a central 
exact sequence 
$$1\to \Oc(\widetilde{\Ga}_{1}) \to A
\to \qeo^{*} \to 1, $$
with $\qeo$ a Hopf subalgebra of $\qe$, $\HZe(A) =  
\Oc(\widetilde{\Ga}_{1})$ and 
$\HZe(\qeo^{*}) =  
\CC$. This implies in particular that $\qeo \simeq \qablohat$. 
Since both pointed Hopf algebras are generated
by group-likes and skew-primitives, by looking 
at the linear spaces generated by the skew-primitives 
we should have that $\alpha = \beta^{-1}$, 
which contradicts our assumption
\eqref{eq:cond-alpha-beta} on the parameters. 
\epf

It remains an open question to determine when 
two quotients given by subgroup data are
isomorphic as Hopf algebras. The problem was solved for a
special case for quotients of $ \Oe $ at \cite{AG1} using 
algebraic geometry and homological tools. In view of 
this, it seems to be difficult to solve the problem with all
generality. This will be left for future research.
%%%%%%%%%%%%%%%%%%%%%%%%%%%%%%%

%%%%%%%%%% BIBLIOGRAPHY %%%%%%%%%%%%
\bibliographystyle{amsbeta}

\end{document}